\newcommand*{\Gref}[1]{\hyperref[SDEfunc]{Eq($#1$)}}
\newcommand*{\Gtag}[1]{\tag{Eq($#1$)}}
\DeclareMathOperator{\1}{\mathbbm{1}}
\DeclareMathOperator{\I}{\mathbbm{1}}
\DeclareMathOperator{\Law}{Law}%
\DeclareMathOperator{\sign}{sign}
\def\E{\hskip.15ex\mathsf{E}\hskip.10ex}
\def\P{\mathsf{P}}
\DeclareMathOperator{\Var}{Var}
\def\eps{\varepsilon}
\def\phi{\varphi}
\newcommand{\tand}{\quad\textrm{and}\quad}
\newtheorem{theorem}{Theorem}[section]
\newtheorem{lemma}[theorem]{Lemma}
\newtheorem{proposition}[theorem]{Proposition}
\newtheorem{corollary}[theorem]{Corollary}
\newtheorem{conjecture}[theorem]{Conjecture}
\theoremstyle{definition}
\theoremstyle{definition}\newtheorem{remark}[theorem]{Remark}
\theoremstyle{definition}\newtheorem{definition}[theorem]{Definition}
\crefname{Lemma}{Lemma}{Lemmas}
\crefname{Theorem}{Theorem}{Theorems}
\numberwithin{equation}{section}
\renewcommand{\ge}{\geqslant}
\renewcommand{\le}{\leqslant}
\newcommand{\nn}{\nonumber}
\newcommand{\wt}{\widetilde}
\newcommand{\wh}{\widehat}
\newcommand{\qh}{\widehat{q}}
\newcommand{\0}{{D}}
\newcommand{\A}{\mathcal{A}}
\newcommand{\B}{\mathcal{B}}
\newcommand{\Ba}{\mathbf{B}}
\newcommand{\C}{\mathcal{C}}
\newcommand{\F}{\mathcal{F}}
\newcommand{\G}{\mathcal{G}}
\newcommand{\M}{\mathcal{M}}
\newcommand{\N}{\mathbb{N}}
\newcommand{\R}{\mathbb{R}}
\newcommand{\Z}{\mathbb{Z}}
\def\({\lt(}
\def\){\rt)}
\newcommand{\lt}{\left}
\newcommand{\rt}{\right}
\newcommand{\Bes}{\mathcal{B}} 
\newcommand{\bes}{\Bes} 
\DeclareMathOperator{\Leb}{Leb}
\newcommand{\var}{\textrm{var}}
\newcommand{\Di}[1]{|#1|}
\definecolor{Brown}{rgb}{.75,.5,.25}
\definecolor{DGreen}{rgb}{0,0.55,0}
\definecolor{Olive}{rgb}{0.41,0.55,0.13}
\definecolor{Red}{rgb}{1,0,0}
\definecolor{Blue}{rgb}{0,0,1}
\definecolor{Olive}{rgb}{0.41,0.55,0.13}
\definecolor{Yarok}{rgb}{0,0.5,0}
\definecolor{Green}{rgb}{0,1,0}
\definecolor{MGreen}{rgb}{0,0.8,0}
\definecolor{DGreen}{rgb}{0,0.55,0}
\definecolor{Yellow}{rgb}{1,1,0}
\definecolor{Cyan}{rgb}{0,1,1}
\definecolor{Magenta}{rgb}{1,0,1}
\definecolor{Orange}{rgb}{1,.5,0}
\definecolor{Violet}{rgb}{.5,0,.5}
\definecolor{Purple}{rgb}{.75,0,.25}
\definecolor{Brown}{rgb}{.75,.5,.25}
\definecolor{Grey}{rgb}{.7,.7,.7}
\definecolor{Black}{rgb}{0,0,0}
\begin{document}

\makeatletter
\let\@fnsymbol\@arabic
\makeatother
	
\title{Stochastic equations with singular drift driven by  fractional Brownian motion}
	
	\author{
		Oleg Butkovsky%
		\thanks{Weierstrass Institute, Mohrenstrasse 39, 10117 Berlin, FRG. Email: \texttt{oleg.butkovskiy@gmail.com}}$^{\,\,\,,\!\!}$
		\thanks{Institut für Mathematik, Humboldt-Universität zu Berlin, Rudower Chaussee 25,
			12489, Berlin,	FRG.}
		\and				
		Khoa  L\^e
		\thanks{School of Mathematics, University of Leeds, U.K. Email: \texttt{k.le@leeds.ac.uk}}
		\and
		Leonid Mytnik%
		\thanks{Technion --- Israel Institute of Technology,
			Faculty of Data and Decision Sciences,
			Haifa, 3200003, Israel.  Email: \texttt{leonid@ie.technion.ac.il}
		}
	}
	
	\maketitle

\begin{abstract}
We consider stochastic differential equation
$$
d X_t=b(X_t) dt +d W_t^H,
$$
where the drift $b$ is either a measure or an integrable function, and $W^H$ is a $d$-dimensional fractional Brownian motion with Hurst parameter $H\in(0,1)$, $d\in\N$. For the case where $b\in L_p(\R^d)$, $p\in[1,\infty]$ we show weak existence of solutions to this equation under the   condition 
$$
\frac{d}p<\frac1H-1,
$$
which is an extension of the Krylov--R\"ockner condition (2005) to the fractional  case. We construct a counter-example showing optimality of this condition. If $b$ is a Radon measure, particularly the delta measure, we prove weak existence of solutions to this equation  under the optimal  condition $H<\frac1{d+1}$. We also show strong well-posedness of solutions to this equation under certain conditions. 
To establish these results, we utilize the stochastic sewing technique and develop a new version of the stochastic sewing lemma.
\end{abstract}

\section{Introduction}
Stochastic differential equations (SDEs) driven by fractional Brownian motion (fBM) are commonly used in  mathematical biophysics, where fBM is considered a canonical model for anomalous diffusion, see, e.g., \cite{phy0,phy3,phy_JM,phy2}. In this article we study  multidimensional SDE 
\begin{align}\label{mainSDE}
&dX_t=b(X_t)dt + dW_t^H,\quad t\in[0,T]\\
&X_0=x\in\R^d\nn,
\end{align}
where $d\in\N$, $T>0$, $b$ is a finite signed Radon measure or $b\in L_p(\R^d,\R^d)$, $W^H$ is an fBM with the Hurst parameter $H\in(0,1)$, and the initial condition $x\in\R^d$. We show weak existence of solutions to this equation in the optimal regime (\cref{T:func,T:ident}) and construct a counter-example to prove optimality of our assumptions (\cref{T:genopt}). For $d=1$ we obtain strong well-posedness under certain conditions (\cref{T:uniq}). Additionally, we provide a connection between equation \eqref{mainSDE} and a stochastic equation involving local time (\cref{T:measure}). As a byproduct of our results, we deduce a short direct proof of classical results of Geman--Horowitz \cite{MR556414} and Xiao \cite{Xiao} that if $Hd<1$, then $W^H$ has a local time of certain space-time regularity. We also show that a sum of a fractional Brownian motion and an adapted process of bounded variation has a jointly continuous local time under the optimal condition $H(d+1)<1$ (\cref{T:loc}). To prove these results, we further develop the stochastic sewing technique initiated by \cite{LeSSL} and obtain a  Rosenthal-type stochastic sewing lemma \cref{T:RoSSL} which relaxes the assumptions on high moments of increments of the process. 

It has been known for a long time that an ill-posed deterministic system can become well-posed when randomly perturbed. Indeed, the differential equation  $dX_t=\sign(X_t)|X_t|^\alpha dt$, $\alpha\in(0,1)$, $X_0=0$ has infinitely many solutions. On the other hand, the perturbed  equation $dX_t=\sign(X_t)|X_t|^\alpha dt+dW_t$, where $W$ is a Brownian motion, has a unique solution. This phenomenon is called regularization by noise; we refer the reader to monograph \cite{F11} for many interesting examples and further discussion.

Regularization by noise is particularly well-studied in the case of Brownian noise $(H=\frac12)$, and many sharp results have been obtained. The pioneering works of Zvonkin and Veretennikov \cite{zvonkin74,ver80} showed that if the drift $b$ is a measurable bounded function, then the SDE
\begin{equation}\label{mainSDEBM}
dX_t=b(X_t)dt + dW_t,\quad t\in[0,T],
\end{equation}
where $W$ is a Brownian motion, has a unique strong solution. This was later extended  by Krylov and R\"ockner \cite{kr_rock05} to unbounded integrable drifts $b\in L_q([0,1],L_p(\R^d))$, $p\ge2$, under the condition $\frac2q+\frac{d}p<1$. The critical case has been treated in \cite{BC2005} ($d=1$) and \cite{krylov2020strong} ($d\ge3)$. It is also known that the Krylov--R\"ockner condition is optimal: if $\frac{d}p>1$, then one can construct a drift $b\in L_p(\R^d)$ such that \eqref{mainSDEBM}  has no solutions \cite[Example~7.4]{BFGM}, \cite[Example~1.1]{krylov21}. Further generalizations can be found in \cite{Zhang11}.

The case where the drift $b$ is a Radon measure has also been thoroughly studied. In this case, the term $b(X_t)$ in \eqref{mainSDEBM} is not defined, and one needs to explain what is meant by a solution to this equation. The main idea is that, instead of defining $b(X_t)$ at each fixed $t$, one directly defines the integral $\int_0^t b(X_r)dr$ either via the local time of $X$ or via approximations of $b$, see \cite{LG84, BC} and the discussion in \cref{S:MR}. It is known from \cite{LG84} and \cite[Theorem~4.5]{BC2005} that if $d=1$ and $b$ is a finite signed Radon measure such that $|b(\{x\})|\le1$ for each $x\in\R$, then \eqref{mainSDEBM} has a unique strong solution. This condition is also optimal: if the measure $b$ has atoms of size larger than $1$, then \eqref{mainSDEBM} has no solutions, see \cite[Theorem~3.2]{BC2005} and \cite[p.~312]{HS81}.

Regularization by noise phenomena outside from martingale contexts such as SDEs driven by fractional Brownian motion and stochastic partial differential equations are much less understood. This is not because regularization by noise  only occurs in the Brownian case, but rather because there are very few tools available to study this problem in other setups. Indeed, all the proofs in the aforementioned articles heavily rely on It\^o calculus and PDE techniques;  these instruments are largely inapplicable beyond the semimartingale setting. 
Recall that the rougher the driving noise $W$ is in \eqref{mainSDEBM}, the more irregular drift $b$ one can take so that \eqref{mainSDEBM} remains well-posed, see \cite{CG16} for a collection of results highlighting this phenomenon. The idea behind this is that if the fluctuations of the noise are bigger than the fluctuations of the drift $\int b(X_r) dr$, then the noise averages out singularities of the drift $b$ 
so that the map $\eta\mapsto\int b(W+\eta)dr$ is Lipschitz in suitable metrics. 
If the noise is not irregular enough, then we give an explicit example showing that \eqref{mainSDEBM} may have no solutions, see \cref{T:genopt}.
On the other spectrum, \cite[Theorem~1.2]{HP20} constructs a random continuous forcing $W$, which is so irregular that equation \eqref{mainSDEBM} has a unique solution for \textit{any} Schwartz distribution $b$.  We refer the reader to \cite[Section~1]{GG22} and \cite[end of Section~1]{CdR18} for further discussion of this  principle.

Therefore, an important objective is to generalize sharp results on regularization by Brownian noise to non-semimartingale noises while maintaining optimality.

To understand what the optimal condition for well-posedness of \eqref{mainSDE} should be, we apply the following heuristics. Clearly, $W^H\in\C^{H-\eps}$. This implies that $X\in \C^{H-\eps}$. Further, 
if $b\in L_p(\R^d)$, $p\in[1,\infty]$, then $b\in\C^{-\frac{d}p}$ and very informally we would get $b(X_\cdot)\in \C^{-\frac{(H-\eps)d}p}$. We gain $1$ in regularity by integration, and hence we see that the drift of \eqref{mainSDE} $\int b(X_r)\,dr\in
\C^{1-\frac{(H-\eps)d}p}$. By the above principle, we expect to have  well-posedness if 
\begin{equation}\label{optimalineq}
1-\frac{Hd}p>H.
\end{equation}
As a sanity check, for the Brownian case $H=\frac12$, \eqref{optimalineq} becomes $\frac{d}p<1$ and coincides with the Krylov--R\"ockner condition. Another version of the same heuristic which uses the scaling argument and leads to \eqref{optimalineq} can be found in \mbox{\cite{GG22}}.

Regularization by fractional Brownian noise was initiated by Nualart and Ouknine in \cite{OuNu,NO03}.
This problem sparks new interests recently, with new different approaches \cite{CG16,ART21,GG22}. Unfortunately, these works do not allow to deduce well-posedness for $b\in L_p(\R^d)$ or for $b$ being a Radon measure under condition \eqref{optimalineq}.

Our main result is the weak existence of solutions to \eqref{mainSDE} under the above condition.  In the case where $b$ is a measure, we also obtain weak existence under the corresponding analogue of \eqref{optimalineq}. We construct a counterexample that shows the optimality of \eqref{optimalineq}.
Further, we show that, under certain assumptions, a solution to  \eqref{mainSDE} has a jointly continuous local time. This extends to the case of irregular drifts the corresponding results obtained in \cite{LouCheng17,SSV22} where existence of local times of solutions to SDEs driven by fBM with smooth drifts was established.

In the particular case where $b = \delta_0$, with $\delta_0$ being the delta measure at $0$,  equation \eqref{mainSDE} becomes
\begin{equation}\label{sFBM}
	dX_t=\delta_0(X_t)dt + dW_t^H,\quad t\in[0,T]
\end{equation}
and its solution is called \textit{skew fractional Brownian motion} (sfBM). Strong well-posedness of sFBM is known for $H=\frac12$ (\cite{HS81}) and $H\le\frac14$ (\cite{CG16,ART21}). This leaves the interval $H\in(\frac14,\frac12)$ where well-posedness of \eqref{sFBM} is expected but has not yet been proved. In this article we reduce this gap. We show weak existence of solutions to \eqref{sFBM} in the whole range $H<\frac12$ and strong unqiueness for $H<(\sqrt13-3)/2$. The remaining part --- strong uniqueness for $H\in[(\sqrt13-3)/2,1/2)$ --- poses a significant challenge that would likely require the development of vastly different techniques. 

Now, let us say a few words about our proof methods. As we explained above, well-posedness for \eqref{mainSDE} with $H=\frac12$ has been analyzed mostly using PDE methods and It\^o's formula. A breakthrough in studying \eqref{mainSDE} for $H\neq\frac12$ has been achieved in \cite{CG16} where deterministic sewing was applied. \cite{LeSSL} introduced a new technique called \textit{stochastic sewing} and applied it to analyze \eqref{mainSDE}. Further developments of stochastic sewing and its application to studying equation \eqref{mainSDE} were made in \cite{ART21,GG22,MP22,Gerreg22}. We also apply stochastic sewing, however compared with the above works, our proof strategy has the following new ingredients which allowed us to get optimal results on the existence of weak solutions.

First, as discussed above, the drift $\int b(X_r)\, dr$ has H\"older regularity $1-\frac{Hd}p-\eps<1$. On the other hand, if we measure its regularity in the variation scale, then it is easy to see that this drift is always of finite $1$-variation. 
By combining this insight with the idea of random control, which was introduced in \cite{ABLM}, we improve the corresponding powers in stochastic sewing from  $1-\frac{Hd}p-\eps$ to $1$, see the proof of \cref{L:driftb2}.

Next, we replace the Burkholder--Davis--Gundy inequality in stochastic sewing lemma with the Rosenthal--Burkholder inequality. This yields a new version of stochastic sewing lemma, \cref{T:RoSSL}. Morally, it says that instead of bounding $m$th moment (where $m$ is large) of the difference $\delta A_{s,u,t}$, it is enough to have a good bound only on the second conditional moment $\E [\delta A_{s,u,t}^2|\F_u]$. \cref{R:highmom} explains why this improvement is crucial for getting weak existence in the optimal regime. 

Finally, we also utilize the quantitative John--Nirenberg inequality \cite{le2022} and taming singularities technique \cite{BFG,le2021taming}. \cref{R:newthings} elaborates why this leads to better results than just the application of stochastic sewing alone.

We have made significant effort to avoid using Girsanov's transform. This is because we do not expect Girsanov's theorem to be applicable over the entire range of $p\in[1,\infty]$, $H\in(0,1)$, $d\in\N$, where \eqref{optimalineq} holds. Therefore, to extend the corresponding bounds from fBM to perturbed fBM, we rely solely on stochastic sewing.
A parallel project \cite{BLM23} focuses on the range of $p$, $H$,  $d$ for which Girsanov's theorem is applicable. It establishes the strong existence and uniqueness of solutions to \eqref{mainSDE} for these values of the parameters (a subset of the range of parameters for which \eqref{optimalineq} holds). Weak uniqueness for SDE \eqref{mainSDE} for $b$ being a Schwartz distribution was considered in recent work \cite{BM24}.

The fact that our approach does not rely on Girsanov's formula or It\^o's lemma makes it quite robust. We believe that it can be transferred to other types of noise, see \cref{R:levy}. In particular, it can be used to study regularization by noise and local times for stochastic partial differential equations, potentially complementing  the results from \cite{ABLM,Zambottibspde}.

The rest of the paper is organized as follows. Our main results are presented in \cref{S:MR}. We collect our toolkit including Rosenthal's type stochastic sewing lemma in \cref{sec:tools}. \cref{S:key} contains key bounds needed for the proofs, while proofs of the main results are postponed till \cref{sec:proof}. Auxiliary technical results on Besov spaces as well as necessary heat kernel bounds are put in Appendix.
  
\textbf{Convention on constants}. Throughout the paper $C$ denotes a positive constant whose value may change from line to line; its dependence is always specified in the corresponding statement.

\textbf{Convention on integrals}. In this paper, all integrals with respect to the deterministic measure are understood in the Lebesgue sense.

\textbf{Acknowledgements}. The authors are very grateful to Peter Friz, Lucio Galeati, M\'at\'e Gerencs\'er and Konstantinos Dareiotis   for numerous interesting discussions about regularization by noise for SDEs driven by fractional Brownian motion and to Willem van Zuijlen for a very helpful discussion about different aspects of theory of Besov spaces. We would also like to thank the referees for thoroughly reading the paper and offering very valuable suggestions. OB has received funding from the DFG Research Unit FOR~2402, Deutsche Forschungsgemeinschaft (DFG, German Research Foundation) under Germany's Excellence Strategy --- The Berlin Mathematics Research Center MATH+ (EXC-2046/1, project ID: 390685689, sub-project EF1-22), DFG CRC/TRR 388 “Rough Analysis, Stochastic Dynamics and Related Fields”, Project B08. LM is supported in part by ISF grant No. ISF 1985/22. Significant progress on the project has been achieved during the visits of the authors to Technion, ICMS Edinburgh (research in group stay in May 2022), and Mathematisches Forschungsinstitut Oberwolfach (mini-workshop 2207c, February 2022). We would like to thank all these institutions 
for providing excellent accommodation and very good working conditions.

\section{Main results}\label{S:MR}

We begin by introducing the basic notation. Let $d \in \mathbb{N}$. Let $\C_b^\infty=\C^\infty_b(\R^d,\R^d)$ be the space of infinitely
differentiable real functions $\R^d\to\R^d$ which are bounded and have bounded derivatives of all orders. Let $\M(\mathbb{R}^d, \mathbb{R}^d)$ denote the set of $d$-dimensional signed finite Radon measures on $\mathbb{R}^d$. Let $\M_+(\mathbb{R}^d, \mathbb{R}^d) \subset \M(\mathbb{R}^d, \mathbb{R}^d)$ be the set of non-negative $d$-dimensional finite Radon measures on $\mathbb{R}^d$. Let $\B^{\beta}_p = \B^{\beta}_{p,\infty}(\R^d)$ be the Besov space of regularity $\beta \in \mathbb{R}$ and integrability $p \in [1, \infty]$. If $\beta<0$, then we write $\C^\beta:=\B^\beta_{\infty,\infty}$. 
For $\beta\in(0,1]$, $d,k\in\N$, $Q\subset \R^d$, let   $\C^\beta (Q,\R^k)$ be the space of all functions $f\colon Q\to \R^k$,  such that \begin{equation*}
\|f\|_{\C^\beta(Q,\R^k)}:=\sup_{x\in Q}|f(x)|+[f]_{\C^\beta(Q,\R^k)}<\infty, \text{\,\, where\,\,} [f]_{\C^\beta(Q,\R^k)}:=\sup_{x,y\in Q, x\neq y}\frac{|f(x)-f(y)|}{|x-y|^\beta}.
\end{equation*}
We will also use the space $\C^\beta_0(Q,\R^k)=\C^\beta(Q,\R^k)\cap \{f\colon Q\to\R^k, f(0)=0\}$ of H\"older functions which are zero at zero. Often, for brevity, we will write $\C^\beta(Q):=
\C^\beta(Q,\R^k)$ if there is no ambiguity.

We will frequently use in the paper the following elementary embeddings   without explicitly mentioning them (\cite[Proposition~2.39]{bahouri}, \cite[Proposition~2.1, Theorem~2.5]{Besovbook}):
\begin{equation*}
\M(\R^d,\R^d)\subset\B^0_1,\quad L_p(\R^d,\R^d)\subset \B^0_p,\quad \B^\alpha_p\subset \B^{\alpha-\frac{d}{p}+\frac{d}{q}}_q,
\end{equation*}
where $p\in[1,\infty]$, $\alpha\in\R$, $q\in[p,\infty]$. The Lebesgue measure on $\R^d$ will be denoted by $\Leb$. An open ball in $\R^d$ centered at $x\in\R^d$ of radius $r$ will be denoted by $\Ba(x,r)$. The volume of the unit ball in $\R^d$ will be denoted by 
\begin{equation}\label{nud}
v_d:=\Leb(\Ba(0,1)).
\end{equation}
 For $x\in\R^d$ we will denote the delta measure at $x$ by $\delta_x$.

For a function $f\colon [s,t]\to\R^d$, where $0\le s\le t$, define its $\ell$-variation, $\ell\in[1,\infty)$ by
$$
\|f\|_{\ell-\var;[s,t]}:=\bigl(\sup_\Pi\sum_{i=0}^{n-1} |f(t_{i+1}-f(t_i)|^\ell\bigr)^{\frac1\ell},
$$
where the supremum is taken over all partitions $\Pi=\{t_0=s,t_1,\hdots, t_n=t\}$ of interval $[s,t]$. The space of functions $f\colon [s,t]\to\R^d$ with finite $\ell\in[1,\infty)$ variation will be denoted by $\C^{\ell-\var}([s,t],\R^d)$.

Denote by $p_t$, $t>0$, the density of a $d$-dimensional vector with independent Gaussian components each of mean zero and variance $t$:
$$
p_t(x) =(2\pi t)^{-d/2}e^{-\frac{|x|^2}{2t}},\quad  x\in\R^d,
$$
and let $P_t$ be the corresponding Gaussian semigroup.

We recall from \cite[Section~5.1.3, formula~(5.8) and Proposition~5.1.3]{Nu} that given a $d$-dimensional fractional Brownian motion $W^H$ one can construct on the same probability space a standard Brownian motion $B$ such that
\begin{equation}\label{WB}
W^H_t=\int_0^t K_H(t,s)\,dB_s=:\Psi_t(B),\quad t\in[0,T],
\end{equation}
where for $0\le s \le t \le T$
\begin{align}
&K_H(t,s):=C(H,d)s^{\frac12-H}\int_s^t (r-s)^{H-\frac32}r^{H-\frac12}\,dr\quad\text{when}\quad H>1/2,\label{Hl12}\\
&K_H(t,s):=C(H,d)\Bigl(t^{H-\frac12}s^{\frac12-H}(t-s)^{H-\frac12}
\nonumber\\&\quad\quad \quad \quad \quad+(\frac12-H)s^{\frac12-H}\int_s^t (r-s)^{H-\frac12}r^{H-\frac32}\,dr\Bigr)\quad\text{when}\quad H<1/2,\label{Hg12}
\end{align}
and $C(H,d)$ is a certain positive constant. 

Let $(\Omega, \F,\P)$ be a probability space and let $(\F_t)_{t\in[0,T]}$ be a filtration on this space such that $\F_0$ contains all null sets.
As in \cite[Definition~1]{OuNu}, we say that $W^H$ is a \textit{$(\F_t)$-fractional Brownian motion}, if there exists an $(\F_t)$-Brownian motion $B$ such that \eqref{WB} holds. Since the natural filtrations generated by $W$ and $B$ in \eqref{WB} coincide (\cite[Section 5.1.3]{Nu}), we see that $W^H$ is an $(\F^{W^H}_t)$-fractional Brownian motion.

As mentioned earlier, we consider SDE \eqref{mainSDE} in two different settings. First, if $b$ is a measurable function $\R^d\to\R^d$, then, as usual, we say that a continuous process $X$ is a solution to \eqref{mainSDE} with the initial condition $x\in\R^d$ if a.s.
\begin{equation*}\label{SDEfunc}
	\Gtag{x;b}
	X_t=x +\int_0^t b(X_r)\,dr +W_t^H, \qquad t\in[0,T],
\end{equation*}
where the integration is understood in the Lebesgue sense.
Second, we consider the case when $b\in\M(\R^d,\R^d)$, in which $b$ is not a function and thus
the composition $b(X_t)$ in \eqref{mainSDE} is not-well defined. We will define precisely the meaning of a solution to \eqref{mainSDE} in two different ways.

 One approach follows the ideas of Bass and Chen \cite[Definition~2.1]{BC}, \cite[Definition~2.5]{BC03} (see also \cite[Definition~3.9]{bib:zz17}, \cite[Definition~2.1]{ABM2020}), who suggested to understand an ill-defined drift term $\int b(X_t) dt$ as the limit of the corresponding approximations.  We recall the following concepts.

\begin{definition}
We say that a sequence of functions $f^n\colon\R^d\to\R^d$, $n\in\Z_+$, converges to a function $f$ in $\B^{\beta-}_{p}$, $\beta\in\R$, $p\ge1$, if $\sup_n \|f_n\|_{\B^{\beta}_{p}}<\infty$ and  for any $\beta'<\beta$ we have $\|f_n-f\|_{\B^{\beta'}_{p}}\to0$ as $n\to\infty$.
\end{definition}

\begin{definition}\label{D:sol}
Let $b\in\B^{\beta}_{p}$, where $\beta\in\R$, $p\in[1,\infty]$, $T>0$.  
A continuous   process $(X_t)_{t\in[0,T]}$ taking values in $\R^d$ is called a \textbf{regularized solution} to SDE \eqref{mainSDE} with the initial condition $x\in\R^d$, if there exists a process $(\psi_t)_{t\in[0,T]}$ taking values in $\R^d$ such that:
\begin{enumerate}[(i)]
\item $X_t=x+\psi_t+W_t^H$, $t\in[0,T]$;
\item for \textit{any} sequence $(b^n)_{n\in\Z_+}$ of $\C^\infty_b(\R^d,\R^d)$ functions   converging to $b$ in $\B^{\beta-}_{p}$ we have
    \begin{equation*}
    	\lim_{n\to\infty}\sup_{t\in[0,T]}\Big|\int_0^t b^n(X_r)\,dr-\psi_t\Big|= 0\,\,\text{in probability}.
    \end{equation*}
\end{enumerate}
\end{definition}

\begin{remark} \begin{enumerate}[(i)]
\item
It is easy to see that for any $f\in \B^\beta_p$, there exists a sequence of $\C_b^\infty$ functions $(f_n)_{n\in\Z_+}$ which converges to $f$ in $\B^{\beta-}_p$ as $n\to\infty$. For example, one can take $f_n := P_{\frac1n}f$, see, e.g., \cite[Lemma~A.3]{ABLM}. Therefore, the sequence $(b^n)_{n\in\Z_+}$  required in the second part of \cref{D:sol} always exists.
\item Since, as mentioned before, $\M(\R^d,\R^d)\subset \B^0_1$, the above definition provides a notion of a solution to SDE \eqref{mainSDE} for the case where $b$ is a signed finite Radon measure.
		
\item It is immediate that for $b\in\C^\beta(\R^d)=\B^\beta_\infty(\R^d)$, $\beta>0$, a regularized solution to \eqref{mainSDE} is a standard notion of a solution to SDE \Gref{x;b}. Indeed, in this case we can choose a sequence  $(b^n)_{n\in\Z_+}$ which converges to $b$ uniformly and thus $\psi_t=\int_0^t b(X_r)\, dr$. We will show that this is also the case if $b\in L_p(\R^d)$ for the whole range of $p$ in which we are able to construct a solution, see \cref{T:func} for the precise formulation.
\end{enumerate}
\end{remark}

We will consider the following class of solutions.
\begin{definition}
We say that a regularized solution $(X,W^H)$ to \eqref{mainSDE} is in the class \textbf{BV}, if $X-W^H$ has finite variation almost surely, i.e.
$\|X-W^H\|_{1-\var;[0,T]}<\infty$ a.s.
\end{definition}
We see that if $b$ is a nonnegative function or a non-negative measure, then the process $X-W^H$ is nondecreasing, and thus automatically of finite variation (one can see this by approximating the non-negative measure $b$ by non-negative functions  $b^n$ in \cref{D:sol})

Following \cite[Section~3.2]{OuNu}, we say that a couple $(X,W^H)$ on a complete filtered probability space $(\Omega, \F, \P, (\F_t)_{t\in[0,T]})$ is a \textit{weak solution} to \eqref{mainSDE}, if $W^H$ is an $(\F_t)$-fractional Brownian motion, $X$ is adapted to $(\F_t)$, and $X$ is a solution to \eqref{mainSDE} (in the standard sense or in the sense of \cref{D:sol}). A weak solution $(X,W^H)$ is called a \textit{strong solution}  if $X$ is adapted to $(\F^{W^H}_t)$, that is the filtration generated by $W^H$. We say that \textit{pathwise uniqueness} holds for \eqref{mainSDE} if for any two weak solutions of  \eqref{mainSDE}  $(X,W^H)$ and $(Y,W^H)$  with common noise $W^H$ on a common probability space (w.r.t. possibly different filtrations) and with the same initial conditions one has $\P(X_t=Y_t \text{ for all $t\in[0,T]$})=1$. 

Now let us present our main results for the  case when $b$ is a measurable function.
\begin{theorem}\label{T:func}
Let $b$ be a measurable function in $L_p(\R^d,\R^d)$, $p\in[1,\infty]$, $H\in(0,1)$, $x\in\R^d$ and suppose that \begin{equation}\label{maincond}
\frac{d}{p}<\frac1H-1.
\end{equation}
Then the following holds:
\begin{enumerate}[\rm{(}i\rm{)}]
\item\label{part1tfunc}  equation \Gref{x;b} has a weak solution $(X,W^H)$;
\item\label{part2tfunc} let $(X,W^H)$ be any weak solution to  \Gref{x;b}.  Then   for any $m\ge1$ there exists a constant $C=C(H,d,m,p,T,\|b\|_{L_p(\R^d)})$ such that for any $0\le s \le t\le T$ one has 
\begin{equation*}
\|\,\|X-W^H\|_{1-\var;[s,t]}\,\|_{L_m(\Omega)}\le C (t-s)^{1-\frac{Hd}p};
\end{equation*}
\item\label{partiiitfunc} let $(X,W^H)$ be any weak solution to  \Gref{x;b}. Then it is also a weak regularized solution to  \eqref{mainSDE} (in the sense of \cref{D:sol}). 
\item\label{part3tfunc} let $(X,W^H)$ be any  weak regularized solution to equation \eqref{mainSDE} in the class \textbf{BV}. Then $(X,W^H)$ is a weak solution to  \Gref{x;b}.\end{enumerate}
\end{theorem}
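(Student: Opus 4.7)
The plan is to prove the a priori drift estimate in part (ii) first, deduce weak existence in part (i) from it by a standard tightness--extraction argument applied to mollified drifts, and then derive the equivalence of standard and regularized solutions in parts (iii)--(iv) via a single sewing-based stability statement.

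For part (ii), observe that for any standard weak solution one has $\|X-W^H\|_{1-\var;[s,t]} \le \int_s^t |b(X_r)|\,dr$, so it suffices to show that for every nonnegative $f \in L_p(\R^d)$,
\[
\Bigl\| \int_s^t f(X_r)\,dr \Bigr\|_{L_m(\Omega)} \le C (t-s)^{1-Hd/p} \|f\|_{L_p}.
\]
I would apply the Rosenthal-type stochastic sewing lemma \cref{T:RoSSL} to the germ
\[
A_{s,t} := \E\Bigl[ \int_s^t f\bigl(X_s + (W^H_r - W^H_s)\bigr)\,dr \,\Big|\, \F_s \Bigr].
\]
The first-moment bound $|A_{s,t}| \le C(t-s)^{1-Hd/p}\|f\|_{L_p}$ comes from the Gaussian density estimate for $W^H_r - W^H_s$ together with H\"older's inequality (the exponent is integrable precisely because $Hd/p<1$, which follows from \eqref{maincond}), while the required conditional $L_2$ control on $\delta A_{s,u,t}$ relies on the local nondeterminism of fBM. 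Using the Rosenthal SSL rather than the classical BDG-based sewing is essential in this regime: as $d/p$ approaches $1/H-1$, higher conditional moments of the germ degenerate, but the second moment remains controllable, precisely as emphasized in \cref{R:highmom}.

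For part (i), regularize $b$ by $b^n := P_{1/n} b \in \C^\infty_b$. The smooth SDEs $dX^n_t = b^n(X^n_t)\,dt + dW^H_t$ admit unique strong solutions on a common probability space supporting $W^H$. Since $\|b^n\|_{L_p} \le \|b\|_{L_p}$, part (ii) applied uniformly to each $X^n$ yields a uniform in $n$ bound on $\|X^n - W^H\|_{1-\var;[s,t]}$ with scaling exponent $1 - Hd/p$ strictly greater than $H$ (by \eqref{maincond}). Combined with Kolmogorov's continuity theorem, this gives tightness of the laws of $(X^n, W^H)$ on $\C([0,T],\R^d)^2$. Skorokhod representation extracts an a.s.\ uniform limit $(X, W^H)$ with $\psi := X - W^H$ of finite variation, and identifying $\psi_t = \int_0^t b(X_r)\,dr$ reduces to the stability statement below.

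For parts (iii) and (iv), both directions reduce to the following claim: if $X$ is $(\F_t)$-adapted with $X - W^H$ of bounded variation a.s., and $b^n \to b$ in $\B^{0-}_p$ with $\sup_n \|b^n\|_{L_p} < \infty$, then $\int_0^t b^n(X_r)\,dr \to \int_0^t b(X_r)\,dr$ in probability, the right-hand side being the Lebesgue integral. For (iii), the BV hypothesis on $X - W^H$ is automatic from part (ii). For (iv), the Lebesgue integral on the right is well-defined because a version of part (ii) adapted to the perturbed process yields $\int_0^T |b(X_r)|\,dr < \infty$ a.s.; the convergence then identifies $\psi_t = \int_0^t b(X_r)\,dr$. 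The claim itself is proved by applying stochastic sewing to the germ $\E[\int_s^t (b^n - b)(X_s + (W^H_r - W^H_s))\,dr \mid \F_s]$, with the smallness factor $\|b^n - b\|_{\B^{\beta}_p} \to 0$ for $\beta < 0$ driving the limit. The main obstacle, and the place where avoiding Girsanov really matters, is transferring the fBM sewing estimates to the perturbed process $X$: this is where the random control technique from \cite{ABLM} and the quantitative John--Nirenberg inequality \cite{le2022} come in, as they absorb the BV perturbation $\psi$ so that the fBM heat-kernel bounds apply pathwise to $X$ throughout the full range of parameters satisfying \eqref{maincond}.
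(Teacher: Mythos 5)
Your overall architecture matches the paper's: prove the a priori variation bound (ii) first via Rosenthal-type stochastic sewing, obtain (i) by mollifying the drift, bounding variations uniformly, and extracting a weak limit via Skorokhod, and collapse (iii)--(iv) into a single stability claim. The germ you propose, $A_{s,t}=\E^s\bigl[\int_s^t f(X_s+(W^H_r-W^H_s))\,dr\bigr]$, is (up to the outer conditional expectation) the paper's germ $\int_s^t f(W^H_r+z_s)\,dr$ with $z:=X-W^H$; the paper deliberately \emph{omits} the conditioning in the random-drift case, because the clean identity $\delta A_{s,u,t}=\int_u^t\bigl(f(W^H_r+z_s)-f(W^H_r+z_u)\bigr)dr$ is what exposes the increment $|z_u-z_s|$ that feeds the random control. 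With your conditioned germ the delta mixes $\E^s$ and $\E^u$ and the extraction of a $\|z\|_{1-\var;[s,t]}$-control is substantially more delicate, so you should adopt the unconditioned version.

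The genuine gap, however, is that your argument for part (ii) is circular as written. The random control appearing in conditions \eqref{Rcon:s1}--\eqref{Rcon:s2} is $\lambda(s,t)=\|z\|_{1-\var;[s,t]}=\int_s^t|b(X_r)|\,dr$, which when $f=|b|$ is exactly the quantity whose $L_m$-norm you want to estimate. Moreover you have no a priori guarantee that this quantity even has finite $m$-th moment. The paper closes the loop with a two-step bootstrap that your proposal omits entirely: first stop $z$ at $\tau_N:=\inf\{t:\|z\|_{1-\var;[0,t]}\ge N\}$ so that the control is bounded by $N$ and the estimate from \cref{L:driftb2} is meaningful; second, on short intervals of length $\ell$ the resulting inequality takes the self-improving form
\begin{equation*}
\Bigl\|\int_{s\wedge\tau_N}^{t\wedge\tau_N}|b(X_r)|\,dr\Bigr\|_{L_m}\le C_0\,\|b\|_{L_p}\,\ell^{\,1-\frac{Hd}{p}-H}\Bigl\|\int_{s\wedge\tau_N}^{t\wedge\tau_N}|b(X_r)|\,dr\Bigr\|_{L_m}+C_0\,\|b\|_{L_p}\,(t-s)^{1-\frac{Hd}{p}},
\end{equation*}
and one chooses $\ell$ small enough (using $1-\tfrac{Hd}{p}-H>0$, i.e.\ precisely \eqref{maincond}) to absorb the first term; then one lets $N\to\infty$ by Fatou and iterates over $O(1/\ell)$ subintervals. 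Without this absorption argument the Rosenthal sewing by itself produces an estimate whose right-hand side depends on the very thing you are bounding, and there is nothing to extract from it. The same stopping-time device reappears in the proof of (iv), where the finite variation of $X-W^H$ is only assumed almost surely (not in $L_m$); you mention BV a.s.\ but not how to pass from pathwise finiteness to moment bounds, which again requires localizing by $\tau_N$ before invoking \cref{L:driftb2}.

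Beyond that, the remaining points are sound but need care: in (i) the limit identification $\psi_t=\int_0^t b(X_r)\,dr$ is not immediate from tightness and requires the same $\B^{-\delta}_p$-stability estimate (the paper's \cref{L:ident}); and in the heat-kernel step you must condition with respect to $\F_s$ by writing $W^H_r=\E^s W^H_r+V_{s,r}$ since $W^H_r-W^H_s$ is not independent of $\F_s$ for $H\neq\tfrac12$, a point your sketch glosses over.
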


As mentioned before, we are able to show that condition \eqref{maincond} is essentially optimal. We provide the following general counter-example, which justifies the heuristics discussed in the introduction: regularization by noise requires the noise to be rough enough. Otherwise, SDE \Gref{x;b} might fail to have a solution.
\begin{theorem}\label{T:genopt}
Let $d\in\N$, $\gamma\in(0,1)$, $\alpha>\frac{1-\gamma}\gamma$, $f=(f^1,f^2,\hdots,f^d)\in\C_0^\gamma([0,1],\R^d)$. Suppose that $f$ is not identically $0$. Then the deterministic equation 
\begin{equation}\label{SDEce}
	X_t^i=-\int_0^t\sign(X_s^i) |X_s|^{-\alpha}\I(|X_s|<1) ds+f_t^i,\quad i=1,\hdots,d,
\end{equation}
where the integration is understood in the Lebesgue sense, has no continuous solutions.
\end{theorem}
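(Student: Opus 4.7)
The plan is to argue by contradiction, exploiting the fact that the singular drift is too strong near the origin for the merely $\gamma$-Hölder forcing $f$ to push $X$ away from $0$. I would first reduce to the setting $X_0=0$ and $X\not\equiv 0$ on any initial interval. With the convention $\sign(0)=0$ the integrand in \eqref{SDEce} vanishes whenever $X_s=0$, so $X\equiv 0$ on $[0,t_0]$ forces $f\equiv 0$ on $[0,t_0]$. Setting $t_0:=\sup\{t: f\equiv 0\text{ on }[0,t]\}<1$ and translating the time origin to $t_0$ yields $X_0=f_0=0$ together with $f\not\equiv 0$ on every $(0,\delta)$; the same implication in reverse then gives $X\not\equiv 0$ on every $(0,\delta)$ as well.

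For the main argument I would track first hitting times of small spheres. For each sufficiently small $\epsilon>0$ set $\tau_\epsilon:=\inf\{t>0:|X_t|\ge \epsilon\}$; then $\tau_\epsilon\in(0,1)$, $|X_{\tau_\epsilon}|=\epsilon$, $|X_s|<\epsilon$ on $[0,\tau_\epsilon)$, and $\tau_\epsilon\to 0$ as $\epsilon\to 0$. Choose $i=i(\epsilon)$ with $|X^i_{\tau_\epsilon}|$ maximal, so that $|X^i_{\tau_\epsilon}|\ge \epsilon/\sqrt{d}$, and by sign symmetry assume $X^i_{\tau_\epsilon}>0$. Let $a_\epsilon:=\sup\{s\in[0,\tau_\epsilon):X^i_s=0\}$, well-defined since $X^i_0=0$; by continuity $X^i_{a_\epsilon}=0$, and by maximality combined with $X^i_{\tau_\epsilon}>0$, $X^i$ is strictly positive on $(a_\epsilon,\tau_\epsilon]$. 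Integrating the $i$th component of \eqref{SDEce} on this interval, where $\sign(X^i_s)=+1$, gives
\begin{equation*}
  X^i_{\tau_\epsilon}=(f^i_{\tau_\epsilon}-f^i_{a_\epsilon})-\int_{a_\epsilon}^{\tau_\epsilon}|X_s|^{-\alpha}\I(|X_s|<1)\,ds.
\end{equation*}
Non-negativity of the integral together with $X^i_{\tau_\epsilon}\ge\epsilon/\sqrt{d}$ and the Hölder estimate $|f^i_{\tau_\epsilon}-f^i_{a_\epsilon}|\le [f]_\gamma(\tau_\epsilon-a_\epsilon)^\gamma$ yield the lower bound $\tau_\epsilon-a_\epsilon\ge\bigl(\epsilon/(\sqrt{d}[f]_\gamma)\bigr)^{1/\gamma}$. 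Conversely, since $|X_s|<\epsilon<1$ on $[0,\tau_\epsilon)$ one has $|X_s|^{-\alpha}\I(|X_s|<1)>\epsilon^{-\alpha}$, so the integral exceeds $\epsilon^{-\alpha}(\tau_\epsilon-a_\epsilon)$; comparing with the upper bound $[f]_\gamma(\tau_\epsilon-a_\epsilon)^\gamma$ gives the matching upper bound $\tau_\epsilon-a_\epsilon<\bigl([f]_\gamma\epsilon^\alpha\bigr)^{1/(1-\gamma)}$.

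Chaining the two bounds produces $\epsilon^{1/\gamma-\alpha/(1-\gamma)}\le C(d,\gamma,[f]_\gamma)$, whose exponent equals $(1-\gamma(\alpha+1))/(\gamma(1-\gamma))$ and is strictly negative by the hypothesis $\alpha>(1-\gamma)/\gamma$. Letting $\epsilon\to 0^+$ blows up the left-hand side, a contradiction, so no continuous solution can exist. The main obstacle this argument is designed to overcome is the multidimensional aspect: a naive one-dimensional excursion argument cannot control $|X_s|^{-\alpha}$ from below, because the only automatic bound $|X_s|\ge|X^i_s|$ controls the singular factor in the wrong direction. The device of selecting the coordinate that realizes the maximum at $\tau_\epsilon$ and rewinding to its last zero $a_\epsilon$ produces an interval on which $\sign(X^i_s)$ is constant (so the single-coordinate equation pins the integral down in terms of $f^i$) while the global choice of $\tau_\epsilon$ simultaneously enforces the pointwise ceiling $|X_s|<\epsilon$ needed for the lower bound on $|X_s|^{-\alpha}$.
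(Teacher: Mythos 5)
Your argument is correct and follows essentially the same route as the paper's: pick the coordinate $i$ realizing the maximum of $|X^i|$ at the first hitting time of a small sphere, rewind to that coordinate's last zero, and on the resulting sign-constant excursion compare the H\"older estimate on $f^i$ against the negative, singularly large drift contribution. The only differences are cosmetic---the paper hits the sphere of radius $d\eps$ and bounds $-(t''_\eps-t'_\eps)d^{-\alpha}\eps^{-\alpha}+K(t''_\eps-t'_\eps)^\gamma$ by a single supremum, whereas you chain explicit lower and upper bounds on the excursion length---and your time-translation preamble is superfluous, since the paper's observation that $X\equiv 0$ would force $f\equiv 0$ already handles the degenerate case without it.
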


\begin{corollary}\label{T:opt}
Let $d\in \N$, $p\in[1,\infty)$, $H\in(0,1)$ and suppose that
\begin{equation}\label{maincondnot}
\frac{d}{p}>\frac1H-1.
\end{equation}
Then there exists a function $b\in L_p(\R^d,\R^d)$, $x\in\R^d$ such that 
 equation \Gref{x;b} has no weak solutions.
\end{corollary}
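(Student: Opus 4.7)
The plan is to reduce to \cref{T:genopt} by a pathwise argument with a judicious choice of singular drift. The hypothesis $\frac{d}{p} > \frac{1}{H} - 1 = \frac{1-H}{H}$ allows me to pick $\alpha$ satisfying
\begin{equation*}
\frac{1-H}{H} < \alpha < \frac{d}{p}.
\end{equation*}
Since $\gamma \mapsto \frac{1-\gamma}{\gamma}$ is continuous at $\gamma = H$, I can then select $\gamma \in (0, H)$ close enough to $H$ so that $\frac{1-\gamma}{\gamma} < \alpha$. This is exactly the assumption on $(\alpha, \gamma)$ required to invoke \cref{T:genopt}.

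Next I would define $b \colon \R^d \to \R^d$ componentwise by
\begin{equation*}
b^i(x) := -\sign(x^i)\, |x|^{-\alpha}\, \I(|x| < 1), \qquad i = 1, \ldots, d,
\end{equation*}
with the convention $b(0) := 0$. Since $\alpha p < d$, the integral $\int_{|x|<1} |x|^{-\alpha p}\, dx$ is finite, so $b \in L_p(\R^d, \R^d)$.

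Now take $x = 0$ and (without loss of generality) $T = 1$. Suppose for contradiction that $(X, W^H)$ is a weak solution of $\Gref{0;b}$. Then almost surely the sample path $t \mapsto X_t(\omega)$ is a continuous function on $[0,1]$ satisfying \eqref{SDEce} with $f := W^H(\omega)$. The sample paths of $W^H$ lie in $\C_0^\gamma([0,1], \R^d)$ almost surely (being H\"older continuous of every exponent less than $H$ and vanishing at $0$), and $W^H$ is not identically zero almost surely since $W_1^H$ is a non-degenerate Gaussian vector. On the event of full probability where both properties hold, \cref{T:genopt} applied with $f = W^H(\omega)$ rules out any continuous solution of \eqref{SDEce}, yielding the desired contradiction.

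I do not anticipate a serious obstacle here: the real work has already been done in \cref{T:genopt}, and what remains is essentially a parameter matching and the observation that fractional Brownian paths verify the deterministic hypotheses almost surely. The only minor care needed is to ensure that the chosen $b$ is indeed in $L_p$ and that a pathwise reading of the weak-solution identity is legitimate, both of which are immediate.
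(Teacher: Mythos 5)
Your proposal is correct and follows essentially the same route as the paper: fix $\alpha$ strictly between $\frac{1-H}{H}$ and $\frac{d}{p}$ (the paper writes $\alpha = d/p'$ with $p' > p$), choose a H\"older exponent $\gamma < H$ with $\frac{1-\gamma}{\gamma} < \alpha$, and then apply \cref{T:genopt} pathwise using that $W^H \in \C_0^\gamma([0,1],\R^d)$ a.s. and is a.s.\ not identically zero. The only difference is cosmetic bookkeeping in how the parameters are named.
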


Surprisingly enough, \cref{T:func} combined with \cref{T:genopt} allows to show that  trajectories of $W^H$, $H\in(0,1)$ cannot be $H'$-H\"older continuous for any $H'>H$. Indeed, \Gref{x;b} has a solution, but equation \eqref{SDEce} has no solutions if the forcing $f=W^H$ is smooth enough. Thus, $W^H$ cannot be too smooth. This statement is well-known, see, e.g., \cite[p.~220]{MRLifshits}, but the standard proof is, of course, very different. Here we provide a regularization by noise perspective on this statement.
\begin{corollary}\label{C:nr}
Let $H\in(0,1)$, $H'>H$. Then $\P(W^H\in\C^{H'}([0,1],\R))=0$.
\end{corollary}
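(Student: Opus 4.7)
The plan is to prove \cref{C:nr} by contradiction, combining weak existence (\cref{T:func}) with the pathwise non-existence result (\cref{T:genopt}). The underlying idea is that if paths of $W^H$ were $H'$-H\"older for some $H'>H$, they would be too regular to regularize a sufficiently singular drift, while \cref{T:func} would still deliver a weak solution to the corresponding SDE, yielding a contradiction.

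Since $\{W^H\in\C^{H''}([0,1],\R)\}\subset\{W^H\in\C^{H'}([0,1],\R)\}$ whenever $H''\ge H'$, it suffices to prove the statement for $H'\in(H,1)$. Fix such $H'$, pick $\alpha\in\bigl(\tfrac{1-H'}{H'},\tfrac{1-H}{H}\bigr)$ (non-empty and positive since $H<H'<1$), choose an integer $d>\alpha$, and choose $p\in[1,\infty)$ with $\max\{1,\tfrac{dH}{1-H}\}<p<\tfrac{d}{\alpha}$; the choice of $\alpha$ makes the latter interval non-empty. Define $b=(b^1,\dots,b^d):\R^d\to\R^d$ by
\[b^i(x):=-\sign(x^i)|x|^{-\alpha}\I(|x|<1),\qquad i=1,\dots,d.\]
A polar-coordinate computation gives $b\in L_p(\R^d,\R^d)$ since $\alpha p<d$, and $p>\tfrac{dH}{1-H}$ is exactly condition \eqref{maincond}, $\tfrac{d}{p}<\tfrac{1}{H}-1$.

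Now suppose for contradiction that $\P(W^H\in\C^{H'}([0,1],\R))=q>0$ for the one-dimensional fBM. By \cref{T:func}\eqref{part1tfunc} applied in dimension $d$, there is a weak solution $(X,W^H)$ to $\Gref{0;b}$, so that almost surely
\[X_t^i=-\int_0^t\sign(X_r^i)|X_r|^{-\alpha}\I(|X_r|<1)\,dr+W_t^{H,i},\qquad i=1,\dots,d,\ t\in[0,1].\]
Because the components of $d$-dimensional fBM are independent one-dimensional fBM's and $W^H$ is almost surely not identically zero (e.g.\ since $\Var(W^H_t)=t^{2H}>0$), the event $A:=\{W^H|_{[0,1]}\in\C^{H'}_0([0,1],\R^d),\;W^H\not\equiv 0\}$ has probability $q^d>0$. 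On $A$, the inequality $\alpha>\tfrac{1-H'}{H'}$ permits applying \cref{T:genopt} pathwise with $\gamma=H'$ to $f=W^H(\omega)$, which asserts that no continuous $X$ can solve the displayed equation. This contradicts the continuity of $X$, so $q=0$.

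The main conceptual point is calibrating the triple $(\alpha,p,d)$ so that condition \eqref{maincond} is satisfied while the drift remains singular enough to trigger \cref{T:genopt}. Allowing $d>1$ is the small but necessary trick needed to cover small values of $H$, for which the required exponent $\alpha$ can exceed $1$ and the one-dimensional construction alone would fail.
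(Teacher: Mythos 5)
Your proof is correct and follows essentially the same route as the paper: choose $(\alpha,p,d)$ with $\frac1{H'}-1<\alpha<\frac{d}{p}<\frac1H-1$, set $b^i(x)=-\sign(x^i)|x|^{-\alpha}\I(|x|<1)$, and play the weak existence from \cref{T:func} against the pathwise non-existence from \cref{T:genopt} on the event that the noise path is $H'$-H\"older, then pass from the $d$-dimensional statement to $d=1$ via independence of the components. Your explicit remarks that $W^H\not\equiv0$ almost surely and that $d$ may need to exceed $1$ for small $H$ (so $\alpha$ can exceed $1$ while still permitting $\alpha<d/p$ with $p\ge1$) are both correct and are left implicit in the paper's terser write-up.
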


In case $H=1/2$, condition \eqref{maincond} just becomes  the Krylov--R\"ockner condition $d/p<1$. Previously, weak existence of solutions to \Gref{x;b} for $L_p$ drifts was established in the literature under more restrictive conditions than \eqref{maincond}. Namely,  \cite[Theorem~3.7]{NO03} requires $d=1$, $p>2$, $H\in(0,\frac12)$; \cite[Theorem~1.13]{CG16} and embedding $L_p(\R^d)\subset \C^{-\frac{d}p}$ requires $d\in\N$, $\frac{d}p<\frac1{2H}-1$, $H\in(0,\frac12)$; \cite[Theorem~6.1]{LeSSL} assumes $\frac{d}{p}<\frac1{2H}$, $p>2$, $H\in(0,\frac12)$; \cite[Theorem~8.2]{GG22} and \cite[Theorem~2.5]{ART21}  impose condition $\frac{d}p<\frac1{2H}-\frac12$.

\begin{remark}
The case of time-dependent drift $b \in L_q([0,1],L_p(\R^d))$, $p,q \in [1,\infty]$,  
was considered in a subsequent work \cite{bgallay}. Note that in this case  
a natural candidate for the weak existence condition is a version of  
the Ladyzhenskaya–Prodi–Serrin (LPS) condition:  
\begin{equation*}  
\frac{d}{p} + \frac{1}{Hq} < \frac{1}{H} - 1,  
\end{equation*}  
see \cite[Example~1.2]{GG22}. However, the optimal condition for weak existence is even better. It was shown in \cite{bgallay} that the SDE \eqref{mainSDE} has a weak solution if  
\begin{equation}\label{timespace}
\frac{d}{p} + \frac{1-H}{H} \frac{1}{q} < \frac{1}{H} - 1,  
\end{equation}  
and that if this condition is not satisfied, weak existence might fail. 
For further discussions regarding condition \eqref{timespace}, we refer to \cite{Galeati24}.
\end{remark}

\begin{remark}\label{R:levy}
Recall that the fractional Brownian motion has a scaling property $\Law(W_t^H)=\Law(t^H W^H_1)$. If $L^\alpha$ is an $\alpha$-stable process, $\alpha\in(1,2)$, then one has a similar property $\Law(L^\alpha_t)=\Law(t^{1/\alpha} L^\alpha_1)$. Thus,  parameter $H\in(1/2,1)$ ``morally'' corresponds to $1/\alpha$. Therefore, if one repeats the strategy of the  proof of \cref{T:func}\ref{part1tfunc} for an $\alpha$-stable  process instead of fractional Brownian motion (with appropriate modifications due to the discontinuity of $L^\alpha$, see subsequent work \cite{bgallay} where a more general result is obtained) one gets the following condition for weak existence of a solution to \Gref{x;b} with the driving noise $L^\alpha$ in place of $W^H$: $b\in L_p(\R^d,\R^d)$ and 
$$
\frac{d}{p}<\alpha-1
$$
(this is  \eqref{maincond} with $H$ replaced by $1/\alpha$). This coincides with the Podolynny and Portenko result \cite[p. 123]{PP95}.
\end{remark}

Now we turn to study the case where $b$ is a measure. Here one can again define a solution to 
\eqref{mainSDE} in the regularised sense. However there is also another way to make sense of the drift $\int b(X_t) dt$ which was pioneered by Stroock and Yor \cite[Theorem~1.9]{SY81} and Le~Gall \cite{LG84}, see also \cite{ES85}, \cite{BC2005}, and the survey \cite{lejay2006constructions}. The idea is to utilize the local time (occupation density) of the process $X$. 

\begin{definition}\label{d:meas}

Let $b\in\M(\R^d,\R^d)$, $T>0$, $x\in\R^d$. We say that a continuous   process $(X_t)_{t\in[0,T]}$   taking values in $\R^d$ solves
\begin{equation}\label{measureeq}
X_t=x+\int_{\R^d} L^X_t(y)\, b(dy) +W_t^H,\quad  t\in[0,T],
\end{equation}
where%
\begin{equation}\label{lebloc}
L^X_t(y):=\limsup_{\eps\to0}\frac1{v_d\eps^{d}}\int_0^t \I(|X_s-y|<\eps)\,ds,\quad t\in[0,T],\,\, y\in\R^d,
\end{equation}
 if  the  occupation measure of $X$,  $\mu^X_t(A):=\int_0^t \I(X_s\in A)\,ds$, $A\in\mathscr{B}(\R^d)$ is absolutely continuous with respect to the Lebesgue measure and \eqref{measureeq} holds $\P$-almost surely. The integration with respect to the measure is understood in the Lebesgue sense.
\end{definition}

In what follows by solution to equation \eqref{measureeq} we always mean a solution in the sense of   \cref{d:meas}.

It is clear, that if $b$ has a  density $\rho_b$ with respect to the Lebesgue measure, then 
\begin{equation}\label{occupationtime}
\int_{\R^d} L^X_t(y)\, b(dy)=\int_0^t \rho_b(X_s)\,ds,
\end{equation}
 and \eqref{measureeq} becomes \Gref{x;\rho_b}.
\begin{remark}
Clearly, $L^X$ in \eqref{measureeq}--\eqref{lebloc} is just a local time of $X$, that is  $L_t^X=\frac{d \mu_t^X}{d \Leb}$. 
However, since measure $b$ can be singular with respect to the Lebesgue measure, it is crucial to carefully select an appropriate version of the Radon–Nikodym derivative (as it was done in \eqref{lebloc}) in order to avoid absurd situations. Indeed, let $d=1$ and consider an equation $d X_t = \delta_0(X_t) \,dt +d W_t^H$. 
Let  $L^W$ be the local time of $W^H$. Put 
$$
\wt L^W_t(x):=L^W_t(x)\I(x\neq0).
$$ 
Then $\wt L^W$ is also a version of the Radon–Nikodym derivative $\frac{d \mu_t^W}{d \Leb}$. Further, by definition, $\int_{\R} \wt L^W_t(y)\, \delta_0(dy)=0$, and thus
\begin{equation*}
W_t^H=\int_{\R} \wt L^W_t(y)\, \delta_0(dy) +W_t^H.
\end{equation*}
Hence, if we had allowed $L^X$ in \eqref{measureeq} to be \textit{any} version of the local time of $X$, we would have concluded that $W^H$ solves the equation $dX_t = \delta_0(X_t)  dt + dW_t^H$,  which makes no sense. Le Gall \cite{LG84} avoided this issue by requiring $L^X$ to be the semimartingale local time of $X$, defined via the Tanaka formula. Since It\^o's calculus for a generic $X$ is unavailable when $H \neq 1/2$, we had to manually select the correct version of $L^X$ in \eqref{lebloc}.
\end{remark}

The next theorem shows weak existence and  stability of solutions to SDE \eqref{mainSDE} for the case when $b$ is a measure.

\begin{theorem}\label{T:ident}
Let $b\in \M(\R^d,\R^d)$, $H\in(0,\frac1{d+1})$, $x\in\R^d$.
Then the following holds:
\begin{enumerate}[\rm{(}i\rm{)}]
\item\label{part1tm}  equation \eqref{mainSDE} has a weak regularized  solution $(X,W^H)$ (in the sense of \cref{D:sol}), which is in the class \textbf{BV};

\item let $(b_n)_{n\in\Z_+}$  be a sequence of $\C^1(\R^d,\R^d)$ functions converging to $b$ in $\B^{0-}_1$. Suppose that 
\begin{equation}\label{condL1}
\sup_{n\in\Z_+}\|b_n\|_{L_1(\R^d)}<\infty.
\end{equation}
Let $(x_n)_{n\in\Z_+}$ be a sequence  of vectors in  $\R^d$ converging to $x\in\R^d$. Let $X_n$  be a strong solution to \Gref{x_n;b_n}. Then  the sequence $(\Law(X_n,W^H))_{n\in\Z_+}$ is tight in  $\C([0,T],\R^{2d})$ and any of its partial limits  is a weak regularized solution to equation \eqref{mainSDE} with the initial condition $x$.
\end{enumerate}
\end{theorem}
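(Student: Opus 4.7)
The plan is to first establish part (ii) by approximation, then deduce part (i) by constructing an explicit admissible sequence. For part (ii), the solutions $X_n$ are given, so $\psi^n_t := X_n(t) - x_n - W^H_t = \int_0^t b_n(X_n(r))\,dr$ is a well-defined adapted continuous process of bounded variation. The first step is to obtain uniform a priori bounds on $\psi^n$. Applying the key estimates of \cref{S:key} to the process $X_n = W^H + x_n + \psi^n$, which is a BV-perturbation of $W^H$, together with the random-control technique and the Rosenthal-type stochastic sewing lemma \cref{T:RoSSL}, should yield, under the condition $H(d+1)<1$, a bound of the form
\begin{equation*}
\bigl\| \|\psi^n\|_{1-\var;[s,t]}\bigr\|_{L_m(\Omega)} \le C\,\|b_n\|_{\B^0_1}\,(t-s)^{\alpha}
\end{equation*}
for some $\alpha > H$, every $m\ge 1$, and a constant $C$ independent of $n$ given \eqref{condL1} (noting $L_1\subset \B^0_1$). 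The Kolmogorov criterion then forces equicontinuity of $(\psi^n)$ and hence tightness of $\Law(X_n,W^H)$ in $\C([0,T],\R^{2d})$.

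Next I fix a weakly convergent subsequence with limit $\Law(X,W^H)$. By Skorokhod's representation theorem I pass to a new probability space on which $X_n\to X$ uniformly a.s., together with a corresponding fBM still denoted $W^H$. The uniform $L_m$-bound on the $1$-variation of $\psi^n$ is inherited by $\psi:=X-x-W^H$ via Fatou's lemma, so $\psi$ has finite $1$-variation a.s.\ and $(X,W^H)$ lies in the class \textbf{BV}. To identify $\psi$ as a regularised drift in the sense of \cref{D:sol}, one must show that for every sequence $(c^k)\subset\C_b^\infty$ converging to $b$ in $\B^{0-}_1$,
\begin{equation*}
\sup_{t\in[0,T]}\Bigl|\int_0^t c^k(X_r)\,dr-\psi_t\Bigr|\xrightarrow[k\to\infty]{}0\quad\text{in probability.}
\end{equation*}
I intend to deduce this from a stability estimate of the form
\begin{equation*}
\Bigl\|\sup_{t\in[0,T]}\Bigl|\int_0^t (f-g)(Y_r)\,dr\Bigr|\Bigr\|_{L_m(\Omega)}\le C\,\|f-g\|_{\B^{\beta'}_1}
\end{equation*}
available from the key estimates for smooth $f,g$ and any BV-perturbation $Y$ of $W^H$ whose $1$-variation is controlled, valid for some admissible $\beta'<0$ under $H(d+1)<1$. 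Applying this bound to the pair $(b_n,c^k)$ and passing first $n\to\infty$ (using uniform convergence $X_n\to X$ a.s.\ and continuity of $c^k$ to send $\int_0^\cdot c^k(X_n)\,dr\to\int_0^\cdot c^k(X)\,dr$) and then $k\to\infty$ identifies $\psi_t=\lim_k\int_0^t c^k(X_r)\,dr$ independently of the chosen approximating sequence $(c^k)$.

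The main obstacle is this last identification step: the stability estimate must depend on $Y$ only through its $1$-variation (so that the bound is uniform along the approximating subsequence), and it must be extended from smooth drifts to the $\B^{0-}_1$ limit. This is precisely where the quantitative John--Nirenberg inequality and the taming-singularities ideas highlighted in the introduction are needed, upgrading $L_m$-control to pathwise control and removing smoothness requirements on the drift. Finally, for part (i), the sequence $b_n:=P_{1/n}b\in\C_b^\infty$ satisfies $\|b_n\|_{L_1(\R^d)}\le\|b\|_{\M(\R^d,\R^d)}$ and converges to $b$ in $\B^{0-}_1$ by the Besov-semigroup lemma cited after \cref{D:sol}, so condition \eqref{condL1} holds; applying part (ii) with $x_n\equiv x$ produces the required regularized solution in the class \textbf{BV}.
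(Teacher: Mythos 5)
Your proposal mirrors the paper's argument closely: the uniform $1$-variation bound you seek is exactly \cref{L:51} with $p=1$ (giving exponent $\alpha=1-Hd>H$ under $H(d+1)<1$), tightness is the content of \cref{l:tght}, and the identification via a two-parameter approximation and the stability estimate from \cref{L:driftb2} is exactly the decomposition $I_1(n,k)+I_2(n,k)$ used in \cref{L:ident}. Deducing (i) from (ii) with $b_n=P_{1/n}b$ and $\|b_n\|_{L_1}\le |b|(\R^d)$ is also what the paper does, so the strategy is sound.

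One point you gloss over, and which the paper handles with some care, is that the partial limits must be \emph{weak} regularized solutions, i.e.\ one must exhibit a filtration $(\wh\F_t)$ such that the Skorokhod-limit $\wh W^H$ is an $(\wh\F_t)$-fBM and the limit $\wh X$ is $(\wh\F_t)$-adapted. It is not enough to note that $\wh W^H$ has the law of an fBM: with the natural choice $\wh\F_t:=\sigma(\wh B_s,\wh X_s;\,s\le t)$ one has to verify that the increments $\wh B_t-\wh B_s$ remain independent of $\wh\F_s$, and that $\wh W^H=\Psi(\wh B)$. The paper does this in the proof of \cref{c:ews}(i) by including the underlying Brownian motion $B$ in the Skorokhod representation, choosing a H\"older topology on the $B$-coordinate, passing a product-of-test-functions identity to the limit to get independence, and invoking the continuity of $\Phi$ (for $H\le 1/2$) or $\Psi$ (for $H>1/2$) from \cref{p:cont}. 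This step should be added to your argument; without it you only obtain a regularized solution, not a weak regularized solution. A minor secondary remark: inheriting the $1$-variation bound for $\psi$ from the $\psi^n$ needs not only Fatou but also the lower semicontinuity of $\|\cdot\|_{1-\var}$ under pointwise convergence (\cref{p:TV}), and the constant in \cref{L:51} depends on $\sup_n\|b_n\|_{L_1}$ nonlinearly, not linearly as your displayed bound suggests — neither affects the argument.
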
 

\cref{T:ident} advances the current state of the art. Indeed,  \cite[Theorem~2.5 and Corollary~2.6]{ART21} show weak existence of solutions to \eqref{mainSDE} for the case where $d=1$, $b\in \M_+(\R,\R)$, $H<\sqrt2 -1$; \cite[Theorem~1.13]{CG16} and embedding $\M(\R^d)\subset \C^{-d}$ requires $d\in\N$, $b\in \M(\R^d,\R^d)$, $H<\frac1{2(d+1)}$, and the last bound was improved recently in \cite[Theorem~8.2]{GG22} to $H<\frac1{2d+1}$.

\cref{T:ident}(ii) shows that  when $H<\frac1{d+1}$, approximations of equation \eqref{mainSDE} can converge only to a regularized solution to this equation. On the other hand, \cref{T:ident}(ii) might not hold in the regime $H\ge\frac1{d+1}$. Indeed, take $d=1$, $H=1/2$, $b_n=\beta p_{1/n}$, $\beta\in\R$. Obviously, the sequence $(b_n)_{n\in\N}$ satisfies \eqref{condL1} and converges to $\beta\delta_0$ in $\bes^{0-}_1$. On the other hand, Le Gall \cite[page~65]{LG84} showed that a sequence of solutions to \Gref{0;b_n} converges to a solution of \eqref{mainSDE} with the drift $\frac{1-\exp(-2\beta)}{1+\exp(-2\beta)}\delta_0$ which is different from the expected drift $\beta\delta_0$.

The next result  shows that in the whole considered  range $H\in(0,\frac1{d+1})$,  a regularized solution to \eqref{mainSDE} is essentially equivalent to a solution of equation \eqref{measureeq}. In a special case, when the measure $b$ has a density $\rho_b$, these two notions are the same as the standard notion of a solution to \Gref{x;\rho_b} thanks to the occupation times formula \eqref{occupationtime}.

\begin{theorem}\label{T:measure}
Let $b\in \M(\R^d,\R^d)$, $H\in(0,\frac1{d+1})$, $x\in\R^d$. Then the following holds:
\begin{enumerate}[\rm{(}i\rm{)}]
\item\label{part3tm} let $(X,W^H)$ be any weak regularized solution to equation \eqref{mainSDE} in the class \textbf{BV}. Then $(X,W^H)$ is a weak solution to equation \eqref{measureeq};
\item let $(X,W^H)$ be any weak solution to \eqref{measureeq}. Then $(X,W^H)$ is a weak regularized solution to equation \eqref{mainSDE} and belongs to \textbf{BV}.
\end{enumerate}
If (i) or (ii) holds, then  for any $m\ge1$ there exists a constant $C=C(H,d,m,T,|b|(\R^d))>0$ such that for any $0\le s\le t\le T$ we have 
	\begin{equation}\label{reztmeas}
	\|\,\|X-W^H\|_{1-\var;[s,t]}\,\|_{L_m(\Omega)}\le C (t-s)^{1-Hd}.
\end{equation} 
\end{theorem}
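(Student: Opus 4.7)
The bridge between the regularised-solution formulation of \cref{D:sol} and the local-time formulation \eqref{measureeq} is the classical occupation times formula
\begin{equation*}
\int_0^t g(X_r)\,dr=\int_{\R^d}g(y)L^X_t(y)\,dy,
\end{equation*}
valid for any bounded measurable $g$, as soon as $X$ admits a Lebesgue occupation density. My first task would be to show that, under the assumption $H(d+1)<1$, any process $X$ which is either a regularised BV solution to \eqref{mainSDE} or a weak solution to \eqref{measureeq} possesses a jointly continuous local time $L^X$ that further lies almost surely in $\C([0,T];\C^\beta(\R^d))$ for some $\beta>0$, with quantitative moment control
\begin{equation*}
\bigl\|\sup_{y\in\R^d}\bigl(L^X_t(y)-L^X_s(y)\bigr)\bigr\|_{L_m(\Omega)}\le C(t-s)^{1-Hd},\qquad 0\le s\le t\le T,\ m\ge 1.
\end{equation*}
I would extract this from \cref{T:loc} combined with the key bounds of \cref{S:key}; the threshold $H<\frac1{d+1}$ is precisely what those estimates require.

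The identification then proceeds through Besov duality. For part~(i), let $(X,W^H)$ be a regularised BV solution and $b^n\in\C_b^\infty$ any sequence with $b^n\to b$ in $\B^{0-}_1$. The occupation formula gives $\int_0^t b^n(X_r)\,dr=\int L^X_t(y)b^n(y)\,dy$, while the Besov duality pairing against $L^X_t\in\C^\beta$ together with Step~1 forces
\begin{equation*}
\sup_{t\in[0,T]}\Bigl|\int L^X_t(y)\bigl(b^n(y)\,dy-b(dy)\bigr)\Bigr|\le \sup_{t\in[0,T]}\|L^X_t\|_{\C^\beta}\cdot\|b^n-b\|_{\B^{-\beta}_1}\to 0\quad\text{a.s.}
\end{equation*}
Combined with Definition~\ref{D:sol}(ii), this identifies $\psi_t=X_t-x-W^H_t$ with $\int L^X_t(y)\,b(dy)$, giving \eqref{measureeq}. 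Part~(ii) reads the same chain in reverse: for any admissible $b^n$, the same duality shows $\int_0^t b^n(X_r)\,dr\to X_t-x-W^H_t$ uniformly in probability, matching \cref{D:sol}(ii) with $\psi_t=X_t-x-W^H_t$. For the BV property I would decompose the vector measure coordinatewise into Jordan parts $b^i=b^i_+-b^i_-$; since $L^X_\cdot(y)$ is non-decreasing in $t$ for each $y$, so is $t\mapsto\int L^X_t(y)\,b^i_\pm(dy)$, hence
\begin{equation*}
\|X-W^H\|_{1-\var;[s,t]}\le |b|(\R^d)\sup_{y\in\R^d}\bigl(L^X_t(y)-L^X_s(y)\bigr),
\end{equation*}
and taking $L_m$-norms together with Step~1 delivers \eqref{reztmeas}.

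The main obstacle is Step~1: upgrading the joint continuity provided by \cref{T:loc} to quantitative spatial Hölder control $\|L^X_t\|_{\C^\beta}$ uniform in $t$, and carrying this through for the perturbed process $X=W^H+A$ rather than for $W^H$ alone. Because Girsanov's transform is unavailable throughout the whole range $H<\frac1{d+1}$, the transfer has to be performed directly via the random-control refinement and the Rosenthal-type stochastic sewing lemma developed in \cref{sec:tools}; this is the novel technical ingredient that renders the sharp threshold accessible.
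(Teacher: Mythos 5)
Your proposal has the right architecture and identifies all the correct ingredients: the occupation density formula, joint continuity and Hölder regularity of the local time of the perturbed process via \cref{T:loc}(ii), Besov duality (in the paper this is \cref{p:ltc}), and the stochastic sewing machinery for the quantitative estimates. Parts (i) and (ii) then proceed exactly as the paper does: pass to a subsequence so that $\int_0^t b^n(X_r)\,dr\to\psi_t$ a.s., rewrite the left side via occupation times as $\int L_t^X b^n$, and use that $L_t^X\in\C^\gamma$ with $b^n\to b$ in $\B^{-\gamma/2}_1$ to take the limit. You also correctly observe that in part (ii) the drift $t\mapsto\int L_t^X(y)\,b(dy)$ has finite $1$-variation because $L^X_\cdot(y)$ is nondecreasing.

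The genuine gap is in your ``Step 1'', and it is more than a technical detail to be filled in later. The estimate you write down,
\begin{equation*}
\bigl\|\sup_{y\in\R^d}\bigl(L^X_t(y)-L^X_s(y)\bigr)\bigr\|_{L_m(\Omega)}\le C(t-s)^{1-Hd},
\end{equation*}
cannot be produced directly from \cref{T:loc} and the integral bounds of \cref{S:key}, because those bounds (specifically \cref{L:driftb2} applied to $X=W^H+\psi$) require as input a moment bound on $\|\psi\|_{1-\var}$, and $\|\psi\|_{1-\var;[s,t]}=\int(L_t^X(y)-L_s^X(y))\,|b|(dy)$ is precisely the quantity you are trying to control. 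The paper breaks this circularity with an explicit bootstrap in \cref{L:51m}: one introduces the stopping time $\tau_N:=\inf\{t:\|\psi\|_{1-\var;[0,t]}\ge N\}$, applies \cref{L:driftb2} to the stopped process (whose variation is bounded by $N$ by construction) on short intervals of length $\ell$, obtains a self-improving inequality of the form $\Xi\le C_0 M\ell^{1-Hd-H}\Xi + C_0 M(t-s)^{1-Hd}$ for the stopped variation $\Xi$, absorbs the first term for $\ell$ small since $1-Hd-H>0$, and finally lets $N\to\infty$ by Fatou. Your proposal gestures at ``the random-control refinement and the Rosenthal-type stochastic sewing lemma'' as the mechanism, which is correct as far as it goes, but omits the stopping-time truncation and the absorption argument, which are precisely what makes the bound non-circular. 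Without them, Step 1 as stated does not hold up.

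A secondary point: your Step 1 aims for an $L_m$ bound on $\sup_y(L_t^X(y)-L_s^X(y))$, which is strictly stronger than what \eqref{reztmeas} requires and than what the paper establishes. Since $\|X-W^H\|_{1-\var;[s,t]}=\int(L_t^X-L_s^X)\,|b|(dy)$ exactly, it suffices (and is technically lighter) to bootstrap this integral directly, as \cref{L:51m} does; obtaining the $L_m$ bound on the spatial supremum would require an additional Kolmogorov-type argument on top of the circular estimates and is unnecessary overhead.
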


\begin{remark}
	Condition $H<1/(d+1)$ in \cref{T:ident,T:measure} is also essentially optimal. Indeed, if $d=1$, $H=1/2$, then Harrison and Shepp \cite{HS81} showed that equation  \eqref{measureeq} has no solution if $b=\beta \delta_0$ for $|\beta|>1$.
\end{remark}

Now we move to the strong well-posedness of \eqref{mainSDE}.

\begin{theorem}\label{T:uniq}
Suppose that $d=1$, $H\in(0,1)$, $x\in\R$.
\begin{enumerate}[\rm{(}i\rm{)}]
\item\label{tuniqp1} Let $b\in \M_+(\R,\R)$, $H<(\sqrt{13}-3)/2\approx 0.303$. Then equation \eqref{mainSDE} has a unique strong regularized solution. 
\item\label{partuniq} Let $b\in \M(\R,\R)$, $H<(\sqrt{13}-3)/2$. Then in class \textbf{BV} equation \eqref{mainSDE} has a unique strong regularized solution. 
\item Let $b$ be a measurable function in $L_p(\R,\R)$, $p\in[1,\infty]$ and suppose that $H<\frac{p}{2p+1}$. For $p\in[1,2]$ suppose additionally that
\begin{equation}\label{uniqcond}
H^2+H(1+\frac2p)-1<0.
\end{equation}
Then equation \Gref{x;b} has a unique strong solution.
\end{enumerate}
\end{theorem}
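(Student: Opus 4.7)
The approach is the Yamada--Watanabe principle: weak existence plus pathwise uniqueness yields strong existence and uniqueness. Weak existence in each of the three cases is already at our disposal, from \cref{T:func} in case~(iii) (since $H<p/(2p+1)$ implies $d/p<1/H-1$) and from \cref{T:ident} in cases~(i)--(ii) (since $(\sqrt{13}-3)/2<1/2=1/(d+1)$ for $d=1$; note that any regularized solution in case~(i) is automatically in class~\textbf{BV} since $b\in\M_+$). Hence only pathwise uniqueness remains to be proved.

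Fix two weak solutions $(X,W^H)$ and $(Y,W^H)$ on a common filtered probability space sharing the same driving fBM and the same initial value $x$, and set $Z_t:=X_t-Y_t$. Then $Z$ is continuous, vanishes at zero, and has finite $1$-variation almost surely in all three cases. The goal is to prove $Z\equiv 0$. The heart of the argument is a stability estimate derived via the Rosenthal-type stochastic sewing lemma \cref{T:RoSSL} applied to the germ
$$A_{s,t}:=\E\Bigl[\int_s^t\bigl(b(X_r)-b(Y_r)\bigr)\,dr\,\Big|\,\F_s\Bigr],$$
interpreted through smooth approximations $b^n\to b$. Conditioning on $\F_s$, the processes $X$ and $Y$ on $[s,t]$ decompose as $\F_s$-measurable shifts plus fBM plus finite-variation drifts; combining the heat-kernel estimates of the Appendix with the taming-singularities technique \cite{BFG,le2021taming} one derives a \emph{Lipschitz-in-path} estimate
$$\|A_{s,t}\|_{L_m(\Omega)}\le C\,\|b\|_{\bes^{-d/p}_p}\,(t-s)^{\tau}\,\|\,\|Z\|_{C([s,t])}\,\|_{L_m(\Omega)},$$
with the measure-case variant replacing $-d/p$ by $0$ and $p$ by $1$. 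The Rosenthal version is essential here, as opposed to the BDG-based SSL: it relaxes the requirement to second conditional moments of the germ, avoiding high-moment bounds that are unavailable when $p$ is small, exactly as flagged in \cref{R:highmom}.

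The restrictions on $H$ encode the need to secure $\tau>H$ (so that the sewn bound beats the scaling of the fBM increments) \emph{together} with a linear dependence on $\|Z\|$ (so that iteration produces a genuine contraction rather than only a H\"older-type estimate). Matching these against the hypotheses of \cref{T:RoSSL} produces the first condition $H<p/(2p+1)$ uniformly in $p$; for small $p\in[1,2]$ an additional Besov embedding invoked inside the heat-kernel estimate yields the quadratic condition \eqref{uniqcond}, which at $p=1$ specializes to $H<(\sqrt{13}-3)/2$---the threshold in parts~(i)--(ii), consistent with the embedding $\M(\R,\R)\subset\bes^0_1$. With the Lipschitz-in-path bound in hand, \cref{T:RoSSL} delivers
$$\|\,\|Z\|_{C([s,t])}\,\|_{L_m(\Omega)}\le C\,(t-s)^{\kappa}\,\|\,\|Z\|_{C([s,t])}\,\|_{L_m(\Omega)}$$
for some $\kappa>0$, forcing $Z\equiv 0$ on a short initial interval and then, by iteration, on all of $[0,T]$; Yamada--Watanabe completes the proof. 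The principal obstacle is obtaining the Lipschitz-in-path bound with \emph{linear} (not H\"older) dependence on $\|Z\|$---this linearity is sharp for our method and is precisely what constrains the admissible range of $H$.
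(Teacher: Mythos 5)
Your proposal diverges from the paper's argument in two ways, one cosmetic and one substantive. The cosmetic one: you invoke Yamada--Watanabe for strong existence, whereas the paper uses the Gy\"ongy--Krylov lemma \cite[Lemma~1.1]{MR1392450}; the paper itself notes this is an interchangeable alternative. The substantive divergence is in the pathwise-uniqueness argument, and here your route has a genuine gap. You propose a contraction based on a \emph{Lipschitz}-in-path bound of the form $\|A_{s,t}\|_{L_m}\le C\|b\|(t-s)^\tau\|\,\|Z\|_{\C([s,t])}\|_{L_m}$, obtained "by combining heat-kernel estimates with taming singularities." But no such linear-in-$\|Z\|$ estimate is established anywhere in the paper's toolkit: the closest statement is the two-point bound \cref{C:twopoint}, which under \eqref{alphalambda} delivers only H\"older dependence $|y|^\lambda$, and pushing to $\lambda=1$ costs a full unit of regularity in the condition, which is essentially the same regularity cost as passing to $\nabla b$---so your proposed shortcut does not avoid that cost. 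More importantly, your contraction requires the $L_m$-moment of a path-supremum on the right-hand side, which is not controlled by the Rosenthal sewing hypotheses \eqref{Rcon:s1}--\eqref{Rcon:s3}: those require bounds in terms of deterministic constants times random controls of the germ differences $\delta A_{s,u,t}$, not in terms of path-sup of the unknown. Making this rigorous is precisely the obstruction the paper circumvents.

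The paper's actual mechanism is quite different and worth internalizing. It uses the fundamental theorem of calculus to rewrite the drift difference as $b_n(X_r)-b_n(Y_r)=\bigl(\int_0^1\nabla b_n(\theta\phi_r+(1-\theta)\psi_r+W^H_r)\,d\theta\bigr)\,v_r$, leading to the Young-type identity $v(t)=\int_0^t v_r\,dR(r)$ with driver $R_n(t)=\int_0^t\int_0^1\nabla b_n(\cdot)\,d\theta\,dr$. The key integral bound \cref{L:driftb2} (itself a consequence of the Rosenthal sewing \cref{T:RoSSL}) is applied to $f=\nabla b_n\in\B^{-1-\eps}_p$---not to $b$ itself---to show $R_n\to R$ in $\C^\gamma$ for some $\gamma>0$, which is where the thresholds $H<p/(2p+1)$ and \eqref{uniqcond} arise (from \eqref{maincond1} and \eqref{maincond2} with $\alpha\approx -1$). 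Then, crucially, it invokes the nonstandard Young-ODE uniqueness result \cref{T:YoungODE}: uniqueness for $Y_t=\int_0^t Y_s\,dX_s$ when $X$ may be \emph{arbitrarily rough} ($p$-variation for large $p$) provided $Y$ is of bounded variation. Since the driver $R$ need only be $\C^\gamma$ for tiny $\gamma>0$, there is no contraction mechanism available on short time intervals; \cref{T:YoungODE} replaces it. Your proposal omits this step entirely. Finally, your explanation that the $H$-thresholds "secure $\tau>H$" is incorrect: the paper never compares $\tau$ to $H$; the conditions come from demanding $R$ have positive H\"older exponent (\eqref{maincond1}) and from the moment-matching requirement of the Rosenthal sewing (\eqref{maincond2}), which specializes to the quadratic \eqref{uniqcond} only when $p\in[1,2]$ forces $q=2$.
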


By taking in \cref{T:uniq}\ref{tuniqp1} $b=\delta_0$, we get that skew fractional Brownian motion, that is solution to \eqref{sFBM}, is well-defined for $H<(\sqrt{13}-3)/2$. This improves the previous best bound $H\le\frac14$ \cite[Theorem~2.9]{ART21}. We believe that our methods can be very useful for studying the flow of skew Brownian motions, in particular, its H\"older continuity and differentiability, thus extending the results of \cite{ABF20} (see also \cite[Theorem~1.4(iii)]{GG22}). However this will the subject of further research.

We do not claim optimality of \cref{T:uniq}. However, this result  improves the current state of the art. Indeed, if $b\in \M(\R,\R)$, then strong uniqueness of \eqref{mainSDE} is known for $H\le \frac14$ (\cite[Theorem~1.13]{CG16} and embedding $\M(\R)\subset \C^{-1}$; \cite[Theorem~2.9]{ART21}) and for $H=\frac12$ as long as $b$ does not have atoms with weight more than $1$ (\cite[Theorem~2.3]{LG84}, \cite[Theorem~4.5]{BC2005}). 
We make the following conjecture, but proving it seems to be a very hard challenge.

\begin{conjecture}\label{conj}
Let $b\in \M(\R,\R)$, $H\in(0,1/2)$. Then equation \eqref{mainSDE} has a unique strong solution.
\end{conjecture}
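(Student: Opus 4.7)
Since the authors themselves flag this statement as a significant challenge beyond current techniques, any concrete plan is necessarily speculative. My approach would split into two stages: first, removing the \textbf{BV} restriction from \cref{T:uniq}\ref{partuniq}; second, extending the admissible range of $H$ beyond $(\sqrt{13}-3)/2$.

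For the first stage, I would argue that for any $b\in\M(\R,\R)$ and $H<1/2$ every regularized solution $X$ automatically lies in the class \textbf{BV}. The key input is the existence of a jointly continuous occupation density $L^X_t(y)$, available for $H<1/(d+1)=1/2$ by \cref{T:loc} (announced in the introduction), since $X-W^H$ is adapted and of bounded variation along the regularizing sequence. Identifying $\psi_t=\int_\R L^X_t(y)\,b(dy)$ as in \cref{T:measure}\ref{part3tm} then yields the \textbf{BV} property, because $t\mapsto L^X_t(y)$ is non-decreasing in $t$ and $|b|$ is finite. This step would reduce the conjecture to the \textbf{BV} setting already treated by \cref{T:uniq}\ref{partuniq}.

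For the second stage, the constraint \eqref{uniqcond} with $p=1$ (via $\M\subset\B^0_1$) is precisely $H^2+3H-1<0$, which presumably arises from closing a Gronwall/fixed-point step where the drift difference $\int b(X_r)\,dr-\int b(Y_r)\,dr$ of two candidate solutions is estimated through the Rosenthal-type stochastic sewing lemma. To relax it, I would avoid the crude $\M\subset\B^0_1$ embedding and instead exploit a genuine Besov spatial regularity of the occupation density $L^X$ of order roughly $\beta\sim\tfrac{1-H}{2H}-\eps>0$ for $H<1/2$ (the fractional analogue of Barlow--Yor estimates), which would let $\int L^X\,b(dy)$ be estimated in a higher regularity scale. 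Feeding this enhanced regularity into the key bounds of \cref{S:key} should loosen \eqref{uniqcond} to something like $H^2+H-1<0$, comfortably covering all $H<1/2$.

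The main obstacle is exactly at the boundary $H\uparrow 1/2$: the heuristic regularity gain $1-H$ of the drift collides with the H\"older exponent $1/2$ of the noise, which is precisely the critical regime where for $H=1/2$ atoms of size larger than $1$ destroy uniqueness (Harrison--Shepp). Any proof must therefore quantitatively separate the fractional case from the Brownian critical case. The most plausible route is to leverage the negatively correlated increments of $W^H$ for $H<1/2$ through the explicit Volterra representation \eqref{WB}, upgrading the Rosenthal-type stochastic sewing lemma with sharper bounds on conditional variances that use decorrelation rather than just independence. I expect this refinement --- rather than the Besov interpolation of the first stage --- to be the true technical bottleneck, and it is probably what needs to be developed before the conjecture becomes tractable.
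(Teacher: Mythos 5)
This statement is explicitly left as a conjecture in the paper --- the authors themselves write that the range $H\in[(\sqrt{13}-3)/2,\,1/2)$ ``poses a significant challenge that would likely require the development of vastly different techniques'' --- so there is no proof to compare against, and your plan must stand on its own internal logic; its first stage does not. To invoke \cref{T:loc}(ii) and the identification $\psi_t=\int_{\R}L^X_t(y)\,b(dy)$ of \cref{T:measure}\ref{part3tm}, you must already know that $\psi:=X-W^H$ has finite $1$-variation a.s.; that is a standing hypothesis of both statements, not a conclusion. Your justification, that ``$X-W^H$ is adapted and of bounded variation along the regularizing sequence,'' is circular: each approximant $\psi^n_t=\int_0^t b^n(X_r)\,dr$ is of course of finite variation, but the uniform limit of finite-variation paths need not be, and \cref{p:TV} only yields $\|\psi\|_{1-\var;[0,1]}\le\liminf_n\|\psi^n\|_{1-\var;[0,1]}$, which is useless if the right-hand side is infinite. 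To bound $\int_0^1|b^n(X_r)|\,dr$ uniformly in $n$ via \cref{L:driftb2} you would have to feed in $z=\psi$, whose variation is exactly the unknown. This is precisely why the paper restricts \cref{T:uniq}\ref{partuniq} to the class \textbf{BV}, and why the restriction disappears only for sign-definite $b$ (\cref{T:uniq}\ref{tuniqp1}), where $\psi$ is automatically monotone. For a genuinely signed $b$, oscillatory cancellations in the $b^n$ could in principle produce a regularized solution with continuous but infinite-variation drift, and nothing in your Stage~1 excludes this.

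Stage~2 is, as you acknowledge, speculative, and its difficulty hides in the phrase ``feeding this enhanced regularity into the key bounds.'' The paper's uniqueness argument (\cref{L:uniq}) compares two solutions by writing the drift difference as $\int_0^t v_r\,dR_n(r)$, with $R_n$ built from $\nabla b_n$ evaluated along an interpolation $W^H+\theta\phi+(1-\theta)\psi$, and then applying \cref{T:YoungODE}. The constraint \eqref{uniqcond} --- which at $p=1$ is $H^2+3H-1<0$, hence $H<(\sqrt{13}-3)/2$ --- enters through condition \eqref{maincond2} of \cref{L:driftb2} applied to $\nabla b_n$, and that condition traces back to the conditional second-moment hypothesis \eqref{Rcon:s1} of the sewing lemma. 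Spatial Besov regularity of the occupation density $L^X$ of a \emph{single} process gives no direct handle on this two-process interpolated quantity, so your proposal needs not merely sharper local-time estimates but a genuinely different comparison mechanism. I agree with your diagnosis that a refined sewing input exploiting the decorrelation of fBM increments for $H<1/2$ is the true bottleneck, but the ingredients you list do not currently assemble into a proof strategy; they amount to a research programme, which is exactly what the authors say remains to be developed.
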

Parts (i) and (ii) of \cref{T:uniq} reduce the gap where strong well--posedness of \eqref{mainSDE} is still not known but expected from $(1/4,1/2)$ to  $[(\sqrt{13}-3)/2,1/2)$. 

If $b\in L_p(\R,\R)$, then \cite[Theorem~1.13]{CG16} and embedding $L_p(\R)\subset \C^{-\frac1p}$ yields strong well-posedness of  \Gref{x;b} for $H<\frac{p}{2(p+1)}$ (see also \cite[Theorem~6.2]{LeSSL} for the same result). This is improved by \cref{T:uniq}(iii) for all $p\in[1,\infty)$, as $H<p/(2p+2)$  implies $H<p/(2p+1)$ and \eqref{uniqcond}. 

Let us mention that the condition on $H$ in \cref{T:uniq}(iii) for strong existence and uniqueness is more restrictive than the bound $H<p/(p+1)$ from \cref{T:func} (take $d=1$ in \eqref{maincond}) which guarantees weak existence. One might expect that strong existence and uniqueness would also hold under the conditon $H<p/(p+1)$. Note that condition $H<(\sqrt{13}-3)/2$ from parts (i), (ii) of \cref{T:uniq} is just the condition from part (iii) of the theorem with $p=1$. Therefore the conjecture that $H<p/(p+1)$ might lead to strong well-posedness for an $L_p(\R,\R)$ drift is in line with \cref{conj} for measure-valued drifts. 

\smallskip
\cref{T:func}\ref{part3tfunc},  \cref{T:measure}\ref{part3tm},  \cref{T:uniq}\ref{partuniq} require that the drift $X-W^H$ is of finite variation a.s. This  is a weaker version of the corresponding condition which appears in the analysis of SDEs with irregular drift driven by Brownian motion or an $\alpha$-stable process, see \cite[Definition 2.1(iii)]{BC}, \cite[Definition 2.5(b)]{BC03}, \cite[Definition 3.1 and Corollary 5.3]{bib:zz17}, \cite[Definition~2.2]{ABM2020} and so on. As mentioned above, if $b$ is a nonnegative function or a nonnegative measure, this condition is automatically satisfied. Note that in all the cases no regularity of the drift is imposed a priori. 

To show equivalence of equations \eqref{mainSDE} and \eqref{measureeq} for measure valued drifts, we analyze local times of fractional Brownian motion and related processes. Local time of fractional Brownian motion has been studied in  \cite[Theorem~8.1]{BG73}, \cite[Theorem~4]{Pitt}, \cite[Theorem~30.4]{MR556414}. It follows from \cite[Corollary~1.1 and Lemma~2.5]{Xiao}, \cite[Proposition~3.3]{SSV22} and a straightforward application of the Kolmogorov continuity theorem, that under the condition $Hd<1$ fractional Brownian motion has a local time which is jointly continuous in time and space; moreover, it belongs to  $\C^{1-Hd-\eps}([0,T],\R)$ in time and $\C^{(\frac{1-Hd}{2H}\wedge1)-\eps}(\R^d,\R)$ in space. We were able to obtain a new short proof of this result which avoids tedious moment computations and Fourier analysis. 
This is the subject of part (i) of the next theorem. The existence of the local time obtained in the second part of the theorem extends the well-known fact that occupation measure of a sum of a $1$-dimensional Brownian motion and an adapted process of bounded variation is absolutely continuous with respect to the Lebesgue measure.

\begin{theorem}\label{T:loc}
Let $d\in\N$, $H\in(0,1)$. Let $W^H$ be an $(\F_t)$--fractional Brownian motion. 
\begin{enumerate}[\rm{(}i\rm{)}]
\item Assume that $Hd<1$. Then process $W^H$ has a local time $L^W$ which is jointly continuous in $(t,x)$.  Furthermore, for any $\eps>0$, $\gamma\in [0,(\frac1{2H}-\frac{d}2)\wedge1)$ one has
a.s. 
\begin{equation}\label{loc1}
\sup_{\substack{x\in\R^d\\0\le s\le t\le T} }\frac{L^W([s,t],x)}{|t-s|^{1-Hd-\eps}}<\infty,\qquad
\sup_{\substack{x,y\in\R^d\\0\le s\le t\le T} }\frac{|L^W([s,t],x)-L^W([s,t],y)|}{|t-s|^{1-Hd-H\gamma-\eps}|x-y|^\gamma}<\infty.
\end{equation}
\item Assume that $H(d+1)<1$. Let $\psi\colon[0,T]\times\Omega\to\R^d$ be a continuous process adapted to the filtration $(\F_t)$ and $\psi\in\C^{1-\var}([0,T],\R^d)$ a.s. Then the process $W^H+\psi$ has a local time $L^{W+\psi}$ which is jointly continuous in $(t,x)$. Furthermore,
for any $\eps>0$, $\gamma\in [0,(\frac1{2H}-\frac{d}2-\frac12)\wedge1)$ one has
a.s. 
\begin{equation}\label{loc2}
	\sup_{\substack{x\in\R^d\\0\le s\le t\le T} }\frac{L^{W+\psi}([s,t],x)}{|t-s|^{1-H(d+1)-\eps}}<\infty,\qquad
	\sup_{\substack{x,y\in\R^d\\0\le s\le t\le T} }\frac{|L^{W+\psi}([s,t],x)-L^{W+\psi}([s,t],y)|}{|t-s|^{1-H(d+1)-H\gamma-\eps}|x-y|^\gamma}<\infty.
\end{equation}
\end{enumerate}
\end{theorem}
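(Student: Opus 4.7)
The plan is to realize both local times as $L_m$-limits of the Gaussian-smoothed occupation integrals
\begin{equation*}
L^{Y,\eps}([s,t],x) := \int_s^t p_\eps(Y_r - x)\,dr, \qquad \eps > 0,
\end{equation*}
with $Y = W^H$ for part~(i) and $Y = W^H + \psi$ for part~(ii), and then to deduce joint continuity and the bounds \eqref{loc1}, \eqref{loc2} from moment estimates on the increments of $L^{Y,\eps}([s,t],x)$ via Kolmogorov's continuity theorem. The moment estimates themselves will come from combining the key bounds of \cref{S:key} with the Rosenthal-type stochastic sewing lemma \cref{T:RoSSL}.

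For part~(i), the central input is a moment estimate of the form
\begin{equation*}
\Bigl\|\int_s^t F(W_r^H)\,dr\Bigr\|_{L_m(\Omega)} \lesssim \|F\|_{\B^\beta_\infty}(t-s)^{1 + H\beta}
\end{equation*}
for $F \in \B^\beta_\infty$ with $1 + H\beta > 0$, which follows by stochastic sewing from the key bounds. I would apply this to $F = p_{\eps_1}(\cdot - x) - p_{\eps_2}(\cdot - x)$ and to $F = p_\eps(\cdot - x) - p_\eps(\cdot - y)$, using the Besov estimates $\|p_{\eps_1} - p_{\eps_2}\|_{\B^{-d-\eta}_\infty} \lesssim (\eps_1 \vee \eps_2)^{\eta/2}$ and $\|p_\eps(\cdot - x) - p_\eps(\cdot - y)\|_{\B^{-d-\gamma}_\infty} \lesssim |x-y|^\gamma$. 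Since $Hd < 1$ we may choose $\eta > 0$ with $H(d+\eta)<1$, which makes $L^{W,\eps}([s,t],x)$ an $L_m$-Cauchy family as $\eps \to 0$; its limit $L^W([s,t],x)$ inherits a bivariate bound of the shape $\|L^W([s,t],x) - L^W([s,t],y)\|_{L_m} \lesssim |x-y|^\gamma(t-s)^{1 - H(d+\gamma)}$. To reach the sharper spatial exponent $(\tfrac{1}{2H} - \tfrac{d}{2}) \wedge 1$ asserted in \eqref{loc1}, rather than the cruder $1/H - d$ afforded by the naive bound, I would invoke the Rosenthal-type sewing \cref{T:RoSSL}: it demands only a \emph{second} conditional moment bound on the germ, which can be evaluated through the two-point correlation structure of $W^H$ and yields the sharper decay. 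Kolmogorov's theorem in $(s,t,x,y)$ then delivers joint continuity together with \eqref{loc1}; identifying the resulting limit with the limsup in \eqref{lebloc} is a standard Lebesgue differentiation argument once absolute continuity of the occupation measure has been established.

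For part~(ii), the same scheme is applied to $Y = W^H + \psi$, but the key bound must now accommodate the adapted perturbation. I would decompose
\begin{equation*}
\int_s^t F(W_r^H + \psi_r)\,dr = \int_s^t F(W_r^H + \psi_s)\,dr + \int_s^t \bigl[F(W_r^H + \psi_r) - F(W_r^H + \psi_s)\bigr]\,dr.
\end{equation*}
The first summand is $\F_s$-conditionally reducible to the fBM case and handled as in part~(i) with center $\psi_s$. The second is controlled via the random-control framework of \cite{ABLM}: the total $1$-variation $V_{s,t} := \|\psi\|_{1-\var;[s,t]}$ plays the role of a random control which, together with a first-order Besov expansion of $F$, absorbs the fluctuation $\psi_r - \psi_s$ and contributes an extra factor in the sewing norm. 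This extra factor shifts the critical time exponent from $1 - Hd$ (as in part~(i)) to $1 - H(d+1)$, thereby matching the sharp hypothesis $H(d+1) < 1$. The Rosenthal-type sewing is essential once again, since we lack pathwise $L_\infty$ control on $F \circ (W^H + \psi)$ of the sort one would need to proceed via the Burkholder--Davis--Gundy inequality.

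The main obstacle is precisely the random-control step of part~(ii). The non-Markovian structure of fBM combined with the merely adapted nature of $\psi$ means that the naive freezing of $\psi$ at the left endpoint produces an error term that is not absorbed by H\"older regularity of $F$ alone; one must simultaneously exploit the $1$-variation of $\psi$ through random control and track the sewing scaling with enough accuracy that the critical exponent $1 - H(d+1)$ actually emerges. It is exactly here that the Rosenthal-type refinement of stochastic sewing and the random-control framework have to be deployed in tandem, which is the delicate heart of the argument.
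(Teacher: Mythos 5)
Your overall scaffolding — mollify, establish moment bounds via sewing, pass to the limit, conclude joint continuity from Kolmogorov — matches the paper's strategy (which uses indicators of balls rather than Gaussian mollifiers, a cosmetic difference). But the explanation you offer for where the sharp spatial exponent in part~(i) comes from is wrong, and this reveals a gap in the reasoning.

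You work throughout in the scale $\B^\beta_\infty$ and claim that the coarse exponent obtained this way can be upgraded to the sharp $(\tfrac{1}{2H}-\tfrac{d}{2})\wedge 1$ by invoking the Rosenthal-type sewing lemma \cref{T:RoSSL}. That is not how the sharp exponent is reached, and the Rosenthal-type lemma is not even well matched to the situation: in part~(i) the germ $A_{s,t}=\E^s\int_s^t f(V_{S_0,r})\,dr$ is $\F_s$-measurable, so $\delta A_{s,u,t}$ is $\F_u$-measurable and condition \eqref{Rcon:s2} degenerates to a pointwise bound on $\delta A_{s,u,t}$ itself. The true sources of the sharp exponent are (a) working in the low-integrability Besov scale $\B^\alpha_2$ rather than $\B^\beta_\infty$ (via the embedding $L_1\subset\B^{-d/2}_2$), and (b) the taming-singularities lemma \cref{P:sing} together with the John--Nirenberg inequality \cref{P:BMO}. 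Concretely, \cref{L:firstb} produces a bound with a singular prefactor $(S-S_0)^{-\frac{Hd}{q}}$ under the weak condition $\alpha>-\tfrac{1}{2H}$ (not $\alpha-\tfrac{d}{q}>-\tfrac{1}{2H}$); \cref{P:sing} then removes the singularity, so that condition \eqref{extracondTS} with $q=2$, $\alpha=-\tfrac{d}{2}-\gamma$ exactly forces $\gamma<\tfrac{1}{2H}-\tfrac{d}{2}$; \cref{P:BMO} then boosts from small to arbitrary $L_m$ moments. All of this is packaged in \cref{C:ourbounds}, which is what the paper applies in part~(i). No Rosenthal-type sewing is involved there.

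For part~(ii) your plan — random control $V_{s,t}=\|\psi\|_{1-\var;[s,t]}$, freezing $\psi$ at the left endpoint, and Rosenthal-type sewing to avoid high-moment blowup of the control — is the correct heart of the argument and matches \cref{L:driftb2}. However, you gloss over a necessary localization: $\|\psi\|_{1-\var;[0,T]}$ is only assumed finite a.s., not to have finite moments, so one must introduce the stopping times $\tau_N=\inf\{t:\|\psi\|_{1-\var;[0,t]}\ge N\}$, run the argument for the stopped drift, and patch the events $\{\tau_N=T\}$ together at the end. Without that localization the moment bounds needed for Kolmogorov's theorem are not available, and the $L_m$-Cauchy argument you outline does not close.
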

In the above theorem, $L([s,t])=L(t)-L(s)$ is the local time of the process accumulated over the interval $[s,t]$.

\begin{remark} The condition  $H(d+1)<1$ in part (ii) of the above theorem is optimal in the following sense: if $H(d+1)\ge1$, then the process $W^H+\psi$ might have local time which is not jointly continuous. Indeed, take $H=1/2$, $d=1$; then $H(d+1)=1$. Consider a reflected Brownian motion $|B|$, which by Tanaka's formula can be represented as $|B_t|=W_t+L^{|B|}(t,0)$, where $W$ is the standard Brownian motion, $L^{|B|}(t,0)$ is local time of $|B|$ at $0$; clearly $L^{|B|}(\cdot,0)$ is a process of finite variation. However, local time of $|B|$ is discontinuous at $x=0$ for any $t>0$. Condition $Hd<1$ in part (i) of the theorem is of course also optimal, but this is classical: \cite[Theorem~1.1]{Talagrand}, \cite[Theorem~2.24]{BMbook}.
\end{remark}

Finally, let us briefly describe our proof strategy. We sketch here informally the main steps. First, using stochastic sewing and a quantitative version of John--Nirenberg inequality, we are able to bound all moments of $\int f(W^H_s)\,ds$ for generic function $f$ in terms of $\|f\|_{\B^\alpha_p}$ for  $\alpha<0$, $p\ge1$, see \cref{C:ourbounds}. A careful application of a newly established  Rosenthal-type stochastic sewing lemma (\cref{T:RoSSL}) allows to squeeze an additional finite variation random drift $\psi$ in the integral; this is done in \cref{L:driftb2}; see also \cref{R:highmom} explaining why the application of standard stochastic sewing lemma would have led to a non-optimal results.
By taking in these bounds $f=\delta_x$, for $x\in\R^d$, we establish existence of local time for $W^H$ and $W^H+\psi$ and its regularity (\cref{T:loc}). By letting $f=b_n$ where $b_n$ is an approximation of $b$, we obtain the stability result and get weak existence (\cref{T:func,T:ident}). Finally, strong uniqueness (\cref{T:uniq}) follows from the same integral bound of  \cref{L:driftb2}
applied for $f=\nabla b$ and a result from the theory of deterministic Young equations (\cref{T:YoungODE}).

\section{Tools}
\label{sec:tools}

In this section we present the toolkit which is used to obtain our main results. Sewing, John--Nirenberg inequality and related techniques which allow to bound moments of certain integrals are presented in \cref{S:st}, the result related to Young differential equations  which is needed for uniqueness is presented in \cref{S:YDE}.

Let us introduce further necessary notation which will be used in the article.
For $0\le S< T$ denote by $\Delta_{[S,T]}$ the simplex
\begin{equation*}
	\Delta_{[S,T]}:=\{(s,t)\in[S,T]^2\colon s\le t\}.
\end{equation*}
The mesh size of a partition $\Pi$ of an interval will be denoted by $|\Pi|$. For a filtered probability space $(\Omega,\F,(\F_t)_{t\in[0,T]},\P)$, $T>0$, we will denote by $\E^t$ the conditional expectation with respect to $\F_t$.

We will use the following elementary bound which follows from Jensen's inequality. If $\mathcal{G}\subset\mathcal{H}$ are sub-$\sigma$-algebras, $p\ge1$, and $\xi$ is an integrable random vector, then
\begin{equation}\label{sigmaalg}
\E\bigl[|\E [\xi|\mathcal{H}]|^p \,\bigl|\mathcal{G}\bigr]\le 
\E\bigl[|\xi|^p \,\bigl|\mathcal{G}\bigr].
\end{equation}


\subsection{Sewing and related techniques}\label{S:st}

In this section we present a variety of our sewing techniques needed for the proofs.
Whilst some of the statements below (\cref{P:sing,P:BMO,T:SSLst}) are known or can be viewed as minor modifications of the existing results, the Rosenthal-type stochastic sewing lemma, \cref{T:RoSSL}, is essentially new. The key difference between  \cite[Theorem~2.1]{LeSSL} and \cref{T:RoSSL} is that the former uses the Burkholder-Davis-Gundy inequality, while the latter uses the Rosenthal--Burkholder inequality (\cite[Theorem~21.1]{Bur73}), which leads to a different set of conditions. One application of this new stochastic sewing lemma is \cref{L:driftb2}, which does not follow from the original stochastic sewing lemma. We strongly believe that this new sewing lemma will find further interesting applications.

Let $(A_{s,t})_{(s,t)\in\Delta_{[S,T]}}$ be a collection of random vectors in $\R^d$ such that $A_{s,t}$ is $\F_t$-measurable for every $(s, t)\in\Delta_{[S,T]}$. For every triplet of times $(s,u,t)$ such that 
$S\le s\le u\le t$ denote  as usual
$$ 
\delta A_{s,u,t}:= A_{s,t}-A_{s,u}-A_{u,t}.
$$

\begin{proposition}[Taming singularities, {\cite[Lemma~3.4]{le2021taming}}, {\cite[Lemma~2.3]{BFG}}]
Let $(\mathcal E,d)$ be a metric space. Suppose that there exist constants 
$\tau_i,\eta_i\ge0$,  $\tau_i>\eta_i$, $\Gamma_i>0$, $i=1,\hdots, h$, such that a function $Y\colon (0,T]\to \mathcal E$ satisfies  \label{P:sing}
\begin{equation*}
d(Y_s,Y_t) \le \sum_{i=1}^h \Gamma_is^{-\eta_i}(t-s)^{\tau_i}\quad\text{for any $0< s\le t\le T$.}
\end{equation*}
Then
\begin{equation*}
d(Y_s,Y_t)\le\sum_{i=1}^h(1-2^{\eta_i- \tau_i})^{-1} \Gamma_i(t-s)^{\tau_i- \eta_i}\quad\text{for any $0< s\le t\le T$.}
\end{equation*}	
\end{proposition}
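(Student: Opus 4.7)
The approach is an elementary dyadic chaining argument that trades the singular factor $s^{-\eta_i}$ for a factor $(t-s)^{-\eta_i}$, at the price of lowering the H\"older exponent from $\tau_i$ to $\tau_i-\eta_i$.

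First I would record that the hypothesis itself forces $Y$ to be continuous on $(0,T]$: for fixed $s\in(0,T]$ the right-hand side vanishes as $t\downarrow s$ because $\tau_i>0$, and for fixed $t\in(0,T]$ it also vanishes as $s\uparrow t$ since $s^{-\eta_i}$ stays bounded away from the singularity at $0$. This continuity will be used to pass to the limit in the telescoping sum below.

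Next I would fix $0<s<t\le T$ and introduce the geometric chain
\[
s_k:=s+2^{-k}(t-s),\qquad k=0,1,2,\ldots,
\]
so that $s_0=t$ and $s_k\searrow s$. By the triangle inequality together with the continuity of $Y$,
\[
d(Y_s,Y_t)\le\sum_{k=0}^\infty d(Y_{s_{k+1}},Y_{s_k}).
\]
The crux is then to apply the hypothesis to each link while controlling the singular factor by the \emph{length scale} rather than the absolute position: since
\[
s_{k+1}\ge s_{k+1}-s=2^{-k-1}(t-s),
\]
one obtains $s_{k+1}^{-\eta_i}\le 2^{(k+1)\eta_i}(t-s)^{-\eta_i}$. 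Combining this with $(s_k-s_{k+1})^{\tau_i}=2^{-(k+1)\tau_i}(t-s)^{\tau_i}$ gives, for every link,
\[
d(Y_{s_{k+1}},Y_{s_k})\le\sum_{i=1}^h\Gamma_i\,2^{(k+1)(\eta_i-\tau_i)}(t-s)^{\tau_i-\eta_i}.
\]
Summing the geometric series in $k$, which converges thanks to $\tau_i>\eta_i$, yields the stated bound with constant $(1-2^{\eta_i-\tau_i})^{-1}$.

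There is no substantive obstacle: the statement is a routine deterministic lemma. The only point deserving care is the choice of chain --- halving the distance from $s_k$ \emph{down to} $s$ rather than \emph{up to} $t$ --- so that the singular factor $s_k^{-\eta_i}$ is absorbed uniformly in $k$ into a power of $(t-s)$ rather than of $s$.
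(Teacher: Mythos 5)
Your proof is correct, and it is the same standard dyadic chaining argument used in the cited references (which the paper itself does not reproduce, merely noting that the cited proof goes through without the restriction $\tau_i,\eta_i\le 1$ — a fact your argument confirms, since no such bound is used anywhere). The key point is exactly the one you flag: chaining geometrically \emph{downward} to $s$, so that $s_{k+1}\ge s_{k+1}-s=2^{-(k+1)}(t-s)$ converts the singular factor $s_{k+1}^{-\eta_i}$ into a power of $(t-s)$; your constant $2^{\eta_i-\tau_i}/(1-2^{\eta_i-\tau_i})$ is in fact slightly sharper than the stated $(1-2^{\eta_i-\tau_i})^{-1}$.
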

\begin{proof}
Compared to \cite[Lemma~3.4]{le2021taming}, the restrictions $\tau_i, \eta_i \le 1$ have been removed. Nonetheless, the proof in the aforementioned reference remains valid without these restrictions
\end{proof}

\begin{proposition}[John--Nirenberg inequality, {{\cite[Exercise~A.3.2]{SVbook}, \cite[Theorem~2.3]{le2022}}}]
Let $S\in[0,T]$ and let $\A=\{\A_t:t\in[S,T]\}$ be a continuous process adapted to the filtration $(\F_t)_{t\in[0,T]}$. Assume that there exist a constant $\Gamma>0$ such that
 for any $(s,t)\in\Delta_{[S,T]}$ one has 
\begin{equation}\label{BMOcon}
\E^s|\A_t-\A_s|\le \Gamma,\quad \text{a.s.}
\end{equation}
Then for any $m\in[1,\infty)$, there exists a constant $C=C(m)$ independent of $S,T, \Gamma$ such that for any 
$(s,t)\in\Delta_{[S,T]}$ one has \label{P:BMO} 
\begin{equation*}
\|\A_t-\A_s\|_{L_m(\Omega)}\le C \Gamma.
\end{equation*}	
\end{proposition}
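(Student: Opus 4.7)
The plan is to promote the $L_\infty$ conditional-mean bound in the hypothesis to an exponential tail estimate via a classical John--Nirenberg stopping-time iteration, and then recover $\|\A_t-\A_s\|_{L_m(\Omega)}$ by integrating that tail against the layer-cake formula. For $\lambda>0$ define
$$
q(\lambda):=\sup_{(s,t)\in\Delta_{[S,T]}}\bigg\|\,\P\Big(\sup_{r\in[s,t]}|\A_r-\A_s|>\lambda\Gamma\,\Big|\,\F_s\Big)\,\bigg\|_{L_\infty(\Omega)}.
$$
The goal is $q(\lambda)\le C_1 e^{-C_2\lambda}$ for absolute constants $C_1,C_2>0$; given this,
$$
\|\A_t-\A_s\|_{L_m(\Omega)}^m=m\int_0^\infty \lambda^{m-1}\P(|\A_t-\A_s|>\lambda)\,d\lambda\le m\Gamma^m\int_0^\infty \lambda^{m-1} q(\lambda)\,d\lambda
$$
yields $C(m)\Gamma^m$ independently of $S,T$.

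The key device is the stopping time $\tau_\lambda:=\inf\{r\ge s:|\A_r-\A_s|\ge\lambda\Gamma\}\wedge t$. Continuity of $\A$ guarantees $|\A_{\tau_\lambda}-\A_s|=\lambda\Gamma$ on $\{\tau_\lambda<t\}$, and $\{\sup_{r\in[s,t]}|\A_r-\A_s|\ge\lambda\Gamma\}=\{\tau_\lambda<t\}$ up to null sets. Taking $\E^s$ of the triangle inequality $|\A_t-\A_s|\ge|\A_{\tau_\lambda}-\A_s|-|\A_t-\A_{\tau_\lambda}|$ restricted to $\{\tau_\lambda<t\}$ and using the hypothesis conditionally at $\tau_\lambda$ to dispose of $\E^{\tau_\lambda}|\A_t-\A_{\tau_\lambda}|\le\Gamma$ furnishes the initial bound $q(\lambda)\le(\lambda-1)^{-1}$ for $\lambda>1$. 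For the iteration I would observe the inclusion
$$
\big\{\textstyle\sup_{r\in[s,t]}|\A_r-\A_s|\ge 2\lambda\Gamma\big\}\subset\big\{\tau_\lambda<t,\ \textstyle\sup_{r\in[\tau_\lambda,t]}|\A_r-\A_{\tau_\lambda}|\ge\lambda\Gamma\big\},
$$
condition successively on $\F_{\tau_\lambda}$ and then on $\F_s$, and bound each factor by $q(\lambda)$ to conclude $q(2\lambda)\le q(\lambda)^2$. Choosing $\lambda_0$ with $q(\lambda_0)\le 1/2$ and iterating along $\lambda_n=2^n\lambda_0$ then produces the claimed exponential decay.

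The single genuine obstacle is extending the hypothesis from deterministic times to the stopping time $\tau_\lambda$ --- both in the initial bound and in the iteration when $\tau_\lambda$ plays the role of a left endpoint for a second application of $q$. I would handle this by approximating $\tau_\lambda$ from above by stopping times $\tau_\lambda^n$ taking only finitely many deterministic values in $[s,t]$, applying the hypothesis separately on each event $\{\tau_\lambda^n=t_i\}$ (on which the indicator is $\F_{t_i}$-measurable), summing, and passing $n\to\infty$ by path continuity and uniform integrability of the relevant conditional expectations. This extension step is classical but is the one point requiring care; once it is in place, the rest of the argument is a textbook John--Nirenberg iteration.
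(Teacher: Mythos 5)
The paper does not give a proof of this proposition; it only cites \cite[Exercise~A.3.2]{SVbook} and \cite[Theorem~2.3]{le2022}, and your stopping-time iteration is precisely the classical John--Nirenberg argument those references use, so your route matches. Two small points worth tightening: first, you only need the inclusion $\{\sup_{r\in[s,t]}|\A_r-\A_s|>\lambda\Gamma\}\subset\{\tau_\lambda<t\}$ (which holds by continuity and the intermediate value theorem); the reverse containment ``up to null sets'' you assert can fail for a process that touches level $\lambda\Gamma$ without crossing, but it is never used. Second, in the stopping-time extension the discrete approximants $\tau_\lambda^n\downarrow\tau_\lambda$ give $|\A_{\tau_\lambda^n}-\A_{\tau_\lambda}|\to 0$ but not $=0$, so the limit passage naturally yields $\P(\sup_{r\in[\tau_\lambda,t]}|\A_r-\A_{\tau_\lambda}|>\lambda\Gamma\mid\F_{\tau_\lambda})\le q(\lambda')$ for every $\lambda'<\lambda$ rather than $\le q(\lambda)$; this slack is harmless (iterate at slightly shifted levels, or replace $q$ by the hitting-time probability $\P(\tau_\lambda<t\mid\F_s)$ from the start), and the correct tool for the limit is conditional Fatou applied to $\I(\tau_\lambda^n<t)$ and to $\E^{\tau_\lambda^n}|\A_t-\A_{\tau_\lambda^n}|$, not uniform integrability.
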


\begin{remark}
We would like to stress that continuity of $\A$ plays a crucial role in \cref{P:BMO}: indeed, if $\A$ is a standard Poisson process, then clearly $\E^s|\A_t-\A_s|\le t-s$,  but of course it is not true that $\E (\A_t-\A_s)^m\le C(t-s)^m$ for $m>1$. If $\A$ is an $\alpha$-stable process, $\alpha\in(1,2)$, then $\E^s|\A_t-\A_s|\le C(t-s)^{1/\alpha}$, but  $\E (\A_t-\A_s)^m=\infty$ for $m\ge\alpha$. We refer the reader to \cite{le2022} for a general version of this statement without continuity assumption.
\end{remark}

\begin{proposition}[{Stochastic sewing lemma, \cite[Theorem~2.1]{LeSSL}}]\label{T:SSLst}
	Let $m\in[2,\infty)$, $S\in[0,T]$.  	Assume that there exist  constants $\Gamma_1, \Gamma_2, \Gamma_3\ge0$, $\eps_1, \eps_2, \eps_3 >0$ such that the following conditions hold for every $(s,t)\in\Delta_{S,T}$ and $u:=(s+t)/2$
	\begin{align}	
		&\|\delta A_{s,u,t}\|_{L_m(\Omega)}\le \Gamma_1|t-s|^{\frac12+\eps_1}+
		\Gamma_2|t-s|^{\frac12+\eps_2},\label{con:s1}	\\
			&\|\E^s[ \delta A_{s,u,t}]\|_{L_m(\Omega)}\le \Gamma_3|t-s|^{1+\eps_3},\label{con:s2}
	\end{align}
	
	Further, suppose that there exists a process $\A=\{\A_t:t\in[S,T]\}$ such that for any $t\in[S,T]$ and any sequence of partitions $\Pi_N:=\{S=t^N_0,...,t^N_{k(N)}=t\}$ of $[S,t]$ with $\lim_{N\to\infty}\Di{\Pi_N}\to0$  one has
	\begin{equation}\label{con:s3}
		\sum_{i=0}^{k(N)-1} A_{t^N_i,t_{i+1}^N}\to  \A_{t}-\A_{S}\quad\text{in probability as $N\to\infty$}.
	\end{equation}
	
	Then there exists a constant $C=C(\eps_1,\eps_2,\eps_3,m)$ independent of $S,T$, $\Gamma_i$ such that for every $(s,t)\in\Delta_{S,T}$  we have 
	\begin{align}
		\|\A_{t}-\A_{s}-A_{s,t}\|_{L_m(\Omega)} &\le C \Gamma_1 |t-s|^{\frac12+\eps_1}+
		C \Gamma_2 |t-s|^{\frac12+\eps_2}+C \Gamma_3 |t-s|^{1+\eps_3}.
		\label{est:ssl1}
	\end{align}
\end{proposition}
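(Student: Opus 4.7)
The plan is to construct $\A_t-\A_s$ as the $L_m$-limit of Riemann sums along dyadic partitions of $[s,t]$, and to read off the estimate \eqref{est:ssl1} from the geometric series that controls successive refinements. Fix $(s,t)\in\Delta_{S,T}$ and for $n\ge 0$ let $\Pi_n=\{t_i^n:=s+i(t-s)/2^n,\ i=0,\dots,2^n\}$ and $u_i^n:=(t_i^n+t_{i+1}^n)/2$. Set the Riemann sum $A^n_{s,t}:=\sum_{i=0}^{2^n-1}A_{t_i^n,t_{i+1}^n}$, so that $A^0_{s,t}=A_{s,t}$ and under one-step refinement
\begin{equation*}
A^{n+1}_{s,t}-A^n_{s,t} = -\sum_{i=0}^{2^n-1}\delta A_{t_i^n,u_i^n,t_{i+1}^n}.
\end{equation*}

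The key step is to estimate the $L_m$-norm of this increment by splitting each summand into a martingale difference and its conditional expectation: $\delta A_{t_i^n,u_i^n,t_{i+1}^n}=M_i^n+\E^{t_i^n}\delta A_{t_i^n,u_i^n,t_{i+1}^n}$ with $M_i^n:=\delta A_{t_i^n,u_i^n,t_{i+1}^n}-\E^{t_i^n}\delta A_{t_i^n,u_i^n,t_{i+1}^n}$. Since $M_i^n$ is $\F_{t_{i+1}^n}$-measurable with zero conditional mean given $\F_{t_i^n}$, the partial sums form a martingale with respect to the discrete filtration $(\F_{t_{i+1}^n})_i$, and the Burkholder--Davis--Gundy inequality together with Minkowski's inequality in $L_{m/2}$ (valid because $m\ge 2$) yields
\begin{equation*}
\Bigl\|\sum_i M_i^n\Bigr\|_{L_m(\Omega)} \le C_m\Bigl(\sum_i\|\delta A_{t_i^n,u_i^n,t_{i+1}^n}\|_{L_m(\Omega)}^2\Bigr)^{1/2}.
\end{equation*}
Feeding in \eqref{con:s1} gives a bound of order $\Gamma_1|t-s|^{1/2+\eps_1}2^{-n\eps_1}+\Gamma_2|t-s|^{1/2+\eps_2}2^{-n\eps_2}$, while \eqref{con:s2} combined with the triangle inequality controls the conditional piece by $\Gamma_3|t-s|^{1+\eps_3}2^{-n\eps_3}$.

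Summing the three geometric series over $n\ge 0$, the sequence $(A^n_{s,t})_n$ is Cauchy in $L_m$, and its limit $\bar A_{s,t}$ obeys exactly the right-hand side of \eqref{est:ssl1} against $A^0_{s,t}=A_{s,t}$. Finally, since $L_m$-convergence implies convergence in probability, hypothesis \eqref{con:s3} applied to the dyadic sequence $\Pi_n$ identifies $\bar A_{s,t}=\A_t-\A_s$ almost surely, which delivers \eqref{est:ssl1}.

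The main obstacle is the BDG step: one has to pick the correct discrete filtration $(\F_{t_{i+1}^n})_i$ at each level $n$ to turn the centered increments into a martingale-difference sequence, and then combine BDG with Minkowski in $L_{m/2}$ to convert the quadratic variation into a sum of $L_m$-norms of individual $\delta A$ terms. Everything else is a bookkeeping telescoping argument and summation of three convergent geometric series controlled by $\eps_1,\eps_2,\eps_3>0$.
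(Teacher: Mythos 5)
Your proof is correct and follows essentially the same route as the referenced proofs of this lemma (and as the paper's own proof of the Rosenthal-type variant \cref{T:RoSSL}): dyadic refinement of $[s,t]$, decomposition of each $\delta A_{t_i^n,u_i^n,t_{i+1}^n}$ into a martingale increment and its $\F_{t_i^n}$-compensator to match \eqref{con:s2}, Burkholder--Davis--Gundy plus Minkowski in $L_{m/2}$ for the martingale sum, the triangle inequality for the compensator, and summation of the resulting geometric series in the refinement level $n$. The exponent arithmetic works out, so the constant depends only on $\eps_1,\eps_2,\eps_3,m$ as claimed.

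The one step to tighten is the final identification. As stated, \eqref{con:s3} gives convergence of Riemann sums only for partition sequences of $[S,t]$ anchored at $S$, so for $s>S$ it does not literally apply to the dyadic partitions of $[s,t]$. To conclude $\bar A_{s,t}=\A_t-\A_s$, append a refining partition sequence of $[S,s]$ to your dyadics of $[s,t]$, producing a sequence of partitions of $[S,t]$; apply \eqref{con:s3} to $[S,t]$ and separately to $[S,s]$, and subtract. This is a standard and easily supplied step (the paper's proof of \cref{T:RoSSL} elides the same detail), but it should be spelled out rather than attributed directly to \eqref{con:s3}.
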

\cref{T:SSLst} is a minor modification of \cite[Theorem~2.1]{LeSSL}, and for the proof we refer the reader either to  the original proof or to \cite[proof of Theorem~4.1]{ABLM}, which is similar. 

The John--Nirenberg inequality (\cref{P:BMO}) is very powerful; it states that a  good bound just on the conditional first moment of the increment of $\A$ is sufficient to bound all the moments of the increments of $\A$. However, it requires  precise knowledge of the conditional distribution $\Law(\A_t|\F_s)$. This is usually not a problem if $\A$ is a function of a fractional Brownian motion (see \cref{C:ourbounds}) or another process with a known law. However if $\A$ is a function of $W^H+\psi$, where $\psi$ is a generic drift of certain regularity, then a direct application of \cref{P:BMO} seems nontrivial.

This obstacle can be overcome using the stochastic sewing lemma, \cref{T:SSLst}. The lemma imposes no conditions on $\Law(\A_t|\F_s)$; instead it is assumed that we are in full control of law of $A$, which is an approximation of $\A$. A drawback is that in order to bound high moments of $\A$, one has now to bound all high moments of $A$, not just the first one as it was the case with the John--Nirenberg inequality. 

Our next result  takes the best of both worlds. It says that  to bound all the moments of $\A$, it suffices  to control only the first two conditional moments of the increments of its approximation $A$ and to have a very mild bound on a high moment of the increments of $A$.

As in \cite[Definition~4.6]{ABLM}, we  need the notion of \textit{random} control, which extends the notion of (deterministic) control, see, e.g., \cite[Section 0.1]{FZ18}. 
\begin{definition} Let $0\le S\le T$. We say that a measurable function $\lambda\colon\Delta_{[S,T]}\times\Omega\to\R_+$ is a \textit{random control}, if it is subadditive, that is for any $S\le s \le u\le t\le T$ one has
\begin{equation}\label{rcont}
\lambda(s,u,\omega)+\lambda(u,t,\omega)\le \lambda(s,t,\omega)\quad\text{a.s}.
\end{equation}
\end{definition}

\begin{theorem}[Rosenthal-type stochastic sewing lemma with random times and random controls]\label{T:RoSSL}
Let 
$n\in[2,\infty)$ and $S\in[0,T]$.  Let $\tau$ be a measurable random variable taking values in $[S,T]$. Let $\lambda_1$, $\lambda_2$ be random controls.	Assume that there exist  constants  $\alpha_1, \alpha_2, \alpha_3>0$, $\beta_1,\beta_2,\Gamma_1, \Gamma_2, \Gamma_3\ge0$ such that  
\begin{equation}\label{alphabeta}
\alpha_1+\beta_1>\frac12,\qquad \alpha_2+\beta_2>1,\qquad \alpha_3>\frac1n,
\end{equation}
and
the following conditions hold for every $(s,t)\in\Delta_{S,T}$ and $u:=(s+t)/2$
\begin{align}	
&(\E^u|\delta A_{s,u,t}|^2)^{1/2}\le \Gamma_1|t-s|^{\alpha_1}\lambda_1(s,t)^{\beta_1},\label{Rcon:s1}	\\
&|\E^u[ \delta A_{s,u,t}]|\le \Gamma_2|t-s|^{\alpha_2}\lambda_2(s,t)^{\beta_2},\label{Rcon:s2}\\
&\| A_{s,t}\|_{L_n(\Omega)}\le \Gamma_3|t-s|^{\alpha_3}.\label{Rcon:s3}	
\end{align}
	
	Further, suppose that there exists a process $\A=\{\A_t:t\in[S,T]\}$ such that for any $t\in[S,T]$ and any sequence of partitions $\Pi_N:=\{S=t^N_0,...,t^N_{k(N)}=t\}$ of $[S,t]$ with $\lim_{N\to\infty}\Di{\Pi_N}\to0$  one has
\begin{equation}\label{Rcon:3}
	\sum_{i=0}^{k(N)-1} A_{t^N_i,t_{i+1}^N}\I(t_{i}^N\le\tau)\to  \A_{t\wedge\tau}-\A_{S\wedge\tau}\quad\text{in probability as $N\to\infty$}.
\end{equation}
	
Then for every $m\in[2,n]$, there exists a constant $C=C(\alpha_1, \alpha_2,\alpha_3, \beta_1, \beta_2,m,n)>0$ independent of $S,T$, $\Gamma_i$ such that for every $(s,t)\in\Delta_{S,T}$  we have 
\begin{align}
\|\A_{t\wedge\tau}-\A_{s\wedge\tau}\|_{L_m(\Omega)}&\le C \Gamma_1 |t-s|^{\alpha_1}\|\lambda_1(s,t)^{\beta_1}\|_{L_m(\Omega)}+C \Gamma_2 |t-s|^{\alpha_2}\|\lambda_2(s,t)^{\beta_2}\|_{L_m(\Omega)}\nn\\
&\quad+C \Gamma_3 |t-s|^{\alpha_3}.
\label{Rres}
\end{align}
\end{theorem}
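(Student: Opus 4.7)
My plan is to adapt the dyadic sewing argument underlying the standard stochastic sewing lemma (\cref{T:SSLst}), but to replace the Burkholder--Davis--Gundy inequality used in the estimation of the martingale part of the telescope by the Rosenthal--Burkholder inequality (\cite[Theorem~21.1]{Bur73}). The random time $\tau$ can be absorbed at the outset by working with the indicators $\I(t_i^N\le\tau)$, which are measurable with respect to the enlarged filtration $\widetilde\F_t:=\F_t\vee\sigma(\tau)$; since the conditional expectations $\E^u$ in \eqref{Rcon:s1}--\eqref{Rcon:s3} appear only inside quantities that will be multiplied by these indicators, the bounds propagate to $\widetilde\F$ without change. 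For fixed $(s,t)\in\Delta_{S,T}$, write $\Pi_N=\{t_i^N:=s+i(t-s)2^{-N}\}_{i=0}^{2^N}$ and $R_N:=\sum_i A_{t_i^N,t_{i+1}^N}\I(t_i^N\le\tau)$. Inserting the midpoints $u_i^N:=(t_i^N+t_{i+1}^N)/2$ gives
\begin{equation*}
R_{N+1}-R_N=-\sum_i \delta A_{t_i^N,u_i^N,t_{i+1}^N}\I(t_i^N\le\tau)+E_N,
\end{equation*}
where the boundary term $E_N:=\sum_i A_{u_i^N,t_{i+1}^N}\I(t_i^N\le\tau<u_i^N)$ contains at most one nonzero summand and by \eqref{Rcon:s3} satisfies $\|E_N\|_{L_m(\Omega)}\le\Gamma_3((t-s)/2^N)^{\alpha_3}$, hence is summable in $N$ under $\alpha_3>0$.

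I then decompose the principal sum as $-\sum_i(M_i+D_i)$, with $M_i:=(\delta A_{t_i,u_i,t_{i+1}}-\E^{u_i}\delta A_{t_i,u_i,t_{i+1}})\I(t_i\le\tau)$ and $D_i:=\E^{u_i}[\delta A_{t_i,u_i,t_{i+1}}]\I(t_i\le\tau)$. Since $\I(t_i\le\tau)$ is $\widetilde\F_{u_i}$-measurable and $\E^{u_i}M_i=0$, the $(M_i)_i$ form a sequence of martingale differences with respect to the discrete filtration $\G_i:=\widetilde\F_{u_{i+1}}$. Applying the Rosenthal--Burkholder inequality gives
\begin{equation*}
\Bigl\|\sum_i M_i\Bigr\|_{L_m(\Omega)}\le C\Bigl\|\Bigl(\sum_i\E^{u_i}|M_i|^2\Bigr)^{1/2}\Bigr\|_{L_m(\Omega)}+C\|\max_i|M_i|\|_{L_m(\Omega)}.
\end{equation*}
From $\E^{u_i}|M_i|^2\le\E^{u_i}|\delta A_{t_i,u_i,t_{i+1}}|^2$ and \eqref{Rcon:s1}, the quadratic-variation sum is bounded pathwise by $\Gamma_1^2\sum_i(t_{i+1}-t_i)^{2\alpha_1}\lambda_1(t_i,t_{i+1})^{2\beta_1}$. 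The superadditivity \eqref{rcont} of $\lambda_1$ together with the elementary inequality $\sum_i x_i^p\le(\sum_i x_i)^p$ valid for $p\ge1$ (and its reverse $\sum_i x_i^p\le k^{1-p}(\sum_i x_i)^p$ for $0<p<1$) collapses the random factor to $\lambda_1(s,t)^{2\beta_1}$, while the deterministic prefactor gives $(t-s)^{2\alpha_1}2^{-N(2\alpha_1+2\beta_1-1)}$. Taking square roots and $L^m(\Omega)$-norm yields a geometric decay of rate $2^{-N(\alpha_1+\beta_1-1/2)}$, summable exactly under \eqref{alphabeta}. For the maximum term, $\|\max_i|M_i|\|_{L_m}\le\|\max_i|M_i|\|_{L_n}\le(\sum_i\|M_i\|_{L_n}^n)^{1/n}$ (valid since $m\le n$) combined with \eqref{Rcon:s3} produces a decay of rate $2^{-N(\alpha_3-1/n)}$, summable under \eqref{alphabeta}.

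For the drift part, \eqref{Rcon:s2} gives the pathwise estimate $|D_i|\le\Gamma_2(t_{i+1}-t_i)^{\alpha_2}\lambda_2(t_i,t_{i+1})^{\beta_2}$, so summing in $i$ and applying the same superadditivity/power-of-sum trick for $\lambda_2$ yields $\|\sum_iD_i\|_{L_m(\Omega)}\le C\Gamma_2(t-s)^{\alpha_2}2^{-N(\alpha_2+\beta_2-1)}\|\lambda_2(s,t)^{\beta_2}\|_{L_m(\Omega)}$, which is summable under $\alpha_2+\beta_2>1$. Collecting the four contributions (three principal plus $E_N$), the telescoping series $\sum_N\|R_{N+1}-R_N\|_{L_m(\Omega)}$ is summable, so $(R_N)$ is Cauchy in $L_m(\Omega)$; by \eqref{Rcon:3} its limit coincides with $\A_{t\wedge\tau}-\A_{S\wedge\tau}$. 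Passing to the limit in the telescoping estimate and adding the trivial initial bound $\|R_0\|_{L_m(\Omega)}=\|A_{s,t}\I(s\le\tau)\|_{L_m(\Omega)}\le\Gamma_3(t-s)^{\alpha_3}$ from \eqref{Rcon:s3} produces \eqref{Rres}.

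The main obstacle, and the genuinely new ingredient compared with the proof of \cref{T:SSLst}, is the treatment of the Rosenthal maximum term: since we now only control the $L^n$-norm of individual $A$-increments (rather than the full $L^m$-norm with $m$ possibly much larger than $n$), the restriction $\alpha_3>1/n$ is exactly the price paid, and the fact that $n$ may be taken much smaller than $m$ is what provides the flexibility highlighted in \cref{R:highmom}. A secondary technical subtlety is to ensure that the random controls are extracted from inside the conditional expectations only at the very last step --- after they have been collapsed via superadditivity of $\lambda_j$ and the $L^p$-sum inequality to $\lambda_j(s,t)^{\beta_j}$ --- so as to avoid an $L^m(\Omega)$ loss at each dyadic refinement level.
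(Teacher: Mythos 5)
Your overall plan --- dyadic telescoping, Rosenthal--Burkholder on the martingale part, superadditivity plus a power-of-sum inequality to collapse the random controls, and an $L_n$ bound for both the Rosenthal maximum term and the boundary term --- matches the paper's route in broad outline. However, there is a genuine gap precisely at the step where the novelty of the lemma lies: the treatment of the random time $\tau$.

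You define $M_i:=(\delta A_{t_i,u_i,t_{i+1}}-\E^{u_i}\delta A_{t_i,u_i,t_{i+1}})\I(t_i\le\tau)$ and claim that $(M_i)_i$ is a martingale difference sequence with respect to $\G_i:=\widetilde\F_{u_{i+1}}$, where $\widetilde\F_t:=\F_t\vee\sigma(\tau)$. This is false in general. The centering $\E^{u_i}$ in your $M_i$ is the $\F_{u_i}$-conditional expectation, so
\begin{equation*}
\E[M_i\mid\widetilde\F_{u_i}]
=\I(t_i\le\tau)\bigl(\E[\delta A_{t_i,u_i,t_{i+1}}\mid\widetilde\F_{u_i}]-\E[\delta A_{t_i,u_i,t_{i+1}}\mid\F_{u_i}]\bigr),
\end{equation*}
which need not vanish unless $\tau$ is independent of the filtration; in the intended applications $\tau$ is a stopping time built from the process, so it certainly is not. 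If instead one recenters by $\E[\cdot\mid\widetilde\F_{u_i}]$ to restore the martingale property, then the hypotheses \eqref{Rcon:s1}--\eqref{Rcon:s2}, which are stated for $\E^u=\E[\cdot\mid\F_u]$, no longer control the relevant conditional second moment, and the Rosenthal quadratic-variation term cannot be estimated. Your remark that ``the bounds propagate to $\widetilde\F$ without change'' is the precise point that fails.

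The paper avoids this by \emph{not} putting the indicator inside the martingale. It sets $f_j^k:=\sum_{i<j}(\delta A_{t_i,u_i,t_{i+1}}-\E^{u_i}\delta A_{t_i,u_i,t_{i+1}})$ --- with no indicator --- which is a genuine $(\F_{u_j^k})$-martingale. Since the indicators $\I(t_i\le\tau)$ are non-increasing in $i$ (a prefix pattern), the indicator-weighted sum equals $f_{j^*}^k$ for some random index $j^*$, and is therefore dominated by $\max_j|f_j^k|$, which is exactly the quantity the Rosenthal--Burkholder inequality controls. No filtration enlargement is needed, and \eqref{Rcon:s1} applies verbatim. You should replace your enlarged-filtration martingale claim by this ``prefix sum is bounded by the running maximum'' reduction; the remainder of your argument then goes through.

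A smaller inaccuracy: for the boundary term $E_N=\sum_i A_{u_i^N,t_{i+1}^N}\I(t_i^N\le\tau<u_i^N)$ you invoke \eqref{Rcon:s3} termwise to claim $\|E_N\|_{L_m(\Omega)}\le\Gamma_3((t-s)2^{-N})^{\alpha_3}$, ``summable under $\alpha_3>0$.'' Because the surviving index is random, \eqref{Rcon:s3} cannot be applied directly; one must bound $|E_N|\le\max_i|A_{u_i^N,t_{i+1}^N}|$ and then estimate $\|\max_i|A_{u_i^N,t_{i+1}^N}|\|_{L_m}\le\bigl(\sum_i\E|A_{u_i^N,t_{i+1}^N}|^n\bigr)^{1/n}\le\Gamma_3|t-s|^{\alpha_3}2^{-N(\alpha_3-1/n)}$, whose summability requires $\alpha_3>1/n$ as in \eqref{alphabeta}. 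You in fact perform exactly this estimate for the Rosenthal maximum term a few lines later, so this is only a slip, not a conceptual error.
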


The main difference between the original stochastic sewing lemma  \cite[Theorem~2.1]{LeSSL} and \cref{T:RoSSL} is that the latter does not impose bounds on high moments of $\delta A_{s,u,t}$ apart from relatively weak condition \eqref{Rcon:s3}. Another difference is the use of random controls $\lambda$ and bounding differences of $\A$ up to a stopping time $\tau$. All these will be crucial in \cref{L:driftb2}, where we will apply \cref{T:RoSSL} to get a key bound on moments of  $\int f(W^H_r+\psi_r)dr$ for a generic drift $\psi$, see \cref{R:highmom}.

In the proof of \cref{T:RoSSL}, we  combine the technique from the proofs of stochastic sewing lemma  \cite[Theorem~2.1]{LeSSL}, stochastic sewing lemma with random controls \cite[Theorem~4.7]{ABLM} with the novel idea that an application of the Rosenthal-Burkholder inequality allows to obtain better results than an application of the  Burkholder--Davis--Gundy inequality. For the convenience of the reader we recall here the Rosenthal-Burkholder inequality, which is is \cite[Theorem~21.1]{Bur73} applied with $\Phi(\lambda):=|\lambda|^m$.

\begin{proposition}[{Rosenthal-Burkholder inequality, \cite[Theorem~21.1]{Bur73}}]\label{p:rbiq}
Let $n\in\N$ and $(f_k,\mathcal{G}_k)_{k=0,1,\hdots, n}$ be a martingale. Then for any $m\ge2$ there exists a constant $C=C(m)$ such that
\begin{equation*}
\|\max_{i=0,1,\hdots, n} f_i\|_{L_m(\Omega)}\le C \Bigl\|\sum_{i=0}^{n-1} \E[(f_{i+1}-f_i)^2|\mathcal{G}_i]	\Bigr\|_{L_{m/2}}^{1/2}+C \Bigl\|\max_{i=0,\hdots, n-1} |f_{i+1}-f_i|\Bigr\|_{L_m(\Omega)}.
\end{equation*}	
\end{proposition}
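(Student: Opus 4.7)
The plan is to derive the Rosenthal--Burkholder bound by combining the classical Burkholder--Davis--Gundy (BDG) inequality with a Doob decomposition of the square function. Set $d_i := f_{i+1} - f_i$, $S_n^2 := \sum_{i=0}^{n-1} d_i^2$, $f_n^* := \max_{i\le n} |f_i|$, and $d_n^* := \max_{0\le i \le n-1} |d_i|$. Since $\max_i f_i \le f_n^*$, the ordinary BDG inequality already gives $\|\max_i f_i\|_{L_m(\Omega)} \le C \|f_n^*\|_{L_m(\Omega)} \le C \|S_n\|_{L_m(\Omega)}$, so the task reduces to proving
$$
\|S_n\|_{L_m(\Omega)} \le C \Bigl\|\textstyle\sum_i \E[d_i^2|\mathcal{G}_i]\Bigr\|_{L_{m/2}(\Omega)}^{1/2} + C\|d_n^*\|_{L_m(\Omega)}.
$$

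The key step is the Doob decomposition $S_n^2 = \langle f\rangle_n + M_n$, where $\langle f\rangle_n := \sum_i \E[d_i^2|\mathcal{G}_i]$ and $M_n$ is a martingale with increments $\xi_i := d_i^2 - \E[d_i^2|\mathcal{G}_i]$. Applying BDG to $M$ at the exponent $m/2 \ge 1$, together with the bound $|\xi_i| \le d_i^2 + \E[d_i^2|\mathcal{G}_i]$, yields
$$
\|M_n\|_{L_{m/2}(\Omega)} \le C \Bigl\|\bigl(\textstyle\sum_i d_i^4 + \sum_i \E[d_i^2|\mathcal{G}_i]^2\bigr)^{1/2}\Bigr\|_{L_{m/2}(\Omega)}.
$$
I would then exploit $\sum_i d_i^4 \le (d_n^*)^2 S_n^2$ and $\sum_i \E[d_i^2|\mathcal{G}_i]^2 \le \bigl(\max_i \E[d_i^2|\mathcal{G}_i]\bigr)\langle f\rangle_n$. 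Since $d_i^2 \le (d_n^*)^2$, conditional Jensen gives $\E[d_i^2|\mathcal{G}_i] \le \E[(d_n^*)^2|\mathcal{G}_i]$, and the latter is a martingale in $i$, so Doob's $L_{m/2}$-maximal inequality produces $\|\max_i \E[d_i^2|\mathcal{G}_i]\|_{L_{m/2}(\Omega)} \le C \|d_n^*\|_{L_m(\Omega)}^2$. Applying Cauchy--Schwarz in $L_{m/2}$ to the product terms and taking square roots, I arrive at
$$
\|S_n\|_{L_m(\Omega)}^2 \le \|\langle f\rangle_n\|_{L_{m/2}(\Omega)} + C \|d_n^*\|_{L_m(\Omega)} \|S_n\|_{L_m(\Omega)} + C\|d_n^*\|_{L_m(\Omega)} \|\langle f\rangle_n\|_{L_{m/2}(\Omega)}^{1/2}.
$$

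The final step is to solve this self-referential quadratic inequality in $x := \|S_n\|_{L_m(\Omega)}$. Young's inequality $C\|d_n^*\|_{L_m(\Omega)} x \le \tfrac{1}{2} x^2 + C'\|d_n^*\|_{L_m(\Omega)}^2$ absorbs the linear term into the left-hand side; after rearranging and combining with BDG one obtains the announced inequality. The main obstacle I anticipate is legitimising the absorption step: it only works once $\|S_n\|_{L_m(\Omega)}$ is known to be finite, which is not automatic if the right-hand side of the Rosenthal--Burkholder inequality is finite but some individual $\|d_i\|_{L_m(\Omega)}$ is not. I would handle this by first proving the inequality for the truncated differences $d_i \mathbbm{1}_{\{|d_i|\le N\}}$, whose associated square function is bounded hence in $L_m(\Omega)$, and then passing to $N\to\infty$ by monotone convergence on the right-hand side. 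A secondary care point is the BDG constant for $M$ at exponent $m/2$, which is well-behaved as long as $m\ge 2$, matching the hypothesis of the proposition.
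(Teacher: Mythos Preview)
The paper does not prove this proposition; it simply quotes it from Burkholder \cite[Theorem~21.1]{Bur73} (specialised to $\Phi(\lambda)=|\lambda|^m$), so there is no in-paper argument to compare against. Burkholder's original proof proceeds via distribution-function (good-$\lambda$) inequalities, whereas your route --- BDG for $f$, Doob decomposition of $S_n^2$, BDG again for the martingale part, then closing a self-referential quadratic --- is a standard and correct alternative.

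Two minor corrections to your write-up. First, your finiteness worry is misplaced in this finite-$n$ setting: since $|d_i|\le d_n^*$ pointwise and there are only $n$ increments, finiteness of $\|d_n^*\|_{L_m(\Omega)}$ already gives $\|S_n\|_{L_m(\Omega)}\le \sqrt{n}\,\|d_n^*\|_{L_m(\Omega)}<\infty$, so the absorption step is immediately legitimate and no truncation is needed. This is fortunate, because your proposed truncation $d_i\I(|d_i|\le N)$ destroys the martingale-difference property and could not be fed directly into BDG without further centring. Second, Doob's $L_{m/2}$ maximal inequality that you invoke requires $m/2>1$; the endpoint $m=2$ should be treated separately, where the result is the one-line identity $\E S_n^2=\E\langle f\rangle_n$.
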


\begin{proof}[Proof of \cref{T:RoSSL}]

We will denote by $\pi^k_{[s,t]}=\{s=t_0^k<t_1^k<\hdots<t^k_{2^k}=t\}$ the dyadic partition of $[s,t]$; here $k\in\Z_+$ and $(s,t)\in\Delta_{[S,T]}$. That is, $t^k_i=s+i2^{-k}(t-s)$, $i=0,\hdots,2^k$. We denote by $u_i^k$ the midpoint of the interval $[t^k_i,t^k_{i+1}]$, that is, $u_i^k:=(t^k_i+t^k_{i+1})/2$. For notational convenience, we put $u_{2^k}^k:=t^k_{2^k}=t$.

Fix $s,t\in\Delta_{[S,T]}$. For $k\in\Z_+$ put
\begin{equation*}
A^{k}_{s,t}:=\sum_{i=0}^{2^k-1} A_{t_i^k,t_{i+1}^k}\I(t_{i}^k\le\tau).
\end{equation*}
By assumption \eqref{Rcon:3}, $A^k_{s,t}$ converges to  $\A_{t\wedge\tau}-\A_{s\wedge\tau}$ in probability. Note that for $k\in\Z_+$
\begin{align}\label{step1st}
|A^{k}_{s,t}-A^{k+1}_{s,t}|&\le\Bigl|\sum_{i=0}^{2^k-1} \delta A_{t_i^k,u_i^k,t_{i+1}^k}\I(t_{i}^k\le\tau)\bigr|+\Bigl|\sum_{i=0}^{2^k-1}A_{u_i^k,t_{i+1}^k}\I(\tau\in[t_i^k, u_i^k])\Bigr|\nn\\
&\le \Bigl|\sum_{i=0}^{2^k-1} \delta A_{t_i^k,u_i^k,t_{i+1}^k}\I(t_{i}^k\le\tau)\Bigr|+\max_{i=0,\hdots,2^k-1}|A_{u_i^k,t_{i+1}^k}|\nn\\
&\le \Bigl|\sum_{i=0}^{2^k-1} (\delta A_{t_i^k,u_i^k,t_{i+1}^k}-\E^{u_i^k}\delta A_{t_i^k,u_i^k,t_{i+1}^k})\I(t_{i}^k\le\tau)\Bigr|+\sum_{i=0}^{2^k-1} |\E^{u_i^k} \delta A_{t_i^k,u_i^k,t_{i+1}^k}|\nn\\
&\quad+\max_{i=0,\hdots,2^k-1}|A_{u_i^k,t_{i+1}^k}|\nn\\
&\le I_1+I_2+I_3.
\end{align}
We begin with the treatment of $I_1$. Denoting  $f_0^k:=0$, 
$f_j^k:=\sum_{i=0}^{j-1}(\delta A_{t_i^k,u_i^k,t_{i+1}^k}-\E^{u_i^k}\delta A_{t_i^k,u_i^k,t_{i+1}^k})$, $1\le j\le {2^k}$, we note that $(f_j^k)_{j=0,..., 2^k}$ is a martingale with respect to the filtration $(\G_j^k)_{j=0,...,2^k}$, $\G_j^k:=\F_{u_j^k}$. 
To bound the first sum we apply the Rosenthal--Burkholder inequality (\cref{p:rbiq}). We get
\begin{align}
\|I_1\|_{L_m(\Omega)} &\le \bigl\|\max_{j=0,...,2^k} |f_j^k| \bigr\|_{L_m(\Omega)}\nn\\
&\le C \Bigl\|\sum_{i=0}^{2^k-1}\E [|f_{i+1}^k-f_{i}^k|^2|\F_{u_i^k}]\Bigr\|_{L_{m/2}(\Omega)}^{\frac12}
+C\bigl\|\max_{i=0,...,2^k-1} |f_{i+1}^k-f_{i}^k| \bigr\|_{L_m(\Omega)}\label{step2st1}\\
&\le C \Bigl\|\sum_{i=0}^{2^k-1}\E [|f_{i+1}^k-f_{i}^k|^2|\F_{u_i^k}]\Bigr\|_{L_{m/2}(\Omega)}^{\frac12}
+C\Bigl(\sum_{i=0}^{2^k-1} \E |f_{i+1}^k-f_{i}^k|^n \Bigr)^{1/n}\label{step2st2}\\
&\le C \Bigl\|\sum_{i=0}^{2^k-1}
\E^{u_i^k}|\delta A_{t_i^k,u_i^k,t_{i+1}^k}-\E^{u_i^k}\delta A_{t_i^k,u_i^k,t_{i+1}^k}|^2\Bigr\|_{L_{m/2}(\Omega)}^{1/2}\nn\\
&\quad+C \Bigl(\sum_{i=0}^{2^k-1}
\E|\delta A_{t_i^k,u_i^k,t_{i+1}^k}-\E^{u_i^k}\delta A_{t_i^k,u_i^k,t_{i+1}^k}|^n\Bigr)^{1/n}\nn\\
&\le C \Bigl\|\sum_{i=0}^{2^k-1}
\E^{u_i^k}|\delta A_{t_i^k,u_i^k,t_{i+1}^k}|^2\Bigr\|_{L_{m/2}(\Omega)}^{1/2}+C \Bigl(\sum_{i=0}^{2^k-1}
\E|\delta A_{t_i^k,u_i^k,t_{i+1}^k}|^n\Bigr)^{1/n}\label{step2st3}\\
&\le C\Gamma_1|t-s|^{\alpha_1}2^{-k\alpha_1} \Bigl\|\sum_{i=0}^{2^k-1}
\lambda_1(t_i^k, t_{i+1}^k)^{2\beta_1}\Bigr\|_{L_{m/2}(\Omega)}^{1/2}+\!C\Gamma_3|t-s|^{\alpha_3}2^{-k (\alpha_3-\frac1n)}\label{step2st4}\\
&\le C\Gamma_1|t-s|^{\alpha_1}\|\lambda_1(s, t)^{\beta_1}\|_{L_m(\Omega)}2^{-k(\alpha_1+(\beta_1-\frac12)\wedge0)} \!
+\!C\Gamma_3|t-s|^{\alpha_3}2^{-k (\alpha_3-\frac1n)},\label{step2st}
\end{align}
for $C=C(m,n)>0$, where \eqref{step2st1} is the Rosenthal--Burkholder inequality, \eqref{step2st2} follows from the fact that $n\ge m$, \eqref{step2st3} uses the elementary inequality \eqref{sigmaalg} together with the bound $|a+b|^n\le C(n)(|a|^n+|b|^n)$ valid for all $a,b\in\R^d$, \eqref{step2st4} utilizes  assumptions \eqref{Rcon:s1} and \eqref{Rcon:s3} of the theorem, and finally \eqref{step2st} follows from the subadditivity of the control \eqref{rcont} and the Jensen inequality: 
$$
\sum_{i=0}^{2^k-1} \lambda_1(t^k_i, t^k_{i+1})^{2\beta_1} \le  \lambda_1(s, t)^{2\beta_1} 2^{k(1-2\beta_1  )\vee0}.
$$

Bounding $I_2$ in \eqref{step1st} is easy. Indeed, using \eqref{Rcon:s2}, subadditivity of the control and the H\"older inequality, we see that
\begin{align}\label{ivan}
\|I_2\|_{L_m(\Omega)}&\le \Gamma_2|t-s|^{\alpha_2}2^{-k\alpha_2}\Bigl\|\sum_{i=0}^{2^k-1}\lambda(t_i^k,t_i^{k+1})^{\beta_2}\Bigr\|_{L_m(\Omega)}\nn\\
&\le  \Gamma_2|t-s|^{\alpha_2}2^{-k(\alpha_2+(\beta_2-1)\wedge0)} \|\lambda_2(s, t)^{\beta_2}\|_{L_m(\Omega)}.
\end{align}
It is also not difficult to bound $I_3$. Using \eqref{Rcon:s2} and the fact than $n\ge m$, we see that
\begin{align}\label{itree}
\|I_3\|_{L_m(\Omega)}& \le  \|I_3\|_{L_n(\Omega)} =\|\max_{i=0,\hdots,2^k-1}|A_{u_i^k,t_{i+1}^k}|\|_{L_n(\Omega)}\le\Bigl( \sum_{i=0}^{2^k-1} \E| A_{u_i^k,t_{i+1}^k}|^n\Bigr)^{\frac1n}\nn\\
&\le \Gamma_3|t-s|^{\alpha_3}2^{-k (\alpha_3-\frac1n)}.
\end{align}
Substituting  \eqref{step2st}, \eqref{ivan}, and \eqref{itree} into \eqref{step1st}, we finally get 
\begin{align*}
\|A^{k}_{s,t}-A^{k+1}_{s,t}\|_{L_m(\Omega)}&\le 
C\Gamma_1|t-s|^{\alpha_1}\|\lambda_1(s, t)^{\beta_1}\|_{L_m(\Omega)}2^{-k(\alpha_1+(\beta_1-\frac12)\wedge0)}\\
&\quad+ \Gamma_2|t-s|^{\alpha_2}2^{-k(\alpha_2+(\beta_2-1)\wedge0)} \|\lambda_2(s, t)^{\beta_2}\|_{L_m(\Omega)}
+C\Gamma_3|t-s|^{\alpha_3}2^{-k (\alpha_3-\frac1n)}.
\end{align*}
Summing this inequality over $k$, and using the fact that, thanks to \eqref{alphabeta}, $\alpha_1+\beta_1>1/2$, $\alpha_2+\beta_2>1$, $\alpha_3>\frac1n$ we get for any $k\in\N$ (recall that $A^0_{s,t}=A_{s,t}\I_{s\le\tau}$)
\begin{align*}
\|A^{k}_{s,t}-A_{s,t}\I_{s\le \tau}\|_{L_m(\Omega)}&\le \sum_{i=0}^{k-1}
\|A^{i+1}_{s,t}-A^{i}_{s,t}\|_{L_m(\Omega)}\\
&\le C\Gamma_1|t-s|^{\alpha_1}\|\lambda_1(s, t)^{\beta_1}\|_{L_m(\Omega)}+ C\Gamma_2|t-s|^{\alpha_2} \|\lambda_2(s, t)^{\beta_2}\|_{L_m(\Omega)}\\
&\quad+C\Gamma_3|t-s|^{\alpha_3},
\end{align*}
for $C=C(\alpha_1, \alpha_2,\alpha_3, \beta_1, \beta_2,m,n)>0$.
An application of Fatou's lemma, assumption \eqref{Rcon:3}, and \eqref{Rcon:s3}  yields the desired result \eqref{Rres}.
\end{proof}

\subsection{Uniqueness of solutions to the Young differential equation}\label{S:YDE}

An important tool in establishing uniqueness of SDE \eqref{mainSDE} is the following result related to (deterministic) Young differential equations. 
\begin{proposition}\label{T:YoungODE}
Let $d=1$. Suppose that $X\in \C^{p-\var}([0,1],\R)$, $Y\in \C^{q-\var}([0,1],\R)$, $p,q\ge1$, $\frac1{p}+\frac1{q}>1$ and $X$, $Y$ are continuous. Then the following ODE
\begin{equation}\label{YoungODE}
Y_t=\int_0^t Y_s \,dX_s,\quad t\in[0,1],\quad Y_0=0,
\end{equation}
has only trivial zero solution $Y\equiv0$.
\end{proposition}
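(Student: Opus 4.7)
The plan is to apply the Love--Young maximal inequality and then close a local Gronwall-type loop on small subintervals, followed by iteration. Recall that for continuous paths $X\in\C^{p-\var}$ and $Y\in\C^{q-\var}$ with $1/p+1/q>1$, the Young integral $\int_s^t Y_r\,dX_r$ exists and satisfies
\[
\Bigl|\int_s^t Y_r\,dX_r - Y_s(X_t-X_s)\Bigr| \le K_{p,q}\, \|Y\|_{q-\var;[s,t]}\, \|X\|_{p-\var;[s,t]},
\]
for a universal constant $K_{p,q}$. Substituting the equation \eqref{YoungODE} and $Y_0=0$, this gives at once
\[
|Y_t| \le K_{p,q}\, \|Y\|_{q-\var;[0,t]}\, \|X\|_{p-\var;[0,t]},
\]
and more generally
\[
|Y_t - Y_s - Y_s(X_t-X_s)| \le K_{p,q}\, \|Y\|_{q-\var;[s,t]}\, \|X\|_{p-\var;[s,t]}.
\]

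Since $X$ is continuous, the control $\omega_X(s,t):=\|X\|_{p-\var;[s,t]}^p$ is continuous on the simplex. So for any prescribed $\varepsilon>0$, I can choose a finite partition $0=\tau_0<\tau_1<\cdots<\tau_N=1$ of $[0,1]$ such that $\|X\|_{p-\var;[\tau_k,\tau_{k+1}]}<\varepsilon$ for every $k$. I then work on the first subinterval $I:=[0,\tau_1]$ where $Y_0=0$. On $I$, the Young estimate above gives $\sup_{t\in I}|Y_t| \le K_{p,q}\varepsilon\, \|Y\|_{q-\var;I}$. Independently, by applying the Young estimate on each piece of an arbitrary subpartition of $I$, raising to the $q$-th power and summing, I obtain (after using subadditivity of $\omega_X$ and $\omega_Y$) a bound of the form
\[
\|Y\|_{q-\var;I}^q \le C \sup_{t\in I}|Y_t|^q\, \|X\|_{p-\var;I}^q + C\,\|X\|_{p-\var;I}^q\, \|Y\|_{q-\var;I}^q .
\]
Choosing $\varepsilon$ so small that $C\varepsilon^q<1/2$, the last term is absorbed, yielding $\|Y\|_{q-\var;I}\le C'\varepsilon\, \sup_{t\in I}|Y_t|$. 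Substituting this into the previous display gives $\sup_{I}|Y| \le K_{p,q}C'\varepsilon^2\, \sup_{I}|Y|$, which for $\varepsilon$ small enough forces $\sup_I|Y|=0$, i.e.\ $Y\equiv 0$ on $[0,\tau_1]$.

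The argument on $[\tau_1,\tau_2]$ is identical because $Y_{\tau_1}=0$ now plays the role of the initial condition; iterating through the $N$ pieces of the partition yields $Y\equiv 0$ on $[0,1]$. The main technical obstacle is the derivation of the $q$-variation estimate $\|Y\|_{q-\var;I}\le C'\varepsilon\,\sup_I|Y|$: to combine the local Young bounds $|Y_{s_{i+1}}-Y_{s_i}|^q \le C(|Y_{s_i}|^q|X_{s_{i+1}}-X_{s_i}|^q + \|Y\|_{q-\var;[s_i,s_{i+1}]}^q\|X\|_{p-\var;[s_i,s_{i+1}]}^q)$ one must carefully exploit the superadditivity of $\omega_X$ and $\omega_Y$; the condition $1/p+1/q>1$ is used both to invoke Young's inequality and to control the cross exponents (in particular, when $q<p$ one automatically has $q<2$, which together with $C^{p-\var}\subset C^{q-\var}$-type embeddings lets one reduce to the case where the two variation scales are comparable). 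Once that local estimate is in place, the iteration is mechanical and the conclusion follows.
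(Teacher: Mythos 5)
Your approach has a genuine gap, and it concerns exactly the regime that makes the proposition non-trivial. You acknowledge the hard step is establishing $\|Y\|_{q-\var;I}\le C'\varepsilon\,\sup_I|Y|$ from the local Young bounds, and you invoke ``$\C^{p-\var}\subset\C^{q-\var}$-type embeddings'' to ``reduce to the case where the two variation scales are comparable.'' That embedding runs the wrong way: for $q<p$ one has $\C^{q-\var}\subset\C^{p-\var}$, not the reverse, so a path $X$ of finite $p$-variation need not have finite $q$-variation at all. Consequently, when you raise the Young--L\'oeve bound
\[
|Y_{s_{i+1}}-Y_{s_i}|\le |Y_{s_i}|\,|X_{s_{i+1}}-X_{s_i}|+C\|Y\|_{q-\var;[s_i,s_{i+1}]}\|X\|_{p-\var;[s_i,s_{i+1}]}
\]
to the $q$-th power and sum, the leading term produces $\sum_i|X_{s_{i+1}}-X_{s_i}|^q$, which in general diverges as the subpartition refines when $q<p$ (superadditivity of $\omega_X(s,t)=\|X\|_{p-\var;[s,t]}^p$ gives $\sum_i\omega_X(s_i,s_{i+1})\le\omega_X(s,t)$, but raising to the power $q/p<1$ reverses, rather than preserves, this inequality). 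So the claimed display $\|Y\|_{q-\var;I}^q\le C\sup_I|Y|^q\|X\|_{p-\var;I}^q+C\|X\|_{p-\var;I}^q\|Y\|_{q-\var;I}^q$ cannot be derived this way, and the Gr\"onwall loop does not close. The standard Picard/contraction route you sketch works precisely when both exponents are below $2$; the interesting regime here is $p>2>q$, where $X$ is rougher than $Y$ and the integral is one-sided Young.

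The paper circumvents this by a contradiction argument that exploits the linearity and the sign of $Y$: reduce to $p>2>q$ (the other case is classical), suppose $Y$ is positive somewhere, and extract a subinterval $[s,t]$ on which $Y$ is pinned between $\eps/(2K)$ and $\eps$ with $K=\exp\|X\|_\infty$. On that interval the Young--L\'oeve bound can be divided by $Y_{s'}\ge\eps/(2K)$ to show that $X$ itself has finite $q$-variation on $[s,t]$; then the classical uniqueness theorem applies there, forcing $Y_r=Y_s e^{X_r}$ and yielding $\eps=Y_t\le\eps/2$, a contradiction. The crucial idea you are missing is this bootstrap: one does not try to control the $q$-variation of $Y$ by that of $X$ (which is hopeless when $q<p$), but instead uses the non-vanishing of $Y$ to transfer regularity \emph{from $Y$ to $X$} on a well-chosen subinterval.
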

Before we begin the proof let us emphasize that we do not assume here $p<2$; otherwise the result is standard, see, e.g., \cite[Chapter 3, Theorem~3.1]{Bau}. Actually, we are interested in the opposite situation: $Y$ is a ``nice'' relatively smooth process with  $q\in[1,2)$ and $X$ is a very irregular process. Without the additional assumption on regularity of  $Y$, \eqref{YoungODE} would become a rough differential equation and one would have to lift $X$ to rough path in order to make sense of the integral. We consider  \eqref{YoungODE} as a Young differential equation and are interested in uniqueness among all smooth enough solutions.     
\begin{proof}
The main idea of the proof was communicated to us by Fedja Nazarov.

It is sufficient to consider the case $p>2>q$. Indeed, otherwise, if $p\le q$, then $p<2$ and uniqueness of solutions to \eqref{YoungODE} follows from \cite[Chapter 3, Theorem~3.1]{Bau}.
Assume that  \eqref{YoungODE} has a non-zero solution. Without loss of generality, we assume that $Y_r>0$ for some $r\in(0,1]$ (the case when $Y$ is nonpositive is done in exactly the same way). Then there exists $\eps>0$, $0\le s \le t\le 1$ such that 
\begin{equation*}
Y_s=\frac\eps{2K},\quad Y_t=\eps,\quad \inf_{r\in[s,t]} Y_r= \frac\eps{2K},
\end{equation*}
where we defined $K:=\exp(\|X\|_{L_\infty([0,1])})$. We claim now that
$X\in \C^{q-\var}([s,t],\R)$.

Indeed, by  Young-L\'oeve estimate \cite[Theorem~6.8]{FV2010} we have for any $(s',t')\in\Delta_{[s,t]}$
\begin{equation*}
|Y_{t'}-Y_{s'}-Y_{s'}(X_{t'}-X_{s'})|=\Bigl|\int_{s'}^{t'} Y_rdX_r-Y_{s'}(X_{t'}-X_{s'})\Bigr|\le C \|Y\|_{q-\var;[s',t']},
\end{equation*}
for $C=C(p,q)>0$. 
Therefore, taking into account that $Y_{s'}\ge \frac\eps{2K}$ for $s'\in[s,t]$ we get
\begin{equation*}
|X_{t'}-X_{s'}|\le \frac{2K}{\eps}(|Y_{t'}-Y_{s'}|+C \|Y\|_{q-\var;[s',t']})\le  \frac{2K}{\eps}(C+1) \|Y\|_{q-\var;[s',t']}.
\end{equation*}
This implies $ \|X\|_{q-\var;[s,t]}\le \frac{2K}{\eps}(C+1) \|Y\|_{q-\var;[s,t]}$, and thus $X\in \C^{q-\var}([s,t],\R)$. 

However, since $q<2$ and $X,Y\in \C^{q-\var}([s,t],\R)$,  equation \eqref{YoungODE} has a unique solution on $[s,t]$ by \cite[Chapter 3, Theorem~3.1]{Bau}. T his solution is given by $Y_r=Y_{s}e^{X_r}$, $r\in[s,t]$. Thus,
\begin{equation*}
\eps=Y_t=Y_s e^{X_t}\le \frac\eps{2K}K=\frac\eps2,
\end{equation*}
which is a contradiction. Therefore \eqref{YoungODE} has only zero solution.
\end{proof}

\section{Key integral bounds}\label{S:key}

In this section we establish key integral bounds which allow to establish existence and uniqueness of solutions to equation \eqref{mainSDE}. Without loss of generality, we will be working on the time interval $[0,1]$. Fix $d\in\N$, $H\in(0,1)$, and the filtration $(\F_t)_{t\in[0,1]}$ such that $W^H$ is an $(\F_t)$--fractional Brownian motion.  Recall representation \eqref{WB}. For $(s,t)\in\Delta_{[0,1]}$ consider a process
\begin{equation}\label{processV}
V_{s,t}:=W^H_t-\E^s W^H_t=\int_s^t K_H(t,r) dB_r.
\end{equation}

We begin with the following basic bound. Its proof uses  just the original stochastic sewing lemma from \cite{LeSSL}.
\begin{lemma}\label{L:firstb}
Let $f\colon\R^d\to\R$ be a bounded measurable function. Suppose that $f\in\B^\alpha_{q}$ where $q\in[2,\infty]$, $\alpha<0$.  Assume  that
\begin{equation}\label{ahcond}
 \alpha>-\frac1{2H}.
 \end{equation}
Let $m\in[2,\infty)$ such that $m\le q$.
Then there exists a constant $C=C(H,d, \alpha,m,q)>0$ such that for any $0\le S_0< S\le T\le 1$, any \textbf{deterministic} measurable function $x\colon[0,1]\to\R^d$  one has
\begin{align}\label{mainbound1}
&\Bigl\|\int_{S}^{T} f(V_{S_0,r}+x_r)dr\Bigr\|_{L_m(\Omega)}\nn\\
&\quad\le  C \|f\|_{\B^{\alpha}_q}(T-S)^{1+\alpha H}\bigl((S-S_0)^{-\frac{Hd}q} +\I(H<1/2)
(S-S_0)^{-\frac{d}{2q}} (T-S)^{ \frac{d}{2q}-\frac{Hd}{q}}\bigr).
\end{align}
\end{lemma}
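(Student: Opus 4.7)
The strategy is to apply the stochastic sewing lemma (\cref{T:SSLst}) to the germ
\begin{equation*}
A_{s,t} := \int_s^t \E^s f(V_{S_0,r}+x_r)\, dr, \qquad (s,t)\in\Delta_{[S,T]},
\end{equation*}
with natural candidate limit $\A_t := \int_S^t f(V_{S_0,r}+x_r)\,dr$. The foundational observation is the orthogonal decomposition $V_{S_0,r} = V_{S_0,r}^s + V_{s,r}$, where $V_{s,r}:=W^H_r-\E^s W^H_r$ is independent of $\F_s$, centered Gaussian with i.i.d.\ components of variance $\sigma^2(s,r):=\int_s^r K_H(r,u)^2\,du$, while $V_{S_0,r}^s := \E^s V_{S_0,r}$ is $\F_s$-measurable and unconditionally Gaussian with components of variance $\Sigma^2(S_0,s,r):=\int_{S_0}^s K_H(r,u)^2\,du$. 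This yields the clean identity
\begin{equation*}
\E^s f(V_{S_0,r}+x_r) = P_{\sigma^2(s,r)}f(V_{S_0,r}^s + x_r),
\end{equation*}
which reduces the stochastic analysis to Gaussian heat-kernel estimates.

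My plan is, first, to bound $\|A_{u,t}\|_{L_m(\Omega)}$ for $S\le u\le t\le T$. Using $\|\cdot\|_{L_m(\Omega)}\le\|\cdot\|_{L_q(\Omega)}$ (valid since $m\le q$ and $\P$ is a probability), the $L_q(\Omega)$-moment of $P_{\sigma^2(u,r)}f(V_{S_0,r}^u+x_r)$ is computed by integrating against the Gaussian density of $V_{S_0,r}^u$, whose $L_\infty$ bound is $C\Sigma(S_0,u,r)^{-d}$; combining with the Besov semigroup characterization $\|P_\tau g\|_{L_q}\le C\tau^{\alpha/2}\|g\|_{\B^\alpha_q}$ (valid for $\alpha<0$) gives
\begin{equation*}
\bigl\|\E^u f(V_{S_0,r}+x_r)\bigr\|_{L_q(\Omega)}\le C\|f\|_{\B^\alpha_q}\,\Sigma(S_0,u,r)^{-d/q}\,\sigma(u,r)^\alpha.
\end{equation*}
I would then insert the two-scale variance lower bounds supplied by the appendix, namely $\sigma(u,r)\ge C(r-u)^H$, $\Sigma(S_0,u,r)\ge C(u-S_0)^H$, and additionally the anomalous bound $\Sigma(S_0,u,r)\ge C(u-S_0)^{1/2}(r-u)^{H-1/2}$ in the regime $H<1/2$, and integrate in $r$ over $[u,t]$. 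The first lower bound on $\Sigma$ yields the first summand of \eqref{mainbound1} (with $S,T$ replaced by $u,t$), while the second, when available, yields the indicator-gated summand; integrability at $r=u$ is guaranteed by $\alpha>-\tfrac1{2H}$, which forces $1+H\alpha>\tfrac12>0$.

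Second, I would verify the hypotheses of \cref{T:SSLst} on $[S,T]$. The tower property gives $\E^s\delta A_{s,u,t}=0$, so \eqref{con:s2} holds with $\Gamma_3=0$; Jensen provides $\|\delta A_{s,u,t}\|_{L_m(\Omega)}\le 2\|A_{u,t}\|_{L_m(\Omega)}$, so together with the monotonicity $(u-S_0)^{-\kappa}\le(S-S_0)^{-\kappa}$ (valid as $u\ge S$) and $(t-u)\le(t-s)$, the germ bound converts into \eqref{con:s1} with
\begin{equation*}
\Gamma_1=C\|f\|_{\B^\alpha_q}(S-S_0)^{-Hd/q},\quad \eps_1=\tfrac12+H\alpha,
\end{equation*}
and (only when $H<1/2$) $\Gamma_2=C\|f\|_{\B^\alpha_q}(S-S_0)^{-d/(2q)}$, $\eps_2=\tfrac12+H\alpha+(\tfrac12-H)\tfrac dq$. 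The hypothesis $\alpha>-\tfrac1{2H}$ is precisely what makes $\eps_1>0$, and then $\eps_2\ge\eps_1>0$ automatically. The Riemann-sum convergence \eqref{con:s3} is a standard second-moment estimate for the telescoping martingale difference $\sum_i \int_{t_i^N}^{t_{i+1}^N}(\E^{t_i^N}-I)f(V_{S_0,r}+x_r)\,dr$, exploiting boundedness of $f$. \cref{T:SSLst} then bounds $\|\A_T-\A_S-A_{S,T}\|_{L_m(\Omega)}$ by exactly the right-hand side of \eqref{mainbound1}; combining with the already-proved bound on $\|A_{S,T}\|_{L_m(\Omega)}$ (specialise $u=S$, $t=T$) via the triangle inequality closes the argument. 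The main technical obstacle is the pair of sharp lower bounds on $\Sigma(S_0,s,r)$ for the Volterra kernel $K_H$; the anomalous lower bound in the $H<1/2$ regime is exactly what produces the second summand of \eqref{mainbound1}.
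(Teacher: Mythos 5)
Your proposal follows the paper's proof essentially step for step: the same germ $A_{s,t}=\E^s\int_s^t f(V_{S_0,r}+x_r)\,dr$, the same reduction via the conditional heat-kernel identity $\E^u f(V_{S_0,r}+x_r)=P_{\sigma^2(u,r)}f(\E^u V_{S_0,r}+x_r)$, the same observation that $\E^s\delta A_{s,u,t}=0$ so that only \eqref{con:s1} needs work, and the same martingale/BDG verification of \eqref{con:s3}. The one place where your write-up diverges is the intermediate variance estimate, and there it is not correct as stated. For $H<1/2$, neither of the two lower bounds you attribute to the appendix holds for all $S_0<u<r$: the bound $\Sigma(S_0,u,r)\ge C(u-S_0)^H$ fails as $r-u\to\infty$ with $u-S_0$ fixed (since $(r-S_0)^{2H}-(r-u)^{2H}\to 0$ then), and the ``anomalous'' bound $\Sigma(S_0,u,r)\ge C(u-S_0)^{1/2}(r-u)^{H-1/2}$ fails as $r-u\to0$ with $u-S_0$ fixed (the right-hand side blows up while the left stays bounded). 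Moreover, having two \emph{lower} bounds on the same $\Sigma$ would give the \emph{minimum} of the corresponding upper bounds on $\Sigma^{-d/q}$, not their sum, so the reasoning ``first bound yields first summand, second yields second summand'' is logically inverted. What \cref{P:proptg}(iii)--(iv) actually supplies for $H<1/2$ is the single estimate $\Sigma^2(S_0,u,r)\ge C(u-S_0)(r-S_0)^{2H-1}$, and the two summands of \eqref{mainbound1} then come from the sub-additivity $(a+b)^\rho\le C\,a^\rho+C\,b^\rho$ applied to $(r-S_0)^{(1/2-H)d/q}$ with $a=u-S_0$, $b=r-u$. Your two bounds do hold on the complementary regimes $r-u\le u-S_0$ and $r-u\ge u-S_0$, so the argument can be repaired by splitting the $r$-integral; the cleaner fix is simply to invoke \eqref{efp}, which is precisely what the paper does. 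All remaining steps in your proposal match the paper.
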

\begin{proof}
We apply stochastic sewing lemma from \cite{LeSSL}, that is, \cref{T:SSLst}, to the following processes:
\begin{align*}
&\A_t:=\int_S^{t}f(V_{S_0,r}+x_r)\,dr,\quad 	t\in[S,T]; &A_{s,t}:=\E^s\int_s^t f(V_{S_0,r}+x_r)dr,\quad s,t\in\Delta_{[S,T]}.
\end{align*}
We stress once again that the process $x$ is deterministic. Let us check that all the conditions of \cref{T:SSLst} are satisfied.
	
Using heat kernel bound \eqref{efp}, we  derive for  $s,t\in\Delta_{[S,T]}$
\begin{align}\label{Astl41}
\|  A_{s,t}\|_{L_q(\Omega)}&\le \int_s^t \|\E^s f(V_{S_0,r}+x_r)\|_{L_q(\Omega)}\, dr\nn\\
&\le C \|f\|_{\B^{\alpha}_q}(t-s)^{1+\alpha H}\bigl((S-S_0)^{-\frac{Hd}q} +\I(H<1/2)
 (S-S_0)^{-\frac{d}{2q}} (t-s)^{ \frac{d}{2q}-\frac{Hd}{q}}\bigr),
\end{align}
for $C=C(H,d,\alpha,q)>0$. 
We see that thanks to \eqref{ahcond}, $1+\alpha H>1/2$; further if $H<1/2$, then    
$1+ H(\alpha-\frac{d}q)+\frac{d}{2q}>1/2$. Thus,  condition \eqref{con:s1} holds  thanks to an obvious bound 
$$\|\delta A_{s,u,t}\|_{L_m(\Omega)}\le \| A_{s,t}\|_{L_q(\Omega)}+\| A_{s,u}\|_{L_q(\Omega)}+\| A_{u,t}\|_{L_q(\Omega)},
$$
where we also used that $m\le q$.

Next, note that for any $S\le s\le u\le t\le T$ we have
\begin{equation*}
		\delta A_{s,u,t}=\E^s \int_u^t f(V_{S_0,r}+x_r)\, dr-\E^u \int_u^t f(V_{S_0,r}+x_r)\, dr,
\end{equation*}
which implies that $\E^s \delta A_{s,u,t}=0$. Thus condition \eqref{con:s2} holds.

Thus it remains to verify \eqref{con:s3}.  Let $\Pi:=\{S=t_0,t_1,...,t_k=t\}$ be an arbitrary partition of $[S,t]$. Denote by $\Di{\Pi}$ its mesh size. Note that thanks to our choice of the processes $\A$ and $A$,  for any $i\in[0,k-1]$ we have $\E^{t_i}(\A_{t_{i+1}}-\A_{t_{i}}-A_{t_i,t_{i+1}})=0$ and $\A_{t_{i+1}}-\A_{t_{i}}-A_{t_i,t_{i+1}}$ is $\F_{t_{i+1}}$--measurable.   Therefore,  $\A_t-\sum_{i=0}^{k-1} A_{t_i,t_{i+1}}$ is a sum of martingale differences. Then, by the Burkholder--Davis--Gundy inequality,  
\begin{align*}
\Bigl\|\sum_{i=0}^{k-1} (\A_{t_{i+1}}-\A_{t_i}-A_{t_i,t_{i+1}})\Bigr\|_{L_2(\Omega)}^2
&\le 
C \sum_{i=0}^{k-1}\|\A_{t_{i+1}}-\A_{t_{i}}- A_{t_i,t_{i+1}}\|_{L_2(\Omega)}^2\nn\\
&\le C\|f\|_{L_\infty(\R^d)}\sum_{i=0}^{k-1} (t_{i+1}-t_i)^2 \nn\\
&\le C\|f\|_{L_\infty(\R^d)}(t-S)|\Pi|
\end{align*}
for $C>0$. 
Therefore, $\sum_{i=0}^{k-1} A_{t_i,t_{i+1}}$ converges to $\A_{t}$ in probability as $|\Pi|\to0$ and hence condition~\eqref{con:s3} holds.

Thus, all the conditions of \cref{T:SSLst}  are satisfied and \eqref{est:ssl1} implies
\begin{align*}
&\Bigl\|\int_{S}^{T}  f(V_{S_0,r}+x_r)\,dr\Bigr\|_{L_m(\Omega)}\\
&\quad\le 
C \|f\|_{\B^{\alpha}_q}(T-S)^{1+\alpha H}\bigl((S-S_0)^{-\frac{Hd}q} +\I(H<1/2)
(S-S_0)^{-\frac{d}{2q}} (T-S)^{ \frac{d}{2q}-\frac{Hd}{q}}\bigr),
 \end{align*}
 for $C=C(H,d,\alpha,m,q)>0$,
where again we used \eqref{Astl41}. This implies \eqref{mainbound1}.
\end{proof}

The next step is to remove the singularity at $S_0$ in \eqref{mainbound1} as well as the unnatural restriction $m \le q$. This is done using the taming singularities lemma and then applying the John–Nirenberg inequality, \cref{P:sing,P:BMO}. The former proposition allows us to take $S = S_0$ in \eqref{mainbound1}, while the latter allows us to take $m$ arbitrarily large.

\begin{lemma}\label{C:ourbounds}
Let $f\colon\R^d\to\R$ be a bounded measurable function. Suppose that $f\in\B^\alpha_{q}$ where $q\in[2,\infty]$, $\alpha<0$ such that 
\begin{equation}\label{extracondTS}
\alpha>-\frac1{2H}\tand \alpha-\frac{d}{q}>-\frac1H.
\end{equation}
Let $m\in[1,\infty)$.
Then there exists a constant $C=C(H,d,\alpha,m,q)>0$ such that for any $0\le S_0\le S\le T\le 1$, any \textbf{deterministic} measurable function $x\colon[0,1]\to\R^d$  one has
\begin{equation}\label{mainbound2}
\Bigl\|\int_S^T f(V_{S_0,r}+x_r)dr\Bigr\|_{L_m(\Omega)}\le 
C \|f\|_{\B^{\alpha}_q}(T-S)^{1+\alpha H-\frac{Hd}q}.
\end{equation}
%
%
%
%
\end{lemma}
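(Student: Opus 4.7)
The plan is to execute the two-step strategy outlined in the paragraph preceding the lemma. \textbf{Step 1 (taming the singularity at $S_0$).} I apply \cref{L:firstb} with $m=2$, which is permissible because $q\ge 2$. For fixed $S_0\in[0,1)$ and $s\in(0,1-S_0]$, set
\begin{equation*}
Y_s:=\int_{S_0}^{S_0+s}f(V_{S_0,r}+x_r)\,dr,
\end{equation*}
a well-defined element of $L_2(\Omega)$ since $f$ is bounded. For $0<s\le t\le 1-S_0$, applying \cref{L:firstb} with $S=S_0+s$, $T=S_0+t$ gives
\begin{align*}
\|Y_t-Y_s\|_{L_2(\Omega)}&\le C\|f\|_{\B^\alpha_q}s^{-Hd/q}(t-s)^{1+\alpha H}\\
&\quad+C\I(H<\tfrac12)\|f\|_{\B^\alpha_q}s^{-d/(2q)}(t-s)^{1+\alpha H+d/(2q)-Hd/q}.
\end{align*}
Each $\tau_i$ strictly exceeds the corresponding $\eta_i$ precisely thanks to the assumption $\alpha-d/q>-1/H$ in \eqref{extracondTS}, so \cref{P:sing}, applied in the Banach space $L_2(\Omega)$, collapses both terms to
\begin{equation*}
\|Y_t-Y_s\|_{L_2(\Omega)}\le C\|f\|_{\B^\alpha_q}(t-s)^{1+\alpha H-Hd/q}.
\end{equation*}
Translating back yields \eqref{mainbound2} for $m=2$ and every $0\le S_0\le S\le T\le 1$.

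\textbf{Step 2 (upgrade to arbitrary $m\ge 1$ via John--Nirenberg).} I set $\A_t:=\int_{S_0}^t f(V_{S_0,r}+x_r)\,dr$, which is continuous and $(\F_t)$-adapted. For $S_0\le s\le r$, the Volterra representation \eqref{processV} splits as
\begin{equation*}
V_{S_0,r}=\int_{S_0}^s K_H(r,u)\,dB_u+\int_s^r K_H(r,u)\,dB_u=:\xi^s_r+V_{s,r},
\end{equation*}
where $\xi^s_r$ is $\F_s$-measurable and $V_{s,r}$ is independent of $\F_s$ (since $B$ is an $(\F_t)$-Brownian motion). Hence $\A_t-\A_s=\int_s^t f(V_{s,r}+\bar x_r)\,dr$ for the $\F_s$-measurable family $\bar x_r:=\xi^s_r+x_r$. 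Conditioning on $\F_s$ freezes $\bar x$ to a deterministic function while leaving the law of $V_{s,\cdot}$ unchanged, so the Step 1 estimate applies pointwise on a set of full measure and, together with Jensen's inequality, gives
\begin{equation*}
\E^s|\A_t-\A_s|\le\bigl(\E^s|\A_t-\A_s|^2\bigr)^{1/2}\le C\|f\|_{\B^\alpha_q}(t-s)^{1+\alpha H-Hd/q}\quad\text{a.s.}
\end{equation*}
Since $1+\alpha H-Hd/q>0$ under \eqref{extracondTS}, on any subinterval $[S,T]\subset[S_0,1]$ this upgrades to the uniform bound $\E^s|\A_t-\A_s|\le \Gamma$ with $\Gamma:=C\|f\|_{\B^\alpha_q}(T-S)^{1+\alpha H-Hd/q}$. \cref{P:BMO} then delivers $\|\A_T-\A_S\|_{L_m(\Omega)}\le C_m\Gamma$ for every $m\in[1,\infty)$, which is exactly \eqref{mainbound2}.

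The main subtle point is the independence $V_{s,\cdot}\perp\F_s$ extracted from the Volterra decomposition: it is what licenses the Step 1 bound, proved only for \emph{deterministic} $x$, to be redeployed (conditionally) with the random, but $\F_s$-measurable, drift $\bar x=\xi^s_r+x_r$, and is the sole place where the specific form \eqref{Hl12}--\eqref{Hg12} of the kernel $K_H$ intervenes.
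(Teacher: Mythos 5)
Your proof is correct and follows essentially the same two-step strategy as the paper: apply \cref{L:firstb} together with the taming lemma \cref{P:sing} to remove the singularity at $S_0$, then upgrade to general $m$ via the freezing/independence decomposition $V_{S_0,r}=\xi^s_r+V_{s,r}$ and the John--Nirenberg inequality \cref{P:BMO}. The only cosmetic differences are that you parametrize the singularity-taming process $Y$ forward from $S_0$ (the paper anchors it at $T$) and that you run Step~1 in $L_2$ rather than $L_1$; neither affects the argument, though one should make explicit the limiting step (as the paper does with Fatou's lemma) needed to include the endpoint $S=S_0$ after applying \cref{P:sing}, since that proposition is stated only for $s>0$.
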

\begin{proof} 
\textbf{Step 1}. We prove \eqref{mainbound2} for $m=1$. We tame the singularities with \cref{P:sing}. Fix $(S,T)\in\Delta_{[0,1]}$ and put
\begin{equation*}
Y_t:=\int_{S+t}^T  f(V_{S_0,r}+x_r)\, dr, \quad t\in(0,T-S].
\end{equation*}
We see that \eqref{extracondTS} implies $\alpha H >-\frac12$, and thus by \cref{L:firstb}, for $(s,t)\in\Delta_{[0,T-S]}$ we have
\begin{equation*}
\|Y_t-Y_s\|_{L_q(\Omega)}\le C \|f\|_{\B^{\alpha}_q}(t-s)^{1+\alpha H}\bigl(s^{-\frac{Hd}q} +\I(H<1/2)s^{-\frac{d}{2q}} (t-s)^{ \frac{d}{2q}-\frac{Hd}{q}}\bigr),
\end{equation*}
for $C=C(H,d,\alpha,q)>0$.
We also see from \eqref{extracondTS} that  $1+\alpha H>\frac{Hd}q$ and $1+\alpha H+ \frac{d}{2q}-\frac{Hd}{q}>\frac{d}{2q}$. Hence all the conditions of \cref{P:sing} are satisfied and we get that for any $\eps\in(0,T-S]$,
\begin{align*}
\Bigl\|\int_{S+\eps}^T  f(V_{S_0,r}+x_r)\, dr\Bigr\|_{L_1(\Omega)}&\le \Bigl\|\int_{S+\eps}^T  f(V_{S_0,r}+x_r)\, dr\Bigr\|_{L_q(\Omega)}=\|Y_{T-S}-Y_\eps\|_{L_q(\Omega)}\\
&\le C \|f\|_{\B^{\alpha}_q}(T-S-\eps)^{1+\alpha H-\frac{Hd}q},
\end{align*}
where $C=C(H,d,\alpha,q)>0$.
By passing to the limit as $\eps\to0$, we get \eqref{mainbound2} for $m=1$; here we used Fatou's lemma and boundedness of the function $f$. 

\textbf{Step 2}. Now let us prove \eqref{mainbound2} for all $m\in[1,\infty)$;  actually, the proof of Step~1 works  for $m\le q$, but not for large $m$. We apply \cref{P:BMO}.  Put 
$$
\A_t:=\int_S^t  f(V_{S_0,r}+x_r)\, dr, \quad t\in[S,T].
$$
Then for any $(s,t)\in\Delta_{[S,T]}$ we have
\begin{equation*}
\E^s |\A_t-\A_s|=\E^s  \Bigl|\int_s^t  f(\E^s V_{S_0,r}+V_{s,r}+x_r)\, dr\Bigr|=
\E \Bigl|\int_s^t f(V_{s,r}+y_r)dr\Bigr| \bigg\rvert_{y_r= \E^s V_{S_0,r}+x_r},
\end{equation*}
where we used independence of $(V_{s,r})_{r\ge s}$ and $\F_s$. Applying \eqref{mainbound2} with $m=1$ obtained from Step~1, we get 
\begin{equation*}
\E^s |\A_t-\A_s|\le C \|f\|_{\B^{\alpha}_q}(t-s)^{1+\alpha H-\frac{Hd}q}\le C \|f\|_{\B^{\alpha}_q}(T-S)^{1+\alpha H-\frac{Hd}q},\quad (s,t)\in\Delta_{[S,T]},
\end{equation*}
where $C=C(H,d,\alpha,q)>0$.
Thus, \eqref{BMOcon} is satisfied with $\Gamma:=C \|f\|_{\B^{\alpha}_q}(T-S)^{1+\alpha H-\frac{Hd}q}$. Clearly, the process $\A$ is continuous since $f$ is bounded. Thus, all the conditions of \cref{P:BMO} are satisfied and hence we get  for $m\in[1,\infty)$
\begin{align*}
\Bigl\|\int_S^T f(V_{S_0,r}+x_r)dr\Bigr\|_{L_m(\Omega)}&=\|\A_T-\A_S\|_{L_m(\Omega)}\le C \|f\|_{\B^{\alpha}_q}(T-S)^{1+\alpha H-\frac{Hd}q},
\end{align*}
where $C=C(H,d,\alpha,m,q)>0$, 
which is \eqref{mainbound2}.
\end{proof}
\begin{remark}\label{R:newthings}
Note that if $f\in\B^\alpha_q$, then $f\in\B^{\alpha-\frac{d}q}_\infty$. Therefore, if we apply \cref{L:firstb} alone, we would obtain exactly the same bound \eqref{mainbound2}, but under the much more restrictive condition $\alpha - \frac{d}{q} > -\frac{1}{2H}$. The above proof shows that an application of the taming singularities lemma (\cref{P:sing}) leads to a better condition \eqref{extracondTS}. Thus, a combination of the taming singularities technique and stochastic sewing gives much better results than stochastic sewing alone. \cref{P:BMO} allows us to further refine the obtained results. Namely, when combined with stochastic sewing, it allows us to bound all the moments of the integral (rather than just the first few moments).
\end{remark}

Next, we need an analogue of \eqref{mainbound2}, but for the difference of functions.

\begin{corollary}\label{C:twopoint}
Let $f\colon\R^d\to\R$ be a bounded measurable function. Suppose that $f\in\B^\alpha_{q}$ where $q\in[2,\infty]$, $\alpha<0$. Let $\lambda\in[0,1]$, $m\in[1,\infty)$.  Assume that
\begin{align}\label{alphalambda}
	\alpha- \lambda>-\frac1{2H}\tand
	\alpha-\frac dq- \lambda>-\frac1H
\end{align}

Then there exists a constant $C=C(H,d,\alpha,\lambda,m,q)>0$  such that the following bound holds for  any \textbf{deterministic} measurable function $x\colon[0,1]\to\R^d$, $y\in\R^d$, $(S,T)\in\Delta_{[0,1]}$:
\begin{equation}\label{mainbound2dif}
\Bigl\|\int_S^T (f(V_{S,r}+x_r+y)-f(V_{S,r}+x_r))\,dr\Bigr\|_{L_m(\Omega)}\le C \|f\|_{\B^{\alpha}_q}|y|^\lambda (T-S)^{1+H(\alpha-\lambda-\frac{d}q)}.
\end{equation}
\end{corollary}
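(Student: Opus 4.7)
The plan is to reduce \eqref{mainbound2dif} to a direct application of \cref{C:ourbounds} by viewing the difference $f(\cdot+y)-f(\cdot)$ as a single function of strictly worse Besov regularity. Concretely, I would define
\begin{equation*}
g_y(z):=f(z+y)-f(z),\qquad z\in\R^d,
\end{equation*}
and use the standard Besov-space identity (a characterization of Besov regularity via differences, which is typically one of the Appendix lemmas in this paper) asserting that for $\lambda\in[0,1]$
\begin{equation*}
\|g_y\|_{\B^{\alpha-\lambda}_q}\le C|y|^\lambda\|f\|_{\B^\alpha_q},
\end{equation*}
with $C=C(d,\alpha,\lambda,q)$. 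Since $f$ is assumed bounded, $g_y$ is bounded as well, and since $\alpha<0$ we also have $\alpha-\lambda<0$, so the hypothesis framework of \cref{C:ourbounds} is applicable to $g_y$ with the shifted regularity exponent $\widetilde\alpha:=\alpha-\lambda$.

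Next, I would check that conditions \eqref{alphalambda} are exactly the translation of \eqref{extracondTS} for $\widetilde\alpha=\alpha-\lambda$, namely
\begin{equation*}
\widetilde\alpha>-\tfrac1{2H}\quad\text{and}\quad \widetilde\alpha-\tfrac{d}{q}>-\tfrac1H.
\end{equation*}
Applying \cref{C:ourbounds} to $g_y$ with $S_0=S$ therefore yields
\begin{equation*}
\Bigl\|\int_S^T g_y(V_{S,r}+x_r)\,dr\Bigr\|_{L_m(\Omega)}\le C\|g_y\|_{\B^{\widetilde\alpha}_q}(T-S)^{1+\widetilde\alpha H-\frac{Hd}q},
\end{equation*}
with $C=C(H,d,\alpha,\lambda,m,q)$. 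Combining this with the Besov difference estimate gives the right-hand side of \eqref{mainbound2dif} directly, since $1+\widetilde\alpha H-\frac{Hd}q=1+H(\alpha-\lambda-\frac{d}q)$.

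The one technical point worth flagging is the Besov difference inequality $\|f(\cdot+y)-f(\cdot)\|_{\B^{\alpha-\lambda}_q}\lesssim|y|^\lambda\|f\|_{\B^\alpha_q}$ for $\lambda\in[0,1]$; it is standard (it can be verified directly via the Littlewood--Paley characterization, using that convolution with a translate is contractive on $L_q$ and treating high/low frequencies separately), and presumably already recorded in the Besov-space appendix of the paper. Beyond invoking this inequality, the argument is purely a reduction, with no new sewing input needed. I therefore do not anticipate any genuine obstacle: once the right function $g_y$ is identified, \cref{C:ourbounds} does all the work.
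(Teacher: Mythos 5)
Your proposal is correct and follows essentially the same route as the paper: define $g_y=\tau_yf-f$, invoke the translation estimate $\|\tau_yf-f\|_{\B^{\alpha-\lambda}_q}\le C|y|^\lambda\|f\|_{\B^\alpha_q}$ (this is indeed recorded in the Appendix as \eqref{fxydif}), and apply \cref{C:ourbounds} with $\alpha$ replaced by $\alpha-\lambda$ after checking that \eqref{alphalambda} is exactly \eqref{extracondTS} for the shifted exponent.
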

\begin{proof}
Introduce the shift operator $\tau_yf(z):=f(y+z)$, $y,z\in\R^d$. Let us apply \eqref{mainbound2} to the function $\tau_y f -f$ with $\alpha-\lambda$ in place of $\alpha$. We see that condition \eqref{alphalambda} ensures that  \eqref{extracondTS} holds. Then we get 
\begin{align*}
\Bigl\|\int_S^T (f(V_{S,r}+x_r+y)-f(V_{S,r}+x_r))\,dr\Bigr\|_{L_m(\Omega)}&\le
		C\|\tau_y f -f\|_{\B^{\alpha-\lambda}_q}(T-S)^{1+H(\alpha-\lambda-\frac{d}q)}\\
&\le C \|f\|_{\B^{\alpha}_q}|y|^\lambda(T-S)^{1+H(\alpha-\lambda-\frac{d}q)},
\end{align*}
where $C=C(H,d,\alpha,\lambda,m,q)>0$ and  the last inequality follows from \eqref{fxydif}. This yields \eqref{mainbound2dif}.
\end{proof}

\begin{remark} By taking in \eqref{mainbound2dif} $S=0$, $x\equiv0$, $f=\delta_u$, $u\in\R^d$,
 $\alpha=-\frac{d}{2}$, $q=2$, $\lambda\in[0,(\frac{1}{2H}-\frac{d}{2})\wedge1)$ we get that if $Hd<1$, then 
	\begin{equation*}
		\|L^W(u-y,t)-L^W(u,t)\|_{L_m(\Omega)}\le C |y|^{\lambda} t^{1-H(d+\lambda)},\quad t\in[0,1],
	\end{equation*}	
	where $L^W$ is the local time of fractional Brownian motion $W^H$  and we also used \eqref{fxydif}. This provides a short and direct proof of Xiao's result about moments of local time \cite[Lemma~2.5]{Xiao}. 
	It just remains to justify that \cref{C:ourbounds} is applicable also for distributions and not only for bounded functions, which is done in the proof of \cref{T:loc}(i) by approximating a distribution by smooth functions (note that the right-hand side of \eqref{mainbound2} depends only on $\|f\|_{\B^\alpha_q}$ and does not depend on higher Besov norms of $f$). 
\end{remark}

Now we are ready to move on to the main lemma of this section, which allows us to replace a deterministic drift in \eqref{mainbound2} with a stochastic drift. It is clear that such a replacement should come with certain restrictions: one must impose that the stochastic drift $\psi$ is more regular (in a certain sense) than the driving noise $W^H$. Indeed, otherwise, one could simply take $\psi = -W$, thereby completely canceling the noise. An important observation is that it is much better to measure regularity on the $p$-variation scale rather than in the Hölder scale. Indeed, if $b \in L_p(\R^d)$, then the drift $\int b(X_r) , dr$ in \Gref{x;b} is only $(1 - \frac{Hd}{p})$-Hölder (and this exponent can be arbitrarily small), but it always has finite $1$-variation.

The proof of the lemma strongly relies on the Rosenthal-type stochastic sewing, \cref{T:RoSSL}, and the integral bounds already obtained in this section.

\begin{lemma}\label{L:driftb2}
Let $m\in[2,\infty)$. 
Let $f\colon\R^d\to\R$ be a measurable function, $f\in \B^\alpha_{q}$, where $q\in [2,\infty]$, $\alpha<0$. Suppose further that $f$ is bounded   or $f\ge0$. Let $z\colon[0,1]\times\Omega\to\R^d$ be a (possibly \textbf{non-deterministic}) continuous process adapted to the filtration $(\F_t)_{t\ge0}$
such that 
\begin{equation}\label{finvar}
\|\,\|z\|_{1-\var;[0,1]}\|_{L_m(\Omega)}<\infty.	
\end{equation}
Let $\tau$ be a measurable random variable taking values in $[0,1]$. Assume  
 that the following conditions hold:
\begin{align}\label{maincond1}
&\Bigl(\alpha-\frac{d}q\Bigr)H>-1+H,\\
&\alpha>\bigl(\frac{Hd}q\wedge\frac12\bigr)-\frac1{2H}.
\label{maincond2}
\end{align}

Then there exists a constant $C=C(H,d,\alpha,m,q)>0$  such that for any $(s,t)\in\Delta_{[0,1]}$:
\begin{align}\label{finbound}
\Bigl\|\int_{s\wedge\tau}^{t\wedge\tau} f(W_r^H+z_r)dr\Bigr\|_{L_m(\Omega)}\le& C \|f\|_{\B^{\alpha}_q}(t-s)^{1+H(\alpha-\frac{d}q-1)}\|\,\|z\|_{1-\var;[s,t]}\|_{L_m(\Omega)}\nn\\
&+C \|f\|_{\B^{\alpha}_q}(t-s)^{1+ H(\alpha-\frac{d}{q})}.
\end{align}
\end{lemma}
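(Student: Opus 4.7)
The plan is to apply the Rosenthal-type stochastic sewing lemma \cref{T:RoSSL} to the germ $A_{s,t} := \int_s^t f(W^H_r + z_s)\,dr$ with limit process $\A_t := \int_0^t f(W^H_r + z_r)\,dr$. A preliminary density argument lets me first assume $f \in \C_b^\infty$: the Riemann-sum convergence required by \cref{T:RoSSL} then follows from continuity of $z$ and bounded convergence, and the extension to general $f \in \B^\alpha_q$ is by approximation and Fatou's lemma, exploiting that the right-hand side of \eqref{finbound} depends on $f$ only through $\|f\|_{\B^\alpha_q}$ and that either $f \ge 0$ or $f$ bounded ensures that $\A$ is well-defined in the limit.

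For the high-moment bound \eqref{Rcon:s3}, I would condition on $\F_s$: since $\E^s W^H_r + z_s$ is $\F_s$-measurable and $V_{s,r} := W^H_r - \E^s W^H_r$ is independent of $\F_s$, \cref{C:ourbounds} applied under $\P(\cdot\mid\F_s)$ with $x_r = \E^s W^H_r + z_s$ delivers $\|A_{s,t}\|_{L_n(\Omega)} \le C \|f\|_{\B^\alpha_q} (t-s)^{1 + H(\alpha - d/q)}$ for every finite $n$. Hypothesis \eqref{maincond2} guarantees \eqref{extracondTS} and \eqref{maincond1} gives $\alpha_3 := 1 + H(\alpha - d/q) > H > 0$, so $\alpha_3 > 1/n$ for $n$ large; this produces the second term of \eqref{finbound}. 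For the first-moment bound \eqref{Rcon:s2}, I use the Gaussian smoothing of conditional expectation: writing $\E^u f(W^H_r + y) = (P_{V_{u,r}} f)(\E^u W^H_r + y)$ for $\F_u$-measurable $y$, standard Besov heat-kernel estimates (cf.\ the Appendix) give $\|\nabla P_{V_{u,r}} f\|_{L_\infty} \le C (r-u)^{H(\alpha - d/q - 1)} \|f\|_{\B^\alpha_q}$. Applying this at $y = z_s, z_u$, integrating in $r$ (integrability from \eqref{maincond1}), and using $|z_s - z_u| \le \|z\|_{1-\var;[s,t]}$ together with the superadditivity of $1$-variation (so that $\lambda_2(s,t) := \|z\|_{1-\var;[s,t]}$ is a random control) gives $|\E^u \delta A_{s,u,t}| \le C \|f\|_{\B^\alpha_q} \|z\|_{1-\var;[s,t]} (t-s)^{1 + H(\alpha - d/q - 1)}$. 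With $\beta_2 = 1$ the RSSL condition $\alpha_2 + \beta_2 > 1$ reduces to \eqref{maincond1}, and this contribution is the first term of \eqref{finbound}.

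For the conditional second-moment bound \eqref{Rcon:s1}, the crucial step, I would invoke \cref{C:twopoint} conditionally on $\F_u$: with $S = u$, $x_r = \E^u W^H_r$, $y = z_s - z_u$ (all $\F_u$-measurable, deterministic under $\P(\cdot\mid\F_u)$), $m = 2$, and an admissible $\lambda \in [0, 1]$,
\begin{equation*}
(\E^u |\delta A_{s,u,t}|^2)^{1/2} \le C \|f\|_{\B^\alpha_q} \|z\|_{1-\var;[s,t]}^\lambda (t-s)^{1 + H(\alpha - \lambda - d/q)}.
\end{equation*}
Setting $\lambda_1(s,t) := \|z\|_{1-\var;[s,t]}$ and $\beta_1 = \lambda$, the RSSL condition $\alpha_1 + \beta_1 > 1/2$ becomes $1 + H(\alpha - d/q) + \lambda(1 - H) > 1/2$, and assumption \eqref{maincond2} combined with \eqref{alphalambda} is precisely what ensures existence of some $\lambda \in [0,1]$ satisfying this inequality together with \cref{C:twopoint}'s admissibility constraints. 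The elementary Young bound $Z^\lambda \le \lambda Z + (1 - \lambda)$ (for $Z \ge 0$, $\lambda \in [0,1]$) then reshuffles the resulting $\|z\|_{1-\var;[s,t]}^\lambda$ contribution into terms that fit within the two summands of \eqref{finbound}, using $(t-s)^{1 + H(\alpha - \lambda - d/q)} \le (t-s)^{1 + H(\alpha - d/q - 1)}$ when $\lambda \le 1$ and $t-s \le 1$.

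The main obstacle is the verification of \eqref{Rcon:s1}: the parameter regime in which $\lambda = 1$ is inadmissible in \cref{C:twopoint} is precisely why the stronger hypothesis \eqref{maincond2} enters the statement, in contrast to \eqref{maincond1}, which alone governs the first-moment estimate \eqref{Rcon:s2}. Assembling the three pieces via \cref{T:RoSSL} with $m \in [2, n]$ for $n$ sufficiently large produces the claimed bound \eqref{finbound}.
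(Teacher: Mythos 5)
Your proposal follows essentially the same strategy as the paper: the germ $A_{s,t}=\int_s^tf(W^H_r+z_s)\,dr$, the three RSSL conditions verified by conditioning on $\F_s$ or $\F_u$, the heat-kernel bounds \eqref{Cbound}, the two-point estimate \cref{C:twopoint}, and the random control $\|z\|_{1-\var;[s,t]}$. Three small points where your write-up falls short of what the paper actually does.

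First, you assert that \eqref{maincond1}--\eqref{maincond2} ``ensure existence of some $\lambda\in[0,1]$'' satisfying both \eqref{alphalambda} and the RSSL exponent condition, but do not verify it; note that $\lambda=0$ fails when $H\le\frac12$ (since the RSSL condition $1+H(\alpha-\frac dq)>\frac12$ is not implied by \eqref{maincond1} in that case). The paper makes the concrete choice $\lambda=\mu:=\frac{Hd}q\wedge\frac12$ and checks \eqref{equationmu} by hand; you need to do the same or equivalent. Second, your closing Young-inequality step, as written, does not yield the claimed bound \eqref{finbound}: applying $Z^\lambda\le\lambda Z+1-\lambda$ with $Z=\|z\|_{1-\var;[s,t]}$ and then using $(t-s)^{1+H(\alpha-\lambda-\frac dq)}\le(t-s)^{1+H(\alpha-\frac dq-1)}$ leaves a constant term carrying the \emph{larger} exponent $1+H(\alpha-\frac dq-1)$, not the claimed $1+H(\alpha-\frac dq)$. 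You must instead balance the powers and apply the inequality to the ratio $Z=(t-s)^{-H}\|z\|_{1-\var;[s,t]}$, i.e.\ use $a^{-\lambda}b^{\lambda}\le a^{-1}b+1$ with $a=(t-s)^{H}$, $b=\|z\|_{1-\var;[s,t]}$, which directly gives $(t-s)^{1+H(\alpha-\lambda-\frac dq)}\|z\|^\lambda_{1-\var;[s,t]}\le(t-s)^{1+H(\alpha-1-\frac dq)}\|z\|_{1-\var;[s,t]}+(t-s)^{1+H(\alpha-\frac dq)}$ as required. Third, the approximation from $\C^\infty_b$ to general bounded-or-nonnegative measurable $f$ is subtler than ``approximation and Fatou'': mollifications of a merely measurable bounded $f$ converge only $\Leb$-a.e., so you need an extra ingredient to pass to the limit inside $\int_s^t f(W^H_r+z_r)\,dr$ --- either the absolute continuity of $\Law(W^H_r+z_r)$, or, as the paper does, a Urysohn/Lusin argument (its Steps 3--4, where the key estimate for indicator functions of small open sets makes the $\delta\to0$ limit rigorous).
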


\begin{proof}[Proof of \cref{L:driftb2}]
\textbf{Step~1}. Assume further that $f\in\C^1(\R^d)$.
First of all, we note that conditions \eqref{maincond1}, \eqref{maincond2} implies that for $\mu:=\frac{Hd}{q}\wedge\frac12$ the following condition holds:
\begin{equation}\label{equationmu}
(\alpha-\mu)H>-\frac1{2}\quad\text{and}\quad 
(\alpha-\frac{d}q-\mu) H>\max(-\frac12-\mu,-1).
\end{equation}
Indeed, the first inequality of \eqref{equationmu} is just \eqref{maincond2}. For the second inequality of \eqref{equationmu}, we note that if $\frac{Hd}q\le\frac12$, then $\mu=\frac{Hd}q$ and by \eqref{maincond2}
\begin{equation*}
(\alpha-\frac{d}q-\mu)H>(\mu-\frac1{2H}-\frac{d}q-\mu)H=-\frac12-\frac{Hd}q=-\frac12-\mu.
\end{equation*}
If, alternatively, $\frac{Hd}q>\frac12$, then $\mu=\frac12$ and \eqref{maincond1} implies
\begin{equation*}
(\alpha-\frac{d}q-\mu)H>-1+H-\frac12H>-1=-\frac12-\mu.
\end{equation*}
Thus, in both cases the second inequality of \eqref{equationmu} also holds.

We apply the  Rosenthal-type stochastic sewing  with random controls, \cref{T:RoSSL}, to the following processes
\begin{align}
&\A_t:=\int_0^t f(W_r^H+z_r)dr,\quad t\in[0,1]; \nn\\
&A_{s,t}:=\int_s^t f(W_r^H+z_s)\,dr,\quad (s,t)\in\Delta_{[0,1]}.\label{incr}
\end{align}
Let us verify that all the conditions of \cref{T:RoSSL} are satisfied. Clearly, for any $0\le s<u<t\le 1$ we have
\begin{equation*}
	\delta A_{s,u,t}=\int_u^t (f(W_r^H+z_s)-f(W_r^H+z_u))dr.
\end{equation*}

Recall the definition of process $V$ in \eqref{processV}. We have
\begin{align*}
\E^u|\delta A_{s,u,t}|^2&=\E^u \Bigl|\int_u^t (f(\E^u W_r^H+V_{u,r}+z_s)-f(\E^u W_r^H+V_{u,r}+z_u))dr\Bigr|^2\nn\\
&=\E \Bigl|\int_u^t (f(V_{u,r}+x_r+y)-f(V_{u,r}+x_r))dr\Bigr|^2 \bigg\rvert_{x_r= \E^u W_r^H+z_u,\,y=z_s-z_u},
\end{align*}
where we used the independence of $\F_u$ and the process $(V_{u,r})_{r\ge u}$.

Now we apply \cref{C:twopoint} with $\lambda=\mu$. Note that  \eqref{alphalambda} is satisfied thanks to assumption \eqref{equationmu}. We get 
\begin{equation}\label{onecondcheck}
\E^u|\delta A_{s,u,t}|^2\le C \|f\|_{\B^{\alpha}_q}^2 |z_u-z_s|^{2\mu} (t-s)^{2+2H(\alpha-\mu-\frac{d}q)}\le C \|f\|_{\B^{\alpha}_q}^2 \|z\|_{1-\var;[s,t]}^{2\mu} (t-s)^{2+2H(\alpha-\mu-\frac{d}q)}
,\end{equation}
where $C=C(H,d,\alpha,\mu,q)>0$.
We note that $\|z\|_{1-\var;[s,t]}$ is a random control in the sense of \eqref{rcont}. By \eqref{equationmu}, we have  $\mu+1+(\alpha-\mu-\frac{d}q)H>\frac12$ and  condition \eqref{Rcon:s1} holds.

Further, using consequently \eqref{meanformula}, \eqref{Cbound}, and \eqref{KHT}, we derive for $0\le s\le u \le t \le 1$
\begin{align}\label{twocondcheck}
	|\E^u[\delta A_{s,u,t}]|&=\Bigl|\int_u^t \bigl(P_{\sigma^2(u,r)}f(\E^u W_r^H+z_s)-P_{\sigma^2(u,r)}f (\E^u W_r^H+z_u)\bigr)\,dr\Bigr|\nn\\
	&\le \|z\|_{1-\var;[s,u]}\int_u^t \|P_{\sigma^2(u,r)}f\|_{\C^1(\R^d)}\,dr\nn\\
	&\le C \|z\|_{1-\var;[s,t]} \|f\|_{\B^{\alpha}_q}\int_u^t (r-u)^{H(\alpha-\frac{d}q-1)}\,dr\nn\\
	&\le C \|z\|_{1-\var;[s,t]} \|f\|_{\B^{\alpha}_q}(t-u)^{1+H(\alpha-\frac{d}q-1)},
\end{align}
where $\sigma^2(u,r):=\int_u^t (K_H(t,r'))^2\,dr'$ and $C=C(H,d,\alpha,q)>0$.
Note  that  $\|z\|_{1-\var;[s,t]}$ is a random control and by \eqref{maincond1}, ${1+H(\alpha-\frac{d}q-1)>0}$. Therefore \eqref{Rcon:s2} is satisfied. 

Next, we see that for any $n\ge2$, $(s,t)\in\Delta_{[0,1]}$
\begin{align*}
\| A_{s,t}\|_{L_n(\Omega)}^n&=\E \E^s \Bigl|\int_s^t  f(W_r^H+z_s)\,dr\Bigr|^n=\E \E^s\Bigl|\int_s^t  f(\E^s W_r^H+V_{s,r}+z_s)\,dr\Bigr|^n\\
&=\E \Bigl[\E \Bigl|\int_s^t f(V_{s,r}+x_r)dr\Bigr|^n \bigg\rvert_{x_r= \E^s W_r^H+z_s}\Bigr] ,
\end{align*}
where we again used the independence of $\F_s$ and $(V_{s,r})_{r\ge s}$. We see that by  \eqref{maincond2},  $\alpha>-\frac1{2H}$. Further,   \eqref{maincond1} guarantees that $\alpha-\frac{d}q>-\frac1H$. Thus, \eqref{extracondTS} holds. Therefore, all the conditions of \cref{C:ourbounds} are satisfied and we get
\begin{equation}\label{threecondcheck}
\| A_{s,t}\|_{L_n(\Omega)}\le C \|f\|_{\B^{\alpha}_q}(t-s)^{1+\alpha H-\frac{Hd}{q}}
\end{equation}
for $C=C(H,d,\alpha,n,
q)>0$
By taking $n\ge m$ large enough, we see that \eqref{Rcon:s3} and the last inequality of \eqref{alphabeta} are also satisfied.

Finally, let us verify \eqref{Rcon:3}.  Let $\Pi:=\{S=t_0,t_1,...,t_k=t\}$ be an arbitrary partition of $[S,t]$.
Recalling definition \eqref{incr}, we note that for any $i=0,\hdots,k-1$
\begin{align*}
&|\A_{t_{i+1}\wedge \tau}-\A_{t_i\wedge \tau}-A_{t_i,t_{i+1}}\I(t_i\le \tau))|\\
&\quad =\Bigl|\int_{t_i\wedge \tau}^{t_{i+1}\wedge \tau} (f(W_r^H+z_r)- f(W_r^H+z_s))\,dr -\I(t_i\le \tau < t_{i+1})\int_{\tau}^{t_{i+1}}f(W_r^H+z_s)\,dr\Bigr|\\
&\quad \le\int_{t_i}^{t_{i+1}} |f(W_r^H+z_r)- f(W_r^H+z_s)|\,dr +|\Pi|\|f\|_{L_\infty(\R^d)}\I(t_i\le \tau< t_{i+1})\\
&\quad \le \|f\|_{\C^1(\R^d)}|\Pi|(\|z\|_{1-\var;[t_i,t_{i+1}]} +\I(t_i\le \tau< t_{i+1})).
\end{align*}
Then 
\begin{align*}
&\Bigl\|\A_{t\wedge \tau}-\A_{S\wedge \tau}-\sum_{i=0}^{k-1} A_{t_i,t_{i+1}}\I(t_i\le \tau))\Bigr\|_{L_1(\Omega)}\\
&\quad\le \Bigl\| \sum_{i=0}^{k-1} |\A_{t_{i+1}\wedge \tau}-\A_{t_i\wedge \tau}-A_{t_i,t_{i+1}}\I(t_i\le \tau)|\Bigr\|_{L_1(\Omega)}\\
&\quad\le \|f\|_{\C^1(\R^d)}|\Pi|(\|\,\|z\|_{1-\var;[0,1]}\|_{L_1(\Omega)} +1).
\end{align*}
Since $\|\,\|z\|_{1-\var;[0,1]}\|_{L_1(\Omega)}<\infty$ by \eqref{finvar}, we see that \eqref{Rcon:3} holds.

Therefore, all the conditions of \cref{T:RoSSL} are satisfied and we get 
for $(s,t)\in\Delta_{[0,1]}$ (recall \eqref{onecondcheck}, \eqref{twocondcheck}, \eqref{threecondcheck})
\begin{align*}
&\|\A_{t\wedge\tau}-\A_{s\wedge\tau}\|_{L_m(\Omega)}\\
&\quad= \Bigl\|\int_{s\wedge\tau}^{t\wedge\tau} f(W_r^H+z_r)dr\Bigr\|_{L_m(\Omega)}\\
&\quad \le C \|f\|_{\B^{\alpha}_q}(t-s)^{1+H(\alpha-\frac{d}q-\mu)}\|\,\|z\|_{1-\var;[s,t]}\|^\mu_{L_m(\Omega)}\\
&\qquad+C \|f\|_{\B^{\alpha}_q}(t-s)^{1+H(\alpha-\frac{d}q-1)}\|\,\|z\|_{1-\var;[s,t]}\|_{L_m(\Omega)}+C \|f\|_{\B^{\alpha}_q}(t-s)^{1+ H(\alpha-\frac{d}{q})}\\
&\quad = C \|f\|_{\B^{\alpha}_q}(t-s)^{1+ H(\alpha-\frac{d}{q})}\bigl((t-s)^{-1}\|\,\|z\|_{1-\var;[s,t]}\|_{L_m(\Omega)}+(t-s)^{-\mu}\|\,\|z\|_{1-\var;[s,t]}\|_{L_m(\Omega)}^\mu+1\bigr),
\end{align*}
where $C=C(H,d,\alpha,\mu,m,q)>0$. 
Note that for any $a,b>0$ one  has $a^{-\mu}b^{\mu}\le a^{-1}b+1$, since $\mu\in[0,1]$. Together with the above inequality this implies \eqref{finbound}.

\textbf{Step 2}. Assume now that $f$ is a bounded continuous function. For $k\in\Z_+$ put $f_k:=P_{1/k}f$. Then it is clear that $f_k$ converges to $f$ pointwise and $ \|f_k\|_{\B^{\alpha}_q}\le \|f\|_{\B^{\alpha}_q}$, see, e.g.,  \cite[Lemma~A.3(iii)]{ABLM}. Since $f_k\in\C^1(\R^d)$ for any $k\in\Z_+$, the desired bound \eqref{finbound} follows now directly from Step~1 and Fatou's lemma.

\textbf{Step 3}. Now let us treat discontinuous $f$. We begin with $f=\I_U$, where  $U\subset \R^d$ is  an open set of Lebesgue measure $\delta>0$.  By Urysohn's lemma, there exists a sequence of bounded continuos functions $f_n\colon\R^d\to[0,1]$, $n\in\Z_+$, such that $0\le f_n\le \1_U$ for any $n\in\Z_+$ and $f_n(x)\to \1_U(x)$ for any $x\in\R^d$ as $n\to\infty$. Then $f_n(W_r^H+z_r)\to \1_U(W_r^H+z_r)$ as $n\to\infty$ for any $r\in[0,1]$, $\omega\in\Omega$. 

Set 
\begin{equation*}
\qh:=q,\,\,\, \text{if $q<\infty;$}\qquad \qh:=\frac{d}{|\alpha|}\vee1,\,\,\,\text{if $q=\infty.$}
\end{equation*}
Note that $L_{q}(\R^d)\subset \B^\alpha_{q}$ and  $L_{\frac{d}{|\alpha|}\vee1}(\R^d)\subset \B^0_{\frac{d}{|\alpha|}\vee1}\subset \B^{\alpha\vee(-d)}_{\infty}\subset \B^\alpha_\infty$. Thus, we always have the embedding
\begin{equation}\label{embedqq}
L_{\widehat q}(\R^d)\subset \B^\alpha_{q}.
\end{equation}
Let us apply  bound  \eqref{finbound} proved in Step~2 to the bounded continuous function $f_n$. 
Using \eqref{embedqq}, we see that
$\|f_n\|_{\B^{\alpha}_{\qh}}\le \|f_n\|_{L_{\qh}(\R^d)}\le  \|\I_U\|_{L_{\qh}(\R^d)}\le \delta^{1/{\qh}}$. Hence  an application of Fatou's lemma implies
\begin{align}\label{epsres}
\Bigl\|\int_{s\wedge\tau}^{t\wedge\tau} \I_U(W_r^H+z_r)dr\Bigr\|_{L_m(\Omega)}&\le \liminf_{n\to\infty}\Bigl\|\int_{s\wedge\tau}^{t\wedge\tau} f_n(W_r^H+z_r)dr\Bigr\|_{L_m(\Omega)}\nn\\
&\le  C \delta^{1/\qh}(1+\|\,\|z\|_{1-\var;[0,1]}\|_{L_m(\Omega)}),
\end{align}
where $C=C(H,d,\alpha,\mu,m,q)>0$

\textbf{Step 4}. Let $f$ be  a bounded measurable function $\R^d\to\R$. Denote $M:=\sup_{x\in\R^d}|f(x)|$. 
By Lusin’s theorem, there exists a bounded  continuous function $f_\delta\colon\R^d\to\R_+$ such that $\Leb(\{f_\delta\neq f\})\le \delta$ and 
$\|f_\delta\|_{L_\infty(\R^d)}\le  M$. Let $U_\delta$ be an open set of measure $2\delta$ containing the set $\{ f_\delta\neq f\}$; such set exists since the Lebesgue measure is regular. Clearly, for any $x\in\R^d$
\begin{equation*}
	|f_\delta(x)-f(x)|\le 2M\I_{U_\delta}(x).
\end{equation*}
Using again \eqref{embedqq}, we derive
\begin{equation*}
	\|f_\delta\|_{\B^{\alpha}_q}\le  \|f\|_{\B^{\alpha}_q}+\|f_\delta - f\|_{\B^{\alpha}_q}\le  \|f\|_{\B^{\alpha}_q}+\|f_\delta - f\|_{L_{\qh}(\R^d)}\\
	\le  \|f\|_{\B^{\alpha}_q}+2M \delta^{1/\qh}.
\end{equation*}
Then, applying again Step~2 of the proof to the bounded continuous function $f_\delta$ and recalling \eqref{epsres}, we  deduce
\begin{align*}
	&\Bigl\|\int_{s\wedge\tau}^{t\wedge\tau} f(W_r^H+z_r)\,dr\Bigr\|_{L_m(\Omega)}\\
	&\quad\le 
	\Bigl\|\int_{s\wedge\tau}^{t\wedge\tau} f_\delta(W_r^H+z_r)\,dr\Bigr\|_{L_m(\Omega)}+2M
	\Bigl\|\int_{s\wedge\tau}^{t\wedge\tau} \I_{U_\delta}(W_r^H+z_r)\,dr\Bigr\|_{L_m(\Omega)}\\
	&\quad\le C\|f\|_{\B^{\alpha}_q}(t-s)^{1+H(\alpha-\frac{d}q-1)}\|\,\|z\|_{1-\var;[s,t]}\|_{L_m(\Omega)}+C\|f\|_{\B^{\alpha}_q}(t-s)^{1+ H(\alpha-\frac{d}{q})}\\
	&\qquad +C M \delta^{1/\qh}(1+\|\,\|z\|_{1-\var;[s,t]}\|_{L_m(\Omega)}),
	\end{align*}
where $C=C(H,d,\alpha,\mu,m,q)>0$.
By passing to the limit in the above bound as $\delta\to0$  (note that $\qh<\infty$), we get 
\eqref{finbound}.

\textbf{Step 5}. Suppose that  $f$ is a nonnegative measurable function $\R^d\to\R$. It is clear that a sequence of bounded functions $f\wedge N$, $N\in\N$, converges to $f$ pointwise. Note also that $\|f\wedge N\|_{\B^{\alpha}_{q}}\le \|f\|_{\B^{\alpha}_{q}}$ for any $N\in\N$. Therefore,  
applying  the results of Step~4 to a bounded function $f\wedge N$ and then Fatou's lemma, we derive \eqref{finbound}.
\end{proof}

The following important corollary says that \cref{L:driftb2} is applicable in the full range of parameters $H,d,p$ satisfying \eqref{maincond}.

\begin{corollary}\label{c:driftb2}
Let $f\colon\R^d\to\R$ be a measurable function which is additionally bounded or non-negative. 
Suppose that $f\in \B^{-\delta}_{p}$ and that the parameters $p\in[1,\infty]$, $\delta>0$ satisfy
\begin{equation}\label{maincondupd}
	H+\frac{Hd}{p}+2\delta H<1.
\end{equation}
Then the assumptions \eqref{maincond1}-\eqref{maincond2} are satisfied with $\alpha=-\delta+\I_{p\in[1,2]}(\frac{d}2-\frac{d}p)$, $q=p\vee2$.
\end{corollary}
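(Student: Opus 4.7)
The assertion is purely a matter of unwinding the Besov embedding and then checking two linear inequalities, so my plan is to set up the embedding, reduce both conditions \eqref{maincond1}--\eqref{maincond2} to inequalities in the quantity $H+Hd/p+H\delta$, and finally absorb the worst terms using the hypothesis \eqref{maincondupd}. I do not expect any analytic difficulty; the only thing to keep track of is that the choice of $(\alpha,q)$ is rigged so that $\alpha-d/q$ equals $-\delta-d/p$ in both regimes $p\ge 2$ and $p\in[1,2]$.

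\noindent\textbf{Step 1: Besov embedding.} Recall the elementary Besov embedding $\B^\alpha_p\subset \B^{\alpha-d/p+d/q}_q$ for $q\ge p$ that the paper uses throughout. Applied to $f\in\B^{-\delta}_p$ with $q:=p\vee 2$, this shows $f\in\B^{\alpha}_{q}$ with
\begin{equation*}
\alpha:=-\delta-\frac{d}{p}+\frac{d}{q}=-\delta+\I_{p\in[1,2]}\Bigl(\frac{d}{2}-\frac{d}{p}\Bigr),
\end{equation*}
exactly as in the statement. In particular, in both regimes one has the clean identity $\alpha-\frac{d}{q}=-\delta-\frac{d}{p}$, which is what makes the rest of the verification uniform.

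\noindent\textbf{Step 2: Verifying \eqref{maincond1}.} Using the identity from Step~1, condition \eqref{maincond1} becomes
\begin{equation*}
\Bigl(-\delta-\frac{d}{p}\Bigr)H>-1+H\quad\Longleftrightarrow\quad H+\frac{Hd}{p}+H\delta<1,
\end{equation*}
which is strictly weaker than the standing assumption \eqref{maincondupd} since $\delta>0$. So this condition is immediate.

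\noindent\textbf{Step 3: Verifying \eqref{maincond2}.} Dividing \eqref{maincondupd} by $2H$ gives $\frac{1}{2H}-\delta>\frac{1}{2}+\frac{d}{2p}$, which will be used repeatedly. In the case $p\ge 2$ we have $\alpha=-\delta$, so \eqref{maincond2} reads $\frac{1}{2H}-\delta>\frac{Hd}{p}\wedge\frac{1}{2}$. The right-hand side is bounded by both $\frac{1}{2}$ and $\frac{Hd}{p}\le \frac{d}{2p}$ when $H\le 1/2$, and is simply bounded by $\frac{1}{2}$ when $H>1/2$; in every subcase the bound $\frac{1}{2H}-\delta>\frac{1}{2}+\frac{d}{2p}$ from above dominates it. In the case $p\in[1,2]$ we have $\alpha=-\delta+\frac{d}{2}-\frac{d}{p}$, and \eqref{maincond2} is
\begin{equation*}
\frac{1}{2H}-\delta+\frac{d}{2}-\frac{d}{p}>\frac{Hd}{2}\wedge\frac{1}{2}.
\end{equation*}
Using the rearranged assumption, the left-hand side exceeds $\frac{1}{2}+\frac{d}{2}-\frac{d}{2p}\ge\frac{1}{2}$, which strictly dominates $\frac{Hd}{2}\wedge\frac{1}{2}$ (the latter is at most $\frac{1}{2}$, and equal to $\frac{Hd}{2}\le\frac{1}{2}$ when $H\le 1/d$). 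This settles both subcases.

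\noindent\textbf{Main obstacle.} There is really no obstacle; the only subtlety is to make sure that the strict inequality in \eqref{maincondupd} is used (not just the nonstrict form) in the borderline case $p=1$ of Step~3, where one of the intermediate quantities collapses to exactly $\frac{1}{2}$ and the extra margin $\delta>0$ (respectively the strict sign in \eqref{maincondupd}) is what preserves the strict inequality in \eqref{maincond2}. I would make this margin explicit to avoid any ambiguity.
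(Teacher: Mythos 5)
Your proof is correct and follows essentially the same route as the paper: rewrite $\alpha-\frac{d}{q}=-\delta-\frac{d}{p}$ to reduce \eqref{maincond1} to a weakening of \eqref{maincondupd}, then verify \eqref{maincond2} case-by-case on $p\in[1,2]$ vs.\ $p\ge 2$ using the rearranged form $\frac{1}{2H}-\delta>\frac12+\frac{d}{2p}$ and the crude bound $\frac{Hd}{q}\wedge\frac12\le\frac12$. (The paper's handling of $p\in[1,2]$ via $-\frac{d}{p}\ge-\frac{d}{2p}-\frac{d}{2}$ is the same inequality you invoke as $\frac{d}{2}-\frac{d}{2p}\ge 0$; your Step~1 embedding remark is not strictly part of the corollary's claim but is harmless.)
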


\begin{proof}
We have $\alpha-\frac{d}q= -\delta-\frac{d}p$. Therefore, we immediately see that \eqref{maincond1} is satisfied thanks to \eqref{maincondupd}.

If $p\in(2,\infty]$, then using again \eqref{maincondupd} we get
\begin{equation*}
\alpha=-\delta>\frac12-\frac1{2H}\ge \bigl(\frac{Hd}q\wedge\frac12\bigr)-\frac1{2H},
\end{equation*}
If $p\in[1,2]$, then using the inequality $\frac1p\le \frac1{2p}+\frac{1}{2}$, we get
\begin{equation*}
\alpha=-\delta+\frac{d}2-\frac{d}p\ge -\delta-\frac{d}{2p}>\frac12-\frac1{2H}\ge \bigl(\frac{Hd}q\wedge\frac12\bigr)-\frac1{2H}.
\end{equation*}
Thus, in both cases \eqref{maincond2} holds. 
\end{proof}

\begin{remark}\label{R:highmom}
Now we see why it was important to use in the proof of \cref{L:driftb2} Rosenthal-type stochastic sewing (\cref{T:RoSSL}) rather than the usual stochastic sewing \cite{LeSSL}. Indeed, we see from \eqref{onecondcheck} that for $m\ge2$, $0\le s\le u \le t$,
$$
\|\delta A_{s,u,t}\|_{L_m(\Omega)}\le C (t-s)^{1+H(\alpha-\mu-\frac{d}q)} \bigl\|\|z\|_{1-\var;[s,t]}^{\mu}\bigr\|_{L_m}.
$$
Recall that $\bigl(\E \|z\|_{1-var;[s,t]}^{1+\rho}\bigr)_{(s,t)\in\Delta_{[0,T]}}$ is a control for any $\rho\ge0$. Thus, clearly, if $m\mu\le1$, then
$$
\|\|z\|_{1-\var;[s,t]}^{\mu}\bigr\|_{L_m(\Omega)}\le \bigl(\E \|z\|_{1-var;[s,t]}\bigr)^\mu.
$$
If, alternatively, $m\mu>1$, then we note that 
$$
\|\|z\|_{1-\var;[s,t]}^{\mu}\bigr\|_{L_m(\Omega)}=\bigl(\E \|z\|^{m\mu }_{1-var;[s,t]}\bigr)^{\frac1m}.
$$
Thus, we obtain that $\bigl\|\|z\|_{1-\var;[s,t]}^{\mu}\bigr\|_{L_m(\Omega)}$ is bounded by a deterministic control raised to the power $\mu\wedge\frac 1m$. Recalling \eqref{con:s1} and \eqref{onecondcheck}, we end up with the condition
\begin{equation}\label{worsemu}
(\alpha-\mu)H>-\frac1{2}\quad\text{and}\quad 
(\alpha-\frac{d}q-\mu) H>\max(-\frac12-(\mu\wedge\frac 1m),-1)
\end{equation}
which is much worse than \eqref{equationmu}. This leads to a non-optimal condition for weak existence: $b\in L_p(\R^d)$, where $\frac{Hd}p<\frac12\wedge(1-H)$ instead of $\frac{Hd}p<1-H$. Indeed,  the optimal choice for the parameter $\mu$ in this worse condition \eqref{worsemu} is $\mu=0$, which combined with \eqref{maincond1} gives for $\alpha = 0$ the condition $\frac{Hd}{p} < \frac12 \wedge (1 - H)$.

We introduced a stopping time $\tau$ in  \cref{L:driftb2} because in applications we assume only that $\|z\|_{1-\var;[0,1]}<\infty$ a.s. rather than $\|\,\|z\|_{1-\var;[0,1]}\|_{L_m(\Omega)}<\infty$. Therefore, we would need to stop the process $z$ once its variation becomes sufficiently large, see, e.g., proof of \cref{L:51}.	
\end{remark}

\section{Proofs of the main results}
\label{sec:proof}

Now, we proceed with the proof of our main results from \cref{S:MR}. Without loss of generality, we assume the time interval is $[0,1]$. We begin by proving the existence of local time for fractional Brownian motion and related processes (\cref{T:loc}) by applying the key integral bound \eqref{finbound} with $f = \delta_x$, $x \in \R^d$. This is done in \cref{S:51}. In \cref{S:52}, we establish the regularity of the solutions to \Gref{x;b}, using again the bound \eqref{finbound} with $f = |b|$. These two results allow us to show the equivalence of different notions of solutions to \eqref{mainSDE}. Once we have a good bound on the regularity of the solutions to \Gref{x;b}, we obtain weak existence by a standard tightness argument, see, e.g., the proofs of \cite[Proposition~2.8]{ABM2020}, \cite[Proposition 3.3 and Corollary 3.5]{ABLM}, \cite[Theorem~8.2]{GG22}, \cite[Theorem~2.8]{ART21}, and others; this is the subject of  \cref{s:WU}. Strong uniqueness for $d = 1$ follows from the bound \eqref{finbound} with $f = \nabla b$ and the uniqueness of solutions to the Young differential equation (\cref{T:YoungODE}), see \cref{s:SU}. Finally, strong existence follows from the Gy\"ongy–Krylov lemma \cite[Lemma~1.1]{MR1392450}, which provides a flexible (non black-box) alternative to the classical Yamada–Watanabe result, see \cref{s:SE}. We conclude with \cref{S:NE}, which provides  a counterexample showing that condition~\eqref{maincond} of \cref{T:func} is optimal, thereby proving \cref{T:opt}.

\subsection{Existence of local times and their regularity}\label{S:51}
Now we are ready to show that the processes $W^H$ and $W^H+\psi$, where $\psi$ is of finite $1$-variation, have local times. Heuristically, the proof is simply an application of \cref{c:driftb2} for the function $f := \delta_x$, $x \in \R^d$. Indeed, formally, the local time of the process $W^H + \psi$ at $x \in \R^d$ over the interval $[0,t]$, $t \in [0,1]$, is $\int_0^t \delta_x(W_r^H + \psi_r)\,dr$. \cref{c:driftb2} guarantees the existence this integral under the condition \eqref{maincondupd}. Clearly, $\delta_x \in \B^0_1$, and thus \eqref{maincondupd} reduces to $H + Hd < 1$, which is the desired condition.

Note, however, that \cref{c:driftb2} applies  only to measurable functions, not distributions. Therefore, to make the above argument rigorous, one must appropriately approximate the delta function and justify passing to the limit. Another technical challenge is that, to make the moments of the quadratic variation of $\psi$ finite, one needs to introduce stopping times and stop $\psi$ once its quadratic variation becomes large enough. This program is implemented in the proof below.

\begin{proof}[Proof of \cref{T:loc}]
(i). Fix $d\in\N$, $H\in(0,1)$ such that $Hd<1$, $\gamma\in (0,(\frac1{2H}-\frac{d}2)\wedge1)$. Recall the definition of $\nu_d$ in \eqref{nud}. For  $x\in\R^d$, $R>0$ define
\begin{equation}\label{lrx}
l^{R,x}(z):=\frac1{v_dR^{d}}\I(z\in\Ba(x,R)),\quad z\in\R^d.
\end{equation}
We see that by \cref{p:deltacon} $l^{R,x}$ converges to $\delta_x$ in $\B^{-\eps}_1$ as $R\to0$, for any $\eps>0$. Denote
$$
L^{R}(t,x):=\int_{0}^{t} l^{R,x}(W_r^H)\,dr,\quad t\in[0,1],\,x\in\R^d.
$$
Take $\eps>0$ small enough so that $H(d+2\eps)<1$. 
For $x, y \in \R^d$ and $n, k \in \N$, we apply \cref{C:ourbounds} three times with $q = 2$, $m \ge 2$, $S_0 = 0$, $x(\cdot) \equiv 0$, and the following remaining parameters. First, we take $f=l^{\frac1n,x}$, $\alpha=-\frac{d}2$. Then we take $f=l^{\frac1n,x}-l^{\frac1n,y}$, $\alpha=-\frac{d}2-\gamma$. Finally, we take $f=l^{\frac1n,x}-l^{\frac1k,x}$, $\alpha=-\frac{d}2-\eps$. In all three cases, we see that \eqref{extracondTS} holds. Recall that, by definition, $V_{0,r} = W^H_r$, $r\in[0,1]$. Therefore, using the embedding $L_1(\R^d) \subset \B^{0}_1 \subset \B^{-d/2}_2$, we obtain for any $s,t\in[0,1]$, $x,y\in\R^d$
\begin{align}
&\| L^{\frac1n}(t,x)-L^{\frac1n}(s,x)\|_{L_m(\Omega)}\le C \|l^{\frac1n,x}\|_{\B^{-d/2}_2}|t-s|^{1-Hd}\le C|t-s|^{1-Hd};\label{step1lt}\\
&\| L^{\frac1n}(t,x)-L^{\frac1n}(s,x)-(L^{\frac1n}(t,y)-L^{\frac1n}(s,y))\|_{L_m(\Omega)}\le 
C |x-y|^\gamma |t-s|^{1-Hd-H\gamma};\label{step2lt}\\
&\| L^{\frac1n}(t,x)-L^{\frac1k}(t,x)\|_{L_m(\Omega)}\le C \|l^{\frac1n,x}-l^{\frac1k,x}\|_{\B^{-\eps}_1}\le C(n\wedge k)^{-\eps} \label{step3lt};
\end{align}	
for $C=C(H,d,\eps,\gamma,m)>0$. Here  in \eqref{step2lt}, we used \eqref{fxydif}, which implies that
$$\|l^{\frac1n,x}-l^{\frac1n,y}\|_{\B^{-d/2-\gamma}_2}\le C |x-y|^\gamma \|l^{\frac1n,x}\|_{\B^{-d/2}_2}\le C |x-y|^\gamma\|l^{\frac1n,x}\|_{L_1(\R^d)}\le C |x-y|^\gamma
$$
for $C=C(d,\gamma)>0$; in \eqref{step3lt} we applied \eqref{apa4mr}. We stress that the constant $C$ does not depend on $n,k$. Collecting \eqref{step1lt}, \eqref{step2lt}, \eqref{step3lt}, and noting that the process $L^R$ is continuous in $t$ for any fixed $R>0$, we see that all the conditions of the Kolmogorov continuity theorem in the form \cref{p:KCT} with  $X^n:=L^{\frac1n}$ are satisfied. Therefore, there exists a jointly continuous  process $L\colon
\Omega\times[0,1]\times \R^d\to\R$ and a set of full measure $\Omega'\subset\Omega$ such that on $\Omega'$ for any $M>0$ 
\begin{align}
&\sup_{\substack{x\in\R^d\\|x|\le M}}\sup_{s,t\in[0,1]}\frac{|L(t,x)-L(s,x)|}{|t-s|^{1-Hd-\eps}}<\infty;\label{limpartL2}\\
&
\sup_{\substack{x,y\in\R^d\\|x|,|y|\le M}}\sup_{s,t\in[0,1]}\frac{|L(t,x)-L(s,x)-(L(t,y)-L(s,y))|}{|t-s|^{1-Hd-H\gamma-\eps}|x-y|^{\gamma-\eps}}<\infty.\label{limpartL3}
\end{align}
Further, \cref{p:KCT}(iii) implies that for any $\omega\in\Omega'$ there exists a set $A(\omega)\subset \R^d$ of zero Lebesgue measure such that 
for any $M>0$, $t\in[0,1]$
\begin{equation}\label{limpartL}
\sup_{\substack{x\in\R^d\setminus A(\omega)\\|x|\le M}}|L(\omega, t,x)- L^{\frac1{n}}(\omega,t,x)|\to0 \quad\text{ as $n\to\infty$}.
\end{equation}
Since for a fixed $\omega\in \Omega'$ the trajectory $W^H(\omega)$ is bounded on $[0,1]$, we see that $L^R(t,x)=0$ for any $t\in[0,1]$, $|x|>M(\omega)$, $R<1$ for some $M(\omega)$. Therefore, $L(t,x)=0$ for  any $t\in[0,1]$, $|x|>M(\omega)$. Hence, restriction $|x|\le M$ in \eqref{limpartL2}, \eqref{limpartL3}, \eqref{limpartL} can be removed. Therefore, $L$ satisfies \eqref{loc1}. 

It remains to show that $L$ is a local time of $W^H$. Define $\mu_t(B)=\int_0^t \I(W_r^H\in B)\,dr$, $B\in\mathcal{B}(\R^d)$, $t\in[0,1]$. Then for any $R>0$, $t\in[0,1]$, $x\in\R^d$
\begin{equation*}
L^{R}(t,x)=\frac1{v_dR^{d}}\mu_t (\Ba(x,R)),\quad 
\end{equation*}
Substituting this into \eqref{limpartL}, we see that all the conditions of \cref{p:localtime} are satisfied. Thus, on $\Omega'$ for any $t\in[0,1]$  we have $d \mu_t/d \Leb=L(t)$, which shows that $L$ is a local time of $W^H$.

(ii). The proof is similar to the proof of part (i), however we have to modify certain steps since we have to stop the drift once its variation becomes is too large. Fix $d\in\N$, $H\in(0,1)$ such that $H(d+1)<1$, $\gamma\in (0,(\frac1{2H}-\frac{d}{2}-\frac12)\wedge1)$, and a random process $\psi$ satisfying the assumptions of the theorem. For $N\in\Z_+$ introduce a stopping time
$$
\tau_N:=\inf\{t\in[0,1]: \|\psi\|_{1-\var;[0,t]}\ge N\}\wedge1.
$$
Recall that $\psi$ is continuous, hence $\|\psi\|_{1-\var;[0,\cdot]}$ is continuous,
and $\|\psi\|_{1-\var;[0,\tau_N]}\le N$. Let $\psi^N_t:=\psi_{t\wedge \tau_N}$, $t\in[0,1]$. Consider a set where we do not stop the process $\psi$, that is 
\begin{equation}\label{omegan}
	\Omega_N:=\{\tau_N=1\}.
\end{equation}	
For a function $l^{R,x}$ defined in \eqref{lrx}, $N\in\Z_+$ put 
$$
L^{R,N}(t,x):=\int_{0}^{t } l^{R,x}(W_r^H+\psi^N_r)\,dr,\quad t\in[0,1],\,x\in\R^d.
$$

Take $\eps>0$ small enough so that $H(d+1+2\eps)<1$. For $x,y\in\R^d$, $n,k\in\N$ we apply  \cref{L:driftb2} three times with $z=\psi^N$, $q=2$, $m\ge2$, $\tau=1$ and the following remaining parameters. First, we take $f=l^{\frac1n,x}$, $\alpha=-\frac{d}2$. Then, we take $f=l^{\frac1n,x}-l^{\frac1n,y}$, $\alpha=-\frac{d}2-\gamma$. Finally, we take $f=l^{\frac1n,x}-l^{\frac1k,x}$, $\alpha=-\frac{d}2-\eps$. It is easy to check that in all three cases conditions \eqref{maincond1}-\eqref{maincond2} are satisfied. Thus, using \eqref{fxydif} and \eqref{apa4mr},  we deduce for any $s,t\in[0,1]$, $x,y\in\R^d$
\begin{align*}
&\| L^{\frac1n,N}(t,x)-L^{\frac1n,N}(s,x)\|_{L_m(\Omega)}\le C (N+1) |t-s|^{1-H(d+1)};\\
&\| L^{\frac1n,N}(t,x)-L^{\frac1n,N}(s,x)-(L^{\frac1n,N}(t,y)-L^{\frac1n,N}(s,y))\|_{L_m(\Omega)};\nn\\
&\qquad\le C(N+1) |x-y|^\gamma |t-s|^{1-H(d+1+\gamma)}\\
&\| L^{\frac1n}(t,x)-L^{\frac1k}(t,x)\|_{L_m(\Omega)}\le C (N+1) (n\wedge k)^{-\eps};
\end{align*}
for $C=C(H,d,\eps,\gamma,m)>0$. Applying a version of the Kolmogorov continuity theorem (\cref{p:KCT}), we conclude that there exists a process $L^N$ and a set $\wt \Omega_N\subset\Omega$ of full probability measure such that on $\wt \Omega_N$ \eqref{limpartL2}--\eqref{limpartL3} hold with $L^N$ in place of $L$ and $H(d+1)$ in place of $Hd$. Furthermore, for any $\omega\in\wt \Omega_N$ there exists
a  set $A_N(\omega)\subset\R^d$ of Lebesgue measure zero such that for any $M>0$
\begin{equation}\label{limpartLpsi}
\sup_{\substack{x\in\R^d\setminus A_N(\omega)\\|x|\le M}}|L^N(\omega,t,x)- L^{\frac1{n},N}(\omega,t,x)|\to0 \quad\text{ as $n\to\infty$}.
\end{equation}
 Now on the set of full probability measure
$
\Omega^*:=\bigcup_{N=1}^\infty (\Omega_N\setminus \Omega_{N-1})\cap \wt\Omega_N
$
we are ready to define a candidate for the local time as follows:
$$
\mathbf{L}(\omega,t,x):=\sum_{N=1}^\infty  L^{N}(t,x) (\omega)\I_{(\Omega_N\setminus \Omega_{N-1})\cap\wt \Omega_N}(\omega),\quad t\in[0,1], x\in\R^d.
$$
By above, on $\Omega^*$, the process $\mathbf{L}$ is jointly continuous in $(t,x)$ and satisfies \eqref{loc2}. Here, as in part (i), we also use the fact that for any fixed  $\omega\in \Omega^*$ the trajectory $W^H(\omega)+\psi(\omega)$ is bounded. 

Denote by $\mu_t$ the occupation measure of $W^H+\psi$ over the time interval $[0,t]$. Note that by the definition of  the set $\Omega_N$ in \eqref{omegan}, for any $N\in\Z_+$, $R>0$, $t\in[0,1]$ $x\in\R^d$, $\omega\in\Omega$ we have
$$
L^{R,N}(\omega,t,x) \I_{\Omega_N}(\omega)=\frac1{v_dR^{d}}\mu_t ( \Ba(x,R))
\I_{\Omega_N}(\omega),\quad  
$$
Therefore \eqref{limpartLpsi} and the definition of the process $\mathbf{L}$ imply for any $\omega\in\Omega^*$, $t\in[0,1]$
\begin{equation*}
	\sup_{x\in\R^d\setminus (\cup_{N=1}^\infty A_N(\omega))}\Bigl|\mathbf{L}(\omega,t,x)- \frac{1}{v_dn^{-d}}\mu_t \bigl(\Ba(x,\frac1{n})\bigr)\Bigr|\to0 \quad\text{ as $n\to\infty$}.
\end{equation*}
This, together with \cref{p:localtime}, shows that $\mathbf{L}$ is indeed the local time of $W^H+\psi$.
\end{proof}

\subsection{A priori bound on regularity of solutions to SDE and equivalence of different notions of solutions}\label{S:52}
Next, we establish a very useful bound for the regularity of solutions to \eqref{mainSDE}. This result, combined with \cref{T:loc} proved earlier, is crucial for showing the equivalence of different notions of solutions to \eqref{mainSDE}.
\begin{lemma}\label{L:51}
Let $H\in(0,1)$, $d\in\N$, $p\in[1,\infty]$, $m\in[2,\infty)$, $M>0$. Assume that \eqref{maincond} holds. Let $f\colon\R^d\to\R^d$ be a measurable function such that 
\begin{equation}\label{Mcond}
 \|f\|_{L_p(\R^d)} \le M.
\end{equation}
Let $x\in\R^d$ and assume that $(X,W^H)$ is a weak solution to \Gref{x;f}.  
Then there exists a constant $C=C(H,d, m,M,p)>0$ such that for any $(s,t)\in\Delta_{[0,1]}$ we have
\begin{equation}\label{regresult}
\|\,\|X-W^H\|_{1-\var;[s,t]}\,\|_{L_m(\Omega)}=\Bigl\|\int_s^t |f(X_r)|\,dr\Bigr\|_{L_m(\Omega)}\le C (t-s)^{1-\frac{Hd}p}.
\end{equation}
\end{lemma}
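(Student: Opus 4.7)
The plan is to apply the key integral bound \cref{L:driftb2} to the non-negative function $|f|\in L_p(\R^d)$, with the adapted continuous drift $z_r := X_r - W_r^H = x + \int_0^r f(X_u)\,du$. Since $z$ is absolutely continuous in time, $\|z\|_{1-\var;[s,t]} = \int_s^t |f(X_u)|\,du$; this identity already gives the leftmost equality in \eqref{regresult}, and the identification $W_r^H + z_r = X_r$ matches the integrand of \cref{L:driftb2} with the quantity we want to bound.

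Because the moment condition \eqref{finvar} is not known a priori, I would first localize. For $N\in\N$ put $\tau_N := \inf\{t\in[0,1] : \|z\|_{1-\var;[0,t]} \ge N\}\wedge 1$ and $z^N_r := z_{r\wedge\tau_N}$, so that $\|z^N\|_{1-\var;[0,1]}\le N$ almost surely. I then apply \cref{L:driftb2} to $|f|$, $z^N$ and $\tau=\tau_N$, with $(\alpha,q) = (0,p)$ if $p\ge 2$ and $(\alpha,q) = (d/2-d/p,\,2)$ if $p\in[1,2]$. In both cases $|f|\in L_p\subset\B^\alpha_q$ by the Besov embeddings from \cref{S:MR}, $\||f|\|_{\B^\alpha_q}\le C\|f\|_{L_p}$, and $\alpha-d/q = -d/p$; the conditions \eqref{maincond1}, \eqref{maincond2} follow from \eqref{maincond} after a short case analysis (condition \eqref{maincond1} reduces to \eqref{maincond} itself, while \eqref{maincond2} requires a case split on whether $Hd\le 1$ or $Hd>1$ when $p\in[1,2]$). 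Writing $\Phi_N(s,t) := \bigl\|\int_{s\wedge\tau_N}^{t\wedge\tau_N} |f(X_u)|\,du\bigr\|_{L_m(\Omega)}$, which is finite because $\Phi_N(s,t)\le N$, and noting that on $[s\wedge\tau_N,t\wedge\tau_N]$ one has $\|z^N\|_{1-\var;[s,t]} = \int_{s\wedge\tau_N}^{t\wedge\tau_N}|f(X_u)|\,du$, the bound \eqref{finbound} becomes the self-bounding inequality
\begin{equation*}
\Phi_N(s,t) \,\le\, C_1 (t-s)^{1-H-Hd/p}\,\Phi_N(s,t) \,+\, C_2 (t-s)^{1-Hd/p},
\end{equation*}
with $C_1,C_2>0$ independent of $N$.

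Since $1-H-Hd/p>0$ by \eqref{maincond}, I would fix $h_0\in(0,1]$ with $C_1 h_0^{1-H-Hd/p}\le\tfrac12$ and absorb to get $\Phi_N(s,t)\le 2C_2 (t-s)^{1-Hd/p}$ whenever $t-s\le h_0$. For arbitrary $(s,t)\in\Delta_{[0,1]}$ I would partition $[s,t]$ into at most $\lceil (t-s)/h_0\rceil$ subintervals of length $\le h_0$ and use the subadditivity of $\|\cdot\|_{1-\var}$ together with $(t-s)\le (t-s)^{1-Hd/p}$, which upgrades the local bound to $\Phi_N(s,t)\le C_3(t-s)^{1-Hd/p}$ uniformly in $N$, for a new constant $C_3$. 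Finally, because $X$ solves \Gref{x;f}, $\int_0^1|f(X_u)|\,du<\infty$ almost surely, so $\tau_N\uparrow 1$ a.s., and Fatou's lemma lets me pass to the limit $N\to\infty$ to conclude \eqref{regresult}. The main technical delicacy is the sharp verification of \eqref{maincond1}--\eqref{maincond2} with no $\delta$-margin, so that the exponent extracted by the absorption step is exactly $1-Hd/p$; this is possible because the first term on the right hand side of \eqref{finbound} carries the strictly larger exponent $1-Hd/p - H + 1\cdot(-H)\cdot0 = 1-H-Hd/p$ in $(t-s)$, which is precisely the gain \cref{T:RoSSL} was designed to deliver and is what makes the self-bounding argument work.
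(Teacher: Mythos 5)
Your overall route is the same as the paper's: define $\psi := X-W^H$, localize by the stopping times $\tau_N$, apply \cref{L:driftb2} with $z=\psi^N$ and $|f|$ to get a self-bounding inequality, absorb the first term for $t-s$ small using $1-H-\frac{Hd}{p}>0$, patch over $[0,1]$, and conclude with Fatou. The absorption and patching steps are correct. However, there is a concrete gap in the parameter choice, and the case $p=\infty$ is not addressed.

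The parameter choice $(\alpha,q)=(0,p)$ for $p\ge 2$ is inadmissible: \cref{L:driftb2} (and the estimates it rests on, namely \cref{L:firstb} and \cref{C:ourbounds}) require \emph{strictly} $\alpha<0$, since the heat kernel bound \eqref{efp} and the characterization \eqref{modinfF} of $\B^\alpha_{p,\infty}$ norms used there are stated only for negative regularity. Taking $\alpha=0$ is a boundary case and the cited bounds do not apply. The repair is exactly what the paper does: for $p\in[2,\infty)$ pick some $q>p$ with $-\frac{d}{p}+\frac{d}{q}>\frac12-\frac1{2H}$ (possible since the right-hand side is negative and the left-hand side tends to $0$ as $q\downarrow p$) and set $\alpha:=-\frac{d}{p}+\frac{d}{q}<0$. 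Then $L_p\subset\B^\alpha_q$, $\alpha-\frac{d}{q}=-\frac{d}{p}$ so the exponent in \eqref{finbound} is preserved, and both \eqref{maincond1} and \eqref{maincond2} hold. Your treatment of $p\in[1,2)$ with $\alpha=\frac{d}{2}-\frac{d}{p}$, $q=2$ is correct (note that the case split on $Hd\lessgtr1$ is not actually needed: $\alpha\ge-\frac{d}{2p}>\frac12-\frac1{2H}\ge\bigl(\frac{Hd}{q}\wedge\frac12\bigr)-\frac1{2H}$, the middle inequality being \eqref{maincond}).

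For $p=\infty$ this fix is unavailable (one is forced to $q=\infty$ and then $\alpha-\frac{d}{q}=\alpha<0$, which would give an exponent strictly below $1$), so that case requires a separate argument. The paper handles it by truncating $f$, but in fact it is immediate: $|f|\le M$ pointwise gives $\int_s^t|f(X_r)|\,dr\le M(t-s)$, which is precisely \eqref{regresult} since $1-\frac{Hd}{p}=1$ when $p=\infty$. Your proposal should note this case explicitly.
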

\begin{proof}
First, we treat the case $p<\infty$. 
Define
$$
\psi(t):=x+\int_0^t f(X(s))\,ds=X_t-W_t^H,\quad t\in[0,1].
$$
Clearly,  $\psi$ is a process of finite $1$-variation and 
\begin{equation*}
\|\psi\|_{1-\var;[s,t]}=\int_s^t |f(X(r))|\,dr,\quad (s,t)\in\Delta_{[0,1]}.
\end{equation*}
Note that we did not assume that $f$ is bounded; therefore, we do not know a priori whether $\|\psi\|_{1-\var;[s,t]}$ has a finite moment of order $m$. We establish this using the stopping time technique and by bounding the expression in terms of itself via \cref{c:driftb2}.

Similar to the proof  of \cref{T:loc}, we fix arbitrary $N>0$ and introduce a stopping time 
$
\tau_N:=\inf\{t\in[0,1]: \|\psi\|_{1-\var;[0,t]}\ge N\};
$
as usual we set $\tau_N=1$ if $\|\psi\|_{1-\var;[0,1]}< N$. Recall, that $\psi$ is continuous by the definition of a solution to \Gref{x;f}; hence $\|\psi\|_{1-\var;[0,\cdot]}$ is continuous,
 and $\|\psi\|_{1-\var;[0,\tau_N]}\le N$. Consider a stopped process
 $$
 \psi^N_t:=\psi_{t\wedge\tau_N},\quad t\in[0,1].
 $$
By construction for $(s,t)\in\Delta_{[0,1]}$
\begin{equation}\label{psivar}
\|\psi^N\|_{1-\var;[s,t]}=\int_{s\wedge\tau_N}^{t\wedge\tau_N}|f(X_r)|\, dr\le N.
\end{equation}

Fix small $\ell\in[0,1]$. We apply \cref{L:driftb2} with $z:=\psi^N$,  $|f|$ in place of $f$, $\tau=\tau_N$. If $p\in[1,2)$, we take $\alpha:=-\frac{d}p+\frac{d}2<0$, $q=2$. Since $\alpha>-\frac{d}{2p}>\frac12-\frac1{2H}$, we see that conditions \eqref{maincond1}-\eqref{maincond2} are satisfied for this choice of $\alpha$ and $q$. If $p\in[2,\infty)$, then there exists $q>p$ such that $-\frac{d}p+\frac{d}q>\frac12-\frac1{2H}$. We take $\alpha=-\frac{d}p+\frac{d}q<0$. We see that conditions \eqref{maincond1}-\eqref{maincond2} holds as well. In both cases we have $L_p(\R^d)\subset \B^{\alpha}_q$ and hence $\|\,|f|\,\|_{\B^\alpha_q}\le \|f\|_{L_p(\R^d)}\le M$. Thus, all the conditions of \cref{L:driftb2} are satisfied and we get from \eqref{finbound}
for any $s,t\in\Delta_{[0,1]}$ with $|t-s|\le \ell$, $m\ge2$
\begin{align}\label{buckling}
\Bigl\|\int_{s\wedge\tau_N}^{t\wedge\tau_N} |f(X_r)|\,dr\Bigr\|_{L_m(\Omega)}&=
\Bigl\|\int_{s\wedge\tau_N}^{t\wedge\tau_N} |f(W_r+\psi^N_r)|\,dr\Bigr\|_{L_m(\Omega)}\nn\\
&\le C_0 M \ell^{1-\frac{Hd}p-H}
\Bigl\|\int_{s\wedge\tau_N}^{t\wedge\tau_N}|f(X_r)|\,dr\Bigr\|_{L_m(\Omega)}+C_0 M (t-s)^{1-\frac{Hd}p},
\end{align}
where $C_0=C_0(H,d,m,p)>0$ and we used the total variation identity \eqref{psivar} and assumption \eqref{Mcond}. Note that $\|\int_{s\wedge\tau_N}^{t\wedge\tau_N} |f(X_r)|\,dr\|\le N<\infty$, see \eqref{psivar}. Further, we see that by \eqref{maincond}, $1-\frac{Hd}p-H>0$. Thus, by taking in \eqref{buckling} $\ell=\ell(H,d,m, M,p)$ small enough so that 
$$
C_0 M \ell^{1-\frac{Hd}p-H}<\frac12,
$$
we get
\begin{equation*}
\Bigl\|\int_{s\wedge\tau_N}^{t\wedge\tau_N} |f(X_r)|\,dr\Bigr\|_{L_m(\Omega)}\le 2C_0 M (t-s)^{1-\frac{Hd}p},\quad s,t\in[0,1],\,|t-s|\le \ell.
\end{equation*}
Since $\tau_N\to 1$ a.s. as $N\to\infty$, we have $\int_{s\wedge\tau_N}^{t\wedge\tau_N} |f(X_r)|\,dr\to\int_{s}^{t} |f(X_r)|\,dr$ a.s. as $N\to\infty$. Therefore, Fatou's lemma implies
\begin{equation*}
\Bigl\|\int_{s}^{t} |f(X_r)|\,dr\Bigr\|_{L_m(\Omega)}\le 2C_0 M (t-s)^{1-\frac{Hd}p},\quad s,t\in[0,1],\,|t-s|\le \ell.
\end{equation*}
Applying this bound $\lceil 1/\ell\rceil$ times, we get \eqref{regresult}.

Now let us treat the case $p=\infty$. Denote $f_M:=|f|\I_{|f|>M}$. Choose $\eps>0$ such that $\eps<\frac1{2H}-\frac12$. Then using the stopping times  $\tau_N$ defined as above, we derive for any $(s,t)\in\Delta_{[0,1]}$
\begin{align*}
\Bigl\|\int_s^t |f(X_r)|\,dr\Bigr\|_{L_m(\Omega)}&\le\Bigl\|\int_s^t |f(X_r)|\I_{|f(X_r)|\le M}\,dr\Bigr\|_{L_m(\Omega)}+\Bigl\|\int_0^1 f_M(X_r)\,dr\Bigr\|_{L_m(\Omega)}\\
&\le  M (t-s)+\lim_{N\to\infty} \Bigl\|\int_{0}^{\tau_N} f_M(X_r)\,dr\Bigr\|_{L_m(\Omega)}\\
&\le  M (t-s)+\lim_{N\to\infty} C N \|f_M\|_{\B^{-\eps}_\infty}=M(t-s),
\end{align*}
where in the last line we applied \cref{L:driftb2} with $z:=\psi^N$, $f=f_M$, $\tau=\tau_N$, $\alpha=\eps$, $q=\infty$ and used that $\|f_M\|_{\B^{-\eps}_\infty}\le \|f_M\|_{L_\infty(\R^d)}=0$.  
\end{proof}

Next, we establish a similar result for solutions to equation \eqref{measureeq}. For a measure $\mu \in \mathcal{M}(\mathbb{R}^d, \mathbb{R}^d)$, we denote its total variation measure by $|\mu| := \mu_+ + \mu_-$, where the measures $\mu_+$ and $\mu_-$ are from the Jordan decomposition of the measure $\mu$: $\mu = \mu_+ - \mu_-$.

\begin{lemma}\label{L:51m}
	Let $H\in(0,1)$, $d\in\N$, $m\in[2,\infty)$, $x\in\R^d$, $b\in\M(\R^d,\R^d)$. Assume that $H(d+1)<1$. Let 
	 $(X,W^H)$ is a weak solution to \eqref{measureeq}.   
	Then there exists a constant $C=C(H,d, m,|b|(\R^d))>0$ such that for any $(s,t)\in\Delta_{[0,1]}$ we have
	\begin{equation}\label{regresultm}
		\|\,\|X-W^H\|_{1-\var;[s,t]}\,\|_{L_m(\Omega)}\le C (t-s)^{1-Hd}.
	\end{equation}
\end{lemma}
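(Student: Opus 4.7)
The strategy is to mimic the buckling argument of \cref{L:51}, but since $b$ is a measure we first identify $L^X$ with a jointly continuous local time. Define $\psi_t := X_t - W^H_t$; then $\psi_t = \int_{\R^d} L^X_t(y)\,b(dy)$. By \eqref{lebloc}, $t\mapsto L^X_t(y)$ is a $\limsup$ of nondecreasing functions, hence nondecreasing. Jordan-decomposing each component of $b$, $\psi$ is almost surely the difference of two continuous nondecreasing processes, and for any partition $\{u_i\}$ of $[s,t]$,
\begin{equation*}
\sum_i |\psi_{u_{i+1}} - \psi_{u_i}| \le \sum_i \int (L^X_{u_{i+1}}(y) - L^X_{u_i}(y))\,|b|(dy),
\end{equation*}
which implies $\|\psi\|_{1-\var;[s,t]} \le \int_{\R^d}(L^X_t(y) - L^X_s(y))\,|b|(dy) < \infty$ a.s. Thus $\psi$ is continuous, adapted and of finite $1$-variation a.s., and since $H(d+1)<1$, \cref{T:loc}(ii) applied to $X=W^H+\psi$ yields a jointly continuous local time $\hat L$. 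Continuity of $\hat L(t,\cdot)$ together with the identity $\mu^X_t(\Ba(y,R)) = \int_{\Ba(y,R)} \hat L(t,z)\,dz$ gives $\lim_{R \to 0}(v_d R^d)^{-1} \mu^X_t(\Ba(y,R)) = \hat L(t,y)$ for \emph{every} $y$, whence $L^X_t(y) = \hat L(t,y)$ for all $y$ a.s.

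Following \cref{L:51}, set $\tau_N := \inf\{t\in[0,1] : \|\psi\|_{1-\var;[0,t]} \ge N\}\wedge 1$ and $\psi^N_t := \psi_{t \wedge \tau_N}$, so $\|\psi^N\|_{1-\var;[0,1]} \le N$ a.s. Let $f_n := P_{1/n}|b| \in \C^\infty_b$; it is nonnegative with $\|f_n\|_{\B^{-d/2}_2} \le C |b|(\R^d)$ uniformly in $n$, via the embeddings $\M\hookrightarrow\B^0_1\hookrightarrow\B^{-d/2}_2$. Apply \cref{L:driftb2} with $\alpha = -d/2$, $q = 2$, $z = \psi^N$, $\tau = \tau_N$: conditions \eqref{maincond1}--\eqref{maincond2} reduce, under $H(d+1)<1$, to $Hd(H+1)<1$, which follows from $Hd < 1-H$ since $Hd(H+1) < (1-H)(1+H) < 1$. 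This yields
\begin{equation*}
\Bigl\|\int_{s\wedge\tau_N}^{t\wedge\tau_N} f_n(X_r)\,dr\Bigr\|_{L_m(\Omega)} \le C (t-s)^{1-H(d+1)} \bigl\|\|\psi^N\|_{1-\var;[s,t]}\bigr\|_{L_m(\Omega)} + C(t-s)^{1-Hd},
\end{equation*}
with $C = C(H,d,m,|b|(\R^d))$.

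By the occupation times formula and the symmetry of the heat kernel, using $L^X = \hat L$,
\begin{equation*}
\int_{s\wedge\tau_N}^{t\wedge\tau_N} f_n(X_r)\,dr = \int f_n(y)g(y)\,dy = \int (P_{1/n}g)(z)\,|b|(dz), \quad g(y) := \hat L(t\wedge\tau_N,y) - \hat L(s\wedge\tau_N,y).
\end{equation*}
Since $g$ is a.s.\ continuous, nonnegative and compactly supported, $P_{1/n}g \to g$ uniformly and hence $\int (P_{1/n}g)\,d|b| \to \int g\,d|b|$ a.s.\ by dominated convergence against the finite measure $|b|$. Fatou's lemma combined with $\|\psi^N\|_{1-\var;[s,t]} = \|\psi\|_{1-\var;[s\wedge\tau_N,t\wedge\tau_N]} \le \int g(z)\,|b|(dz)$ from the first paragraph produces the self-bound
\begin{equation*}
\bigl\|\|\psi^N\|_{1-\var;[s,t]}\bigr\|_{L_m(\Omega)} \le C(t-s)^{1-H(d+1)} \bigl\|\|\psi^N\|_{1-\var;[s,t]}\bigr\|_{L_m(\Omega)} + C(t-s)^{1-Hd}.
\end{equation*}
Choosing $\ell = \ell(H,d,m,|b|(\R^d))$ so that $C \ell^{1-H(d+1)} \le 1/2$ and buckling yield $\bigl\|\|\psi^N\|_{1-\var;[s,t]}\bigr\|_{L_m(\Omega)} \le 2C(t-s)^{1-Hd}$ for $|t-s|\le\ell$. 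Passing $N\to\infty$ (so $\tau_N\to 1$ a.s.) via Fatou and subdividing any $[s,t]$ into $\lceil 1/\ell\rceil$ short pieces gives \eqref{regresultm}.

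The main obstacle is the pointwise identification $L^X_t(y) = \hat L(t,y)$ for \emph{every} $y$: since $b$ may concentrate on Lebesgue-null sets (e.g.\ $b = \delta_0$), the standard a.e.\ agreement of two densities of the occupation measure is insufficient to evaluate $\int L^X_t\,db$ or to justify the convergence $P_{1/n}g \to g$ against the possibly singular $|b|$. The joint continuity of $\hat L$ from \cref{T:loc}(ii) upgrades the a.e.\ identity to an everywhere identity, which is exactly what makes the mollification argument work.
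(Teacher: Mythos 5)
Your proof is correct and follows essentially the same strategy as the paper: establish finite $1$-variation of $\psi := X - W^H$ via the local time identity, invoke \cref{T:loc}(ii) to get a jointly continuous local time, introduce the stopping times $\tau_N$ to enforce bounded variation, apply \cref{L:driftb2} to the mollifications $P_{1/n}|b|$ with $\alpha = -d/2$, $q = 2$, pass to the limit in $n$, and close the loop by a buckling/iteration argument. Your verification of conditions \eqref{maincond1}--\eqref{maincond2} under $H(d+1)<1$ matches the paper's.

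The one place where you take a slightly different route is the passage to the limit $n\to\infty$. The paper invokes \cref{p:ltc}, which is a Besov duality bound: it uses that $P_{1/n}|b|\to|b|$ in $\B^{-\gamma/2}_1$ and that $L^X_t \in \C^\gamma$, and pairs them. You instead move the mollifier onto the local time via Fubini and symmetry of the heat kernel, writing $\int (P_{1/n}|b|)(y)\,g(y)\,dy = \int (P_{1/n}g)(z)\,|b|(dz)$, and then use uniform convergence $P_{1/n}g \to g$ for continuous compactly supported $g$ together with dominated convergence against the finite measure $|b|$. This is more elementary and self-contained (no Besov duality needed), though it is specific to the heat-kernel mollifier, while \cref{p:ltc} is insensitive to the choice of approximation. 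You also make explicit the pointwise identification $L^X_t(y) = \hat L(t,y)$ for every $y$ — the paper handles this more tersely (implicitly via \cref{p:limitlt} and the conventions around \cref{d:meas}), but spelling it out is harmless and arguably clearer, since it is exactly what allows integration against the possibly singular $|b|$. Both routes are sound and of comparable length.
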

\begin{proof}
We use a similar argument as in the proof of \cref{L:51} with appropriate modifications. The main difference is that, instead of the drift term $\int_0^t f(W^H_r+\psi_r)\,dr$, for which we had already established a good bound in \cref{L:driftb2}, we now have to work with the drift $\int_{\R^d} L_t^{W^H+\psi}(y)b(dy)$, where $\psi$ is a “nice” perturbation. The main idea is to approximate this drift by the drifts of the form $\int_0^t f^n(W^H_r+\psi_r)\,dr$ for an appropriate sequence $(f^n)$ and then use the bound \eqref{finbound}.

Define
$$
\psi(t):=x+\int_{\R^d} L_t^X(y) b(dy)=X_t-W_t^H,\quad t\in[0,1].
$$
It is easy to see that $\psi$ is a process of finite $1$-variation and 
	\begin{equation*}
		\|\psi\|_{1-\var;[s,t]}=\int_{\R^d} (L_t^X(y)-L_s^X(y)) |b|(dy),\quad (s,t)\in\Delta_{[0,1]}.
	\end{equation*}
By  \cref{T:loc}(ii), there exists a set of full measure $\Omega'\subset\Omega$ and $\gamma>0$ such that for any $t\in[0,1]$ the process $L_t^X$ belongs to $\C^\gamma(\R^d,\R)$ on $\Omega'$. Define $f^n:=P_{\frac1n}|b|$ and note that $f^n\to |b|$ in $\B^{-\frac\gamma2}_1$ as $n\to\infty$ by \cref{p:veryboring}. Therefore, by \cref{p:ltc} on $\Omega'$ for any $(s,t)\in\Delta_{[0,1]}$
\begin{equation}\label{limboundfnst}
\int_s^t f^n(X_r)\,dr=\int_{\R^d} (L_t^X(y)-L_s^X(y)) f^n(y)\,dy\to\int_{\R^d} (L_t^X(y)-L_s^X(y)) |b|(dy).
\end{equation}
as $n\to\infty$. 

As in the proof of \cref{L:51}, we fix arbitrary $N>0$ and introduce a stopping time and a stopped variation process
$$
\tau_N:=\inf\{t\in[0,1]: \|\psi\|_{1-\var;[0,t]}\ge N\};\qquad \psi^N_t:=\psi_{t\wedge\tau_N},\quad t\in[0,1].
$$
We have 
\begin{equation}\label{varbound}
	\|\psi^N\|_{1-\var;[s,t]}=\int_{\R^d} (L_{t\wedge\tau_N}^X(y)-L_{s\wedge\tau_N}^X(y)) |b|(dy),\quad (s,t)\in\Delta_{[0,1]}.
\end{equation}
Fix small $\ell\in[0,1]$. We apply \cref{L:driftb2} with $z:=\psi^N$,  $f^n$ in place of $f$, $\tau=\tau_N$, $\alpha:=-\frac{d}2$, $q=2$. We see  all the conditions of \cref{L:driftb2} are satisfied and we get from \eqref{finbound}
for any $s,t\in\Delta_{[0,1]}$ with $|t-s|\le \ell$, $m\ge2$
\begin{align*}
	\Bigl\|\int_{s\wedge\tau_N}^{t\wedge\tau_N} f^n(X_r)\,dr\Bigr\|_{L_m(\Omega)}&=
	\Bigl\|\int_{s\wedge\tau_N}^{t\wedge\tau_N} f^n(W_r+\psi^N_r)\,dr\Bigr\|_{L_m(\Omega)}\nn\\
	&\le C_0 M \ell^{1-Hd-H}
	\Bigl\|\int_{\R^d} (L_{t\wedge\tau_N}^X(y)-L_{s\wedge\tau_N}^X(y)) |b|(dy)\Bigr\|_{L_m(\Omega)}\nn\\
	&\phantom{\le}+C_0 M (t-s)^{1-{Hd}},
\end{align*}
where we denoted $M:=|b|(\R^d)$, $C_0=C_0(H,d,m)>0$ and we used the embedding $L_1(\R^d)\subset \B^{-\frac{d}2}_q$ and the  identity for total variation \eqref{varbound}. We note that $C_0$ does not depend on $n$. Thus, by passing to the limit as $n\to\infty$ and using \eqref{limboundfnst} and Fatou's lemma we get for  $s,t\in\Delta_{[0,1]}$ with $|t-s|\le \ell$, $m\ge2$
\begin{align*}
&\Bigl\|\int_{\R^d} (L_{t\wedge\tau_N}^X(y)-L_{s\wedge\tau_N}^X(y)) |b|(dy)\Bigr\|_{L_m(\Omega)}\\
&\qquad \le C_0 M \ell^{1-Hd-H}
\Bigl\|\int_{\R^d} (L_{t\wedge\tau_N}^X(y)-L_{s\wedge\tau_N}^X(y)) |b|(dy)\Bigr\|_{L_m(\Omega)}+C_0 M (t-s)^{1-{Hd}}.
\end{align*}
By choosing now $\ell= \ell(H,d,m,M)$ small enough, we get from the above inequality 
	\begin{equation*}
\Bigl\|\int_{\R^d} (L_{t\wedge\tau_N}^X(y)-L_{s\wedge\tau_N}^X(y)) |b|(dy)\Bigr\|_{L_m(\Omega)}		\le C (t-s)^{1-Hd},\quad s,t\in[0,1],\,|t-s|\le \ell,
	\end{equation*}
where $C=C(H,d,m,M)>0$. By passing to the limit in this inequality as $N\to\infty$ and applying the resulting bound $\lceil 1/\ell\rceil$ times, we get \eqref{regresultm}.
\end{proof}

Now using \cref{L:51} we conclude that for $b \in L_p(\R^d)$ the standard notion of a solution to \eqref{mainSDE} coincides with the notion of a regularized solution introduced in \cref{D:sol}, provided that the solution has finite variation and \eqref{maincond} holds.

\begin{proof}[Proof of \cref{T:func}(ii)--(iv)]
(ii). This follows immediately from  \cref{L:51}.

(iii). Let $(X,W^H)$ be a solution to  \Gref{x;b}. Let $(b^n)_{n\in\Z_+}$ be a sequence of $\C^\infty_b(\R^d,\R^d)$ functions converging to $b$ in $\B^{0-}_p$. Define
\begin{equation*}
\psi_t:=\int_0^t b(X_r)\,dr,\quad \psi^n_t:=\int_0^t b^n(X_r)\,dr,\qquad t\in[0,1],\,n\in\Z_+.
\end{equation*}
We see that $X_t=x+\psi_t+W_t^H$, $t\in[0,1]$, and thus part~(1) of \cref{D:sol} holds. 

To verify part~(2), we fix $N>0$ and take $\delta>0$ small enough such that \eqref{maincondupd} holds. We apply \cref{L:driftb2} twice: first to the function $f := b^n - b\I_{|b| \le N}$, and then to the function $f := |b|\I_{|b| \ge N}$. The remaining parameters are the same in both cases and are given by $z =x+ \psi$, $\tau = 1$, $q = p \vee 2$, and $\alpha = -\delta + \I_{p \in [1,2]}\big(\frac{d}{2} - \frac{d}{p}\big)$. By \cref{c:driftb2}, conditions \eqref{maincond1}--\eqref{maincond2} are satisfied. Additionally, the function $b^n - b\I_{|b| \le N}$ is bounded, and the function $|b|\I_{|b| \le N}$ is nonnegative. Thus, in both cases, all the conditions of \cref{L:driftb2} are satisfied. Recalling the bound on $1$-variation of $\psi$ obtained in \eqref{regresult}, we derive from \eqref{finbound}  for any $(s,t)\in\Delta_{[0,1]}$, $m\ge2$
\begin{align*}
\|(\psi_t^n-\psi_t)-(\psi_s^n-\psi_s)\|_{L_m(\Omega)}&=
\Bigl\|\int_{s}^{t} (b^n-b)(W_r^H+x+\psi_r)\,dr\Bigr\|_{L_m(\Omega)}\\
&\le \Bigl\|\int_{s}^{t} (b^n-b\I_{|b|\le N})(W_r^H+x+\psi_r)\,dr\Bigr\|_{L_m(\Omega)}\\
&\phantom{\le}+\Bigl\|\int_{s}^{t} (|b|\I_{|b|> N})(W_r^H+x+\psi_r)\,dr\Bigr\|_{L_m(\Omega)}\\
&\le C \|b^n-b\I_{|b|\le N}\|_{\B^{-\delta}_p}(t-s)^{1-\frac{Hd}p-\delta H}+
C\|b\I_{|b|> N}\|_{L_p(\R^d)}\\
&\le C \|b^n-b\|_{\B^{-\delta}_p}(t-s)^{1-\frac{Hd}p-\delta H}+
2C\|b\I_{|b|> N}\|_{L_p(\R^d)},
\end{align*}
where $C=C(H,d,\delta,m,p,\|b\|_{L_p(\R^d)})>0$. By passing to the limit as $N\to\infty$ we get 
\begin{equation*}
	\|(\psi_t^n-\psi_t)-(\psi_s^n-\psi_s)\|_{L_m(\Omega)}\le 
	 C \|b^n-b\|_{\B^{-\delta}_p}(t-s)^{1-\frac{Hd}p-\delta H}.
\end{equation*}
By taking $m$ sufficiently large such that $(1 - \frac{Hd}{p} - \delta H)m > 1$, we conclude, by the Kolmogorov continuity theorem, that
\begin{equation*}
\|\sup_{t\in[0,1]}|\psi_t^n-\psi_t|\|_{L_m(\Omega)}\le  C \|b-b^n\|_{\B^{-\delta}_p},
\end{equation*}
where the constant $C=C(H,d,\delta,m,p,\|b\|_{L_p(\R^d)})>0$ is independent of $n$. Recalling that by assumption $\|b-b^n\|_{\B^{-\delta}_p}\to0$ as $n\to\infty$, we deduce that $\sup_{t\in[0,1]}|\psi_t^n-\psi_t|\to0$ as $n\to\infty$ in $L_m(\Omega)$ and hence in probability. Therefore, part (2) of  \cref{D:sol} is also satisfied. Thus, $(X,W^H)$ solves \eqref{mainSDE} in the sense of \cref{D:sol}.

(iv). Assume that $(X,W^H)$ solves \eqref{mainSDE} in the sense of \cref{D:sol}. Put  $\psi_t:=X_t-W^H_t-x$, $t\in[0,1]$. By the definition of the class \textbf{BV}, $\|\psi\|_{1-\var;[0,1]}<\infty$. Let $(b^n)_{n\in\Z_+}$ be a sequence of $\C^\infty_b(\R^d,\R^d)$ functions converging to $b$ in $\B^{0-}_p$. Fix $N>0$ and  introduce a stopping time 
$$
\tau_N:=\inf\{s\in[0,1]: \|\psi\|_{1-\var;[0,s]}\ge N\}\wedge1.
$$ 
Define $\psi^N_t:=\psi_{t\wedge\tau_N}$, $t\in[0,1]$, and note that for any $(s,t)\in[0,1]$ we have ${\|\psi^N\|_{1-\var;[s,t]}\le N}$. 
Fix $t \in [0,1]$, $M>0$, and take $\delta>0$ small enough such that \eqref{maincondupd} holds. As before, we apply \cref{L:driftb2} twice: first to the function $f := b^n - b\I_{|b| \le M}$, and then to the function $f := |b|\I_{|b| \ge M}$. The remaining parameters are the same in both cases and are given by $z = x + \psi^N$, $\tau =\tau_N$, $q = p \vee 2$, and $\alpha = -\delta + \I_{p \in [1,2]}\big(\frac{d}{2} - \frac{d}{p}\big)$. By \cref{c:driftb2}, conditions \eqref{maincond1}--\eqref{maincond2} are satisfied. Moreover, the function $b^n - b\I_{|b| \le M}$ is bounded, and the function $|b|\I_{|b| \ge M}$ is nonnegative. Therefore, in both cases, all the conditions of \cref{L:driftb2} are satisfied, and we obtain from \eqref{finbound}
\begin{align*}
\Bigl\|\int_{0}^{t\wedge\tau^N} (b^n(X_r)-b(X_r))\,dr\Bigr\|_{L_2(\Omega)}&=
\Bigl\|\int_{0}^{t\wedge\tau^N} (b^n-b)(W_r^H+x+\psi^N_t)\,dr\Bigr\|_{L_2(\Omega)}\\
&\le \Bigl\|\int_{0}^{t\wedge\tau^N} (b^n-b\I_{|b|\le M})(W_r^H+x+\psi^N_r)\,dr\Bigr\|_{L_2(\Omega)}\\
&\phantom{\le}+\Bigl\|\int_{0}^{t\wedge\tau^N} |b|\I_{|b|> M})(W_r^H+x+\psi^N_r)\,dr\Bigr\|_{L_2(\Omega)}\\
&\le C (1+N) (\|b-b^n\I_{|b|\le M}\|_{\B^{-\delta}_p}+\|b\I_{|b|> M}\|_{L_p(\R^d)})\\
&\le C (1+N) (\|b-b^n\|_{\B^{-\delta}_p}+\|b\I_{|b|> M}\|_{L_p(\R^d)})
\end{align*}
for $C=C(H,d,\delta,p)>0$. 
By passing to the limit as $M\to\infty$ and then as $n\to\infty$ we get that for any fixed $N>0$
\begin{equation*}
\int_{0}^{t\wedge\tau^N} b^n(X_r)\,dr\to
\int_{0}^{t\wedge\tau^N} b(X_r)\,dr,\quad\text{in probability as $n\to\infty$}.
\end{equation*}
On the other hand, since $(X,W^H)$ is a regularized solution to \eqref{mainSDE}, 
\begin{equation*}
	\int_{0}^{t\wedge\tau^N} b^n(X_r)\,dr\to \psi_{t\wedge\tau^N},\quad\text{in probability as $n\to\infty$}.
\end{equation*}
Thus, $\psi_{t\wedge\tau^N}=\int_{0}^{t\wedge\tau^N} b(X_r)\,dr$ a.s. for any $N>0$.   By passing to the limit as $N\to\infty$, we get
\begin{equation*} 
X_{t}-x-W_t^H=\psi_t=\int_0^{t} b(X_r)\,dr\quad \text{a.s.},
\end{equation*}
where we used that $\lim_{N\to\infty}\tau^N=1$ a.s. because  $\|\psi\|_{1-\var;[0,1]}<\infty$.
This implies that $(X,W^H)$ is a solution to \Gref{x;b}.
\end{proof}

Next, we prove a related statement concerning equivalence of different notions of solutions in case $b\in\M(\R^d,\R^d)$. 

\begin{proof}[Proof of \cref{T:measure}]
(i). 
Assume that $(X,W^H)$ is a regularized solution to \eqref{mainSDE} in the sense of \cref{D:sol}. Define $\psi_t:=X_t-W_t^H-x$ and note that $\|\psi\|_{1-\var;[0,1]}<\infty$ by the definition of class \textbf{BV}. Then $X=W^H+\psi+x$, and therefore, \cref{T:loc}(ii) implies that $X$ has local time $L_t^X$, which on a set of full measure $\Omega'\subset\Omega$ belongs to $\C^\gamma(\R^d,\R)$ for any $t\in[0,1]$, and $\gamma=\frac1{4H}-\frac{d}4-\frac14>0$, thanks to the assumption $H(d+1)<1$. By \cref{p:limitlt}, \eqref{lebloc} holds for $L^X$.

Let $(b^n)_{n\in\Z_+}$ be a sequence of $\C^\infty_b(\R^d,\R^d)$ functions converging to $b$ in $\B^{0-}_1$. By part (2) of \cref{D:sol} and by passing to a subsequence if necessary, we have on a set $\Omega''\subset\Omega'$ of full measure for any $t\in[0,1]$
$$
\int_0^t b^n(X_r)dr\to \psi(t)\quad \text{as $n\to\infty$}.
$$
On the other hand,  by the definition of local time and \cref{p:ltc}, for any $t\in[0,1]$ on $\Omega''$
$$
\int_0^t b^n(X_r)dr=\int_{\R^d}b^n(y)L_t^X(y)\,dy\to \int_{\R^d}L_t^X(y)b(dy)\quad \text{as $n\to\infty$}.
$$
Here we used that $L_t^X\in\C^\gamma(\R^d,\R)$ for some $\gamma>0$, and $b^n\to b$ in $\B^{-\gamma/2}_1$ as $n\to\infty$. Therefore,  on $\Omega''$
$$
X_t=x+\psi(t)+W^H(t)=x+\int_{\R^d}L_t^X(y)b(dy)+W^H(t),
$$
which is \eqref{measureeq}.

(ii). 
Let $(X,W^H)$ be a solution to \eqref{measureeq}. Define $\psi(t):=\int_{\R^d}L_t^X(y)b(dy)$. Clearly, $\psi$ is of bounded variation, and thus $X$ belongs to \textbf{BV}. By \cref{T:loc}(ii), there exists a set of full measure $\Omega'\subset\Omega$ and $\gamma>0$ such that on $\Omega'$
\begin{equation}\label{lambdabound}
\sup_{t\in[0,1]}\|L_t^X\|_{\C^\gamma(\R^d)}<\infty.
\end{equation}

Now let us verify that $X$ is regularized solution to \eqref{mainSDE}. 
We see that $X=x+\psi+W^H$ and therefore part (i) of \cref{D:sol} is satisfied. Next, we fix a sequence  $(b^n)_{n\in\Z_+}$ of $\C^\infty_b(\R^d,\R^d)$ functions converging to $b$ in $\B^{0-}_1$.
\begin{align*}
\sup_{t\in[0,1]}\Bigl|\int_0^t b^n(X_r)\,dr-\psi_t\Bigr|&=
\sup_{t\in[0,1]}\Bigl|\int_{\R^d} L_t^X(y) b^n(y)\,dr-\int_{\R^d} L_t^X(y) b(dy)\Bigr|\\
&\le \|b^n-b\|_{\B^{-\frac\gamma2}_1}\sup_{t\in[0,1]}\|L_t^X\|_{\C^\gamma(\R^d)},
\end{align*}
where the last inequality follows from \cref{p:ltc}. 
Recalling \eqref{lambdabound}, we see that $\sup_{t\in[0,1]}\Bigl|\int_0^t b^n(X_r)\,dr-\psi_t\Bigr|\to0$ as $n\to\infty$ on $\Omega'$ and thus  part (ii) of \cref{D:sol} also holds.  Hence $(X,W^H)$ solves equation \eqref{mainSDE} in the sense of \cref{D:sol}.

Bound \eqref{reztmeas} follows from \cref{L:51m}.
\end{proof}

\subsection{Weak existence of solutions to SDE}\label{s:WU}

Now, let us show the existence of weak solutions to the SDE \eqref{mainSDE}. Keeping in mind that we will later need to establish the strong existence of solutions and apply \cite[Lemma~1.1]{MR1392450}, we prepare by also providing more general results related to weak existence.

We will prove the weak existence of regularized solutions to \eqref{mainSDE} for $b \in \B^0_p$, where $p$ satisfies \eqref{maincond}. Thanks to the embedding $L_p(\R^d,\R^d) \subset \B^0_p$ and the already established equivalence of regularized solutions and solutions to \Gref{x;b} (\cref{T:func}\ref{part3tfunc}), this will imply the weak existence of solutions to \Gref{x;b}, that is, \cref{T:func}\ref{part1tfunc}. \cref{T:ident}\ref{part1tm} will  follow similarly from the embedding $\M(\R^d,\R^d) \subset \B^0_1$.

Until the end of this section, we fix the parameters $H,d,p$ satisfying \eqref{maincond} and assume $b \in \B^0_p$.

\begin{lemma}[Tightness]\label{l:tght}
Let $(x_n')_{n\in\Z_+}$, $(x_n'')_{n\in\Z_+}$ be two sequences  of vectors in  $\R^d$. Let $(b_n')_{n\in\Z_+}$, $(b_n'')_{n\in\Z_+}$ be two sequences of $\C^1(\R^d,\R^d)$ functions such that
\begin{equation}\label{condabs}
\sup_{n\in\Z_+}(\|b_n'\|_{L_p(\R^d)}+ \|b_n''\|_{L_p(\R^d)}+|x_n'|+|x_n''|)<\infty,
\end{equation}
Let $X_n'$, $X_n''$ be strong solutions to \Gref{x_n';b_n'},
\Gref{x_n'';b_n''}, respectively.

Then there exists a subsequence $(n_k)_{k\in\Z_+}$ such that $(X_{n_k}',X_{n_k}'',W^H,B)_{k\in\Z_+}$ converges weakly in the space $\C([0,1],\R^{3d})\times\C^\theta([0,1],\R)$ for any $\theta\in[0,\frac12)$.
\end{lemma}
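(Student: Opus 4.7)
The plan is to run a standard Kolmogorov-type tightness argument, with the key input being the a priori variation bound \cref{L:51}. Since $(W^H,B)$ does not depend on $n$, its two marginals are automatically tight (in $\C([0,1],\R^d)$ and in $\C^\theta([0,1],\R)$ for every $\theta\in[0,\tfrac12)$, the latter by the usual Kolmogorov argument for Brownian motion). On a finite product of separable metric spaces, marginal tightness implies joint tightness, so it is enough to establish tightness of $(X_n')_{n}$ and $(X_n'')_{n}$ in $\C([0,1],\R^d)$ separately; Prokhorov's theorem then extracts the convergent subsequence.

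Hypothesis \eqref{condabs} supplies the uniform bound $M:=\sup_{n}(\|b_n'\|_{L_p(\R^d)}+\|b_n''\|_{L_p(\R^d)})<\infty$. Each $X_n'$ is a strong (hence weak) solution to \Gref{x_n';b_n'}, so \cref{L:51} applies and yields, for every $m\ge 2$ and every $(s,t)\in\Delta_{[0,1]}$,
\[
\Bigl\|\int_s^t |b_n'(X_n'(r))|\,dr\Bigr\|_{L_m(\Omega)}\le C(t-s)^{1-\frac{Hd}{p}},
\]
with $C=C(H,d,m,M,p)$ independent of $n$, and analogously for $X_n''$. Combined with the standard Gaussian increment estimate $\|W^H_t-W^H_s\|_{L_m(\Omega)}\le C(t-s)^H$ and the identity $X_n'(t)-X_n'(s)=\int_s^t b_n'(X_n'(r))\,dr+W^H_t-W^H_s$, this gives
\[
\|X_n'(t)-X_n'(s)\|_{L_m(\Omega)}\le C\bigl((t-s)^{1-\frac{Hd}{p}}+(t-s)^H\bigr)\le C(t-s)^H,
\]
where in the last step I use $t-s\le 1$ together with the fact that assumption \eqref{maincond} precisely gives $1-\frac{Hd}{p}>H$, so the slowest-growing exponent is $H$. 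The same bound holds for $X_n''$.

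Choosing $m$ large enough that $mH>1$ and noting that $|X_n'(0)|=|x_n'|$ is uniformly bounded by \eqref{condabs}, the Kolmogorov tightness criterion yields tightness of $(X_n')$ and $(X_n'')$ in $\C^{H-\eps}([0,1],\R^d)$ for every sufficiently small $\eps>0$, which embeds compactly into $\C([0,1],\R^d)$. By the reduction in the first paragraph, the family $(X_n',X_n'',W^H,B)$ is therefore tight in $\C([0,1],\R^{3d})\times\C^\theta([0,1],\R)$ for every $\theta\in[0,\tfrac12)$, and Prokhorov's theorem provides the desired convergent subsequence.

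I do not anticipate a substantive obstacle: the lemma is essentially a packaging of \cref{L:51}. The only care needed is to invoke \eqref{maincond} to guarantee that both exponents $H$ and $1-\tfrac{Hd}{p}$ are strictly positive, so that the Kolmogorov criterion applies and the resulting Hölder exponent of the solutions matches that of the driving fractional Brownian motion.
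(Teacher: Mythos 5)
Your proof is correct and follows essentially the same strategy as the paper: both hinge on the uniform moment bound supplied by \cref{L:51} and then convert that into tightness via a Kolmogorov-type argument. The only difference is a matter of packaging. The paper establishes the Hölder bound for the drift $\psi_n' = X_n' - W^H$ alone, producing tightness of $(\psi_n')$ in $\C([0,1])$ via Arzel\`a--Ascoli and Markov's inequality, and then passes to $X_n' = \psi_n' + W^H$ by continuity of addition. You instead absorb the noise term directly into the moment estimate, using \eqref{maincond} to observe that the exponent $H$ dominates $1-\tfrac{Hd}{p}$ on $[0,1]$, and then apply the Kolmogorov tightness criterion to $X_n'$ itself, getting tightness in $\C^{H-\eps}\hookrightarrow\C$. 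Both routes are equally elementary and lead to the same conclusion; the paper's decomposition is marginally cleaner because it isolates the term for which the uniform-in-$n$ estimate is nontrivial, whereas your version is a bit more self-contained in that it bounds $X_n'$ in one stroke. Your reduction to marginal tightness (finite products) and the observation that $(W^H,B)$, being a single fixed random element, is trivially tight in the relevant (Hölder) spaces, are both sound.
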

\begin{proof}
Denote 
$$
M:=\sup_{n\in\Z_+}\|b_n'\|_{L_p(\R^d)}.
$$
Introduce the process 
\begin{equation*}
\psi_n'(t):=X_n'(t)-W^H(t)=x_n'+\int_0^t b'_n(X_n'(r))\,dr,\quad t\in[0,1].
\end{equation*}
We  apply \cref{L:51} with $f=b_n'$. We get that for any $m\ge2$ there exists a constant $C=C(H,d,m,M,p)>0$,  such that for any $(s,t)\in\Delta_{[0,1]}$
\begin{equation*}
\|\psi_n'(t)-\psi_n'(s)\|_{L_m(\Omega)}\le C(t-s)^{1-\frac{Hd}p}.
\end{equation*}
Therefore, by the Kolmogorov continuity theorem, for any $\eps>0$ there exists a constant  $C=C(H,\eps,d,M,p)>0$ such that
\begin{equation}\label{Holdfin}
\E [\psi'_n(\omega)]_{\C^{1-\frac{Hd}p-\eps}([0,1])}\le C,
\end{equation}
and $C$ does not depend on $n$.
Let us prove now that the sequence $(\psi'_n)_{n\in\Z_+}$ is tight in $\C([0,1])=\C([0,1],\R^d)$. For $N>0$ put
$$
H_N:=\{f\in \C([0,1]): |f(0)|\le R,\,[f]_{\C^{1-\frac{Hd}p-\eps}([0,1])}\le N\},
$$
where $R:=\sup_{n\in\Z_+}|x_n'|<\infty$. 
By the Arzel\`a--Ascoli theorem, for each $N>0$ the set $H_N$ is a compact set. Furthermore, by \eqref{Holdfin} for any  $n\in\Z_+$ we have
$$
\P(\psi'_n\notin H_N)=\P([\psi'_n(\omega)]_{\C^{1-\frac{Hd}p-\eps}} >N)\le C  N^{-1}.
$$
Thus, the sequence $(\psi'_n)_{n\in\Z_+}$ is tight in $\C([0,1])$. Recalling that $X_n'=\psi_n'+W^H$ and the addition is continuous in $\C([0,1])$, we see that $(X'_n)_{n\in\Z_+}$
is tight in $\C([0,1])$. Similarly,  the sequence $(X''_n)_{n\in\Z_+}$ is tight. Thus, 
$(X_{n}',X_{n}'',W^H,B)_{k\in\Z_+}$ is tight. By the Prokhorov theorem, this implies the desired statement. 
\end{proof}

\begin{lemma}[Identification of the limit]\label{L:ident}
Let $(f_n)_{n\in\Z_+}$  be a sequence of $\C^1(\R^d,\R^d)$ functions converging to $b$ in $\B^{0-}_p$. Suppose that 
\begin{equation}\label{condbabs}
\sup_{n\in\Z_+}\|f_n\|_{L_p(\R^d)}<\infty.
\end{equation}
Let $(W^H_n)_{n\in\Z_+}$ be a sequence of fractional Brownian motions with the same Hurst parameter $H\in(0,1)$.
Let $(x_n)_{n\in\Z_+}$ be a sequence  of vectors in  $\R^d$ converging to $x\in\R^d$. Let $X_n$  be a strong solution to \Gref{x_n;f_n} with $W^H_n$ in place of $W^H$.

Suppose that there exists measurable functions $X,W^H\colon\Omega\times [0,1]\to\R^d$ such that $(X_n,W_n^H)$ converges to $(X,W^H)$ in space $\C([0,1],\R^{2d})$ in probability as $n\to\infty$.

Then $X$ is a regularized solution to \eqref{mainSDE}. Further, for any $m\ge1$
	\begin{equation}\label{TVbound}
		\|\,\| X-W^H\|_{1-\var;[0,1]}\,\|_{L_m(\Omega)}<\infty.
	\end{equation}
\end{lemma}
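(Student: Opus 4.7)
The strategy is to transfer all convergences to the almost-sure setting via Skorokhod's representation theorem and then combine the a priori variation bound of \cref{L:51} for the strong solutions $X_n$ with the key integral estimate of \cref{L:driftb2} applied uniformly in $n$.

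\textbf{Setup and variation bound.} First invoke Skorokhod's representation theorem on the tight family $(X_n, W^H_n, B_n)$ (where $B_n$ is the Brownian motion driving $W^H_n$ via \eqref{WB}), so that one may assume without loss of generality that $(X_n, W^H_n) \to (X, W^H)$ almost surely in $\C([0,1],\R^{2d})$. The adaptedness of $X_n$ to the filtration generated by $W^H_n$ is preserved under Skorokhod since it is encoded in the joint law of $(X_n, W^H_n)$ as a consequence of the strong solution property. Since $\sup_n \|f_n\|_{L_p(\R^d)} \le M < \infty$, applying \cref{L:51} to each strong solution $X_n$ yields the uniform-in-$n$ bound $\sup_n \bigl\|\,\|X_n - W^H_n\|_{1-\var;[0,1]}\,\bigr\|_{L_m(\Omega)} < \infty$. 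Because total variation is lower semicontinuous with respect to uniform convergence, Fatou's lemma immediately yields \eqref{TVbound}.

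\textbf{Regularized solution property.} Set $\psi_t := X_t - x - W^H_t$ and $\psi_n(t) := X_n(t) - x_n - W^H_n(t) = \int_0^t f_n(X_n(r))\,dr$; part (i) of \cref{D:sol} is immediate. For part (ii), let $(b^k) \subset \C^\infty_b(\R^d,\R^d)$ be any sequence converging to $b$ in $\B^{0-}_p$, and fix $\delta > 0$ small enough that $H + \frac{Hd}{p} + 2\delta H < 1$, which is possible by \eqref{maincond}. For each fixed $k$, since $b^k$ is Lipschitz and $X_n \to X$ uniformly a.s., a direct estimate gives $\int_0^t b^k(X_n(r))\,dr \to \int_0^t b^k(X_r)\,dr$ uniformly in $t$ a.s. Combined with $\psi_n \to \psi$ uniformly a.s., we obtain the pointwise identity
\begin{equation*}
\int_0^t b^k(X_r)\,dr - \psi_t = \lim_{n \to \infty}\int_0^t (b^k - f_n)(X_n(r))\,dr \quad \text{a.s., uniformly in } t.
\end{equation*}
Now apply \cref{L:driftb2} (whose hypotheses are verified via \cref{c:driftb2}) to the integrals on the right, taking $f := b^k - f_n$ (bounded, in $\B^{-\delta}_p$), $z := x_n + \psi_n$ (adapted to the filtration of $W^H_n$, with $\|z\|_{1-\var;[s,t]}$ bounded in $L_m$ uniformly in $n$ by \cref{L:51}), and $\tau = 1$. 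Together with the Kolmogorov continuity theorem (applied with $m$ sufficiently large so that the positive Hölder exponent from \eqref{finbound} exceeds $1/m$), this gives
\begin{equation*}
\Bigl\| \sup_{t \in [0,1]}\Bigl|\int_0^t (b^k - f_n)(X_n(r))\,dr\Bigr|\,\Bigr\|_{L_m(\Omega)} \le C \|b^k - f_n\|_{\B^{-\delta}_p}
\end{equation*}
with $C$ independent of $n$. Fatou's lemma applied to the pointwise identity above, together with the triangle bound $\|b^k - f_n\|_{\B^{-\delta}_p} \le \|b^k - b\|_{\B^{-\delta}_p} + \|b - f_n\|_{\B^{-\delta}_p}$ (with the second term vanishing as $n \to \infty$), yields
\begin{equation*}
\Bigl\| \sup_{t \in [0,1]}\Bigl|\int_0^t b^k(X_r)\,dr - \psi_t\Bigr|\,\Bigr\|_{L_m(\Omega)} \le C \|b^k - b\|_{\B^{-\delta}_p} \xrightarrow{k \to \infty} 0,
\end{equation*}
which implies the required convergence in probability.

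\textbf{Main obstacle.} The most delicate step is the filtration bookkeeping under the Skorokhod construction: one must verify that on the new probability space each $X_n$ remains adapted to a filtration with respect to which $W^H_n$ is an fBM, so that the hypotheses of \cref{L:driftb2} are satisfied uniformly in $n$. This is standard but requires care due to the non-Markovian character of the driving noise, which is why the Brownian motions $B_n$ with $W^H_n = \Psi(B_n)$ must be carried along in the Skorokhod construction.
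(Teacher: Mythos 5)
Your argument reproduces the paper's proof in its essentials: the same decomposition of $\int_0^t b^k(X)\,dr - \psi_t$ into $\int_0^t (b^k - f_n)(X_n)\,dr$ plus terms that vanish as $n\to\infty$ via uniform convergence of $(X_n,W^H_n)$; the same uniform-in-$n$ variation bound $\sup_n\|\,\|X_n-W^H_n\|_{1-\var;[0,1]}\,\|_{L_m(\Omega)}<\infty$ from \cref{L:51}; the same application of \cref{L:driftb2} (via \cref{c:driftb2}) with $z=X_n-W^H_n$ and $f=b^k-f_n$ to get the $\|b^k-f_n\|_{\B^{-\delta}_p}$ bound; and the same Fatou/lower-semicontinuity argument for \eqref{TVbound}. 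The one detour is the Skorokhod representation step at the outset: it is unnecessary here because the lemma already posits that $(X_n,W^H_n)\to(X,W^H)$ in probability on a single common probability space (Skorokhod is reserved in the paper for \cref{c:ews}, where only weak convergence is available), so the paper works directly with in-probability convergence for the Lipschitz term and $L_m$-convergence for the singular term; dropping Skorokhod also dissolves the filtration-bookkeeping concern you flag, since $X_n$ is already a strong solution with respect to $W^H_n$ on the given space.
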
 
\begin{proof}
First, we see that $W^H$ has the law of fractional Brownian motion with the Hurst parameter $H$. Therefore, defining 
\begin{equation}\label{psiprdef}
\psi(t):= X(t)-x- W^H(t),\quad t\in[0,1],
\end{equation}
we see that part (1) of  \cref{D:sol} holds.

To check part (2) of  \cref{D:sol}, we fix a sequence $(b_n)_{n\in\Z_+}$ of $\C^\infty_b(\R^d,\R^d)$ functions converging to $b$ in $\B^{0-}_p$. We put for $n\in\Z_+$, $t\in[0,1]$
\begin{equation*}
\psi_n(t):=\int_0^t b_n( X_r)\,dr.
\end{equation*}
	
Our goal is to prove that $\|\sup_{t\in[0,1]} |\psi_n(t)-\psi(t)|\,\|_{L_2(\Omega)}\to0$. Recalling the definition of $\psi$ in \eqref{psiprdef} and that $X_k$ is a solution to  \Gref{x_k;f_k}, we deduce for any $n,k\in\Z_+$
	\begin{align}\label{baza0}
		&\sup_{t\in[0,1]} |\psi_n(t)-\psi(t)|\nn\\
		&\qquad\le
		\sup_{t\in[0,1]} \Bigl|\int_0^t b_n(X(r))\,dr -\int_0^t b_n(X_k(r))\,dr|+
		\sup_{t\in[0,1]} \Bigl|\int_0^t (b_n-f_k)(X_k(r))\,dr\Bigr|\nn\\
		&\qquad\phantom{\le} +\sup_{t\in[0,1]} \Bigl|\int_0^t f_k(X_k(r))\,dr-\psi(t)\Bigr|\nn\\
		&\qquad\le (\|b_n\|_{\C^1(\R^d)}+1) \bigl(\sup_{t\in[0,1]} |X_k(t)-X(t)|+
		\sup_{t\in[0,1]} | W^H_k(t)- W^H(t)|+|x_k-x|\bigr)\nn\\
		&\qquad\phantom{\le}+ 	\sup_{t\in[0,1]} \Bigl|\int_0^t (b_n-f_k)(X_k(r))\,dr\Bigr|\nn\\
		&\qquad=:I_1(n,k)+I_2(n,k).
	\end{align}
Recall that by assumption, $(X_k,W_k^H)$ converges to $(X,W^H)$ in $\C([0,1],\R^{2d})$ in probability and $x_k$ converges to $x$. 
Therefore, for any fixed $n\in\Z_+$
\begin{equation}\label{baza}
\lim_{k\to\infty} I_1(n,k)=0\quad\text{in probability}.
\end{equation}

Further, using again that $(X_k, W^H_k)$ is a solution to \Gref{x_k,f_k} and that condition \eqref{condbabs} holds, we see that all the assumptions of \cref{L:51} with $f=b_k$ are satisfied. Hence for any $m\ge1$ we have
\begin{equation}\label{var}
\|\,\| X_k-W^H_k\|_{1-\var;[0,1]}\,\|_{L_m(\Omega)}\le C,
\end{equation}
for some constant $C=C(H,d,m,p)>0$ independent of $k$. We take $\delta>0$ small enough such that \eqref{maincondupd} holds and apply \cref{L:driftb2} with $f=b_n-f_k$, $z=X_k-W_k^H$, $\tau=1$, $q=p\vee2$, 
and $\alpha = -\delta + \I_{p \in [1,2]}\big(\frac{d}{2} - \frac{d}{p}\big)$. By \cref{c:driftb2}, conditions \eqref{maincond1}--\eqref{maincond2} are satisfied. Moreover, the function $b^n - f_k$ is bounded. Therefore, all the conditions of \cref{L:driftb2} are satisfied, and we obtain from \eqref{finbound} for any $m\ge2$, $(s,t)\in\Delta_{[0,1]}$
\begin{align*}
\Bigl\|\int_s^t (b_n-f_k)( X_k(r))\,dr\Bigr\|_{L_m(\Omega)}\le C\|b_n-f_k\|_{\B^{-\delta}_p}|t-s|^{1-\frac{Hd}{p}-H-\delta}
\end{align*}
where $C=C(H,d,\delta,m,p)>0$ does not depend on $n,k$. This implies, by the Kolmogorov continuity theorem that
\begin{equation*}
\|I_2(n,k)\|_{L_2(\Omega)}	=	\Bigl\|\sup_{t\in[0,1]} \Bigl|\int_0^t (b_n-f_k)( X_k(r))\,dr\Bigr|\Bigr\|_{L_2(\Omega)}\le C\|b_n-f_k\|_{\B^{-\delta}_p}\to0\,\,\text{as $k,n\to\infty$}.
\end{equation*}
Combining this with \eqref{baza} and passing to the limit in \eqref{baza0} first as $k\to\infty$ and then as $n\to\infty$, we finally get
$$
\sup_{t\in[0,1]} |\psi_n(t)-\psi(t)|\to0,\quad \text{in probability as $n\to\infty$},
$$
and thus $X$ is indeed a solution to \eqref{mainSDE}.

Finally, \eqref{var}, \cref{p:TV}, and Fatou's lemma imply \eqref{TVbound}.
\end{proof}

\begin{corollary}\label{c:ews}
Assume that all the conditions of \cref{l:tght} are satisfied. Suppose additionally that 
$b_n'\to b$, $b_n''\to b$ in $\B^{0-}_p$ as $n\to\infty$ and $x_n'\to x$, $x_n''\to x$ as $n\to\infty$.
Then the following holds. There exists a
filtered probability space $(\wh\Omega, \wh \F,(\wh \F_t)_{t\in[0,1]},\wh \P)$,
an $(\wh \F_t)$-fractional Brownian motion $\wh W$ defined on this space, measurable functions 
$\wh X',\wh X''\colon[0,1]\times\wh\Omega\to\R^d$ such that
\begin{enumerate}[(i)]
\item both $\wh X'$ and $\wh X''$ are adapted to the filtration $(\wh \F_t)$ and are weak regularized solutions to \eqref{mainSDE} with the initial condition $x$;
\item for any $m\ge1$ we have 
$$
\|\,\|\wh X'-\wh W^H\|_{1-\var;[0,1]}\,\|_{L_m(\Omega)}<\infty,\quad  \|\,\|\wh X''-\wh W^H\|_{1-\var;[0,1]}\,\|_{L_m(\Omega)}<\infty.
$$
\item there exists a subsequence $(n_k)_{k\in\Z_+}$ such that $(X_{n_k}',X_{n_k}'')_{k\in\Z_+}$ converge weakly in $\C([0,1],\R^{2d})$ to $(\wh X',\wh X'')$.
\end{enumerate}
\end{corollary}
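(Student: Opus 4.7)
The plan is to combine the tightness statement from \cref{l:tght} with a Skorokhod representation argument, and then invoke \cref{L:ident} to identify the limits as regularized solutions. The whole argument follows the standard weak-existence scheme for SDEs, with the additional care that both subsequences must be transported onto a common probability space sharing the \emph{same} driving fractional Brownian motion.

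\textbf{Step 1 (subsequence extraction).} I would begin by applying \cref{l:tght}, which under the uniform bound \eqref{condabs} gives a subsequence $(n_k)_{k\in\Z_+}$ along which $(X_{n_k}', X_{n_k}'', W^H, B)$ converges weakly in $\C([0,1], \R^{3d}) \times \C^\theta([0,1], \R)$ for some $\theta \in (0,1/2)$. Including both $W^H$ and $B$ in the tuple is important in order to preserve the Volterra relation \eqref{WB} in the limit.

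\textbf{Step 2 (Skorokhod representation and filtration).} I would then apply Skorokhod's representation theorem on an auxiliary probability space $(\wh\Omega, \wh\F, \wh\P)$ to obtain random elements $(\wh X_{n_k}', \wh X_{n_k}'', \wh W_{n_k}^H, \wh B_{n_k})$ with the same joint laws as the original, converging almost surely to $(\wh X', \wh X'', \wh W^H, \wh B)$. Since joint laws are preserved at every level, each $\wh B_{n_k}$ is a Brownian motion and $\wh W^H_{n_k} = \Psi(\wh B_{n_k})$; passing to the limit gives $\wh W^H = \Psi(\wh B)$ with $\wh B$ a Brownian motion. Moreover, strong solvability of the approximations yields $X'_{n_k, t} \in \sigma(B_s : s \le t)$ up to null sets, and this measurability is inherited by the Skorokhod copies and, via a.s.\ convergence, by $\wh X'_t$; the same holds for $\wh X''_t$. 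I would then take $(\wh\F_t)$ to be the usual augmentation of $\sigma(\wh B_s : s \le t)$: this filtration makes $\wh B$ an $(\wh\F_t)$-Brownian motion, hence $\wh W^H$ is an $(\wh\F_t)$-fractional Brownian motion in the sense recalled after \eqref{WB}, and both $\wh X'$ and $\wh X''$ are adapted.

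\textbf{Step 3 (identification via \cref{L:ident}).} With everything on $(\wh\Omega, \wh\F, \wh\P)$ and the almost-sure (hence in-probability) convergence $(\wh X_{n_k}', \wh W_{n_k}^H) \to (\wh X', \wh W^H)$, the hypotheses of \cref{L:ident} are satisfied for the primed sequence: the drifts converge in $\B^{0-}_p$ with uniformly bounded $L_p$ norms, and the initial conditions converge. The lemma then delivers (i) and (ii) for $\wh X'$; the double-primed sequence is handled identically. Assertion (iii) is automatic, since the joint law of $(\wh X_{n_k}', \wh X_{n_k}'')$ equals that of $(X_{n_k}', X_{n_k}'')$ and the former converges almost surely to $(\wh X', \wh X'')$.

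\textbf{Main obstacle.} The delicate point will be Step~2: one must simultaneously ensure that the filtration $(\wh\F_t)$ is large enough so that both $\wh X'$ and $\wh X''$ are adapted, yet small enough so that $\wh B$ remains a Brownian motion with respect to it, which is what makes $\wh W^H$ an $(\wh\F_t)$-fractional Brownian motion in the precise sense of the paper. Both requirements hinge on the strong-solution character of the approximating processes $X_{n_k}'$ and $X_{n_k}''$, which confines their $\sigma$-algebras inside that of $B$.
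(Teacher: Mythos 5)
Your Step 2 contains a genuine gap. You argue that since each $X'_{n_k}$ is a strong solution, hence $\sigma(B_s:s\le t)$-measurable, the Skorokhod copy $\wh X'_{n_k}(t)$ is $\sigma(\wh B_{n_k}(s):s\le t)$-measurable, and that the almost-sure convergence $(\wh X'_{n_k},\wh B_{n_k})\to(\wh X',\wh B)$ then makes $\wh X'(t)$ measurable with respect to the augmented Brownian filtration $\sigma(\wh B_s:s\le t)$. That last transfer is not valid: if $\wh X'_{n_k}=F_{n_k}(\wh B_{n_k})$ for Borel maps $F_{n_k}$, almost-sure convergence of the pair does \emph{not} imply $\wh X'\in\sigma(\wh B)$, because the functionals $F_{n_k}$ are only measurable (not continuous) and the reference process $\wh B_{n_k}$ itself changes with $n_k$. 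If this transfer were legitimate, you would in fact have proved strong existence of the limit directly from strong existence of the approximations without any uniqueness input, which contradicts what the Yamada--Watanabe/Gy\"ongy--Krylov framework (and the structure of this paper, where pathwise uniqueness is established \emph{before} strong existence) requires.

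The paper sidesteps this by \emph{not} trying to show $\wh X'$, $\wh X''$ are Brownian-adapted. Instead it enlarges the filtration to $\wh\F_t:=\sigma(\wh B_s,\wh X'_s,\wh X''_s\,;\, s\le t)$ — under which the adaptedness of $\wh X'$, $\wh X''$ is trivial — and then verifies that $\wh B$ remains an $(\wh\F_t)$-Brownian motion by taking limits, along the almost-sure Skorokhod convergence, in the independence identities
\begin{equation*}
\E\bigl[f(\wh B_{n}(t)-\wh B_{n}(s))\, g(\wh B_n(t_1),\dots,\wh X''_n(t_N))\bigr]
=\E\bigl[f(\wh B_n(t)-\wh B_n(s))\bigr]\,\E\bigl[g(\wh B_n(t_1),\dots,\wh X''_n(t_N))\bigr],
\end{equation*}
valid for bounded continuous $f,g$ because $X'_n,X''_n$ are strong solutions adapted to $B_n$. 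Equalities of expectations of bounded continuous functionals pass to the limit under weak/a.s.\ convergence; measurability relations do not. This is exactly the ``large enough yet small enough'' tension you flagged in your Main Obstacle paragraph, but your resolution rests on the false measurability transfer. You would also need to be a bit more careful with the modulus: the H\"older exponent $\theta$ must satisfy $\theta>H-\tfrac12$ so that, for $H>\tfrac12$, \cref{p:cont}(iv) gives continuity of $\Psi$ on $\C^\theta_0$ and the identity $\wh W^H=\Psi(\wh B)$ can be justified in the limit.
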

\begin{proof}
(i). Take $\theta\in[0,\frac12)$, $\theta>H-\frac12$. By \cref{l:tght},  the sequence $(X_{n_k}',X_{n_k}'',W^H,B)_{k\in\Z_+}$ weakly converges in $\C([0,1],\R^{3d})\times \C^\theta([0,1],\R^{d})\ $. By passing to an appropriate subsequence and applying the Skorokhod representation theorem there exists a random element $(\wh X',\wh X'',\wh W^H, \wh B)$ and a sequence of random elements $(\wh X'_n,\wh X''_n,\wh W^H_n,\wh B_n)$ defined on a common probability space $(\wh\Omega, \wh\F, \wh P)$
such that $\Law(\wh X'_n,\wh X''_n,\wh W^H_n,\wh B_n)=\Law( X'_n, X''_n, W^H_n, B_n)$ and
\begin{equation}\label{prishli}
	\|(\wh X'_n,\wh X''_n,\wh W^H_n)-(\wh X',\wh X'',\wh W^H)\|_{\C([0,1])}+\|\wh B_n-\wh B\|_{\C^\theta([0,1])}\to0 \,\,\,\text{as $n\to\infty$ a.s.}
\end{equation}
for any $m\ge1$. 
Obviously, we have  $\Law(\wh B_n)=\Law(B)$, and thus, $\wh B_n$, $\wh B$ are standard Brownian motions. For $t\in[0,1]$, put $\wh \F_t:=\sigma(\wh B_s,\wh X'_s,\wh X''_s; s\le t)$. We claim that $\wh B$ is an $(\wh \F_t)$-Brownian motion.

Indeed, for any $(s,t)\in\Delta_{[0,1]}$, $n,N\in\Z_+$, any bounded continuous functions $f\colon\R^d\to\R$, 
$g\colon\R^{3Nd}\to\R$, and any time points $0\le t_1\le\hdots\le t_N\le s$ one has
\begin{align*}
&\E f(\wh B_n(t)-\wh B_n(s)) g\bigl(\wh B_n(t_1),\hdots, \wh B_n(t_N), \wh X'_n(t_1),\hdots, \wh X'_n(t_N), \wh X''_n(t_1),\hdots, \wh X''_n(t_N)\bigr)\\
&\quad=\E  f(\wh B_n(t)-\wh B_n(s))\E  g\bigl(\wh B_n(t_1),\hdots, \wh B_n(t_N), \wh X'_n(t_1),\hdots, \wh X'_n(t_N), \wh X''_n(t_1),\hdots, \wh X''_n(t_N)\bigr),
\end{align*}
since $\wh B_n(t)-\wh B_n(s)$ is independent of $\wh \F^n_s:=\sigma(\wh B^n_r, \wh X'_n(r), \wh X''_n(r); s\le t)$. By passing to the limit in the above expression as $n\to\infty$ using \eqref{prishli}, one derives that the same identity holds for $(\wh B,\wh X',\wh X'')$ in place of $(\wh B_n,\wh X'_n,\wh X''_n)$, which implies that $\wh B(t)-\wh B(s)$ is independent of $\wh \F_s$. Thus, $\wh B$ is an $(\wh \F_t)$-Brownian motion.

Recall that we have $W^H=\Psi(B)$ for a certain functional $\Psi$, see relationship \eqref{WB}.

If $H\le \frac12$, then it follows from \cref{p:cont} that $B=\Phi(W^H)$ for a continuous functional $\Phi\colon\C([0,1])\to
\C([0,1])$. Therefore, for any $n\in\Z_+$ we have $\wh B_n=\Phi(\wh W^H_n)$. Passing to the limit as $n\to\infty$ in this identity and using continuity of $\Phi$ and \eqref{prishli}, we get $\wh B=\Phi(\wh W^H)$. Therefore, $\Psi(\wh B)=\Psi\circ\Phi(\wh W^H)= \wh W^H$ thanks to \cref{p:cont}. Since $\wh B$ is
an $(\wh \F_t)$-Brownian motion,  this means that $\wh W^H$ is an $(\wh \F_t)$-fractional Brownian motion with the Hurst index $H$.

If $H>\frac12$, then by  \cref{p:cont} the functional $\Psi\colon\C^\theta_0([0,1])\to\C([0,1])$ is continuous (recall that we chose $\theta>H-\frac12$). We also have $\wh W^H_n=\Psi(\wh B_n)$ and $\wh B_n$ converge to $\wh B$ in $\C^\theta([0,1])$. Hence, $\wh W^H=\Psi(\wh B)$. Thus $\wh W^H$ is an $(\wh \F_t)$-fractional Brownian motion.

By definition, $\wh X'$ and $\wh X''$ are adapted to $(\wh \F_t)$. By \cref{L:ident}
$\wh X'$ and $\wh X''$ are  solutions to \eqref{mainSDE} in the sense of \cref{D:sol}. Hence, $\wh X'$ and $\wh X''$ are weak solutions to this equation.

(ii). Follows from \eqref{TVbound}.

(iii). Follows from \eqref{prishli}.
\end{proof}

Summarizing the results presented in this section, we can complete the proofs of  \cref{T:func,T:ident}. 

\begin{proof}[Proof of \cref{T:func}(i)] Let $b\in L_p(\R^d,\R^d)$. Then $b\in\B^0_p$.
We apply \cref{l:tght} with  $b_n'=b_n''=P_{\frac1n} b$, $x_n'=x_n''=x$, $n\in\Z_+$. \cref{p:veryboring}(ii) guarantees that  $b_n'\to b$ in $\B^{0-}_p$ and $\sup_{n\in\Z_+}\|b_n'\|_{L_p(\R^d)}<\infty$.  Therefore, all the conditions of \cref{l:tght} are satisfied and by \cref{c:ews}, equation \eqref{mainSDE} has a regularized weak solution $(X,W^H)$ and this solution is in \textbf{BV}.  \cref{T:func}\ref{part3tfunc} obtained above implies that $(X,W^H)$ solves \Gref{x;b}.
\end{proof}

\begin{proof}[Proof of \cref{T:ident}](i). 
Let $b\in\mathcal{M}(\R^d,\R^d)$. Then $b\in\B^0_1$. Similarly, we apply \cref{l:tght} with  
$b_n'=b_n''=P_{\frac1n} b$, $x_n'=x_n''=x$, $n\in\Z_+$.   \cref{p:veryboring}(i) implies that $b_n'\to b$ in $\B^{0-}_1$ and $\sup_{n\in\Z_+}\|b_n'\|_{L_1(\R^d)}<\infty$. Further, condition $H<1/(d+1)$ is equivalent to condition \eqref{maincond} for $p=1$. Therefore, all the conditions of \cref{l:tght} with $p=1$ are satisfied, and \cref{c:ews} implies the existence of weak solution  to \eqref{mainSDE} that is in \textbf{BV}.

(ii). The tightness of the sequence $(\Law(X_n,W^H_n))_{n\in\Z_+}$ follows from  \cref{l:tght} with $p=1$. \cref{L:ident} and the Skorokhod representation theorem imply that any of the partial limits of $(\Law(X_n,W^H))_{n\in\Z_+}$ is a weak regularized solution to \eqref{mainSDE} (see also the proof of \cref{c:ews}(i)).
\end{proof}

\subsection{Strong uniqueness of solutions of SDE}\label{s:SU}

Now, we proceed to the strong well-posedness of \eqref{mainSDE}. As mentioned earlier, we will rely on \cref{L:driftb2} with $f = \nabla b$ and \cref{T:YoungODE}. The latter imposes the restriction $d = 1$, which is additionally assumed in this subsection.
Similar to the weak existence proof above, we will establish strong existence for $b \in \B^0_p$. \cref{T:uniq} will then follow from the embeddings $L_p(\R^d,\R^d) \subset \B^0_p$ and $\mathcal{M}(\R^d,\R^d) \subset \B^0_1$.

\begin{lemma}\label{L:uniq} Let $d=1$, $p\in[1,\infty]$, $H\in(0,\frac{p}{2p+1})$, $b\in\B^0_p$. If $p\in[1,2]$, then suppose additionally that \eqref{uniqcond} holds.
Let $(X,W^H)$, $(Y,W^H)$ be two weak regularized solutions of \eqref{mainSDE} with the same initial condition $x\in\R^d$ defined on the same probability space and adapted to the same filtration $(\F_t)$. Suppose that 
for any $m\ge1$
$$
\|\,\|X-W^H\|_{1-\var;[0,1]}\|_{L_m(\Omega)}<\infty,\qquad \|\,\|Y-W^H\|_{1-\var;[0,1]}\|_{L_m(\Omega)}<\infty.
$$
Then $\P(X_t=Y_t \text{ for all $t\in[0,1]$})=1$.
\end{lemma}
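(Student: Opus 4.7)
The plan is to exhibit $Z := X - Y$ as the solution of a linear Young differential equation $Z_t = \int_0^t Z_r \, dA_r$ driven by a continuous random path $A$ of finite $p$-variation with $p < \infty$. Then \cref{T:YoungODE} with $q = 1$ (since $Z$ is of finite $1$-variation) and $1/p + 1/q > 1$ forces $Z \equiv 0$, which is the pathwise uniqueness claim.

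\textbf{Setup and reduction to smooth drift.} Write $\psi^X := X - W^H$ and $\psi^Y := Y - W^H$; both are continuous, $(\F_t)$-adapted, of finite $1$-variation with all moments by hypothesis, so $Z$ is continuous and lies in $\C^{1-\var}([0,1],\R)$ a.s. Let $b^n := P_{1/n}b \in \C_b^\infty$; then $b^n \to b$ in $\B^{0-}_p$ with $\sup_n\|b^n\|_{\B^0_p} < \infty$. Since $X$ and $Y$ are regularized solutions with the same initial condition,
\begin{equation*}
\sup_{t \in [0,1]} \Bigl| \int_0^t \bigl(b^n(X_r) - b^n(Y_r)\bigr) \, dr - Z_t \Bigr| \longrightarrow 0 \quad \text{in probability as } n \to \infty.
\end{equation*}
For smooth $b^n$, the fundamental theorem of calculus gives
\begin{equation*}
\int_0^t \bigl(b^n(X_r) - b^n(Y_r)\bigr) \, dr = \int_0^t Z_r \, dA^n_r, \qquad A^n_t := \int_0^t \!\!\int_0^1 (b^n)'(Y_r + \theta Z_r) \, d\theta \, dr.
\end{equation*}

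\textbf{Cauchy estimate on $A^n$ via \cref{L:driftb2}.} For each fixed $\theta \in [0,1]$ the process $z_r := (1-\theta)\psi^Y_r + \theta\psi^X_r$ is $(\F_t)$-adapted, has all variation moments, and satisfies $Y_r + \theta Z_r = W^H_r + z_r$. I apply \cref{L:driftb2} to $f := (b^n)' - (b^m)'$ (bounded), this $z$, and $\tau \equiv 1$, choosing
\begin{equation*}
q = p,\ \alpha = -1 - \eta \text{ if } p \ge 2; \qquad q = 2,\ \alpha = -\tfrac12 - \tfrac1p - \eta \text{ if } p \in [1,2],
\end{equation*}
(invoking the embedding $\B^0_p \hookrightarrow \B^{-(1/p-1/2)}_2$ in $d=1$ in the second case). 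A direct computation identifies \eqref{maincond1} with $\kappa := 1 + H(\alpha - d/q - 1) > 0$, equivalent in both regimes to $H(2 + 1/p + \eta) < 1$, which holds for small $\eta > 0$ under $H < p/(2p+1)$. Likewise, \eqref{maincond2} reduces to $1 > 2H + 2H^2/p + O(\eta)$ (implied by $H < p/(2p+1)$) when $p \ge 2$, and to $1 > H^2 + H(1 + 2/p) + O(\eta)$, which is precisely \eqref{uniqcond}, when $p \in [1,2]$. Using $\|(b^n - b^m)'\|_{\B^{-1-\eta}_p} \le C\|b^n - b^m\|_{\B^{-\eta}_p}$ together with the embedding for $p < 2$, and integrating the resulting estimate over $\theta$, one obtains
\begin{equation*}
\|(A^n_t - A^m_t) - (A^n_s - A^m_s)\|_{L_{m'}(\Omega)} \le C_{m'} \|b^n - b^m\|_{\B^{-\eta}_p} (t-s)^{\kappa}
\end{equation*}
for every $m' \ge 2$, with $C_{m'}$ independent of $n,m$; the Besov norm on the right vanishes as $n,m \to \infty$.

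\textbf{Passage to a Young equation and conclusion.} By Kolmogorov's continuity theorem (choose $m'$ so large that $\kappa - 1/m' > 0$), $(A^n)$ is Cauchy in $\C^{\kappa'}([0,1],\R)$ for any $\kappa' \in (0,\kappa)$; denote its limit by $A$, which then lies in $\C^{(1/\kappa')-\var}$ a.s. Because $\kappa' + 1 > 1$, the Young integral $\int_0^\cdot Z_r \, dA_r$ is well-defined, and continuity of the Young integral in the integrator (applied pathwise) gives $\int_0^t Z_r \, dA^n_r \to \int_0^t Z_r \, dA_r$. Combining with the first display and joint continuity in $t$, on a full-measure set we have $Z_t = \int_0^t Z_r \, dA_r$ simultaneously for all $t \in [0,1]$. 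Applying \cref{T:YoungODE} pathwise with $p = 1/\kappa'$ and $q = 1$ forces $Z \equiv 0$. The principal obstacle is the parameter bookkeeping in the middle step: the strict positivity of $\kappa$ together with the applicability of \cref{L:driftb2} rest \emph{exactly} on the hypotheses $H < p/(2p+1)$ and (when $p \in [1,2]$) \eqref{uniqcond}, which is why \eqref{uniqcond} appears only in the low-$p$ regime.
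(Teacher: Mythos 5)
Your proof is correct and follows essentially the same route as the paper's: approximate $b$ by $b^n = P_{1/n}b$, write $X-Y$ as a Young integral against the process $A^n_t = \int_0^t\int_0^1 (b^n)'((1-\theta)Y_r+\theta X_r)\,d\theta\,dr$ (the paper's $R_n$, up to the substitution $\theta\mapsto 1-\theta$), use \cref{L:driftb2} with the same choices $q=p\vee 2$, $\alpha=-1-\eta+\I_{p\in[1,2]}(\tfrac12-\tfrac1p)$ to get the Cauchy/Hölder estimates and verify \eqref{maincond1}--\eqref{maincond2} under $H<p/(2p+1)$ and \eqref{uniqcond}, pass to the limit in $\C^{\kappa'}$ via Kolmogorov, invoke stability of the Young integral in the integrator, and conclude with \cref{T:YoungODE}. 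The parameter bookkeeping matches the paper's conditions \eqref{epsassum1}--\eqref{epsassum2} exactly.
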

\begin{proof}
We denote for $t\in[0,1]$
$$
\psi(t):=X(t)-W^H(t),\quad \phi(t):=Y(t)-W^H(t),\quad v(t)=\psi(t)-\phi(t).
$$
Our goal is to prove that $v=0$ a.s. As discussed above, we aim to apply \cref{L:driftb2} for $f = \nabla b$ and then conclude with \cref{T:YoungODE}. Note, however, that $\nabla b$ is not defined as a function (but only as a distribution), and therefore, to apply \cref{L:driftb2}, one must consider appropriate approximations.

For $n\in \N$ we set $b_n := P_{\frac1n}b$. By \cite[Lemma~A.3]{ABLM}, $b_n\to b$ in $\B_p^{0-}$. 
Define
\begin{equation*}
\psi_n(t):=x+\int_0^t b_n(W_r^H+\psi_r)\,dr,\quad \phi_n(t):=x+\int_0^t b_n(W_r^H+\phi_r)\,dr,\qquad t\in[0,1].
\end{equation*}
By definition of the regularized solution and by passing to an appropriate subsequence we can assume that there is a set $\Omega'\subset\Omega$ of full measure such that on $\Omega'$  we have $\|\psi_n-\psi\|_{\C([0,1])}\to0$, $\|\phi_n-\phi\|_{\C([0,1])}\to0$  as $n\to\infty$. Hence for $t\in[0,1]$  on $\Omega'$
\begin{align}\label{st1swp}
v(t)&=\psi(t)-\phi(t)= \lim_{n\to\infty}(\psi_n(t)-\phi_n(t))\nn\\
&=\lim_{n\to\infty}\int_0^{t} (b_n(W_r^H+\psi_r)-b_n(W_r^H+\phi_r))\,dr
=\lim_{n\to\infty}\int_0^{t} v_r\,dR_n(r),
\end{align}
where we denoted
\begin{equation*}
R_n(t):=\int_0^{t}\int_0^1 \nabla b_n(W_r^H+\theta\phi_r+(1-\theta)\psi_r)\,d\theta\,dr,\quad n\in\N.
\end{equation*}
Let us now prove that the sequence of random elements $(R_n)_{n\in\N}$ has a subsequence which converges a.s. in $\C^{\gamma}([0,1])$ for some $\gamma>0$. It is easy to check that the assumption $H<p/(2p+1)$ implies $\frac{H^2}p+H-\frac12<0$. Therefore, there exists $\eps>0$ such that
\begin{equation}\label{epsassum1}
2H+\frac{H}p+\eps H<1\quad\text{and}\quad \frac{H^2}p+H(1+\eps)-\frac12<0.
\end{equation}
If $p\in[1,2]$, then we assume additionally that 
\begin{equation}\label{epsassum2}
H^2+H(1+\frac2p+2\eps)-1<0,
\end{equation}
this is possible thanks to the extra condition \eqref{uniqcond}. 

Fix now $n,k\in\Z_+$.
We apply \cref{L:driftb2} twice to the functions $f:=\nabla b^n-\nabla b^k$ and $f:=\nabla b^n$, respectively. The remaining parameters are as follows: $m\ge2$, $d=1$, $\tau=1$, $q=p\vee2$, $\alpha=-1-\eps+\I_{p\in[1,2]}(\frac12-\frac1p)$, $z=\theta\phi+(1-\theta)\psi$, where $\theta\in[0,1]$. We see that $\nabla b^n$, $\nabla b^n-\nabla b^k$ are bounded by construction. Further, it follows from conditions \eqref{epsassum1} and \eqref{epsassum2} that conditions \eqref{maincond1} and \eqref{maincond2} hold.
Therefore, all the conditions of \cref{L:driftb2} are satisfied, and there exists a constant $C=C(H,\eps,m,p)>0$ such that for any $s,t\in[0,1]$, $n\in\Z_+$ we have
\begin{align}\label{eqlimn}
&\|R_n(t)-R_n(s)\|_{L_m(\Omega)}\nn\\
&\quad\le \int_0^1\Bigl\|\int_s^t \nabla b^n(W_r^H+\theta\phi_r+(1-\theta)\psi_r)\,dr\Bigr\|_{L_m(\Omega)}\,d\theta\nn\\
&\quad\le C \| b_n\|_{\B^{-\eps}_p}(1+\|\,\|\phi\|_{1-\var;[0,1]}\|_{L_m(\Omega)}+\|\,\|\psi\|_{1-\var;[0,1]}\|_{L_m(\Omega)})(t-s)^{1-H(2+\frac1p+\eps)},
\end{align}
where we used inequality $\|\nabla b_n\|_{\B^{\alpha}_q}\le \|\nabla b_n\|_{\B^{-1-\eps}_p}\le \| b_n\|_{\B^{-\eps}_p}$. Similarly, we get from \eqref{finbound} that there exists a constant $C=C(H,\eps,m,p)>0$ such that for any  $t\in[0,1]$, $n,k\in\Z_+$ we have
\begin{equation}\label{eqlimn1}
\|R_n(t)-R_k(t)\|_{L_m(\Omega)}\le C (1+\|\|\phi\|_{1-\var;[0,1]}\|_{L_m(\Omega)}+\|\|\psi\|_{1-\var;[0,1]}\|_{L_m(\Omega)})\| b_n-b_k\|_{\B^{-\eps}_p}.
\end{equation}
Note that the constants $C$ in \eqref{eqlimn}, \eqref{eqlimn1} do not depend on $n,k$. Therefore, by passing to a subsequence, if necessary, and applying the Kolmogorov continuity theorem in the form of \cref{p:KCT}, we see that  there exists a set of full probability measure $\wt\Omega\subset \Omega$ and a measurable process $R\colon\Omega\times[0,1]\to\R$, such that on $\wt\Omega$
\begin{equation*}
\|R\|_{\C^\gamma([0,1])}<\infty \quad \text{and } \|R-R_n\|_{\C^\gamma([0,1])}\to0
\end{equation*}
for some $\gamma>0$.

 Note, that $v$ is a process of finite $1$-variation. Hence, \eqref{st1swp} and stability theorem for Young integrals \cite[Proposition~6.12]{FV2010} implies that on  $\wt\Omega \cap \Omega'$ we have 
\begin{equation*}
v(t)=\lim_{n\to\infty}\int_0^{t} v_r\,dR_n(r)=\int_0^{t} v_r\,dR(r).
\end{equation*}
By \cref{T:YoungODE}, this implies that $v=0$ on $\wt\Omega \cap \Omega'$ and thus ${\P(X=Y)\ge P(\wt\Omega \cap \Omega')=1}$.
\end{proof}

\subsection{Strong existence of solutions of SDE and completion of the proof of \cref{T:uniq}}\label{s:SE}
Finally, let us show the strong existence of solutions to the SDE \eqref{mainSDE} and complete the proof of \cref{T:uniq}. As explained above, strong existence follows from the strong uniqueness and weak existence by the Gy\"ongy–Krylov lemma, which we put here for the sake of completeness and convenience of the reader

\begin{proposition}[{\cite[Lemma~1.1]{MR1392450}}]\label{L:GK}
Let $(Z_n)_{n\in\Z_+}$ be a sequence of random elements in
a Polish space $(E, \rho)$ equipped with the Borel $\sigma$-algebra. Assume that for every pair of
subsequences $(Z_{l_k})$ and $(Z_{m_k})$ there exists a further sub-subsequence $(Z_{l_{k_r}}, 
Z_{m_{k_r}})$ which converges weakly in $E\times E$ to a random element $w = (w',w'')$ such that 
$w' = w''$ a.s. Then there exists an $E$-valued random element $Z$ such that $(Z_n)$ converges in probability to $Z$ as $n\to\infty$.
\end{proposition}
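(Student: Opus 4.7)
The plan is to deduce convergence in probability by first proving that $(Z_n)_{n\in\Z_+}$ is Cauchy in probability, and then upgrading this Cauchy property to a limit using completeness of a suitable metric on the Polish space $E$. The Cauchy-in-probability step will be obtained by contradiction against the subsequence hypothesis, combined with the continuous mapping theorem applied to the distance $\rho$.

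For the contradiction step, I would suppose that $(Z_n)$ is not Cauchy in probability: then there exist $\eps,\delta>0$ and subsequences $(l_k)$, $(m_k)$ such that $\P(\rho(Z_{l_k},Z_{m_k})>\eps)>\delta$ for every $k$. Applying the hypothesis of the proposition to this particular pair of subsequences produces a further sub-subsequence along which $(Z_{l_{k_r}},Z_{m_{k_r}})$ converges weakly in $E\times E$ to some random element $(w',w'')$ with $w'=w''$ almost surely.

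Next, the continuous mapping theorem applied to the continuous function $\rho\colon E\times E\to[0,\infty)$ gives weak convergence $\rho(Z_{l_{k_r}},Z_{m_{k_r}})\Rightarrow \rho(w',w'')$, and the right-hand side equals $0$ almost surely. Since weak convergence to a constant is equivalent to convergence in probability, we get $\P(\rho(Z_{l_{k_r}},Z_{m_{k_r}})>\eps)\to 0$ as $r\to\infty$, contradicting the uniform lower bound $\delta$. Hence $(Z_n)$ is Cauchy in probability.

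Finally, since $E$ is Polish, it admits a compatible complete metric; relative to this metric the Cauchy-in-probability property persists. A standard extraction argument then selects a subsequence $(Z_{n_j})$ along which the successive distances satisfy $\P(\rho(Z_{n_j},Z_{n_{j+1}})>2^{-j})<2^{-j}$, so by Borel--Cantelli the subsequence is almost surely Cauchy in $E$, hence converges almost surely to some $E$-valued random element $Z$. A triangle-inequality argument upgrades this to convergence in probability of the full sequence $Z_n\to Z$. The only delicate point in the plan is the transition between $\rho$ and an equivalent complete metric, which however is automatic for Polish spaces; with that understood, each step is routine.
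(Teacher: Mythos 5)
The paper itself does not prove this proposition; it is quoted verbatim from Gy\"ongy and Krylov. Your reconstruction follows the standard two-stage argument there: first show $(Z_n)$ is Cauchy in probability by contradicting the subsequence hypothesis via the continuous mapping theorem applied to $\rho$ (weak convergence of $\rho(Z_{l_{k_r}},Z_{m_{k_r}})$ to the constant $0$, hence convergence in probability to $0$), then upgrade Cauchy-in-probability to convergence in probability by a Borel--Cantelli extraction in a complete metric. All the essential steps are sound, and the extraction of strictly increasing index sequences $(l_k),(m_k)$ from the failure of Cauchy-in-probability is straightforward.

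One subsidiary claim in your plan is stated incorrectly, though it does not affect the substance. You assert that when passing from $\rho$ to an equivalent complete metric, ``the Cauchy-in-probability property persists,'' and call this ``automatic for Polish spaces.'' That is false as a general principle: Cauchy-ness is a uniform property, not a topological one, and it is easy to produce a sequence that is Cauchy in one compatible metric but not in another --- for instance on $E=(0,1)$ the deterministic sequence $Z_n=1/n$ is Cauchy in the Euclidean metric but not in any complete compatible metric. If the convention behind ``Polish space $(E,\rho)$'' is that $\rho$ is already complete (a common reading), then this remark is simply unnecessary and you can delete it. If $\rho$ is allowed to be incomplete, the correct repair is to fix a complete metric $\rho'$ compatible with the topology at the outset and run the entire contradiction argument with $\rho'$ in place of $\rho$: since $\rho'$ is a continuous map $E\times E\to[0,\infty)$, the continuous-mapping and weak-convergence-to-a-constant steps go through verbatim, and one obtains Cauchy-in-probability directly with respect to the complete metric, after which your Borel--Cantelli step and the triangle-inequality upgrade to the full sequence are correct as written.
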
	

\begin{proof}[Proof of Theorem~\ref{T:uniq}]
	We will use \cref{L:GK}.
(i). For $n\in\N$ put $b_n:=P_{\frac1n}b$. Let $X_n$ be the strong solution to \Gref{x;b_n}. Let $(b'_n,X'_n)$, $(b''_n,X''_n)$, be two arbitrary subsequences of $(b_n,X_n)$. We apply \cref{l:tght} and \cref{c:ews}. We see that \eqref{condabs} is satisfied thanks to \cref{p:veryboring} and \eqref{maincond} holds with $p=1$ thanks to the assumption $H<(\sqrt13-3)/2$. It follows that there  exist a filtered probability space $(\wh \Omega, \wh \F, (\wh\F_t)_{t\in [0,1]},\wh \P)$, 
an $(\wh \F_t)$-fractional Brownian motion $\wh W$ defined on this space and a pair of weak solutions $(\wh X',\wh X'')$  to \eqref{mainSDE}  adapted to the filtration $(\wh \F_t)$. We see also that $\wh X',\wh X''\in\text{\textbf{BV}}$ and  there exists a subsequence $(n_k)$ such that $(X'_{n_k},X''_{n_k})$ converges weakly in $\C([0,1],\R^{2})$ to $(\wh X',\wh X'')$ as $k\to\infty$. 

By \cref{T:measure}, for any $m\ge1$
\begin{equation*}
	\|\,\|\wh X'-\wh W^H\|_{1-\var;[0,1]}\,\|_{L_m(\Omega)}+\|\,\|\wh X''-\wh W^H\|_{1-\var;[0,1]}\,\|_{L_m(\Omega)}<\infty.
\end{equation*}
It is easy to see that the standing assumption $H < (\sqrt{13} - 3)/2$ implies \eqref{uniqcond} with $p = 1$. Hence, recalling the embedding $\mathcal{M} \subset \mathcal{B}^0_1$, we see that all the conditions of \cref{L:uniq} are satisfied and $\wh{X}' = \wh{X}''$. Therefore, \cref{L:GK} now implies that there exists a $\C([0, 1], \mathbb{R})$-valued random element $X$, such that $X_n$ converges to $X$ in probability as $n \to \infty$. \cref{L:ident} now implies that $X$ is a regularized solution of \eqref{mainSDE} and is in the class \textbf{BV}. Since $X_n$ is $(\mathcal{F}_t^{W^H})$-measurable, $X$ is also $(\mathcal{F}_t^{W^H})$-measurable, and thus $X$ is a strong solution to \eqref{mainSDE}.

Let $Y$ be another solution to \eqref{mainSDE} adapted to a different filtration $(\F_t^Y)\supset(\F_t^{W^H})$. Since $X$ is a strong solution to \eqref{mainSDE}, $X$ is also adapted to  $(\F_t^Y)$. It is also clear that $Y\in\text{\textbf{BV}}$, because $Y-W^H$ is increasing. Then, by \cref{T:measure}, 
$\|\,\|Y- W^H\|_{1-\var;[s,t]}\,\|_{L_m(\Omega)}<\infty$.
Therefore, \cref{L:uniq} yields $X=Y$ and pathwise uniqueness holds.

(ii). The proof is exactly the same as (i) with the only difference that now it is assumed that $Y\in\text{\textbf{BV}}$.

(iii). Arguing as in part (i), we get that there exists a strong regularized solution  $X$ to \eqref{mainSDE} and this solution is in \textbf{BV}. By \cref{T:func}\ref{part3tfunc} $X$ solves \Gref{x;b} and it is a strong solution.

Let $Y$ be another solution to \Gref{x;b} adapted to a different filtration $(\F_t^Y)\supset(\F_t^{W^H})$. Then we see again that $X$ is also adapted to  $(\F_t^Y)$. Further, \cref{T:func}\ref{partiiitfunc} implies that $Y$ is a regularized solution to \eqref{mainSDE}. By \cref{T:func}\ref{part2tfunc},  for any $m\ge1$
\begin{equation*}
	\|\,\| X- W^H\|_{1-\var;[0,1]}\,\|_{L_m(\Omega)}+\|\,\| Y- W^H\|_{1-\var;[0,1]}\,\|_{L_m(\Omega)}<\infty.
\end{equation*}
Therefore, the embedding $L_p(\R^d)\subset \B^0_p$ and  \cref{L:uniq} imply that $X=Y$ and thus pathwise uniqueness holds.
\end{proof}

\subsection{Non-existence of solutions to SDE}\label{S:NE}
The last subsection is devoted to showing optimality of condition \eqref{mainSDE}.

\begin{proof}[Proof of \cref{T:genopt}]
Assume the contrary and let $X$ be a continuous function which solves  equation \eqref{SDEce}. Note that $X_0=0$. Fix arbitrary $\eps\in(0,\frac1d)$ and suppose that for some $t\in(0,1]$ we have $|X(t)|>d\eps$. Set 
\begin{equation*}
t''_\eps:=\inf\{s>0: |X(s)|\ge d \eps\}.
\end{equation*}
By continuity of $X$, we have $|X(t''_\eps)|=d\eps$, and therefore for some 
$i\in 1,\hdots, d$ we have $|X^i(t''_\eps)|\ge \eps$. Without loss of generality, assume that $X^i(t''_\eps)>0$. Put 
\begin{equation*}
t'_\eps:=\sup\{s\in(0,t''_\eps): X^i(s)=0\}.
\end{equation*}
Then $X^i(t'_\eps)=0$ and  
\begin{equation*}
X^i(s)>0;\,\, 0<|X(s)|<d\eps\quad \text{for $s\in(t'_\eps,t''_\eps)$.}
\end{equation*}
Then we derive
\begin{align*}
\eps&\le X^i(t''_\eps)-X^i(t'_\eps)=-\int_{t'_\eps}^{t''_\eps}|X(t)|^{-\alpha} dt+f^i(t''_\eps)-f^i(t'_\eps)\\
&\le -(t''_\eps-t'_\eps)d^{-\alpha}\eps^{-\alpha}+K(t''_\eps-t'_\eps)^{\gamma}\\
&\le \sup_{t>0} (-td^{-\alpha}\eps^{-\alpha}+Kt^{\gamma})\le \gamma^{\frac{\gamma}{1-\gamma}} K^{\frac1{1-\gamma}}(\eps d)^{\frac{\alpha \gamma }{1-\gamma}},
\end{align*}
where we denoted $K:=[f]_{\C^{\gamma}([0,1])}<\infty$.
Therefore,
\begin{equation*}
K\ge C(\gamma,d) \eps^{1-\gamma-\alpha \gamma }.
\end{equation*}
Since $\eps>0$ was arbitrary, we pass to the limit in the above inequality as $\eps\searrow0$. By assumption, we have $1-\gamma-\alpha \gamma<0$. Therefore we end up with 
\begin{equation*}
[f]_{\C^{\gamma}([0,1])}=K=\infty,
\end{equation*}
which contradicts the fact that $f\in\C^\gamma([0,1])$. This implies that $X\equiv0$, but this is not a solution to equation \eqref{SDEce}. Therefore, equation  \eqref{SDEce} does not have a solution. 
\end{proof}

\begin{proof}[Proof of \cref{T:opt}]
Suppose we are given $p$ such that \eqref{maincondnot} holds. Take $H'\in(0,H)$, $p'>p$ such that $\frac{d}{p'}>\frac1{H'}-1$ and put  
$\alpha:=\frac{d}{p'}$.  Then, by \cref{T:genopt}, for such choice of $\alpha$, SDE
\begin{equation*}
dX_t^i=-\sign(X_t^i) |X_t|^{-\alpha}\I(|X_t|<1) dt+dW_t^{H,i},\quad i=1,\hdots,d
\end{equation*}
has no solutions since $W^H\in\C^{H'}([0,1])$. To conclude the proof it remains to note that the function $x\mapsto-\sign(x^i)|x|^{-\alpha}\I(|x|<1)\in L_{p}(\R^d)$ for any $i\in1,\hdots,d$.
\end{proof}

\begin{proof}[Proof of \cref{C:nr}]
Fix $0<H<H'<1$. Then there exists some $p\in[1,\infty)$, $d\in\N$, $\alpha>0$ such that
\begin{equation*}
\frac1{H'}-1<\alpha<\frac{d}p<\frac1H-1.
\end{equation*}
Consider a function $b_i(x)=-\sign(x_i)|x|^{-\alpha}\I(|x|<1)$, $x=(x_1,\hdots,x_d)\in\R^d$, $i\in1,\hdots,d$. We see that $b\in L_p(\R^d,\R^d)$. Then \cref{T:func} implies that on some probability space $\wt \Omega$ there is a set of full measure $\wt \Omega_0\subset \wt \Omega$ and continuous processes $X$, $\wt W^H$ such that $\Law (\wt W^H)=\Law  (W^H)$ and on $\wt \Omega_0$
$$
dX_t=b(X_t)dt +d \wt W_t^H.
$$
On the other hand, by \cref{T:genopt}, on a set $\{[\wt W^H]_{\C^{H'}([0,1],\R^d)}<\infty\}$ the above equation has no solutions. Hence 
$$
\P(W^H\in \C^{H'}([0,1],\R^d))=
\P(\wt W^H\in \C^{H'}([0,1],\R^d))\le \P(\wt \Omega\setminus \wt \Omega_0)=0.
$$
Since all the components of $W^H$ are independent and have the same law, we get $\P(W^{H,1}\in\C^{H'}([0,1],\R))=0$, which concludes the proof.
\end{proof}
\appendix

\section{ Useful results on Besov spaces}\label{B.bes}

We will consider Besov space $\B^\alpha_p=\B^\alpha_{p,\infty}$ of regularity $\alpha\in\R$ and with integrability parameter $p\in[1,\infty]$. Recall that for $\alpha<0$, $p\in[1,\infty]$ the  Besov norm is equivalent to the following expression \cite[Theorem~2.34]{bahouri}
\begin{equation}
\|f\|_{\B^\alpha_{p,\infty}}\sim \sup_{t\in(0,1]} t^{-\frac{\alpha}2}\|P_tf\|_{L_p(\R^d)}.\label{modinfF}
\end{equation}

\begin{proposition} 
Let  $f\in \B^{\alpha}_p(\R^d,\R)$, where $d\in\N$, $\alpha\in\R$, $p\in[1,\infty]$, $\lambda\in[0,1]$. Then there exists a constant $C=C(\alpha,\lambda,d,p)>0$ such that for any $x,y\in\R^d$,  one has 
		\begin{equation}\label{fxydif}
			\|f(x+\cdot)\|_{\B^{\alpha}_p}=\|f\|_{\B^{\alpha}_p};\quad
			\|f(x+\cdot)-f(y+\cdot)\|_{\B^{\alpha}_p}\le C |x-y|^{\lambda}\|f\|_{\B^{\alpha+\lambda}_p}.
		\end{equation}
\end{proposition}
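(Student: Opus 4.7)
The plan is to use the Littlewood--Paley characterization $\|f\|_{\B^\alpha_{p,\infty}}\sim\sup_{j\ge-1}2^{j\alpha}\|\Delta_j f\|_{L_p}$, where $\{\Delta_j\}$ denote the standard dyadic Littlewood--Paley blocks. The crucial observation I will exploit is that each $\Delta_j$ is a Fourier multiplier, hence a convolution operator against a fixed (Schwartz) kernel, and therefore commutes with translations: writing $\tau_z g(\cdot):=g(\cdot+z)$, one has $\Delta_j(\tau_z g)=\tau_z(\Delta_j g)$.

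The first identity then follows immediately: since $\Delta_j$ commutes with $\tau_x$ and the $L_p$ norm is translation-invariant, $\|\Delta_j(\tau_x f)\|_{L_p}=\|\tau_x(\Delta_j f)\|_{L_p}=\|\Delta_j f\|_{L_p}$, and taking $\sup_j 2^{j\alpha}(\cdot)$ yields $\|\tau_x f\|_{\B^\alpha_p}=\|f\|_{\B^\alpha_p}$.

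For the second estimate, thanks to the first identity I may set $h:=x-y$ and reduce to showing $\|\tau_h f - f\|_{\B^\alpha_p}\le C|h|^\lambda\|f\|_{\B^{\alpha+\lambda}_p}$ for all $h\in\R^d$. Using the commutation $\Delta_j(\tau_h f - f)=\tau_h(\Delta_j f)-\Delta_j f$, I will combine two elementary bounds at the level of each dyadic block. The trivial one is
\begin{equation*}
\|\tau_h(\Delta_j f)-\Delta_j f\|_{L_p}\le 2\|\Delta_j f\|_{L_p},
\end{equation*}
and, using that $\Delta_j f$ has Fourier spectrum in an annulus of radius $\sim 2^j$ so that Bernstein's inequality gives $\|\nabla\Delta_j f\|_{L_p}\le C 2^j\|\Delta_j f\|_{L_p}$, the mean value theorem yields
\begin{equation*}
\|\tau_h(\Delta_j f)-\Delta_j f\|_{L_p}\le|h|\,\|\nabla\Delta_j f\|_{L_p}\le C|h|\, 2^j \|\Delta_j f\|_{L_p}.
\end{equation*}
Taking the minimum of the two right-hand sides and using $\min(a,b)\le a^\lambda b^{1-\lambda}$ for $\lambda\in[0,1]$ gives $\|\tau_h(\Delta_j f)-\Delta_j f\|_{L_p}\le C(|h|2^j)^\lambda\|\Delta_j f\|_{L_p}$. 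Multiplying by $2^{j\alpha}$ and passing to the supremum over $j$,
\begin{equation*}
\|\tau_h f - f\|_{\B^\alpha_p}\le C|h|^\lambda\sup_j 2^{j(\alpha+\lambda)}\|\Delta_j f\|_{L_p}=C|h|^\lambda\|f\|_{\B^{\alpha+\lambda}_p},
\end{equation*}
which is the claim.

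I do not anticipate any serious obstacle, as every ingredient (the dyadic characterization of $\B^\alpha_{p,\infty}$, translation-invariance of convolution operators, and Bernstein's inequality on annular Fourier supports) is standard and can be cited directly from Bahouri--Chemin--Danchin. One small point worth flagging: the heat-semigroup characterization \eqref{modinfF} quoted in the paper is only stated for $\alpha<0$, whereas the present proposition covers all $\alpha\in\R$, which is why I would work with the Littlewood--Paley definition rather than with \eqref{modinfF}. An alternative proof in the subrange $\alpha<0$ could instead use that $P_t$ commutes with translations and satisfies $\|\nabla P_t g\|_{L_p}\lesssim t^{-1/2}\|g\|_{L_p}$, combined with the semigroup property, but the dyadic route is more uniform in $\alpha$.
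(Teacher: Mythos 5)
Your proof is correct in substance and takes the standard Littlewood--Paley route; the paper itself does not write out a proof but simply refers to \cite[Lemma~A.2]{ABLM}, and the argument there is of the same nature (translation invariance of the dyadic blocks, Bernstein, and interpolation between a zeroth-order and a first-order bound), so you are essentially reconstructing the cited proof.

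One small slip to fix: after obtaining the two bounds $\|\tau_h(\Delta_j f)-\Delta_j f\|_{L_p}\le 2\|\Delta_j f\|_{L_p}$ and $\|\tau_h(\Delta_j f)-\Delta_j f\|_{L_p}\le C|h|\,2^j\|\Delta_j f\|_{L_p}$, you invoke $\min(a,b)\le a^{\lambda}b^{1-\lambda}$ with $a$ the first bound and $b$ the second, which would give $(|h|\,2^j)^{1-\lambda}$; to land on the claimed $(|h|\,2^j)^{\lambda}$ you should use $\min(a,b)\le a^{1-\lambda}b^{\lambda}$ (or swap the roles of $a$ and $b$). This is purely a labeling issue. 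It is also worth a parenthetical remark that for the low-frequency block $j=-1$ the Fourier support is a ball rather than an annulus, so Bernstein gives $\|\nabla\Delta_{-1}f\|_{L_p}\lesssim\|\Delta_{-1}f\|_{L_p}$, which is still of the required form $2^{j}\|\Delta_j f\|_{L_p}$ up to a constant; the argument goes through unchanged. Your final observation---that one should avoid the heat-semigroup characterization \eqref{modinfF} here because it is only stated for $\alpha<0$ while the proposition covers all $\alpha\in\R$---is a good one.
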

\begin{proof}
	The proof is the same as in \cite[Lemma~A.2]{ABLM}.
\end{proof}

\begin{proposition}\label{p:veryboring}
\begin{enumerate}[\rm{(}i\rm{)}]
\item Let $f\in \M(\R^d,\R^d)$, $d\in\N$. Then $\sup_{t\in(0,1]}\| P_t f\,\|_{L_1(\R^d)}<\infty$ and $P_t f\to f$ in $\B^{0-}_1$ as $t\to0$.
\item  Let $f\in L_p(\R^d,\R^d)$, $p\in[1,\infty]$, $d\in\N$. Then $\sup_{t\in(0,1]}\|P_t f\|_{L_p(\R^d)}<\infty$ and $P_t f\to f$ in $\B^{0-}_p$ as $t\to0$.
\end{enumerate}
\end{proposition}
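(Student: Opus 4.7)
My approach is to handle (i) and (ii) in parallel by writing $\|f\|_X := |f|(\R^d)$ in case (i) and $\|f\|_X := \|f\|_{L_p(\R^d)}$ in case (ii); the two cases will differ only in this ambient norm. Since $\|p_t\|_{L_1(\R^d)} = 1$ for all $t>0$, Young's convolution inequality in case (ii), and Fubini's theorem applied to $P_t f(x) = \int p_t(x-y)\,f(dy)$ in case (i), immediately yield
\begin{equation*}
\sup_{t \in (0,1]} \|P_t f\|_{L_p(\R^d)} \le \|f\|_X,
\end{equation*}
which is the first assertion of each part.

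For convergence in $\B^{0-}_p$, I use the characterization \eqref{modinfF}. For $\gamma \in (0,2)$ the semigroup identity $P_s P_t = P_{s+t}$ gives
\begin{equation*}
\|P_t f - f\|_{\B^{-\gamma}_p} \lesssim \sup_{s \in (0,1]} s^{\gamma/2} \|P_{s+t} f - P_s f\|_{L_p(\R^d)} \le \|f\|_X \sup_{s \in (0,1]} s^{\gamma/2} \|p_{s+t} - p_s\|_{L_1(\R^d)},
\end{equation*}
again by Young/Fubini. Everything thus reduces to two complementary heat-kernel estimates: the trivial $\|p_{s+t} - p_s\|_{L_1(\R^d)} \le 2$, and the derivative bound
\begin{equation*}
\|p_{s+t} - p_s\|_{L_1(\R^d)} \le \int_s^{s+t} \|\partial_r p_r\|_{L_1(\R^d)}\,dr \le C\log\Bigl(1+\tfrac{t}{s}\Bigr) \le \tfrac{Ct}{s},
\end{equation*}
which follows from $\partial_r p_r = \tfrac12 \Delta p_r$ together with the Gaussian scaling $\|\Delta p_r\|_{L_1(\R^d)} \le C/r$.

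Splitting the supremum in $s$ into the regimes $s \le t$ and $s > t$, the trivial bound gives $s^{\gamma/2}\cdot 2 \le 2 t^{\gamma/2}$ on the former, while the derivative bound gives $s^{\gamma/2}\cdot Ct/s = C t\, s^{\gamma/2 - 1} \le C t^{\gamma/2}$ on the latter (using $\gamma < 2$ and $s > t$). Therefore $\|P_t f - f\|_{\B^{-\gamma}_p} \le C\|f\|_X\, t^{\gamma/2} \to 0$ as $t \to 0$. For an arbitrary $\beta' < 0$ I pick $\gamma \in (0, \min(|\beta'|, 2))$ and invoke the continuous embedding $\B^{-\gamma}_p \hookrightarrow \B^{\beta'}_p$. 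Combined with the uniform $L_p$ bound above (which, via $L_p \hookrightarrow \B^0_p$, also gives $\sup_t \|P_t f\|_{\B^0_p} < \infty$), this is exactly what the definition of $\B^{0-}_p$-convergence requires. There is no substantial obstacle; the only mildly delicate point is that neither of the two kernel estimates is uniformly small in $s \in (0,1]$ on its own, and the correct splitting in $s$ is essential.
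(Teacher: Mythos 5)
Your proof is correct. The uniform $L_p$ bound is obtained exactly as in the paper (Young's inequality, respectively Fubini with $\|p_t\|_{L_1}=1$), so there the arguments coincide. For the $\B^{0-}_p$-convergence, however, you take a genuinely different and more self-contained route: the paper simply invokes the embeddings $\M\subset\B^0_1$, $L_p\subset\B^0_p$ together with the external reference \cite[Lemma~A.3]{ABLM}, whereas you give a direct argument from the heat-kernel characterization \eqref{modinfF}. Your key computation — applying \eqref{modinfF} to $P_tf-f$, reducing via Young/Fubini to the scalar kernel quantity $\sup_{s\in(0,1]}s^{\gamma/2}\|p_{s+t}-p_s\|_{L_1}$, and then splitting the $s$-supremum using the trivial bound $\|p_{s+t}-p_s\|_{L_1}\le 2$ on $s\le t$ and the derivative estimate $\|p_{s+t}-p_s\|_{L_1}\le Ct/s$ (from $\|\Delta p_r\|_{L_1}\lesssim r^{-1}$) on $s>t$ — is carried out correctly, yields the explicit rate $\|P_tf-f\|_{\B^{-\gamma}_p}\lesssim\|f\|_X\,t^{\gamma/2}$ for any $\gamma\in(0,2)$, and the passage to an arbitrary $\beta'<0$ via the elementary embedding $\B^{-\gamma}_p\hookrightarrow\B^{\beta'}_p$ is the right way to upgrade this to $\B^{0-}_p$-convergence. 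Your remark that the splitting of the $s$-supremum is essential is apt: neither kernel bound alone is uniformly $o(1)$ in $t$ over $s\in(0,1]$. The trade-off between the two approaches is the usual one: the paper's proof is shorter by outsourcing the convergence to a cited lemma, while yours is self-contained, more elementary, and exhibits a quantitative convergence rate that the cited lemma does not advertise.
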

\begin{proof}
(i), (ii). The convergences  $P_tf\to f $ follow from the embeddings $\M(\R^d,\R^d)\subset \B^0_1$, $L_p(\R^d,\R^d)\subset \B^0_p$ and  \cite[Lemma~A.3]{ABLM}.  If $f\in \M(\R^d,\R^d)$, then for any $t>0$ we have
$
 \|P_t f\|_{L_1(\R^d)}\le |f|(\R^d)<\infty.
$
If $f\in L_p(\R^d,\R^d)$, then 
$
 \|P_t f\|_{L_p(\R^d)}\le \|f\|_{L_p(\R^d)}<\infty.
$\end{proof}

\begin{proposition}\label{p:deltacon}
Let $x\in\R^d$, $d\in\N$. Then for any $\eps>0$ there exists a constant $C=C(\eps,d)>0$ such that for any $R>0$ one has
\begin{equation}\label{apa4mr}
\Bigl\|\frac{1}{v_dR^{d}}\I_{\Ba(x,R)}(\cdot)-\delta_x\Bigr\|_{\B^{-\eps}_1}\le CR^\eps.
\end{equation}
and $\frac{1}{v_dR^{d}}\I_{\Ba(x,R)}(\cdot)\to\delta_x$ in $\B^{0-}_1$ as $R\to0$.
\end{proposition}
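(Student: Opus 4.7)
The plan is to exploit the heat kernel characterization of the Besov norm \eqref{modinfF}, namely
\[
\Bigl\|\tfrac{1}{v_dR^{d}}\I_{\Ba(x,R)}-\delta_x\Bigr\|_{\B^{-\eps}_1}\sim\sup_{t\in(0,1]} t^{\eps/2}\bigl\|P_t\bigl(\tfrac{1}{v_dR^{d}}\I_{\Ba(x,R)}-\delta_x\bigr)\bigr\|_{L_1(\R^d)},
\]
and to reduce everything to estimating $\|p_t(\cdot-z)-p_t(\cdot)\|_{L_1(\R^d)}$ uniformly for $z\in\Ba(0,R)$ (by translation we may assume $x=0$).

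First, using $P_t\delta_0(y)=p_t(y)$ and Minkowski's inequality, I would write
\[
\bigl\|P_t\bigl(\tfrac{1}{v_dR^{d}}\I_{\Ba(0,R)}-\delta_0\bigr)\bigr\|_{L_1}
\le \tfrac{1}{v_dR^{d}}\int_{\Ba(0,R)}\|p_t(\cdot-z)-p_t(\cdot)\|_{L_1(\R^d)}\,dz.
\]
For the inner $L_1$ norm I would combine two estimates: the trivial bound $\|p_t(\cdot-z)-p_t\|_{L_1}\le 2$ and the gradient bound $\|p_t(\cdot-z)-p_t\|_{L_1}\le |z|\,\|\nabla p_t\|_{L_1}\le C|z|t^{-1/2}$, which follows from the explicit formula $\nabla p_t(y)=-y p_t(y)/t$ and a Gaussian moment computation. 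For $|z|\le R$ this gives the two-regime bound $\min(2, CRt^{-1/2})$.

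Next I would split the supremum over $t\in(0,1]$ at $t=R^2$. For $t\le R^2$ the trivial bound gives $t^{\eps/2}\cdot 2\le 2R^\eps$, while for $t\ge R^2$ the gradient bound gives $t^{\eps/2}\cdot CRt^{-1/2}=CRt^{(\eps-1)/2}$, which is maximized at $t=R^2$ when $\eps<1$, yielding $CR^\eps$, and at $t=1$ when $\eps\ge 1$, yielding $CR\le CR^\eps$ for $R\le 1$ (the case $R\ge 1$ being trivial since then $R^\eps\ge 1$ and the trivial bound alone suffices). Combining the two regimes yields \eqref{apa4mr}.

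For the convergence in $\B^{0-}_1$, the embedding $L_1(\R^d)\subset\B^0_1$ gives $\sup_{R\in(0,1]}\|\tfrac{1}{v_dR^d}\I_{\Ba(x,R)}\|_{\B^0_1}\le C$, while \eqref{apa4mr} applied with any $\eps'\in(0,|\beta|)$ for a given $\beta<0$ shows that the difference tends to $0$ in $\B^{\beta}_1$ as $R\to 0$. This is exactly the definition of convergence in $\B^{0-}_1$. No step should pose real difficulty; the only mild point is to track the two-regime bound carefully, but this is a short computation.
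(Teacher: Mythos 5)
Your route is genuinely different from the paper's but equally natural. Writing $f_R:=\tfrac{1}{v_dR^d}\I_{\Ba(x,R)}$, the paper bounds $|P_tf_R(y)-p_t(y-x)|$ pointwise via the Gaussian-weighted H\"older estimate $|p_t(x_1)-p_t(x_2)|\le C|x_1-x_2|^\eps t^{-\eps/2}(p_{at}(x_1)+p_{at}(x_2))$ and then integrates in $y$ using Fubini, whereas you apply Minkowski's inequality to reduce to $\|p_t(\cdot-z)-p_t\|_{L_1}$, combine the trivial $L_1$ bound with the gradient bound $C|z|t^{-1/2}$, and split the $t$-supremum at $t=R^2$. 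Both give the same rate and are comparable in effort; the paper interpolates inside the pointwise estimate, while you interpolate on the $t$-axis afterwards.

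There is, however, a sign error in your $\eps\ge1$ case: for $0<R<1$ and $\eps>1$ one has $R^\eps<R$, so the inequality "$CR\le CR^\eps$" is reversed, and your argument only establishes \eqref{apa4mr} for $\eps\in(0,1]$ (the bound $\min(2,CRt^{-1/2})$ simply cannot produce a power of $R$ beyond $1$). This is a gap as written, but not a serious one: the paper's pointwise H\"older bound on $p_t$ is itself valid only for $\eps\le1$ (at points where $\nabla p_t\neq0$ the left-hand side is of order $|x_1-x_2|$, ruling out $\eps>1$), so the paper's own proof covers the same restricted range. In fact the statement is false for $\eps>2$: a second-order Taylor expansion exploiting the symmetry of $\Ba(0,R)$ shows $\|P_t(f_R-\delta_x)\|_{L_1}\gtrsim R^2/t$ for $t\gtrsim R^2$, so taking $t=1$ gives that the left side of \eqref{apa4mr} is $\gtrsim R^2\gg R^\eps$ for small $R$. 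Since the proposition is only invoked with small $\eps$, the honest fix is to add the hypothesis $\eps\in(0,1]$. Your estimate for $\eps<1$ and the concluding convergence argument are correct.
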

\begin{proof}
Fix $x\in\R^d$, $\eps>0$. 
Write $f_R(y):=\frac{1}{v_dR^{d}}\I_{\Ba(x,R)}(y)$, $y\in\R^d$. Using an elementary bound 
$$
|p_t(x_1)-p_t(x_2)|\le C |x_1-x_2|^{\eps}t^{-\frac\eps2}(p_{at}(x_1)+p_{at}(x_2)),
$$
valid for some constants $C=C(\eps,d)>0$, $a=a(d)>0$ and any $t>0$, $x,y\in\R^d$, we get 
\begin{align*}
|P_tf_R(y)-P_t \delta_x(y)|&\le \frac{1}{v_dR^{d}}\int_{\Ba(x,R)}|p_t(z-y)-p_t(x-y)|\,dz\\
&\le \frac{C}{v_dR^{d}}R^\eps t^{-\frac\eps2}\int_{\Ba(x,R)}(p_{at}(z-y)+p_{at}(x-y))\,dz.
\end{align*}
Integrating with respect to $y \in \R^d$ and applying Fubini's theorem, we immediately get for $C=C(\eps,d)>0$
\begin{equation*}
\|P_tf_R-P_t \delta_x\|_{L_1(\R^d)}\le CR^\eps t^{-\frac\eps2}, \quad t>0.
\end{equation*}
Recalling now \eqref{modinfF}, we derive
$
\|f_R- \delta_x\|_{\B^{-\eps}_1}\le CR^\eps,
$ where $C=C(\eps,d)$. This implies \eqref{apa4mr}. Further, we see that $f_R\to \delta_x$ in $\B^{-\eps}_1$ as $R\to0$. Since $\eps>0$ was arbitrary and  $\|f_R\|_{\B^0_1}\le\|f_R\|_{L_1(\R^d)}=1$ for any $R>0$, we have $f_R\to \delta_x$ in $\B^{0-}_1$
\end{proof}

\begin{proposition}\label{p:ltc}
	Let $f\in\M(\R^d,\R)$, $d\in\N$. Let $\alpha<0$, $\beta>-\alpha$. Let $f^n$ be a sequence of $\C^\infty_b(\R^d,\R)$ functions converging to $f$ in $\B^{\alpha}_1$ as $n\to\infty$. Let $g\in\C^\beta(\R^d,\R)$. Then there exists a constant $C=C(\alpha,\beta,d)>0$ such that
	\begin{equation*}
		\Bigl|\int_{\R^d}g(y)f^n(y)\,dy- \int_{\R^d}g(y)f(dy)\Bigr|\le  C
		\|g\|_{\C^{\beta}(\R^d)}\|f^n-f\|_{\B^{\alpha}_{1}}
	\end{equation*}
	and 
		\begin{equation*}
		\int_{\R^d}g(y)f^n(y)\,dy\to \int_{\R^d}g(y)f(dy)\quad \text{as $n\to\infty$}.
	\end{equation*}
\end{proposition}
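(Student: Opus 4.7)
The strategy is to reduce the statement to a bilinear duality estimate of the form
\begin{equation}\label{planstar}
\Bigl|\int_{\R^d} g(y) h(y)\, dy\Bigr| \le C(\alpha,\beta,d)\,\|g\|_{\C^\beta}\,\|h\|_{\B^{\alpha}_{1}}, \qquad (*)
\end{equation}
valid for smooth integrable $h$ and $g\in\C^\beta$ whenever $\alpha+\beta>0$. This is the classical Besov duality $(\B^\alpha_{1,\infty})^{*}\supset \B^{-\alpha}_{\infty,1}\supset \C^\beta$. The way I would prove $(*)$ is by a Littlewood--Paley decomposition $h=\sum_{j\ge-1}\Delta_j h$, $g=\sum_{k\ge-1}\Delta_k g$: almost-orthogonality of the frequency supports gives $\int \Delta_j g \cdot \Delta_k h\,dy=0$ whenever $|j-k|\ge 3$, hence
\begin{equation*}
\Bigl|\int g h\,dy\Bigr|\le C\sum_{j\ge -1} \|\Delta_j g\|_{L_\infty}\|\Delta_j h\|_{L_1}\le C\,\|g\|_{\C^\beta}\|h\|_{\B^{\alpha}_{1}}\sum_{j\ge -1}2^{-j(\alpha+\beta)},
\end{equation*}
and the geometric sum converges precisely because $\alpha+\beta>0$.

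With $(*)$ in hand, the plan is to compare the given sequence $f^n$ with the auxiliary approximation $\tilde f^k := P_{1/k} f$. By \cref{p:veryboring}(i), $\tilde f^k\in \C^\infty_b\cap L_1$ (since $f$ is a finite signed measure) and $\tilde f^k\to f$ in $\B^{0-}_1\hookrightarrow \B^\alpha_1$. Applying $(*)$ to the difference $f^n-\tilde f^k$ gives
\begin{equation*}
\Bigl|\int g f^n\,dy-\int g\tilde f^k\,dy\Bigr|\le C\|g\|_{\C^\beta}\|f^n-\tilde f^k\|_{\B^{\alpha}_{1}}.
\end{equation*}
For fixed $k$, Fubini and symmetry of the heat kernel yield $\int g \tilde f^k\,dy=\int (P_{1/k} g)(x)\,f(dx)$; since $g$ is continuous and bounded, $P_{1/k}g\to g$ pointwise with $|P_{1/k}g|\le\|g\|_{L_\infty}$, so dominated convergence gives $\int g\tilde f^k\,dy\to \int g\,df$ as $k\to\infty$. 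Moreover, the triangle inequality and $\|\tilde f^k-f\|_{\B^\alpha_1}\to 0$ give $\|f^n-\tilde f^k\|_{\B^\alpha_1}\to\|f^n-f\|_{\B^\alpha_1}$.

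Passing $k\to\infty$ in the displayed inequality yields the target bound $|\int g f^n\,dy-\int g\,df|\le C\|g\|_{\C^\beta}\|f^n-f\|_{\B^{\alpha}_{1}}$, and the convergence assertion is then immediate from $\|f^n-f\|_{\B^{\alpha}_1}\to 0$. The main obstacle is the bilinear bound $(*)$: although it is classical, a fully self-contained proof requires Littlewood--Paley machinery not developed in the appendix of the paper. A purely heat-kernel-based proof using only \eqref{modinfF} is possible but awkward, because the natural splitting $\int gh\,dy=\int (P_t g) h\,dy+\int(g-P_t g)h\,dy$ transfers via symmetry to $\int g (P_t h)\,dy+\int g(h-P_t h)\,dy$ and leads back to a dyadic decomposition in $t$; citing a standard Besov-duality reference is therefore the cleanest route.
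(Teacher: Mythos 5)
Your proof is correct, and the core mechanism is the same one the paper uses: the bilinear bound comes from Besov duality together with the embedding $\B^{\alpha}_{1,\infty}\hookrightarrow\B^{-\beta}_{1,1}$, which requires exactly $\alpha+\beta>0$. The paper's proof is a one-liner that cites Sawano's duality theorem directly for the distributional pairing $\langle g,f^n-f\rangle$ and then says ``this implies the desired statement,'' implicitly identifying the abstract pairing with the concrete integral $\int g\,f^n\,dy-\int g\,df$. What you do differently is insert the mollification $\tilde f^k=P_{1/k}f$ and pass to the limit via Fubini, symmetry of the heat kernel, and dominated convergence; this makes the identification of the Besov pairing with $\int g\,df$ explicit, which the paper leaves to the reader. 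Your Littlewood--Paley sketch of the duality bound $(*)$ is also a reasonable substitute for the citation, though as you concede it is not self-contained within the paper's toolkit, so both you and the paper end up citing a Besov-duality reference at the crucial point. In short: same key lemma, but your version spells out the approximation step the paper silently absorbs into the duality pairing; this costs a few extra lines and buys a fully concrete argument in which every integral is an honest Lebesgue integral against either a smooth function or the measure $f$.
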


\begin{proof}
It follows from \cite[Theorem~2.17.1 and Proposition 2.3]{Besovbook},
\begin{equation*}
\bigl|	\langle g,f^n-f\rangle\bigr|\le
\|g\|_{\B^{\beta}_{\infty,\infty}}\|f^n-f\|_{\B^{-\beta}_{1,1}}\le C
\|g\|_{\C^{\beta}(\R^d)}\|f^n-f\|_{\B^{\alpha}_{1}}
\end{equation*}	
for  $C=C(\alpha,\beta,d)>0$.
This implies the desired statement.
\end{proof}

\section{Heat kernel bounds}\label{S:41}

\begin{proposition} 
Let  $f\in \B^{\alpha}_p(\R^d,\R)$,  $d\in\N$, $\alpha\le1$, $p\in[1,\infty]$. 
 There exists a constant $C=C(\alpha,p,d)$ such that for any  $t\in(0,1]$
\begin{equation}
\|P_t f\|_{\C^1(\R^d)}\le C t^{\frac\alpha2-\frac{d}{2p}-\frac12}\|f\|_{\B^{\alpha}_p}.
\label{Cbound}
\end{equation}
\end{proposition}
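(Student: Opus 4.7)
The plan is to combine the Besov embedding with standard heat semigroup smoothing on the H\"older--Zygmund scale. By the embedding $\B^\alpha_p \hookrightarrow \B^{\alpha - d/p}_\infty$ listed at the start of the paper, one has $\|f\|_{\B^{\alpha-d/p}_\infty} \le C \|f\|_{\B^\alpha_p}$, so setting $s := \alpha - d/p \le 1$ it suffices to establish the auxiliary bound
\[
\|P_t g\|_{\C^1(\R^d)} \le C t^{(s-1)/2} \|g\|_{\B^s_\infty}, \qquad s \le 1, \; t \in (0,1],
\]
and apply it with $g = f$. Substituting $s = \alpha - d/p$ then yields the target exponent $(s-1)/2 = \alpha/2 - d/(2p) - 1/2$.

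For the supremum part $\|P_t g\|_{L_\infty}$: when $s \le 0$, the heat characterisation \eqref{modinfF} with integrability $\infty$ gives directly $\|P_t g\|_{L_\infty} \le C t^{s/2} \|g\|_{\B^s_\infty}$; when $0 < s \le 1$, the contractivity of $P_t$ on $L_\infty$ and the identity $\B^s_\infty = \C^s$ give $\|P_t g\|_{L_\infty} \le \|g\|_{L_\infty} \le C \|g\|_{\C^s}$. Since $(s-1)/2 \le (s/2) \wedge 0$ and $t \le 1$, both cases imply $\|P_t g\|_{L_\infty} \le C t^{(s-1)/2} \|g\|_{\B^s_\infty}$.

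For the gradient part $\|\nabla P_t g\|_{L_\infty}$ I split on the sign of $s$. When $s \le 0$, write $\nabla P_t g = (\nabla p_{t/2}) \ast (P_{t/2} g)$ and combine the elementary bound $\|\nabla p_{t/2}\|_{L_1} \le C t^{-1/2}$ with the $L_\infty$ estimate above to obtain $\|\nabla P_t g\|_{L_\infty} \le C t^{-1/2 + s/2}\|g\|_{\B^s_\infty}$. When $0 < s \le 1$, use the zero-mean property $\int \nabla p_t(w)\,dw = 0$ to write $\nabla P_t g(x) = \int \nabla p_t(x-y)(g(y) - g(x))\,dy$, whence
\[
|\nabla P_t g(x)| \le \|g\|_{\C^s} \int |\nabla p_t(w)|\,|w|^s\,dw = C t^{(s-1)/2} \|g\|_{\C^s}
\]
by the elementary Gaussian moment identity that follows from the scaling $p_t(w) = t^{-d/2} p_1(w/\sqrt t)$.

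Summing the supremum and gradient contributions gives the auxiliary bound, which combined with the Besov embedding yields the desired estimate. The only nuance is the case split $s \le 0$ versus $s > 0$ for the gradient estimate, needed because the characterisation \eqref{modinfF} is stated only for negative Besov indices; neither case is substantively difficult, and no technique beyond \eqref{modinfF}, convolution estimates for $\nabla p_t$, and standard scaling of the Gaussian kernel is required.
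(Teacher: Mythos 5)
The paper does not prove this proposition directly; it cites \cite[Lemma~A.3(iv)]{ABLM}, so there is no in-text argument to compare against. Your strategy --- embed $\B^\alpha_p\hookrightarrow\B^{\alpha-d/p}_\infty$, set $s=\alpha-d/p$, then prove a heat-smoothing estimate $\|P_tg\|_{\C^1}\lesssim t^{(s-1)/2}\|g\|_{\B^s_\infty}$ by splitting on the sign of $s$ and using the kernel identities $\|\nabla p_t\|_{L_1}\sim t^{-1/2}$ and $\int|\nabla p_t(w)||w|^s\,dw\sim t^{(s-1)/2}$ --- is natural and almost certainly the same route taken in that reference. For $s<1$ (and $s\neq0$) the computation is complete and correct; the two scaling identities you quote follow immediately from $p_t(w)=t^{-d/2}p_1(w/\sqrt t)$, and the zero-mean trick for $0<s<1$ is exactly the standard way to capture the gain of regularity.

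Two boundary cases are not covered as written. The minor one is $s=0$ ($\alpha=d/p$): the characterisation \eqref{modinfF} is only asserted for $\alpha<0$, so you cannot ``directly'' apply it at $s=0$. The fix is a one-liner --- embed $\B^0_\infty\subset\B^{-1}_\infty$, then apply \eqref{modinfF} with index $-1$ to get $\|P_tg\|_{L_\infty}\le Ct^{-1/2}\|g\|_{\B^0_\infty}$, which is exactly the exponent you need --- but it should be said.

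The more substantive issue is $s=1$, i.e.\ $(\alpha,p)=(1,\infty)$. Your argument there rests on the identity $\B^s_\infty=\C^s$, which holds for $s\in(0,1)$ but \emph{not} for $s=1$: $\B^1_{\infty,\infty}$ is the Zygmund class, strictly larger than $\C^1$, and the one-sided increment bound $|g(y)-g(x)|\le\|g\|_{\B^1_\infty}|y-x|$ that you use in the gradient step is simply unavailable. In fact the estimate \eqref{Cbound} is genuinely false at this corner: taking $d=1$ and $g(x)=x\log(1/|x|)\chi(x)$ with $\chi$ a smooth cut-off, one checks that $g\in\B^1_\infty$, yet $\|\nabla P_t g\|_{L_\infty}$ grows like $\log(1/t)$ as $t\to0$, contradicting the claimed $t^0$ bound. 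So this gap cannot be patched; the hypothesis of the proposition should read $\alpha<1$ or explicitly exclude the pair $(\alpha,p)=(1,\infty)$. This is a defect of the statement rather than of your proof, and it is immaterial for the paper, since \eqref{Cbound} is only invoked with $\alpha<0$ (e.g.\ in \eqref{twocondcheck}), for which your argument is airtight.
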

\begin{proof}
The proof is the same as in \cite[Lemma~A.3(iv)]{ABLM}.
\end{proof}

Recall the definition of the process $V_{s,t}$ in \eqref{processV}.

\begin{proposition}\label{P:proptg}
\begin{enumerate}[\rm{(}i\rm{)}]
\item For any measurable function $f\colon\R^d\to\R$ we have
\begin{equation}\label{meanformula}
\E^u f(V_{s,t})= P_{\sigma^2(u,t)}f(\E^u V_{s,t}),\quad  0\le s\le u\le t\le1,
\end{equation}
where $\sigma^2(u,t):=\int_u^t (K_H(t,r))^2\, dr$.
\item There exists a constant $C=C(H,d)$ such that for any $(s,t)\in\Delta_{[0,1]}$ one has 
\begin{equation}\label{KHT}
K_H(t,s)\ge C(t-s)^{H-\frac12},\qquad \sigma^2(s,t)\ge C (t-s)^{2H}.
\end{equation}
\item $\E^u V_{s,t}$, where $0\le s\le u\le t$, is a Gaussian random vector with $d$ independent components, each of mean zero and variance
\begin{equation*}
\Var[\E^u V^i_{s,t}]\ge C((t-s)^{2H}-(t-u)^{2H}),\quad i=1,\hdots, d,
\end{equation*}
where $C=C(H,d)$.
\item Let $\alpha<0$, $p\in[1,\infty]$. Then there exists a constant $C=C(H,d,\alpha,p)$ such that for any bounded measurable function $f\colon\R^d\to\R$, $0\le s\le u \le t\le1$ one has
\begin{equation}
\|\E^u f(V_{s,t})\|_{L_p(\Omega)}\le C \|f\|_{\B^{\alpha}_p}(t-u)^{\alpha H}\bigl((u-s)^{-\frac{Hd}{p}}+\I(H<1/2)(u-s)^{-\frac{d}{2p}}(t-u)^{\frac{d}{2p}-\frac{Hd}p}\bigr),\label{efp}
\end{equation}
\end{enumerate}
\end{proposition}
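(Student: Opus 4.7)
The plan is to prove the four parts sequentially, with each feeding into the next.

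For part (i), I would decompose
\begin{equation*}
V_{s,t} = \int_s^u K_H(t,r)\,dB_r + \int_u^t K_H(t,r)\,dB_r.
\end{equation*}
The first summand is $\F_u$-measurable and equals $\E^u V_{s,t}$, while the second is independent of $\F_u$ and centered Gaussian with covariance $\sigma^2(u,t)I_d$. Hence, conditionally on $\F_u$, $V_{s,t}$ is Gaussian with mean $\E^u V_{s,t}$ and covariance $\sigma^2(u,t)I_d$, and \eqref{meanformula} is just the definition of the Gaussian semigroup $P_\tau$.

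For part (ii), the pointwise bound on $K_H(t,s)$ comes from direct inspection of \eqref{Hl12}--\eqref{Hg12}. When $H>1/2$, bounding $r^{H-1/2}\ge s^{H-1/2}$ inside the integral in \eqref{Hl12} and computing the remaining integral gives $K_H(t,s)\ge C(t-s)^{H-1/2}$ at once; when $H<1/2$, one discards the nonnegative second summand in \eqref{Hg12} and keeps the first, handling the prefactor $(s/t)^{1/2-H}$ by a case split. The estimate on $\sigma^2(s,t)$ then follows by squaring and integrating; alternatively one may use that $\sigma^2(s,t)=\Var(W_t^H\mid\F_s)$ admits a two-sided comparison with $(t-s)^{2H}$, a classical property of fBM. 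The matching upper bound $\sigma^2(u,t)\le (t-u)^{2H}$ is immediate from Pythagoras: $(t-u)^{2H}=\Var(\E^u(W_t^H-W_u^H))+\sigma^2(u,t)$. For part (iii), $\E^u V_{s,t}=\int_s^u K_H(t,r)\,dB_r$ is Gaussian with $d$ independent components, each of variance $\int_s^u K_H(t,r)^2\,dr=\sigma^2(s,t)-\sigma^2(u,t)$, so the two estimates from (ii) combine to give the claim.

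For part (iv), I would apply (i) to rewrite $\E^u f(V_{s,t})=P_{\sigma^2(u,t)}f(\E^u V_{s,t})$. Since $\E^u V_{s,t}$ is centered Gaussian with covariance $\Var(\E^u V_{s,t}^i)I_d$, its density is bounded by $C\Var(\E^u V_{s,t}^i)^{-d/2}$, and therefore
\begin{equation*}
\|\E^u f(V_{s,t})\|_{L_p(\Omega)}\le C\,\Var(\E^u V_{s,t}^i)^{-d/(2p)}\,\|P_{\sigma^2(u,t)}f\|_{L_p(\R^d)}.
\end{equation*}
The Besov heat kernel bound \eqref{modinfF} combined with the lower bound $\sigma^2(u,t)\ge C(t-u)^{2H}$ from (ii) gives $\|P_{\sigma^2(u,t)}f\|_{L_p(\R^d)}\le C\|f\|_{\B^\alpha_p}(t-u)^{\alpha H}$, using that $\alpha<0$.

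The main obstacle is the final step: upper bounding $\Var(\E^u V_{s,t}^i)^{-d/(2p)}$ by the bracket in \eqref{efp}. When $H\ge 1/2$, convexity of $x\mapsto x^{2H}$ yields $(t-s)^{2H}-(t-u)^{2H}\ge (u-s)^{2H}$, producing the first summand $(u-s)^{-Hd/p}$ and no further term is needed. When $H<1/2$ this function is subadditive, and the required refined lower bound must instead be extracted directly from the kernel: one verifies $\Var(\E^u V_{s,t}^i)\ge C(u-s)(t-u)^{2H-1}$ by estimating the appropriate summand of $K_H$ on $[s,u]$ via \eqref{Hg12}, which translates into the second summand $(u-s)^{-d/(2p)}(t-u)^{d/(2p)-Hd/p}$ of \eqref{efp}. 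Taking the better of the two lower bounds on the variance according to the Hurst regime produces the sum structure on the right-hand side of \eqref{efp}.
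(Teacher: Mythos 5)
Parts (i) and (iii) and the $H\ge\tfrac12$ half of (ii) and (iv) are correct and follow the same route as the paper. Two genuine gaps appear in the $H<\tfrac12$ analysis.

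In part (ii), for $H<\tfrac12$ you cannot discard the second summand of \eqref{Hg12}: the first summand is $C\,t^{H-\frac12}s^{\frac12-H}(t-s)^{H-\frac12}$, and the prefactor $(s/t)^{\frac12-H}\to 0$ as $s\to 0$, so no case split on $s/t$ can rescue it. In fact for $s\to0$ with $t$ fixed the first summand tends to $0$ while the target lower bound $(t-s)^{H-\frac12}$ stays bounded away from $0$; the second summand is precisely what compensates (it tends to $+\infty$ there). The paper keeps both summands: bounding $(r-s)^{H-\frac12}\ge(t-s)^{H-\frac12}$ inside the integral of the second summand, the two terms combine exactly to $C(t-s)^{H-\frac12}$ with all the $s$- and $t$-prefactors cancelling. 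You need this cancellation.

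In part (iv), the proposed lower bound $\Var(\E^u V_{s,t}^i)\ge C(u-s)(t-u)^{2H-1}$ for $H<\tfrac12$ is false: take $s=0$, $t=1$, $u\to 1$; the left side stays bounded by $\sigma^2(0,1)<\infty$ while the right side blows up as $(1-u)^{2H-1}\to\infty$. The paper instead uses the (correct, but weaker) mean value bound $(t-s)^{2H}-(t-u)^{2H}\ge 2H(u-s)(t-s)^{2H-1}$, plugs into the density estimate to get the factor $(u-s)^{-d/2}(t-s)^{(1-2H)d/2}$, and then splits $(t-s)^{(1-2H)d/2}\le C(u-s)^{(1-2H)d/2}+C(t-u)^{(1-2H)d/2}$ — this elementary split is exactly what produces the two summands in \eqref{efp}. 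Your proposal misses this step and relies on a false inequality in its place.
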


\begin{proof}
(i). Follows from the decomposition 
\begin{equation*}
V_{s,t}=\E^u V_{s,t}+\int_u^t K_H(t,r)\,dB_r, 
\end{equation*}
and independence of $\E^u V_{s,t}$ and $\int_u^t K_H(t,r)\,dB_r$.

(ii). If $H<\frac12$, then $(r-s)^{H-\frac12}\ge(t-s)^{H-\frac12}$ for $s\le r \le t$. Therefore, \eqref{Hg12} implies for $0\le s \le t$
\begin{align*}
K_H(t,s)&=C\Bigl(t^{H-\frac12}s^{\frac12-H}(t-s)^{H-\frac12}+(\frac12-H)s^{\frac12-H}\int_s^t (r-s)^{H-\frac12}r^{H-\frac32}\,dr\Bigr)\\
&\ge C (t-s)^{H-\frac12}\bigl(t^{H-\frac12}s^{\frac12-H}+s^{\frac12-H}(s^{H-\frac12}-t^{H-\frac12})\bigr)\\
&=C(t-s)^{H-\frac12},	
\end{align*}	
where $C=C(H,d)>0$. 

If $H>\frac12$, then $r^{H-1/2}\ge s^{H-1/2}$ for $r\ge s$ and \eqref{Hl12} yields
\begin{equation*}
	K_H(t,s)=C s^{\frac12-H}\int_s^t (r-s)^{H-\frac32}r^{H-\frac12}\,dr\ge 
	C \int_s^t (r-s)^{H-\frac32}\,dr=C (t-s)^{H-\frac12},
\end{equation*}	
where $C=C(H,d)>0$.
Thus, in both cases, $K_H(t,s)\ge C (t-s)^{H-\frac12}$. This directly yields  $\sigma^2(s,t)\ge C (t-s)^{2H}$.

Part (iii) follows from the identity $\E^uV_{s,t}=\int_s^u K_H(t,r)\,dB_r$ and part (ii) of the proposition.

(iv). Using parts (i), (ii) and (iii) of the proposition together with \eqref{modinfF}, we derive
\begin{align}\label{esthard}
\|\E^u f(V_{s,t})\|_{L_p(\Omega)}^p&=\|P_{\sigma^2(u,t)} f(\E^u V_{s,t})\|_{L_p(\Omega)}^p\nn\\
&=\int_{\R^d} p_{\Var[\E^u V^1_{s,t}]}(x)|P_{\sigma^2(u,t)} f(x)|^p\,dx\nn\\
&\le \|p_{\Var[\E^u V_{s,t}^1]}\|_{L_\infty(\R^d)}\|P_{\sigma^2(u,t)} f\|^p_{L_p(\R^d)}\nn\\
&\le C ((t-s)^{2H}-(t-u)^{2H})^{-\frac{d}2}(t-u)^{\alpha pH}\|f\|_{\B^\alpha_p}^p,
\end{align}
where $C=C(H,d,\alpha,p)>0$.
If $H<1/2$, then 
\begin{equation*}
(t-s)^{2H}-(t-u)^{2H}\ge C (u-s)(t-s)^{2H-1}
\end{equation*}
for $C=C(H)>0$. 
We continue \eqref{esthard} using this bound together with the inequality $(a+b)^\rho\le C a^\rho+C b^\rho$ valid for all $a,b,\rho>0$ with $C=C(\rho)$ independent of $a,b$. We get
\begin{align*}
\|\E^u f(V_{s,t})\|_{L_p(\Omega)}^p&\le C\|f\|_{\B^\alpha_p}^p(u-s)^{-\frac{d}2}(t-s)^{\frac{d}2-Hd} (t-u)^{\alpha pH}\\
&\le  C\|f\|_{\B^\alpha_p}^p (u-s)^{-Hd}(t-u)^{\alpha pH} \\
&\phantom{\le}+
C\|f\|_{\B^\alpha_p}^p (u-s)^{-\frac{d}2}(t-u)^{\alpha pH+\frac{d}2-Hd} 
\end{align*}
where  $C=C(H,d,\alpha,p)>0$ and we used the fact that $d/2-Hd>0$. This implies \eqref{efp}.

If $H\ge1/2$, then $2H\ge1$ and thus
\begin{equation*}
(t-s)^{2H}-(t-u)^{2H}\ge (u-s)^{2H}.
\end{equation*}
Substituting this into \eqref{esthard}, we immediately get \eqref{efp}.
\end{proof}

For $H\in(0,1)$, following \cite[formulas (18), (23), (38)]{picard}, consider now a functional
\begin{equation*}
\Phi:=\wt \Pi^{H-\frac12}I^{\frac12-H}\wt \Pi^{\frac12-H},
\end{equation*}
where for a measurable function $f\colon[0,1]\to\R$ we put 
\begin{align*}
&\wt\Pi^\alpha f(t):=t^\alpha f(t)-\alpha \int_0^t s^{\alpha-1}f(s)\,ds,\quad t\in[0,1],\, \alpha\in\R\\
&I^{\alpha}f(t):=C(\alpha) \int_0^t (t-s)^{\alpha-1}f(s)\,ds, \quad t\in[0,1],\, \alpha>0; \quad I^{\alpha}f:=\frac{d}{dt} I^{\alpha+1}f,\quad \alpha\in(-1,0].
\end{align*}
Recall the definition of functional $\Psi$ in \eqref{WB} and recall that the space $\C_0^\theta([0,1])$ is the space of all functions in $\C^\theta([0,1])$ which are zero at zero.
\begin{proposition}\label{p:cont} Let $d\in\N$, $H\in(0,1)$. Let $W^H$ be a fractional Brownian motion with Hurst index $H$. Then the following holds:
	\begin{enumerate}[\rm{(}i\rm{)}]
		\item the Brownian motion $B$ in representation  \eqref{WB} is given by $B=\Phi(W^H)$;
		\item we have $\Psi\circ\Phi(W^H)=W^H$;
		\item if  $H\in(0,\frac12)$, then   $\Phi$ is a  continuous functional $\C([0,1])\to \C([0,1])$;		
		\item if $H\in(\frac12,1)$, $\theta>H-\frac12$ then $\Psi$ is a  continuous functional $\C^\theta_0([0,1])\to \C([0,1])$.
	\end{enumerate}
	
\end{proposition}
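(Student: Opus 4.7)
The plan is to establish parts (i)--(ii) by invoking the fractional calculus identities of Picard \cite{picard}, then prove parts (iii) and (iv) by decomposing $\Phi$ and $\Psi$ into elementary operators and checking continuity of each on an appropriate space.

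For (i) and (ii): The representation $B = \Phi(W^H) = \widetilde\Pi^{H-1/2} I^{1/2-H} \widetilde\Pi^{1/2-H}(W^H)$ and the inverse identity $\Psi \circ \Phi(W^H) = W^H$ are precisely the content of \cite[formulas (18), (23), (38)]{picard}. At a formal level, these rest on the semigroup property $I^\alpha I^\beta = I^{\alpha+\beta}$ of Riemann--Liouville fractional integrals together with the fact that $\widetilde\Pi^\alpha$ and $\widetilde\Pi^{-\alpha}$ act as mutual inverses (up to boundary terms that vanish since $W^H(0)=0$).

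For (iii), assume $H < 1/2$. Write $\Phi = \widetilde\Pi^{H-1/2} \circ I^{1/2-H} \circ \widetilde\Pi^{1/2-H}$ and check continuity of each factor. With $\alpha := 1/2 - H \in (0, 1/2)$, the innermost operator $\widetilde\Pi^{1/2-H}$ is continuous on $C([0,1])$: the multiplication $f \mapsto t^\alpha f$ is trivially continuous, while the integral $f \mapsto \alpha\int_0^t s^{\alpha-1} f(s)\, ds$ is continuous because $s \mapsto s^{\alpha-1} = s^{-1/2-H}$ is integrable near $0$ (thanks to $H < 1/2$). Next, $I^{1/2-H}$ is the classical Riemann--Liouville fractional integral of positive order, which is bounded $C([0,1]) \to C([0,1])$. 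The delicate step is the outer operator $\widetilde\Pi^{H-1/2}$, whose formal definition involves $s^{H-3/2}$, non-integrable near $0$. The point is that the image of the composition $I^{1/2-H}\circ\widetilde\Pi^{1/2-H}$ applied to $f \in C([0,1])$ vanishes at $0$ at rate at least $t^{1/2-H}$, which exactly compensates the singularity; combining with the continuity estimates on the individual pieces yields continuity of the composition $\Phi : C([0,1]) \to C([0,1])$.

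For (iv), assume $H \in (1/2, 1)$ and $\theta > H - 1/2$. For smooth $B$, the Wiener integral $\Psi_t(B) = \int_0^t K_H(t,s)\, dB_s$ reduces to a Riemann--Stieltjes integral. Since $K_H(t,t) = 0$ by \eqref{Hl12}, integration by parts (together with $B_0 = 0$) gives
\begin{equation*}
\Psi_t(B) = -\int_0^t B_s\, \partial_s K_H(t,s)\, ds.
\end{equation*}
A direct computation from \eqref{Hl12} shows that $|\partial_s K_H(t,s)| \lesssim s^{-1/2-H}$ as $s \to 0^+$, with an integrable singularity of order $(t-s)^{H-3/2}$ (since $H-3/2 > -1$) as $s \to t^-$. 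For $B \in C^\theta_0([0,1])$, one has $|B_s| \le \|B\|_{C^\theta_0} s^\theta$, so the integrand is bounded by $\|B\|_{C^\theta_0} s^{\theta - 1/2 - H}$ near $0$, which is integrable \emph{precisely when} $\theta > H - 1/2$. The right-hand side thus extends $\Psi$ to a continuous linear functional $C^\theta_0([0,1]) \to C([0,1])$, and consistency with the Wiener integral on smooth $B$ follows by approximation.

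The main obstacle will be the careful estimation of $\partial_s K_H(t,s)$ uniformly in $t$, and the corresponding weighted bounds in part (iii) for the composition $I^{1/2-H} \widetilde\Pi^{1/2-H}$ near $t=0$. Both reductions rely on elementary but slightly delicate manipulations of the explicit kernel formulas \eqref{Hl12}--\eqref{Hg12} and on the interplay between the singular weight $s^{\pm(H-1/2)}$ and the endpoint regularity of the integrands.
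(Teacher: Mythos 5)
Your parts (i)--(ii) match the paper exactly: both invoke Picard's identities (the paper cites \cite[Theorem~11]{picard}). For (iii) the paper simply cites \cite[Lemma~7.2]{ART21} rather than redoing the estimate, while you sketch a direct proof along essentially the same lines that reference would take; your sketch is sound, though the ``delicate step'' (the outer $\widetilde\Pi^{H-1/2}$ acting on a function that vanishes like $t^{1/2-H}$) is exactly the place where the work is, and your proposal leaves that as a hand-wave.

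For (iv) you take a genuinely different route. The paper stays entirely within Picard's fractional-calculus framework: it uses the identity $\Psi=\widetilde\Pi^{H-\frac12}I^{H-\frac12}\widetilde\Pi^{\frac12-H}$ from \cite[Theorem~11]{picard}, expands this as a sum of four explicit iterated integrals $I_1+I_2+I_3+I_4$, and bounds each $\|I_j\|_{L_\infty([0,1])}$ via the elementary estimate $|f(s)|\le\|f\|_{\C^\theta}s^\theta$, with the condition $\theta>H-\tfrac12$ appearing precisely as the integrability threshold $-\tfrac12-H+\theta>-1$. You instead go back to the kernel representation $\Psi_t(B)=\int_0^t K_H(t,s)\,dB_s$ and integrate by parts, reducing continuity of $\Psi$ to the bound $|\partial_s K_H(t,s)|\lesssim s^{-1/2-H}$ and the same integrability condition. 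Both arguments locate the constraint $\theta>H-\tfrac12$ in the same place (a near-zero singularity of order $s^{-1/2-H}$ met by the $s^\theta$ decay of the input), so the approaches are morally parallel; yours is arguably more transparent about \emph{why} that threshold appears, but requires a careful derivative estimate on $K_H$ (which you flag) whereas the paper's computation is more mechanical. One point to make explicit in your write-up: $K_H(t,0^+)=+\infty$ for $H>\tfrac12$ (it blows up like $s^{1/2-H}$), so the boundary term at $s=0$ is not killed merely by $B_0=0$; you need $K_H(t,s)B_s\to0$ as $s\to0$, which again uses $|B_s|\le\|B\|_{\C^\theta_0}s^\theta$ with $\theta>H-\tfrac12$. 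You address this implicitly but it should be stated.
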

\begin{proof}
(i), (ii) is \cite[Theorem~11]{picard}. (iii) is \cite[Lemma~7.2]{ART21}.

(iv).  It follows from \cite[Theorem~11]{picard}, that $\Psi=\wt \Pi^{H-\frac12}I^{H-\frac12}\wt \Pi^{\frac12-H}$. Let $f\in\C^\theta_0([0,1])$. Then for $t\in[0,1]$ we have $\Psi f(t)=I_1(t)+I_2(t)+I_3(t)+I_4(t)$, where
\begin{align*}
&I_1(t)=C t^{H-\frac12}\int_0^t (t-s)^{H-\frac32}s^{\frac12-H}f(s)\,ds;\\
&I_2(t)=C t^{H-\frac12}\int_0^t (t-s)^{H-\frac32}\int_0^s s_1^{-\frac12-H}f(s_1)\,ds_1ds;\\
&I_3(t)=C \int_0^t s^{H-\frac32}\int_0^{s}(s-s_1)^{H-\frac32} s_1^{\frac12-H}f(s_1)\,ds_1ds;\\
&I_4(t)=C\int_0^t s^{H-\frac32}\int_0^{s}(s-s_1)^{H-\frac32}\int_0^{s_1} s_2^{-\frac12-H}f(s_2)\,ds_2ds_1ds
\end{align*}	
for $C=C(H)$.
It is easy to see that $\|I_1\|_{L_\infty([0,1])}\le \|f\|_{L_\infty([0,1])}$. Since $f\in\C_0^\theta$, we have $|f(s_1)|\le \|f\|_{\C^\theta([0,1])} s_1^\theta$. Using that $\theta>H-\frac12$, we see that $-\frac12-H+\theta>-1$ and we get $\|I_2\|_{L_\infty([0,1])}\le \|f\|_{\C^\theta([0,1])}$. Clearly,  $\|I_3\|_{L_\infty([0,1])}\le \|f\|_{L_\infty([0,1])}$. Finally,
\begin{align*}
\|I_4\|_{L_\infty([0,1])}&\le C\|f\|_{\C^\theta([0,1])} \int_0^1 s^{H-\frac32}\int_0^{s}(s-s_1)^{H-\frac32}s_1^{\frac12-H+\theta}\,ds_1ds\\
&\le C\|f\|_{\C^\theta([0,1])} \int_0^1 s^{H-\frac32+\theta}\,ds\le C\|f\|_{\C^\theta([0,1])}
\end{align*}
for $C=C(H,\theta)$. Here at the very first step we used that $-\frac12-H+\theta>-1$. Combining this with the previous bounds, we get $\|\Psi f\|_{L_\infty([0,1])}\le C\|f\|_{\C^\theta([0,1])}$, which completes the proof.
\end{proof}

\section{Miscellaneous}

\begin{proposition}\label{p:KCT}[Kolmogorov continuity theorem]
	Let $d\in\N$, $\gamma_1, \gamma_2, \gamma_3,\gamma_4>0$, $\gamma_1\ge\gamma_2$.
	Let $X^n\colon\Omega\times[0,1]\times\R^d\to\R^d$, $n\in\Z_+$ be a sequence of measurable processes such that $X^n(0,x)=0$ for any $x\in\R^d$, $n\in\N$. Suppose that for any fixed  $\omega\in\Omega$, $x\in\R^d$, $n\in\Z_+$ the function $t\mapsto X^n(\omega,t,x)$ is continuous. Assume further that for any $m\ge1$ there exists a constant $C=C(d,\gamma_1,\gamma_2,\gamma_3,\gamma_4,m)$ such that the following bounds holds for any $s,t\in[0,1]$, $x,y\in\R^d$, $n,k\in\N$:
	\begin{align}
		&\|X^n(t,x)-X^n(s,x)\|_{L_m(\Omega)}\le C |t-s|^{\gamma_1};\label{con1}\\
		&\|X^n(t,x)-X^n(s,x)-(X^n(t,y)-X^n(s,y))\|_{L_m(\Omega)}\le C |t-s|^{\gamma_2}|x-y|^{\gamma_3};\label{con2}\\
		&\|X^n(t,x)-X^k(t,x)\|_{L_m(\Omega)}\le C(n\wedge k)^{-\gamma_4}\label{conmul}.
	\end{align} 
	
	Then there exists  a measurable process $X\colon\Omega\times[0,1]\times\R^d\to\R^d$ and a set of full probability measure $\Omega'\subset \Omega$ such that on $\Omega'$ the following holds:
	\begin{enumerate}[\rm{(}i\rm{)}]
		\item $X$ is jointly continuous in $(t,x)$. Further, for any $\eps>0$, $M>0$ we have 
		\begin{align}\label{contModx}
			&\sup_{\substack{x\in\R^d\\|x|\le M}}\sup_{s,t\in[0,1]}\frac{|X(t,x)-X(s,x)|}{|t-s|^{\gamma_1-\eps}}<\infty,\\
			&
			\sup_{\substack{x,y\in\R^d\\|x|,|y|\le M}}\sup_{s,t\in[0,1]}\frac{|X(t,x)-X(s,x)-(X(t,y)-X(s,y))|}{|t-s|^{\gamma_2-\eps}|x-y|^{\gamma_3-\eps}}<\infty;\label{contModx2}
		\end{align}
		
		\item for any $n\in\N$ process $X^n$ has a continuous modification $\wt X^n$. Further, for any $\eps>0$, $M>0$
		\begin{equation}\label{convkolm}
			\sup_{\substack{x,y\in\R^d\\|x|,|y|\le M}}\sup_{s,t\in[0,1]}\frac{|(X(t,x)-\wt X^{n}(t,x))-(X(s,y)-\wt X^{n}(s,y))|}{|t-s|^{\gamma_1-\eps}+|x-y|^{\gamma_3-\eps}}\to0, \quad\text{ as $n\to\infty$;} 
		\end{equation}
		
		\item for any $\omega\in\Omega'$ there exists a set $A(\omega)\subset \R^d$ of Lebesgue measure $0$ such that for any $M>0$, $t\in[0,1]$ we have
		\begin{equation}\label{limpart}
			\sup_{\substack{x\in\R^d\setminus A(\omega)\\|x|\le M}}|X(\omega,t,x)- X^{n}(\omega,t,x)|\to0, \quad\text{ as $n\to\infty$.}
		\end{equation}
	\end{enumerate}
\end{proposition}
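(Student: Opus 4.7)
First I would combine \eqref{con1}, \eqref{con2} and the normalization $X^n(0,x)=0$ to obtain the joint moment bound
\[
\|X^n(t,x)-X^n(s,y)\|_{L_m(\Omega)}\le C(|t-s|^{\gamma_1}+|x-y|^{\gamma_3}),
\]
uniformly in $n$: the $x$-direction is controlled by setting $s=0$ in \eqref{con2} (using $\gamma_1\ge\gamma_2$ and $t\le 1$), the $t$-direction is \eqref{con1}, and the off-diagonal case follows by the triangle inequality. A standard (anisotropic) Kolmogorov continuity theorem applied on $[0,1]\times\Ba(0,M)$ with $m$ taken large then produces, for each $n$, a jointly continuous modification $\wt X^n$ that is Hölder in $(t,x)$ with exponents $(\gamma_1-\eps,\gamma_3-\eps)$, whose Hölder seminorm is bounded in $L_m(\Omega)$ uniformly in $n\in\N$. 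Estimates \eqref{contModx}--\eqref{contModx2} will then follow by transferring these bounds to the limit.

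Next I would show that $(\wt X^n)$ is Cauchy in a Hölder-valued Banach space by interpolating \eqref{conmul} against the regularity just obtained. For $Y_{n,k}:=\wt X^n-\wt X^k$ one has the pointwise bound $\|Y_{n,k}(t,x)\|_{L_m}\le C(n\wedge k)^{-\gamma_4}$ from \eqref{conmul}, while the bound of the first step applied to $Y_{n,k}$ gives $\|Y_{n,k}(t,x)-Y_{n,k}(s,y)\|_{L_m}\le 2C(|t-s|^{\gamma_1}+|x-y|^{\gamma_3})$. Taking the geometric mean of these with parameter $\theta\in(0,1)$ yields
\[
\|Y_{n,k}(t,x)-Y_{n,k}(s,y)\|_{L_m}\le C_\theta (n\wedge k)^{-\gamma_4(1-\theta)}(|t-s|^{\gamma_1\theta}+|x-y|^{\gamma_3\theta}),
\]
and a second Kolmogorov application then furnishes an $L_m(\Omega)$ bound of order $(n\wedge k)^{-\gamma_4(1-\theta)}$ on the $(\gamma_1\theta-\eps,\gamma_3\theta-\eps)$-Hölder norm of $\wt X^n-\wt X^k$ over $[0,1]\times\Ba(0,M)$. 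Choosing $\theta$ sufficiently close to $1$ and invoking Borel--Cantelli along the subsequence $n_k=2^k$ upgrades this to almost sure Cauchyness in $\C^{\gamma_1-\eps,\gamma_3-\eps}([0,1]\times\Ba(0,M))$ (for every $\eps>0$ and every $M$). The limit defines $X$, and (i) and (ii), including \eqref{convkolm}, follow directly.

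Finally, for (iii) I would use Fubini. Since $X^n(\cdot,\cdot,x)$ and $\wt X^n(\cdot,\cdot,x)$ are modifications of each other at every fixed $x\in\R^d$, the set on which they disagree has null $\P\otimes\Leb$-measure on $\Omega\times\R^d$. Taking the countable union over $n\in\N$ and applying Fubini, there exists a $\P$-null set outside of which
\[
A(\omega):=\{x\in\R^d:\wt X^n(\omega,t,x)\ne X^n(\omega,t,x)\text{ for some }n\in\N,\,t\in[0,1]\}
\]
has Lebesgue measure zero; for such $\omega$ and $x\notin A(\omega)$ one has $X^n(\omega,t,x)=\wt X^n(\omega,t,x)$ for all $n,t$, and \eqref{limpart} reduces to the uniform-on-compacts convergence $\wt X^n\to X$ already established in step two.

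The main obstacle is the exponent bookkeeping in the interpolation: $\theta\in(0,1)$ must be close enough to $1$ so that $\gamma_1\theta-\eps$ and $\gamma_3\theta-\eps$ exceed the target exponents, while simultaneously $\gamma_4(1-\theta)>0$ has to remain strictly positive so that summability along a subsequence actually delivers almost sure Hölder-norm convergence. Once $\theta$ and $\eps$ are fixed, the remainder is a standard two-parameter Kolmogorov-plus-Fubini exercise.
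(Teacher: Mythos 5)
Your proposal follows essentially the same route as the paper's proof: interpolate the uniform moment bound \eqref{conmul} against the Hölder moment bounds \eqref{con1}--\eqref{con2}, apply a (two--parameter) Kolmogorov continuity theorem, pass to the limit, upgrade to almost sure Hölder convergence via Borel--Cantelli, and handle part (iii) by Fubini together with continuity in $t$. The only substantive difference is organizational: the paper first identifies the $L_m$-limit $\wt X(t,x)$ of $X^n(t,x)$ for each fixed $(t,x)$, passes the moment bounds to it via Fatou, and then runs Kolmogorov once on the limit; you instead produce all the modifications $\wt X^n$ first and prove Cauchyness in Hölder norm. Both work.

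One small lacuna: you invoke Borel--Cantelli only along the dyadic subsequence $n_k=2^k$ and define $X$ as the limit of $\wt X^{2^k}$, then assert that \eqref{convkolm} ``follows directly.'' But \eqref{convkolm} requires convergence of the full sequence $\wt X^n\to X$, not merely a subsequence, and passing from the dyadic subsequence back to the full sequence is not automatic without an additional monotonicity or telescoping argument. This is easily repaired and, in fact, the subsequence is unnecessary here: since for each fixed $\eps$ the interpolated bound
\[
\Bigl\|\,\|\wt X^n-X\|_{\C^{\gamma_1\theta-\eps,\gamma_3\theta-\eps}([0,1]\times\Ba(0,M))}\Bigr\|_{L_m(\Omega)}\le C\, n^{-\gamma_4(1-\theta)}
\]
holds with a constant depending on $m$, choosing $m$ so large that $\gamma_4(1-\theta)m>1$ makes the $m$-th moments summable over the \emph{full} sequence $n\in\N$, and Borel--Cantelli then gives almost sure convergence of $\wt X^n\to X$ directly, exactly as in the paper. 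Relatedly, your description of step one is somewhat terse about \eqref{contModx2}: obtaining the mixed-increment Hölder bound requires the two-parameter Kolmogorov--Chentsov theorem (the paper cites Kunita, Theorem~1.4.4) applied to the rectangular increments, not merely the one-parameter Kolmogorov theorem that yields the joint $(t,x)$-Hölder bound \eqref{contModx}; both applications are needed, and it is at the first one that the hypothesis $\gamma_1\ge\gamma_2$ enters. Neither of these is a fatal flaw, but they should be spelled out.
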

\begin{proof}
(i). Fix $\eps>0$. We see from \eqref{conmul} that for any fixed $t\in[0,1]$, $x\in\R^d$ the sequence $(X^n(t,x))_{n\in\N}$  is Cauchy in $L_m(\Omega)$. Therefore, as $n\to\infty$,  this sequence converges in $L_m(\Omega)$ to a limit which we will denote by $\wt X(t,x)$.
An application of Fatou's lemma allows to pass to the limit as $n\to\infty$ in \eqref{con1} and \eqref{con2}. We derive that for any $m\ge1$ there exists a
constant $C=C(d,\gamma_1,\gamma_2,\gamma_3,\gamma_4,m)>0$ such that for any $x,y\in\R^d$, $s,t\in[0,1]$
	\begin{align}
		&\|\wt X(t,x)-\wt X(s,x)\|_{L_m(\Omega)}\le C |t-s|^{\gamma_1};\label{limKolmbox1}\\
		&\|\wt X(t,x)-\wt X(s,x)-(\wt X(t,y)-\wt X(s,y))\|_{L_m(\Omega)}\le C |t-s|^{\gamma_2}|x-y|^{\gamma_3}.\label{limKolmbox2}\\
		&\|\wt X(t,x)-\wt X(t,y)\|_{L_m(\Omega)}\le C |x-y|^{\gamma_3},\label{limKolmbox3}
	\end{align}
where the last inequality follows from \eqref{limKolmbox2} by taking there $s=0$. Take now $m$ large enough so that $d/m<\eps/2$. Recalling that $\gamma_1\ge\gamma_2$, we see that conditions (9), (10), (11) of Kolmogorov--Chentsov continuity theorem for random fields \cite[Theorem~1.4.4]{Kunita} are satisfied. Therefore $\wt X$ has a continuous modification $X$ for which \eqref{contModx2} holds. 
	
Adding now \eqref{limKolmbox1} and \eqref{limKolmbox3}, we get
that for any $x,y\in\R^d$, $s,t\in[0,1]$
\begin{equation*}
\|\wt X(t,x)-\wt X(s,y)\|_{L_m(\Omega)}\le C |t-s|^{\gamma_1}+C|x-y|^{\gamma_3},
\end{equation*}
where $C=C(d,\gamma_1,\gamma_2,\gamma_3,\gamma_4,m)>0$.
Choose now $m$ large enough so that 
$(1+d\frac{\gamma_1}{\gamma_3})/m<\frac\eps2$. 
Then condition (1) of standard Kolmogorov continuity theorem \cite[Theorem~1.4.1]{Kunita} is met, which implies \eqref{contModx}.
	
	(ii).  First, note that, as in part (i), it follows from \eqref{con1}–\eqref{con2} and the Kolmogorov--Chentsov continuity theorem \cite[Theorem~1.4.4]{Kunita} that for any $n \in \Z_+$, the process $X^n$ has a continuous modification $\wt X^n$.
Let $n, k \in \N$ and $m \ge 1$. Applying \eqref{con1} and taking $s = 0$ in \eqref{con2}, we obtain for any $s, t \in [0, 1]$ and $x, y \in \R^d$
	\begin{align}
		&\|X^n(t,x)-X^k(t,x)-(X^n(s,y)-X^k(s,y))\|_{L_m(\Omega)}\nn\\
		&\quad\le 
		\|X^n(t,x)-X^n(s,y)\|_{L_m(\Omega)}+\|(X^k(t,x)-X^k(s,y))\|_{L_m(\Omega)}\nn\\
		&\quad\le C |t-s|^{\gamma_1}+C|x-y|^{\gamma_3}\label{proof2step1}
	\end{align}	
	where $C=C(d,\gamma_1,\gamma_2,\gamma_3,\gamma_4, m)>0$.
	On the other hand, by grouping the terms in the above inequality differently and using \eqref{conmul}, we see that
	\begin{equation*}
		\|X^n(t,x)-X^k(t,x)-(X^n(s,y)-X^k(s,y))\|_{L_m(\Omega)}\le C(n\wedge k)^{-\gamma_4},
	\end{equation*}	
	for $C=C(d,\gamma_1,\gamma_2,\gamma_3,\gamma_4,m)$.
	Hence, combining it with \eqref{proof2step1}, we get that for any $\eps\in(0,1)$, $m\ge1$ there exists a constant $C=C(d,\gamma_1,\gamma_2,\gamma_3,\gamma_4,m)>0$ such that for any $s, t \in [0,1]$ and $x, y \in \R^d$
	\begin{equation*}
		\|X^n(t,x)-X^k(t,x)-(X^n(s,y)-X^k(s,y))\|_{L_m(\Omega)}\le C(n\wedge k)^{-\eps\gamma_4}(|t-s|^{\gamma_1(1-\eps)}+|x-y|^{\gamma_3(1-\eps)}).
	\end{equation*}	
 Let us pass to the  limit in the above inequality for fixed $n$ as $k\to\infty$. By Fatou's lemma we have 
	\begin{equation*}
		\|(X^n(t,x)-X(t,x))-(X^n(s,y)-X(s,y))\|_{L_m(\Omega)}\le C  n^{-\eps\gamma_4}(|t-s|^{\gamma_1(1-\eps)}+|x-y|^{\gamma_3(1-\eps)}),
	\end{equation*}
Applying again the Kolmogorov continuity theorem  \cite[Theorem~1.4.1]{Kunita} and choosing $m>(\eps\gamma_4)^{-1}$ large enough such that, we see that for any $M>0$, $n\in\N$ we have
	\begin{equation*}
		\Bigl\|
		\sup_{\substack{x\in\R^d\\|x|\le M}}\sup_{s,t\in[0,1]}\frac{|(X(t,x)-\wt X^{n}(t,x))-(X(s,x)-\wt X^{n}(s,y))|}{|t-s|^{\gamma_1-2\eps}+|x-y|^{\gamma_3-2\eps}}\Bigr\|_{L_m(\Omega)}\le C n^{-\eps\gamma_4}
	\end{equation*}
for $C=C(d,\gamma_1,\gamma_2,\gamma_3,\gamma_4,\eps,m)$.
	By the standard Borel-Cantelli arguments (see, e.g., \cite[Chapter~2.10, Corollary~2, p.~309]{ShiryaevProbab}), this implies \eqref{convkolm}.
	
	(iii). By part (ii) of the lemma, for any $n\in\N$, $t\in\mathbb{Q}\cap[0,1]$
	$$
	\E \int_{\R^d} \I\bigl(X^n(t,x)\neq \wt X^n(t,x)\bigr)\,dx=
	\int_{\R^d} \P\bigl(X^n(t,x)\neq \wt X^n(t,x)\bigr)\,dx=0. 
	$$ 
	Hence there exists a set $\Omega''\subset\Omega'$ of full probability measure and a set $A(\omega)\subset\R^d$ of Lebesgue measure zero such that for any $\omega\in\Omega''$ we have
	\begin{equation}\label{almostend}
		X^n(\omega, t,x)=\wt X^n(\omega, t,x)\quad \text{ for any $n\in\N$, $t\in\mathbb{Q}\cap[0,1]$, $x\in\R^d\setminus A(\omega)$}.
	\end{equation}
	Note that for any fixed $x\in\R^d$, $n\in\N$, $\omega\in\Omega''$ the processes $X^n(\omega,\cdot,x)$ and  $\wt X^n(\omega,\cdot,x)$ are continuous: the first one by assumptions of the proposition, the second one by construction. Hence \eqref{almostend} implies that $X^n(\omega,t,x)=\wt X^n(\omega,t,x)$ for any $\omega\in\Omega''$, $t\in[0,1]$, $n\in\N$, $x\in\R^d\setminus A(\omega)$.  By taking in \eqref{convkolm} $s=0$,  $y=x$, we get \eqref{limpart}.
\end{proof}

\begin{proposition}\label{p:localtime}
Let $\mu$ be a finite measure on $\R^d$, $d \in \N$. Suppose there exists a set $A \subset \R^d$ of Lebesgue measure $0$ and a continuous function $\ell \colon \R^d \to \R_+$ such that
	\begin{equation}\label{localtimecond}
		\lim_{k\to\infty} \sup_{x\in\R^d\setminus A }\Bigl|\frac{\mu(\Ba(x,2^{-k}))}{v_d2^{-kd}}-\ell(x)\Bigr|=0.
	\end{equation} 
Then the measure $\mu$ is absolutely continuous with respect to the Lebesgue measure, and its Radon–Nikodym derivative is given by $d\mu / d\Leb = \ell$.
\end{proposition}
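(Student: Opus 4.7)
The strategy is to use Fubini--Tonelli together with the uniform convergence hypothesis to compare $\mu$ with the candidate measure $\nu:=\ell\cdot\Leb$ on every open ball, and then conclude via a standard convolution-approximation that $\mu=\nu$.

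For any Borel $E\subset\R^d$ and any integer $k\ge 1$, Fubini--Tonelli yields the symmetric identity
\begin{equation*}
\int_E \frac{\mu(\Ba(x,2^{-k}))}{v_d 2^{-kd}}\,dx \;=\; \int_{\R^d} \frac{\Leb(E\cap\Ba(y,2^{-k}))}{v_d 2^{-kd}}\,d\mu(y).
\end{equation*}
Fix $E=\Ba(x_0,R)$ with $\mu(\partial E)=0$; this is automatic for all but countably many $R>0$, since the spheres $\partial\Ba(x_0,R)$ are pairwise disjoint and $\mu$ is finite. On the left-hand side, since $\Leb(A)=0$ the integral over $E$ equals the integral over $E\setminus A$; on $E\setminus A$ the hypothesis \eqref{localtimecond} provides uniform convergence of the integrand to $\ell$, and since $E$ is bounded and $\ell$ is continuous (hence bounded on $\bar E$), the left-hand side converges to $\int_E\ell\,dx$ as $k\to\infty$. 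On the right-hand side, the integrand is bounded by $1$, converges to $1$ on the open ball $E$ and to $0$ outside $\bar E$, and may oscillate on $\partial E$; under $\mu(\partial E)=0$ dominated convergence (with dominant $1$, integrable against the finite measure $\mu$) gives convergence to $\mu(E)$. Hence $\mu(\Ba(x_0,R))=\int_{\Ba(x_0,R)}\ell\,dx$ for all but countably many $R$.

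Both sides are left-continuous in $R$ and the ``good'' radii are dense in $(0,\infty)$, so the identity extends to every $R>0$ and every $x_0\in\R^d$. Letting $R\to\infty$ gives $\int_{\R^d}\ell\,dx=\mu(\R^d)<\infty$, so $\nu:=\ell\cdot\Leb$ is a finite Borel measure that coincides with $\mu$ on every open ball. To upgrade this to $\mu=\nu$, fix $f\in C_c(\R^d)$ and $r>0$ and set $\phi_r:=\I_{\Ba(0,r)}/(v_d r^d)$; Fubini gives
\begin{equation*}
\int (f*\phi_r)\,d\mu \;=\; \int f(x)\frac{\mu(\Ba(x,r))}{v_d r^d}\,dx \;=\; \int f(x)\frac{\nu(\Ba(x,r))}{v_d r^d}\,dx \;=\; \int (f*\phi_r)\,d\nu.
\end{equation*}
Since $f$ is uniformly continuous with compact support, $f*\phi_r\to f$ uniformly as $r\to 0$, and both measures are finite, so $\int f\,d\mu=\int f\,d\nu$ for every $f\in C_c(\R^d)$, which forces $\mu=\nu$. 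The main technical point is the boundary behaviour on $\partial E$ in the right-hand limit, which is dealt with by restricting to the dense set of radii with $\mu(\partial\Ba(x_0,R))=0$ and extending by left-continuity; the rest of the argument is essentially bookkeeping with Fubini and the uniform convergence off the Lebesgue-null set $A$.
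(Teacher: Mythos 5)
Your proof is correct but takes a genuinely different route from the paper's. The paper first establishes $\mu\ll\Leb$ directly via a covering argument (uniform boundedness of $\mu(\Ba(x,r))/v_dr^d$ on bounded regions plus a Vitali-type covering of a null set by small balls), and then invokes the Radon--Nikodym theorem together with the Lebesgue differentiation theorem to identify the density with $\ell$. You instead bypass the Radon--Nikodym machinery entirely: a Fubini symmetrization identity, combined with the uniform convergence off the null set $A$ and dominated convergence on the right-hand side (valid for the $\mu$-a.e.\ radii), gives $\mu(\Ba(x_0,R))=\int_{\Ba(x_0,R)}\ell\,dx$ for all $R$ after a left-continuity argument, and the convolution step with $\phi_r=\I_{\Ba(0,r)}/(v_dr^d)$ then upgrades agreement on balls to $\mu=\ell\cdot\Leb$ as measures. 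Your approach is more self-contained (no appeal to Lebesgue differentiation) at the cost of some extra bookkeeping about boundary mass on spheres; the paper's approach is shorter once one is willing to cite the two classical theorems. Both establish absolute continuity and identify the density, just in the opposite order: the paper proves $\mu\ll\Leb$ first and then identifies, while you identify $\mu$ on balls first and obtain absolute continuity as a byproduct of $\mu=\ell\cdot\Leb$.
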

Before we begin the proof, let us note that the supremum in \eqref{localtimecond} is crucial. Indeed, if $\mu = \delta_0$, then for any $x \in \R^d \setminus {0}$, we obviously have
\begin{equation*}
	\lim_{k\to\infty} \frac{\delta_0(\Ba(x,2^{-k}))}{v_d2^{-kd}}=0,
\end{equation*}
so an analogue of \eqref{localtimecond} without the supremum holds for $\ell(x) \equiv 0$, but $\delta_0$ is not absolutely continuous with respect to the Lebesgue measure.
\begin{proof}
\textbf{Step 1}. 
We show that $\mu$ is absolutely continuous with respect to the Lebesgue measure. Fix $\eps > 0$, and take $K = K(\eps) > 0$ large enough so that
$\mu(\R^d \setminus \Ba(0, K)) < \eps$. Since $\ell$ is continuous, it is bounded on $\Ba(0, K)$. Therefore, recalling \eqref{localtimecond}, we have
\begin{equation}\label{firstlim}
\sup_{\substack{x\in \Ba(0,K)\setminus A\\k\in\Z_+}}
\frac{\mu(\Ba(x,2^{-k}))}{v_d2^{-kd}} \le M,
\end{equation}	
for some constant $M=M(K)$. Since $\Leb(A)=0$, we see that for any $x\in \Ba(0,K)$, $r\in(0,1/2)$ there exists $\wt x\in \Ba(0,K)\setminus A$, such that 
$\Ba(x,r)\subset \Ba(\wt x,2r)\subset \Ba(\wt x,2^{-k})$, where $k\in\Z_+$ is such that $2^{-k-1}< 2r\le 2^{-k}$. Therefore, applying \eqref{firstlim}, we get 
\begin{equation}\label{ballmeasure}
\sup_{x\in \Ba(0,K),\, r\in(0,1]}\frac{\mu(\Ba(x,r))}{v_dr^{d}}\le 4^dM.
\end{equation}	
Let now $B\subset\R^d$ be an arbitrary set of zero Lebesgue measure. Then a set $B\cap\Ba(0,K)$ can be covered by a countable union of balls $ \Ba(x_i,r_i)$ with $\sum_{i=1}^\infty \nu_dr_i^d<\frac\eps{4^dM}$ and $x_i\in \Ba(0,K)$. Applying \eqref{ballmeasure}, we deduce
\begin{equation*}
\mu(B)\le\mu(B\cap \Ba(0,K))+\mu(\R^d\setminus \Ba(0,K))\le \sum_{i=1}^\infty \mu (\Ba(x_i,r_i))+\eps\le 2\eps.
\end{equation*}		
Since $\eps>0$ was arbitrary, we see that $\mu(B)=0$ and thus $\mu\ll \Leb$.

\textbf{Step~2}. Since $\mu \ll \Leb$, by the Radon–Nikodym theorem, for any set $S \in \mathscr{B}(\R^d)$, we have $\mu(S) = \int_S \wt \ell(x)\,dx$ for some measurable function $\wt \ell \colon \R^d \to \R$. Then, the Lebesgue differentiation theorem implies that for $\Leb$-a.e. $x \in \R^d$,
$$
\lim_{k\to\infty} \frac{\mu(\Ba(x,2^{-k}))}{v_d2^{-kd}}=\wt \ell(x).
$$
Comparing this with  \eqref{localtimecond}, we see that $\wt \ell = \ell$ Lebesgue a.e., and thus $d\mu / d\Leb = \ell$.
\end{proof}

\begin{proposition}\label{p:limitlt}
	Let $\mu$ be a finite measure on $\R^d$. Suppose that $\mu\ll \Leb$ and that Radon-Nikodym derivative $\ell :=d\mu/d \Leb$ is continuous. Then for any $x\in\R^d$
	\begin{equation*}
		\ell(x)=\lim_{\eps\to0} \frac{\mu(\Ba(x,\eps))}{v_d\eps^{d}}.
	\end{equation*} 
\begin{proof}
Follows immediately from continuity of $\ell$ and the fact that	 for any $\eps>0$, $x\in\R^d$
\begin{equation*}
 \inf_{y\in\Ba(x,\eps)} \ell(y)\le \frac{\mu(\Ba(x,\eps))}{v_d\eps^{d}}\le \sup_{y\in\Ba(x,\eps)} \ell(y).\qedhere
\end{equation*}
\end{proof}
\end{proposition}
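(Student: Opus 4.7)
The plan is to reduce the problem to squeezing the ratio $\mu(\Ba(x,\eps))/(v_d\eps^d)$ between two quantities both converging to $\ell(x)$, by directly exploiting continuity of $\ell$ at the point $x$. Since $\mu\ll\Leb$ with density $\ell$, we may rewrite
\begin{equation*}
\mu(\Ba(x,\eps)) = \int_{\Ba(x,\eps)} \ell(y)\,dy,
\end{equation*}
so the quotient $\mu(\Ba(x,\eps))/(v_d\eps^d)$ is simply the average value of $\ell$ over the ball $\Ba(x,\eps)$.

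Next, I would use continuity of $\ell$ at $x$. Given $\delta>0$, there exists $\eps_0>0$ such that $|\ell(y)-\ell(x)|<\delta$ whenever $|y-x|<\eps_0$. In particular, for any $\eps\in(0,\eps_0)$ and every $y\in\Ba(x,\eps)$,
\begin{equation*}
\ell(x)-\delta \;\le\; \ell(y) \;\le\; \ell(x)+\delta,
\end{equation*}
and these point estimates are already written out in the hint provided by the excerpt in the form
\begin{equation*}
\inf_{y\in\Ba(x,\eps)} \ell(y) \;\le\; \frac{\mu(\Ba(x,\eps))}{v_d\eps^d} \;\le\; \sup_{y\in\Ba(x,\eps)} \ell(y).
\end{equation*}
Integrating the pointwise bounds over $\Ba(x,\eps)$ and dividing by $v_d\eps^d$ yields
\begin{equation*}
\ell(x)-\delta \;\le\; \frac{\mu(\Ba(x,\eps))}{v_d\eps^d} \;\le\; \ell(x)+\delta
\qquad \text{for all } \eps\in(0,\eps_0).
\end{equation*}

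Letting $\eps\to 0$ and then $\delta\to 0$ yields the claim. There is really no obstacle here: the only ingredient beyond the absolute continuity hypothesis is the continuity of $\ell$ at the single point $x$, which lets us upgrade the almost-everywhere conclusion of the Lebesgue differentiation theorem to a pointwise one. Notably we do not need any Vitali-type covering argument, because we are not trying to prove differentiability of $\mu$ but merely computing a limit whose value is forced by the continuous density.
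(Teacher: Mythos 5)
Your proof is correct and takes essentially the same route as the paper: you sandwich the ball average of $\ell$ between $\inf_{\Ba(x,\eps)}\ell$ and $\sup_{\Ba(x,\eps)}\ell$ and invoke continuity of $\ell$ at $x$ to squeeze. The extra step of spelling out the $\delta$--$\eps_0$ argument is just an unpacking of what the paper calls "follows immediately from continuity."
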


\begin{proposition}[{\cite[Lemma~5.12]{FV2010}}]\label{p:TV}
	Let $\kappa\ge1$, $X^n\colon[0,1]\to\R^d$ be a sequence of  functions of finite $\kappa$-variation converging to a function $X\colon[0,1]\to\R^d$ pointwise as $n\to\infty$. Then $X$ is also of finite $\kappa$-variation and 
	$$
	\|X\|_{\kappa-\var;[0,1]}\le \liminf_{n\to\infty} \|X^n\|_{\kappa-\var;[0,1]}.
	$$
\end{proposition}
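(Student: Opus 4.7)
The plan is to proceed directly from the definition of $\kappa$-variation as a supremum over partitions, exploiting the fact that for any \emph{fixed} partition, the variation functional involves only finitely many evaluations of the function and hence is continuous under pointwise convergence.

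First, I would fix an arbitrary partition $\Pi = \{0 = t_0 < t_1 < \cdots < t_N = 1\}$ of $[0,1]$. For every $n \in \N$, the trivial bound
\begin{equation*}
\Bigl(\sum_{i=0}^{N-1} |X^n(t_{i+1}) - X^n(t_i)|^\kappa\Bigr)^{1/\kappa} \le \|X^n\|_{\kappa-\var;[0,1]}
\end{equation*}
holds by definition of the $\kappa$-variation norm. Next, I would pass to the limit $n \to \infty$: since $X^n \to X$ pointwise and the finite sum on the left is a continuous function of the finitely many values $\{X^n(t_i)\}_{i=0}^N$, the left-hand side converges to $(\sum_{i=0}^{N-1}|X(t_{i+1})-X(t_i)|^\kappa)^{1/\kappa}$. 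Therefore
\begin{equation*}
\Bigl(\sum_{i=0}^{N-1} |X(t_{i+1}) - X(t_i)|^\kappa\Bigr)^{1/\kappa} \le \liminf_{n\to\infty} \|X^n\|_{\kappa-\var;[0,1]}.
\end{equation*}

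Finally, I would take the supremum over all partitions $\Pi$ of $[0,1]$ on the left-hand side. The right-hand side does not depend on $\Pi$, so this yields
\begin{equation*}
\|X\|_{\kappa-\var;[0,1]} \le \liminf_{n\to\infty} \|X^n\|_{\kappa-\var;[0,1]},
\end{equation*}
which also gives finiteness of $\|X\|_{\kappa-\var;[0,1]}$ whenever the right-hand side is finite. This is the claimed inequality.

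There is essentially no real obstacle here; the argument is a standard lower-semicontinuity statement. The only mild subtlety is making sure that one takes the $\liminf$ \emph{after} passing to the limit in the finite partition sum (so that the inequality survives) rather than attempting to commute the supremum over $\Pi$ with the limit in $n$, which would not be justified.
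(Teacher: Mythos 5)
Your proof is correct and is essentially the same lower-semicontinuity argument as the one in the cited reference {\cite[Lemma~5.12]{FV2010}}; the paper itself does not reprove the statement but simply cites it. The key steps — fixing a partition, passing to the pointwise limit in the finite sum, using $a_n \le b_n$ and $a_n \to a$ to conclude $a \le \liminf b_n$, and only then taking the supremum over partitions — are exactly right and in the correct order.
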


\end{document}